%% file: arXiv_Fast_Spawn_Prune_v1.tex
\documentclass[twoside,11pt]{article}

\usepackage[abbrvbib, preprint]{jmlr2e}
\usepackage{amsfonts}
\usepackage{xcolor}
\usepackage{algorithm,algpseudocode}
\usepackage{booktabs}
\usepackage{tabularx}
\input{maths_commands_jmlr}

\newtheorem{defi}{Definition}[section]
\newtheorem{rem}{Remark}[section]
\newtheorem{theo}{Theorem}[section]
\newtheorem{coro}{Corollary}[section]
\newtheorem{lem}{Lemma}[section]

 \newcommand{\bracket}[1]{\langle{#1}\rangle}
 \newcommand{\Tnua}{\mathsf{W}_{\nu,\alpha}}
 \newcommand{\Tnuka}{\mathsf{W}_{\nu_k,\alpha}}
 \newcommand{\Tnub}{\mathsf{T}_{\nu,\beta}}
 \newcommand{\sPF}{\hat{T}^{\alpha,\beta}_{\sharp}}
 \newcommand{\UU}{}
 \newcommand{\MM}{\mathbf{E}_{\infty}}
 \newcommand{\RR}{\mathfrak{R}}
 \newcommand{\CC}{\mathfrak{C}}
 \newcommand{\cc}{\mathfrak{c}}
 \newcommand{\CTV}{\mathfrak{C}_{TV}}

 \newcommand{\cX}{\mathcal X}
 \newcommand{\cH}{\mathcal H}
 
 \newcommand{\bbH}{\mathds H}

 \newcommand{\cM}{\mathcal M}
 \newcommand{\bbR}{\mathds R}

 \newcommand{\HTVC}{(\mathcal{H}_{\mathrm{TV}}^{\infty})}
 \newcommand{\Heps}{(\mathcal{H}_{\varepsilon})}
 \newcommand{\tnu}{\tilde{\nu}}
 \newcommand{\poschedule}{\widehat{C}_{k}}
 \newcommand{\negschedule}{\widehat{c}_{k}}

 \newcommand{\measet}{\cM(\cX)}

 \newcommand{\nuk}{\nu_k}
 
 \newcommand{\nukp}{\nu_{k^+}}
 \newcommand{\nukkp}{\nu_{k^{++}}}
 
 \newcommand{\nukm}{\nu_{k-1^+}}
 
 \newcommand{\nukpp}{\nu_{k+1}}
 
 \newcommand{\nuks}{\hat{\nu}_k}
 
 \newcommand{\nukps}{\hat{\nu}_{k^+}}
 
 \newcommand{\nukkpps}{\hat{\nu}^{++}_{k}}
 
 \newcommand{\nukpps}{\hat{\nu}_{k+1}}
 
 \newcommand{\nukms}{\hat{\nu}_{k-1^+}}
 
 \newcommand{\vkd}{v_{k-1^+}}
 \newcommand{\vks}{\hat{v}_{k-1^+}}
 
 \newcommand{\nutv}{\|\nu\|_{\mathrm{TV}}}
 \newcommand{\Lip}{\mathfrak{L}}
 \newcommand{\Pkp}{\widehat{\mathcal{P}}_{\nukp}}
 \newcommand{\Nkp}{\widehat{\mathcal{N}}_{\nukp}}
 \newcommand{\Thetakm}{\Theta_{k-1^+}}
 \newcommand{\Thetakms}{\widehat{\Theta}_{k-1^+}}

 \newcommand{\Fk}{\mathfrak{F}_k}
 \newcommand{\Fkm}{\mathfrak{F}_{k-1}}
 
 \newcommand{\Deltakp}{\Delta_{k^+}}
 \newcommand{\Deltakpp}{\Delta_{k+1}}

 \newcommand{\Deltakps}{\widehat{\Delta}_{k^+}}

 \newcommand{\epk}{\varepsilon_k}

 \newcommand{\epkm}{\varepsilon_{k-1}}

 \newcommand{\pos}{\bm{T}}
 \newcommand{\weights}{\bm{W}}

 \newcommand{\newtext}[1]{\noindent\textcolor{black}{#1}}

\definecolor{burgundy}{rgb}{0.5, 0.0, 0.13}
\definecolor{camel}{rgb}{0.76, 0.6, 0.42}
\definecolor{chamoisee}{rgb}{0.63, 0.47, 0.35}
\definecolor{grey1}{RGB}{128,128,128}
\definecolor{ydc-red}{HTML}{EE66AA}
\usepackage[colorinlistoftodos,textwidth=2.3cm]{todonotes}

\usepackage{lastpage}

\begin{document}

\title{Fast Spawn\&Prune (FS\&P): Global convergence of stochastic conic particle gradient descent via birth/death process}

\author{\name Yohann De Castro \email yohann.de-castro@ec-lyon.fr\\
       \addr École Centrale Lyon, CNRS UMR 5208, Institut Camille Jordan,\\ Écully, France.
       \AND
       \name Sébastien Gadat  \email sebastien.gadat@tse-fr.eu\\
       \addr Toulouse School of Economics, CNRS UMR 5314, TSE-R \\
       Toulouse, France.
         \AND
         \name Clément Marteau \email clement.marteau@math.univ-lyon1.fr\\
         \addr Université Lyon 1, CNRS UMR 5208, Institut Camille Jordan,\\ Villeurbanne, France.
         }

\maketitle

\begin{abstract}%
We investigate the global optimization of the objective function arising in continuous sparse regression, specifically the Beurling LASSO (BLASSO), over the space of measures. While Conic Particle Gradient Descent (CPGD) methods are computationally efficient, they may become trapped in local minima due to the non-convexity of the parameterization. To overcome this limitation, we introduce  \textsf{Fast Spawn\&Prune (FS\&P)}, a stochastic algorithm that extends \textsf{FastPart} introduced in \cite{FastPartv1} and combines CPGD with a birth–death process. The birth mechanism ensures asymptotic global exploration by introducing particles in regions where first-order optimality conditions are violated, while the death process preserves computational efficiency by pruning non-informative particles.
   We provide the first theoretical guarantee of global convergence for this class of discrete-time stochastic algorithms, without requiring exponentially large initializations.
    Furthermore, we derive explicit convergence rates for the excess risk, which scale as $\mathcal{O}\big(\left(\log K / K\right)^{\frac{1}{2(2+d)}}\big)$, where $K$ denotes the number of iterations and $d$ the dimension of the domain, thereby quantifying the trade-off between global exploration and local refinement.
    Moreover, the sample complexity is $\mathcal{O}\big(N^{-\frac{1}{4(2+d)}}\big)$ (up to logarithmic factors). We also propose a horizon-free variant that does not require prior knowledge of the iteration budget.    
\end{abstract}

\begin{keywords}
  continuous sparse regression, conic particle gradient descent, birth and death process, global convergence, stochastic optimization
\end{keywords}

\section{Introduction}
Continuous sparse regression has been at the core of numerous studies in statistics and signal processing. In particular, it encompasses a wide range of models and problems, including statistical \emph{mixture models}, \emph{deconvolution} problems, and \emph{neural networks}. We refer to \cite{candes2014towards,azais2015spike,de2021supermix,duval2015exact,giard2025gaussian}, among others.

In this paper, we do not focus on the statistical properties of the estimator $\mu^\star$, but rather on the underlying \emph{optimization problem}. Indeed, attaining the exact global minimum is not strictly necessary in practice; approximate solutions are often sufficient to achieve the desired statistical guarantees. While a thorough analysis of these properties lies beyond the scope of this paper, we refer the interested reader to Appendix~\ref{sec:statistical_guarantees} for a brief overview.

\paragraph{From Over-Parametrization to Global Convergence}
The optimization of a non-convex objective $J$ via particle discretization has undergone significant theoretical advances. The seminal works of \cite{chizat2018global,chizat2022sparse} established that, in the \emph{overparameterized} regime—where the number of particles is very large—gradient descent dynamics can benefit from a \emph{convex optimization landscape}. Specifically, in the mean-field limit, the gradient flow converges to the global optimum, provided that the initialization assigns strictly positive mass to every measurable subset of the domain $\mathcal{X}$. This regime effectively convexifies the problem by allowing mass to flow freely toward the optimal support.

However, the computational cost of deterministic particle gradient descent scales poorly with both the number of particles and the dataset size. To address this issue, recent works have investigated stochastic approximations \citep{FastPartv1} and sketching techniques \citep{poon2023geometry,de2025effective}. In previous work, the authors analyzed \textit{Stochastic Conic Particle Gradient Descent} (\textit{FastPart}) in the overparameterized regime. They showed that replacing exact gradients with unbiased stochastic estimators (via mini-batching and random features) significantly improves time complexity while maintaining strong stability guarantees, in particular the boundedness of the total variation norm along the trajectory and local convergence rates.

Despite these advances, a critical gap remains regarding \emph{global convergence} for discrete-time algorithms with sparse initialization (\textit{i.e.}, with few to a moderate number of particles). Standard gradient descent methods, including their stochastic variants, are primarily local search methods and struggle to transport mass to remote regions of the domain when the current support is far from the optimum. As a result, the discrete algorithm may remain near stationary points where the first-order optimality conditions—namely, the non-negativity of $J'_\nu$, the Fréchet derivative of the objective—are violated in regions devoid of particles.

\paragraph{Fast Spawn\&Prune and the Birth/Death Process}
In this paper, we introduce \textbf{FS\&P}, an algorithm that augments stochastic conic particle gradient descent with a \emph{Birth and Death} process. This mechanism is designed to bridge the gap between local descent and global exploration. 
The \textbf{Birth Process} acts as a global corrective mechanism. It detects violations of the first-order optimality conditions—specifically, regions~$\mathcal{N}_\nu$ where the so-called \emph{dual certificate} $J'_\nu$ satisfies $J'_\nu < 0$—and introduces new particles in these areas. This mechanism ensures asymptotic global exploration and prevents the dynamics from becoming trapped in local minima. Unlike greedy methods such as Frank--Wolfe algorithms, which require solving a global minimization problem for $J'_\nu$ to add a particle, our approach simply samples random points within the negative regions $\mathcal{N}_\nu$ of the certificate $J'_\nu$, making it significantly more computationally tractable. The \textbf{Death Process} maintains computational efficiency by pruning non-informative particles. Exploiting the convexity of the objective function over the space of signed measures, we show that removing a particle whose weight is small and for which the dual certificate $J'_\nu$ at its location is sufficiently large (i.e., in the regions $\mathcal{P}_\nu$) strictly decreases the objective value. This provides a rigorous criterion for reducing the number of particles—and consequently the computational cost—without compromising convergence guarantees.

\medskip
Our main theoretical contribution is to prove the \textbf{global convergence} of this scheme. The \emph{Stochastic Conic Particle Gradient Descent with Birth and Death} is a discrete-time algorithm that constructs a sequence of measures $(\nu_k)_{k \ge 0}$ from the discretized version of the objective function $J(\cdot)$ by encoding the measures as a finite sum of \emph{particles} (Dirac masses). Let $(\varepsilon_k)_k$ be a sequence of \emph{exploration parameters} controlling the intensity of the birth process at iteration $k$. At each iteration, the algorithm performs the following steps:

\begin{itemize}
   \item \textbf{Weight and Push-Forward Update:} Each particle's weight is updated via an exponential weighting scheme based on the local value of the dual certificate $J'_{\nu_k}$ (the so-called \emph{conic descent}). Simultaneously, the positions of the particles are adjusted using a so-called \emph{generalized gradient descent step} to ensure they remain within the domain $\mathcal{X}$. A descent lemma (Proposition~\ref{prop:incre}) quantifies the decrease in the objective function due to these updates, giving the intermediate update $\nu_k\longmapsto\nu_{k^+}$.
\item \textbf{Birth Process:} New particles are introduced by sampling from regions $\mathcal{N}_{\nukp}$ where the (updated) dual certificate~$J'_{\nu_{k^+}}$ is negative (which can be done in practice using \emph{rejection sampling}, for instance). The number of new particles added is proportional to the \emph{exploration schedule parameter} $\varepsilon_k$, which decays over time to balance exploration and exploitation.
\item \textbf{Death Process:} Particles located in regions $\mathcal{P}_{\nukp}$ where $J'_{\nu_{k^+}}$ is sufficiently large are safely removed (with theoretical guarantees) from the measure. This pruning step helps control the total number of particles, ensuring computational efficiency.
\item \textbf{Stochastic Gradient Estimation:} To further enhance scalability, the algorithm employs stochastic approximations of the gradient using \emph{mini-batches} of data and \emph{random feature} mappings (sketching). This reduces the computational burden associated with evaluating the full gradient at each iteration.
\end{itemize}
We start by presenting the continuous sparse regression framework and the key mathematical objects involved in the optimization problem. We then detail the \textbf{Fast Spawn\&Prune} algorithm, including the weight and push-forward updates, as well as the Birth and Death processes. Next, we derive explicit convergence rates that depend on the dimension of the domain $\mathcal{X}$, reflecting the cost of global exploration. We show that, under suitable assumptions on the exploration schedule $(\varepsilon_k)_k$, the sequence of iterates converges to the global optimum.

\subsection{Continuous sparse regression}
Let $\mathcal{X}\subset\R^d$ be a \emph{compact convex} set (equal to the closure of its interior) and consider $(\mathcal{M}(\mathcal{X}),\|\cdot\|_{\mathrm TV})$ the space of \emph{signed} measures, defined as the topological dual space of the space $(\mathcal C(\mathcal{X}),\|\cdot\|_{\infty})$, the \emph{continuous} functions endowed with the infinity norm. Let~$\mathbb H$ be a \emph{separable} Hilbert space and let $\Phi\,:\,\mathcal{M}(\mathcal{X})\to \mathbb H$ be a linear map, referred to as the (forward) measurement operator. We define the BLASSO problem \citep{candes2014towards,azais2015spike} as
\begin{align}
    \label{eq:blasso}
    \mu^\star \in  \argmin_{\mu \in \cM(\cX)} J(\mu)
    \quad\textnormal{where}\quad 
    J(\mu):=\frac{1}{2} \big\Vert {y} - \Phi \mu \big\Vert_{\mathbb{H}}^2 + \kappa \Vert \mu \Vert_{\mathrm TV}\,, 
    \tag{$\mathcal{P}$}
\end{align}
where $\kappa>0$ is a regularization parameter and ${y}\in\mathbb H$ is some observation. We assume that~$\Phi$ is a \emph{bounded linear} and \emph{weak-* continuous} operator and one can prove (see Lemma~\ref{lem:dual_Phi} in the appendix) that 
\begin{subequations}
\begin{equation}
\label{def:Phi}
    \Phi:\ \nu\in\measet\longmapsto\int_\cX\varphi_\vt\mathrm d\nu(\vt) \in\bbH\,,
\end{equation}
where $ \vt\in\cX\longmapsto\varphi_\vt\in\bbH$ denotes the \emph{feature map}. One can define the \emph{model kernel} $K(\cdot,\cdot)$~as 
\begin{equation}
\label{eq:kernel}
\forall \vs,\vt \in \mathcal{X}\,,\qquad
K(\vs,\vt) := \langle \varphi_s,\varphi_t\rangle_{\mathbb H}\,.
\end{equation}
We make the following assumption on the program~\eqref{eq:blasso} throughout this paper, which is satisfied for standard kernels on compact sets $\cX$, such as the Gaussian kernel for instance.
\end{subequations}

\medskip

\noindent
\textbf{Assumption}
($\mathcal{H}_{\mathcal{P}}$). There exist constants $\cc_{\mathcal{P}}>0$ and $\CC_{\mathcal{P}}>0$ such that the observation ${y}\in\mathbb H$ is {\bf bounded} in~$\mathbb{H}$, namely:
$$\|y\|_{\mathbb{H}}\leq \CC_{\mathcal{P}},$$ and the kernel $K(\cdot,\cdot)$ introduced in \eqref{eq:kernel} is 
\begin{subequations}
\label{eqs:hyp_HP}
\begin{itemize}
    \item {\bf Smooth:} Twice continuously differentiable
    \begin{align}
            \addtocounter{equation}{1}
        \tag{\theequation--$(\mathcal{H}_{\mathcal{P}})$}
    \label{eq:upper_bound_kernel_gradient_hessian}
        \max\Big\{\|K(\cdot,\cdot)\|_\infty,
        \|\nabla_sK(\cdot,\cdot)\|_\infty,
        \|\nabla^2_sK(\cdot,\cdot)\|_\infty
        \Big\}\leq\CC_{\mathcal{P}}\,,
    \end{align}
    where $\|\cdot\|_\infty$ is the infinity norm (each case over all $\vs,\vt\in\cX$) of the absolute kernel value, the Euclidean norm of the kernel gradient with respect to the first variable and the operator norm of the kernel Hessian with respect to the first variable, respectively.
    \item {\bf Normalized and positive:}
    \begin{equation}
                \addtocounter{equation}{1}
        \tag{\theequation--$(\mathcal{H}_{\mathcal{P}})$}
    \label{eq:normalization_kernel}
        \forall \vs,\vt\in\cX\,,\qquad K(\vs,\vt)\geq \cc_{\mathcal{P}} >0\quad\text{and}\quad K(\vt,\vt)=1\,.     
    \end{equation}
\end{itemize}
\end{subequations}    

\begin{rem}
\label{rem:K_leq_1}
    Note that Equation \eqref{eq:normalization_kernel} yields $K(\vs,\vt)\in[\cc_{\mathcal{P}},1]$, for any $\vs,\vt\in\cX$. Note also that (see Lemma~\ref{lem:lipschitz_feature_map}) the so-called \emph{kernel metric} $d_K(\cdot,\cdot)$ (see \cite[Eq.~(4.80)]{steinwart2008support}) satisfies
    \begin{equation}
        \label{eq:control_kernel_distance_euclidean_distance}
            \forall \vs,\vt\in\cX\,, \qquad d_K(s,t):=
            \|\varphi_t-\varphi_s\|_{\mathbb H}\leq  \sqrt{\CC_{\mathcal P}}\|t-s\|\,,
    \end{equation}
    where $\|\cdot\|$ is the Euclidean norm on $\mathbb{R}^d$. We will use the Lipschitz property throughout this~paper.
\end{rem}

\subsection{First order optimality condition}
\paragraph{Symmetrization trick}
Following \cite{chizat2022sparse}, we address the optimization problem over the space of signed measures $\mathcal{M}(\mathcal{X})$ by lifting it to the space of \emph{non-negative} measures, denoted by $\mathcal{M}_+(\mathcal{X})$. This standard argument is presented in Section~\ref{sec:symmetrization} (in the appendix) and, from now on, we assume that the feasible set of~\eqref{eq:blasso} is $\mathcal{M}_+(\mathcal{X})$.

\paragraph{Fr\'echet differentiation: the dual certificate}
\begin{subequations}
For any $\nu \in \mathcal{M}_+(\mathcal{X})$, the Fréchet derivative of $J(\cdot)$ is denoted by $J'_\nu\in\mathcal{C}(\mathcal{X})$ (referred to as the \emph{dual certificate}). By Lemmas~\ref{lem:1.2a} and~\ref{lem:1.2b} (in the appendix), the dual certificate enjoys the following equality, for all $\nu \in \cM_+(\cX)$ and $\sigma\in\cM(\cX)$ such that $\nu+\sigma \in \cM_+(\cX)$,
\begin{equation}
    \label{eq:Frechet_Jprime}
    J(\nu+\sigma) - J(\nu) = \langle J'_\nu,\sigma\rangle + \frac12\|\Phi(\sigma)\|_\mathbb{H}^2\,,
\end{equation}
and
\begin{equation}
\label{eq:Frechet_J} 
\forall{t}\in \mathcal{X}\,,\qquad
J_\nu'({t}) = \langle \varphi_{{t}}, \Phi(\nu)- {y} \rangle_\mathbb{H} + \kappa\,.
\end{equation}

\medskip

\begin{rem}[Lipschitz continuity of the dual certificate]
    \label{rem:lipschitz_J_prime}
    As established in Lemma~\ref{lem:lipschitz_J_prime} (see the appendix), the dual certificate $J'_{\nu}$ is Lipschitz continuous with respect to the spatial variable $t \in \mathcal{X}$. Specifically, its Lipschitz constant $\mathfrak{L}({\nu}) $ satisfies 
    \[
        \mathfrak{L}({\nu}) \leq \sqrt{\CC_{\mathcal{P}}} (\CC_{\mathcal{P}}+\|\nu\|_{\mathrm{TV}})\,,
    \]
    where $\CC_{\mathcal{P}}$ is the Lipschitz constant of the feature map $t \longmapsto \varphi_t$ (derived from the kernel smoothness in Assumption $(\mathcal{H}_{\mathcal{P}})$). This smoothness is crucial for the descent lemma (Proposition~\ref{prop:incre}), as it ensures the gradient does not vary too much between close particles. Note that this bound depends on the mass of the measure $\|\nu\|_{\mathrm{TV}}$, which we prove remains bounded by a constant $\CTV>0$ throughout the algorithm. We can define the \emph{uniform Lipschitz constant} $\Lip$ of the dual certificates $J'_{\nu}$ as 
    \begin{equation}
    \label{def:lip_uniform_J'}
            \Lip:=\sqrt{\CC_{\mathcal{P}}} (\CC_{\mathcal{P}}+\CTV)\,.
    \end{equation}
\end{rem}
\end{subequations}

\medskip

\begin{rem}[Bounds on the dual certificate]
    Note that the dual certificate~\eqref{eq:Frechet_J} satisfies, for any $\nu \in \cM_+(\cX)$ and any ${t}\in \mathcal{X}$,
    \begin{equation}
    \label{eq:lower_bound_DC}
        \|\nu\|_{\mathrm{TV}}+\CC_{\mathcal{P}}+\kappa
        \ge
        J_\nu'({t}) 
        =
        \int_\cX K(s,t)\mathrm{d}\nu(s) 
        - 
        \langle \varphi_{{t}}, {y} \rangle_\mathbb{H}
        +
        \kappa
        \geq \cc_{\mathcal{P}}\|\nu\|_{\mathrm{TV}}-\CC_{\mathcal{P}}+\kappa  
    \end{equation}
    under \eqref{eqs:hyp_HP}. The bound \eqref{eq:lower_bound_DC} will be used to prove that the measure updates $(\nu_k)_k$ have bounded total variation. 
\end{rem}

\medskip

\paragraph{Optimality condition}
The dual certificate $J'_\nu$ plays a central role in solving the problem. In particular, it enables the characterization of solutions to our optimization problem, thereby emphasizing its importance. We have the following proposition; see, for instance, \cite[Proposition 3.1]{chizat2022sparse}.
\begin{proposition}[Karush–Kuhn–Tucker (KKT) conditions]\label{prop:minimization}
A measure $\nu^\star$ is a minimizer of $\nu\longmapsto J(\nu)$ if and only if $J_{\nu^\star}'(\vt)\geq 0$ for all $\vt\in \mathcal{X}$ and $J_{\nu^\star}'(\vt)= 0$ when $\vt$ belongs to the support of $\nu^\star$.
\end{proposition}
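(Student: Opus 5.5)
The proof rests on the exact second-order expansion \eqref{eq:Frechet_Jprime} together with the convexity of $J$ on the convex cone $\mathcal{M}_+(\mathcal{X})$. The objective $J$ is the sum of a convex quadratic in $\Phi\mu$ and $\kappa\|\mu\|_{\mathrm{TV}}$. On $\mathcal{M}_+(\mathcal{X})$ the latter is linear, since $\|\mu\|_{\mathrm{TV}}=\mu(\mathcal{X})$. Hence $J$ is convex there. This already appears in \eqref{eq:Frechet_Jprime}: the remainder $\frac12\|\Phi\sigma\|^2_{\mathbb H}$ is nonnegative.

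\textbf{Sufficiency.} Suppose $J'_{\nu^\star}\ge 0$ on $\mathcal{X}$ and $J'_{\nu^\star}=0$ on $\mathrm{supp}(\nu^\star)$. Take any $\nu\in\mathcal{M}_+(\mathcal{X})$ and set $\sigma=\nu-\nu^\star$, so that $\nu^\star+\sigma=\nu\in\mathcal{M}_+(\mathcal{X})$. Then \eqref{eq:Frechet_Jprime} gives
\[
J(\nu)-J(\nu^\star)\ \ge\ \langle J'_{\nu^\star},\nu\rangle-\langle J'_{\nu^\star},\nu^\star\rangle .
\]
The second term vanishes because $J'_{\nu^\star}$ is continuous and equals $0$ on the support of $\nu^\star$. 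The first term is nonnegative because $J'_{\nu^\star}\ge0$ and $\nu\ge0$. Therefore $\nu^\star$ is a minimizer.

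\textbf{Necessity.} Let $\nu^\star$ be a minimizer. I would use two admissible perturbations.

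First, for $t\in\mathcal{X}$ and $\epsilon>0$, take $\sigma=\epsilon\delta_t$. Expansion \eqref{eq:Frechet_Jprime} gives
\[
0\le J(\nu^\star+\epsilon\delta_t)-J(\nu^\star)=\epsilon J'_{\nu^\star}(t)+\tfrac{\epsilon^2}{2}K(t,t).
\]
Dividing by $\epsilon$ and letting $\epsilon\to0$ yields $J'_{\nu^\star}(t)\ge0$.

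Second, take $\sigma=-\epsilon\nu^\star$ with $\epsilon\in(0,1]$, which keeps $\nu^\star+\sigma=(1-\epsilon)\nu^\star\ge0$. The same argument gives $-\epsilon\langle J'_{\nu^\star},\nu^\star\rangle+O(\epsilon^2)\ge0$, so $\int J'_{\nu^\star}\,d\nu^\star\le0$. Combined with $J'_{\nu^\star}\ge0$, this forces $\int J'_{\nu^\star}\,d\nu^\star=0$.

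\textbf{Main obstacle.} The step needing the most care is passing from $\int J'_{\nu^\star}\,d\nu^\star=0$ to pointwise vanishing on the support. Since $J'_{\nu^\star}\ge0$ is continuous, we get $J'_{\nu^\star}=0$ $\nu^\star$-a.e. Now suppose $J'_{\nu^\star}(t_0)>0$ at some $t_0\in\mathrm{supp}(\nu^\star)$. By continuity $J'_{\nu^\star}>0$ on an open neighbourhood $U$ of $t_0$. By definition of the support, $\nu^\star(U)>0$. Then $\int_U J'_{\nu^\star}\,d\nu^\star>0$, a contradiction. Continuity of $J'_{\nu^\star}$, which follows from the Lipschitz bound of Remark~\ref{rem:lipschitz_J_prime}, is exactly what closes this gap.
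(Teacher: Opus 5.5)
Your proof is correct and complete. The exact expansion \eqref{eq:Frechet_Jprime} on the cone $\mathcal{M}_+(\mathcal{X})$ gives sufficiency directly. The test directions $\epsilon\delta_t$ and $-\epsilon\nu^\star$ give $J'_{\nu^\star}\geq 0$ and $\int J'_{\nu^\star}\,d\nu^\star=0$. Continuity then upgrades $\nu^\star$-a.e.\ vanishing to vanishing at every point of the support.

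The paper does not prove this statement itself; it defers to \cite[Proposition 3.1]{chizat2022sparse}, and your argument is the standard first-order convexity proof of that result. One small point: continuity is not needed to get $J'_{\nu^\star}=0$ $\nu^\star$-a.e.; nonnegativity and the zero integral already give that. Continuity is only needed for the final step to every point of the support.
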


\noindent
The KKT conditions in Proposition~\ref{prop:minimization} provide a natural rationale for the particle birth and death dynamics proposed in this algorithm. First, the requirement that $J'_{\nu^\star} \ge 0$ implies that any region where the current dual certificate is negative ($J'_{\nu} < 0$) corresponds to a local violation of optimality. This motivates a \textbf{Birth process} to inject mass into these under-represented areas. 
Second, the complementary slackness condition implies that the support of the optimal measure is contained within the zero level set of the dual certificate. Consequently, particles located in regions where $J'_{\nu}$ is strictly positive should be pruned or down-weighted to reduce the objective value, thereby justifying a \textbf{Death process}.

\subsection{Weight \& Push-Forward update}
\label{sec:proj_gradient}
To adopt an optimization perspective on the functional $J(\cdot)$, we can formulate an ideal algorithm that recursively generates a sequence of measures $(\nuk)_{k \ge 0}$ via a so-called conic gradient descent on $J(\cdot)$.

\paragraph{Generalized descent on $\cX$}
\begin{subequations}
While the dynamics of the weights of the measures are made explicit through an exponential-weight descent, we also need to update the positions of the support points of the measure. Since the measure is supported on $\cX$, we must therefore constrain the particles to remain in $\cX$. This is ensured by the following proximal approach, which we briefly describe below. We refer to \cite{Ghadimi-Lan-Zhang} for further details. Given a step size $\beta>0$, we define
\begin{equation}
    \forall \vt \in \cX\,, \quad \forall {v} \in \mathbb{R}^d\,, \qquad \vt^+_{t,v,\beta} := \argmin_{{u} \in \cX}\Big\{ \langle{u},{v}\rangle+\frac{1}{2 \beta} \|{u}-\vt\|^2\Big\}\,.  
\end{equation}
The \textit{generalized gradient descent step} associated with a descent vector ${v}$ is then defined as
\begin{equation}
\label{eq:gradient_generalise}
\pi_{\cX}(\vt,{v},\beta) := \frac{\vt- \vt^+_{t,v,\beta}}{\beta}
\quad
\text{so that }
\quad
\vt^+_{t,v,\beta}=\vt-\beta \pi_{\cX}(\vt,{v},\beta)\,.
\end{equation}

\end{subequations}

\medskip

\paragraph{Descent Property}
By considering step sizes $\alpha, \beta>0$, whose values will be specified and discussed throughout the paper, we introduce for any measure $\nu \in \cM_+(\cX)$  the mappings $\Tnua\,:\,\mathcal{X} \longrightarrow \mathbb{R}$ and $\Tnub\,:\,\mathcal{X} \longrightarrow \mathbb{R}^d$ defined as:
\[
    \forall \vt\in \mathcal{X}\,,\qquad
    \Tnua(\vt) = e^{-\alpha J_\nu'(\vt)} \quad \mathrm{and} \quad \Tnub(\vt) = \vt-\beta\, \pi_{\cX}(\vt,\nabla J_\nu'(\vt),\beta)\,.
\]

\medskip
\begin{defi}[Weight \& Push-Forward update]
\label{def:PF}
For any $\nu \in \cM_+(\cX)$, define the update as $\nu^+ := \Tnub^\sharp \Tnua\nu$ 
where the weight update $\Tnua\nu\in \cM_+(\cX)$ is given by, for any Borel set $\mathcal{B}\subseteq\mathcal{X}$,
\[
(\Tnua\nu)(\mathcal{B}) = \int_{\mathcal{B}} \Tnua(\vt) \mathrm{d}\nu(\vt) =  \int_{\mathcal{B}}e^{-\alpha J_\nu'(\vt)} \mathrm{d}\nu(\vt)\,,
\]
and the push-forward measure $\Tnub^\sharp \mu\in \cM_+(\cX)$, for any $\mu\in \cM_+(\cX)$, is defined by:
\[
\forall \psi \in \mathcal{C}(\mathcal{X})\,,\qquad\int_\mathcal{X} \psi(\vt) d \Tnub^\sharp \mu (\vt) = \int_\mathcal{X} \psi(\Tnub(\vt)) d\mu(\vt)\,.
\]
\end{defi}

\medskip

\noindent 
The next result provides a quantitative characterization of the effect of the $\Tnua$ and $\Tnub$ updates on the value of the objective $J(\cdot)$.

\begin{proposition}[Descent property]
\label{prop:incre}Assume that  \eqref{eqs:hyp_HP} holds. Then, for any $\nu\in\mathcal{M}_+(\mathcal{X})$ and $\CTV>0$ such that $\|\nu\|_{\mathrm{TV}} \leq \CTV$ and for any~$\alpha \ge 0$ and $\beta \ge 0$ such that:
\begin{equation}
\label{eq:small_learning_rates}
    \alpha < \frac{1}{10(1+ \CTV + \CC_{\mathcal{P}}+\kappa)(1\vee \CTV)}\,\quad \text{and} \quad 
    \beta \leq \frac{1}{2\CC_{\mathcal{P}}(\CC_{\mathcal{P}}+3\CTV)\,e^{1/5}}\, ,
\end{equation}
it holds that
        \[
        J(\nu^+)-J(\nu) \leq - \frac34 \left(\alpha \int_{\cX}  |J'_{\nu}|^2 d\nu\, 
        + \beta \int_{\cX}  \left\|\pi_{\cX}(\vt,\nabla J_\nu'(\vt),\beta)\right\|^2 d\nu \right).
        \]
where $\nu^+ := \Tnub^\sharp \Tnua\nu$.

\end{proposition}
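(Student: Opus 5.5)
I will split the update $\nu\mapsto\nu^+$ into its two stages, $\nu\mapsto\mu:=\Tnua\nu$ and $\mu\mapsto\nu^+=\Tnub^\sharp\mu$, and control each with the exact expansion \eqref{eq:Frechet_Jprime}:
\[
J(\nu+\sigma)-J(\nu)=\langle J'_\nu,\sigma\rangle+\tfrac12\|\Phi\sigma\|_{\mathbb H}^2 .
\]
It is simplest to do this in one shot with $\sigma=\nu^+-\nu$ and the pairing
\[
\langle J'_\nu,\nu^+-\nu\rangle=\int_\cX \Big(J'_\nu(\Tnub(\vt))e^{-\alpha J'_\nu(\vt)}-J'_\nu(\vt)\Big)\,d\nu(\vt).
\]
I write the integrand as
\[
e^{-\alpha J'_\nu}\Big(J'_\nu(\Tnub(\vt))-J'_\nu(\vt)\Big)+J'_\nu(\vt)\Big(e^{-\alpha J'_\nu(\vt)}-1\Big).
\]

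\textbf{Weight term.} By \eqref{eq:lower_bound_DC}, $|J'_\nu|\le M:=\CTV+\CC_{\mathcal P}+\kappa$. The step-size condition gives $\alpha M\le 1/10$, so $e^{-x}-1\le -x+x^2/2\cdot e^{1/10}$ applied with $x=\alpha J'_\nu$. This yields
\[
J'_\nu\big(e^{-\alpha J'_\nu}-1\big)\le -\alpha |J'_\nu|^2\,(1-\alpha M e^{1/10}/2),
\]
which is essentially $-\alpha|J'_\nu|^2$ up to a factor $\ge 0.94$.

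\textbf{Position term.} I would use the standard descent inequality for $J'_\nu$, whose gradient $\nabla J'_\nu(\vt)=\langle\nabla\varphi_t,\Phi\nu-y\rangle$ is Lipschitz with constant $L\le \CC_{\mathcal P}(\CC_{\mathcal P}+\CTV)$ by \eqref{eq:upper_bound_kernel_gradient_hessian}. This gives
\[
J'_\nu(\Tnub(\vt))-J'_\nu(\vt)\le -\beta\langle\nabla J'_\nu(\vt),\pi_\cX\rangle+\tfrac{L\beta^2}{2}\|\pi_\cX\|^2 .
\]
I then invoke the key property of the generalized gradient from \cite{Ghadimi-Lan-Zhang}: optimality of $\vt^+$ in the proximal problem gives $\langle v,\pi_\cX(\vt,v,\beta)\rangle\ge\|\pi_\cX(\vt,v,\beta)\|^2$. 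Hence the position term is at most $-\beta\|\pi_\cX\|^2(1-L\beta/2)$. This is multiplied by $e^{-\alpha J'_\nu}\in[e^{-1/10},e^{1/10}]$; since the bracket is nonpositive, the lower factor $e^{-1/10}$ is the relevant one.

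\textbf{Quadratic remainder.} This is the main obstacle: $\tfrac12\|\Phi(\nu^+-\nu)\|^2$ must be absorbed into the two negative terms. I write
\[
\Phi(\nu^+-\nu)=\int\Big(e^{-\alpha J'_\nu}-1\Big)\varphi_{\Tnub(\vt)}\,d\nu+\int\Big(\varphi_{\Tnub(\vt)}-\varphi_\vt\Big)\,d\nu ,
\]
and use $(a+b)^2\le 2a^2+2b^2$ together with Cauchy--Schwarz in $L^2(\nu)$:
\[
\Big\|\int f\,\psi\,d\nu\Big\|^2\le \|\nu\|_{\mathrm{TV}}\int |f|^2\,d\nu .
\]
Since $\|\varphi\|=1$ and $|e^{-x}-1|\le e^{1/10}|x|$, the first piece is bounded by $\CTV\,e^{1/5}\alpha^2\int|J'_\nu|^2d\nu$. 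Since $\|\varphi_{\Tnub(\vt)}-\varphi_\vt\|\le\sqrt{\CC_{\mathcal P}}\,\beta\|\pi_\cX\|$ by \eqref{eq:control_kernel_distance_euclidean_distance}, the second piece is bounded by $\CTV\,\CC_{\mathcal P}\beta^2\int\|\pi_\cX\|^2d\nu$.

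The constraints on $\alpha$ and $\beta$ (note the factors $1\vee\CTV$, $e^{1/5}$, and $3\CTV$) are exactly what make the total coefficients at most $-\tfrac34\alpha$ and $-\tfrac34\beta$ respectively: $\alpha\CTV e^{1/5}\lesssim 1/10$ and $\beta\,\CC_{\mathcal P}(\CC_{\mathcal P}+3\CTV)e^{1/5}\le 1/2$. The final step is a careful bookkeeping check that these constants close; if the cross-multiplication by $e^{\pm1/10}$ is too lossy, I would instead bound the two stages separately, applying \eqref{eq:Frechet_Jprime} at $\nu$ and then at $\mu$ and using $J'_\mu-J'_\nu=\langle\varphi,\Phi(\mu-\nu)\rangle$.
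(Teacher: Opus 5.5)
Your architecture is the paper's. You make a single application of \eqref{eq:Frechet_Jprime} with $\sigma=\nu^+-\nu$ and split the linear term into the position increment integrated against $\Tnua\nu$ (the paper's $A_1$) and the weight increment (its $A_2$). You then use the descent lemma plus the Ghadimi--Lan--Zhang inequality for the former, and Cauchy--Schwarz on two pieces of $\Phi(\nu^+-\nu)$. Your pointwise factor $e^{-1/10}$ on a nonpositive bracket and your grouping of $\varphi_{\Tnub(t)}$ with the weight increment are mild sharpenings. The $\alpha$-part closes comfortably: the coefficient is at least $1-\frac{e^{1/10}+e^{1/5}}{20}>0.88$, using $\alpha\CTV<\frac1{20}$.

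The step you defer fails for the $\beta$-part. Your coefficient of $-\beta\int\|\pi_{\cX}\|^2\,d\nu$ is $e^{-1/10}\big(1-\frac{L\beta}{2}\big)-\CTV\CC_{\mathcal P}\beta$. Since $\frac{L\beta}{2}+\CTV\CC_{\mathcal P}\beta=\frac12\beta\CC_{\mathcal P}(\CC_{\mathcal P}+3\CTV)\le\frac{1}{4e^{1/5}}\approx0.205$, your bounds only guarantee $e^{-1/10}-\frac{1}{4e^{1/5}}\approx0.70$, not $\frac34$. This is essentially sharp for them: you get about $0.72$ when $\CTV\ll\CC_{\mathcal P}$ and $\alpha\|J'_\nu\|_\infty$ is near $\frac1{10}$. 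The $\beta$-condition is calibrated so that the curvature remainder and the position part of $\frac12\|\Phi\sigma\|^2$ use up all, or nearly all, of the $\frac14$ budget. That leaves too little for the factor $e^{-\alpha J'_\nu}$ which the position step inherits from acting on $\Tnua\nu$, a loss of up to $1-e^{-1/10}\approx0.095$. Your fallback does not remove this. In the two-stage version the displacement is still integrated against $\Tnua\nu$, and you must additionally pay for $\nabla J'_{\Tnua\nu}\neq\nabla J'_\nu$ in the Ghadimi--Lan--Zhang step.

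The paper's written proof has the same slip. Its gathered bound gives the $\beta$-coefficient $\frac34-\alpha(\kappa+\|\nu\|_{\mathrm{TV}}+\|y\|_{\mathbb H})e^{\delta}<\frac34$, where $\delta=\alpha(\|J'_\nu\|_{\mathcal C^2(\cX)}\vee1)$. So this is not a departure from the paper, but neither argument establishes the constant $\frac34$ on the $\beta$-term under \eqref{eq:small_learning_rates}. Your bounds do give a correct version in either of two ways:
\begin{itemize}
\item shrink the step, e.g.\ $\beta\le\frac{3}{10\,\CC_{\mathcal P}(\CC_{\mathcal P}+3\CTV)}$, so the coefficient is at least $e^{-1/10}-0.15>\frac34$;
\item keep \eqref{eq:small_learning_rates} and state the $\beta$-part with a constant such as $\frac23$. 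This is harmless downstream, where only the $\alpha$-coefficient $\frac34$ and the sign of the $\beta$-term are used.
\end{itemize}
Two smaller repairs are also needed. In the weight term you multiply a one-sided bound on $e^{-x}-1$ by $J'_\nu$, whose sign is indefinite; use the two-sided estimate $|e^{-x}-1+x|\le\frac{x^2}{2}e^{|x|}$, or $e^{-x}\ge1-x$ where $J'_\nu<0$. And $\|\nabla^2J'_\nu\|_\infty\le\CC_{\mathcal P}(\CC_{\mathcal P}+\CTV)$ is not a direct consequence of the kernel Hessian bound, because the $\langle\varphi_t,y\rangle_{\mathbb H}$ part involves $D^2_t\varphi_t$; it is what Lemma~\ref{lem:lipschitz_J_prime} supplies.
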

\noindent The proof is deferred to Appendix~\ref{app:tec}. This result highlights both the strengths and limitations of the dynamics $\nu \longmapsto \nu^+$, thereby motivating the algorithmic modifications introduced below. First, the Weight \& Push-Forward update (Definition~\ref{def:PF}) enables the definition of an iterative sequence of measures $(\nuk)_{k \geq 1}$. Proposition~\ref{prop:incre} quantifies the associated ``descent'' property: it establishes a lower bound on the decrease of the objective, guaranteeing a minimal energy reduction for the transition $\nu \longmapsto \nu^+$.
    Secondly, while \cite{chizat2022sparse,de2021supermix} prove that the sequence $(\nu_k)_{k \ge 1}$ converges to a sparse measure $\nu_\infty$ satisfying $J'_{\nu_\infty} = 0$ on its support, they do not guarantee $J'_{\nu_\infty} \geq 0$ elsewhere. 
    Thus, standard conic particle gradient descent fails to satisfy the full optimality conditions of Proposition~\ref{prop:minimization} due to insufficient exploration of~$\cX$, and global convergence cannot be guaranteed in this setting.

\subsection{Birth and Death Stochastic conic particle gradient descent}
\label{s:BirthandDeath}

\begin{subequations}
\paragraph{The particle swarms and their non-convex program}
We consider a generic nonnegative measure composed of~$p$ Dirac masses, referred to as a \emph{particle swarm}. Let $\bm W := (\omega_1, \ldots, \omega_p)\in\bbR^p$, $\bm T := (\vt_1, \ldots, \vt_p)\in\bbR^{p\times d}$, and $\bm\kappa := (\kappa, \ldots, \kappa)\in\bbR^p$. We introduce
\begin{equation}
    \label{def:nu}
    \nu(\bm{W},\bm T) := \sum_{j=1}^p \omega_j \delta_{\vt_j},\,
\end{equation}
where, for any $j$, $\omega_j > 0$ denotes the weight of particle $j$. Then, the parametrization of $\nu$ in~\eqref{def:nu} yields:
\begin{align}
\label{def:F}
J(\nu(\bm{W},\bm T)) 
    & 
     =  \mathrm{F}(\bm{W},{\bm T})+\frac{1}{2}\| y \|_{\mathbb H}^2\quad\text{with}\quad 
     \mathrm{F}(\bm W,\bm T):=\langle \bm\kappa-k_{\bm T}, \bm W \rangle + \frac{1}{2}\bm W^\top K_{\bm T} \bm W\,,
\end{align}
where $\varphi_\vt\in\mathbb H$ is the feature map, $k_{\bm T} := (\langle y, \varphi_{\vt_1} \rangle_{\mathbb{H}}, \ldots, \langle y, \varphi_{\vt_p} \rangle_{\mathbb{H}}) \in \mathds{R}^p$, and $K_{\bm T}$ is a $(p \times p)$ kernel matrix with entries $K(\vs, \vt) := \langle \varphi_s, \varphi_t \rangle_{\mathbb{H}}$.

\medskip

While \eqref{eq:blasso} is convex over the space of measures, the parametric formulation \eqref{def:F} is non-convex due to the joint optimization of weights and positions. Nevertheless, if a solution to \eqref{eq:blasso} can be written as a particle swarm \eqref{def:nu} then the minimizer $(\bm{W}^\star, \bm{T}^\star)$ of $\mathrm{F}(\cdot,\cdot)$ is a global solution to \eqref{eq:blasso}. The existence of such sparse solutions is well-established under some conditions, see for instance \cite{duval2015exact,boyer2019representer}.
\end{subequations}

\paragraph{The stochastic version of the dual-certificate and its gradients}
In the following, we assume access to \emph{unbiased} stochastic estimators of both the dual certificate $J'_\nu$ and its spatial gradient $\nabla J'_\nu$. We will denote by $ \widehat{J'_{\nu}}(\cdot,Z)$ (resp. $\widehat{D_{\nu}}(\cdot,Z)$) the estimator of $J'_\nu$ (resp. $\nabla J'_\nu$) for some random variable $Z$ that captures the randomness of these approximations. We refer to \cite[Section~3]{FastPartv1} for examples and explicit constructions. In the following, we will require some properties on these estimators, as described in the following assumption. 

\medskip

\begin{subequations}
\label{eqs:hyp_gradient_sto}
\noindent
\textbf{Assumption $(\hat{\mathcal{H}}_{\mathrm{sto}})$: Stochastic unbiased gradients.}
    A random variable $Z$ exists such that:
    \begin{itemize}
        \item 
   $\forall \nu \in \mathcal{M}_+(\mathcal{X})$ and $\forall t \in \cX$,
\begin{equation}
\label{A1}
     \begin{cases} \widehat{J'_{\nu}}(\vt,Z) := J'_\nu(\vt) + \xi_{\nu}(\vt,Z)\\
    \widehat{D_{\nu}}(\vt,Z) := \nabla J'_\nu(\vt) + \zeta_{\nu}(\vt,Z)
    \end{cases}
    \text{with} \ \mathbb{E}_{Z}[\xi_{\nu}(\vt,Z)] = 0 \  \text{and} \  \mathbb{E}_{Z}[\zeta_{\nu}(\vt,Z)]=0_{\mathbb{R}^d}.
\end{equation}
\item 
There exist positive constants $\bm{H},\bm{G},\MM>0$ such that, almost surely,
\begin{equation}
\label{A2}
\forall \vt\in \mathcal{X}\,,\qquad
    |\xi_{\nu}(\vt,Z)| \vee \|\zeta_{\nu}(\vt,Z)\|
\leq \MM  \quad \mathrm{and} \quad \widehat{J'_{\nu}}(\vt,Z)  \geq  \bm{G}\,\|\nu\|_{\mathrm{TV}} - \bm{H} + \kappa\,.
\end{equation}
\item Almost surely, the \emph{stochastic dual-certificate} $\vt \longmapsto \widehat{J'_{\nu}}(\vt,Z)$ is uniformly $\Lip$-Lipschitz, regardless of the value of~$Z$ (the constant~$\Lip$ may be taken to be larger than the one appearing in~\eqref{def:lip_uniform_J'} if necessary).
 \end{itemize}
\end{subequations}
\begin{subequations}
\begin{rem}%
   In~\eqref{A2}, we assume a \emph{Hoeffding-type} condition on the centered random variables, along with a lower bound on the stochastic dual certificate. 
  The constant $\MM$ stands for the maximal size of the admissible noise level that perturbs the computation of $\widehat{J'_{\nu}}(\vt,Z)$ and $\widehat{D_{\nu}}(\vt,Z)$ at each iteration. Such a condition could be replaced by a sub-Gaussian assumption on the distribution of the noise.
   Regarding the lower bound (affine in $\|\nu\|_{\mathrm{TV}}$), note that it holds for the deterministic dual certificate under $(\cH_{\mathcal{P}})$; see Equation~\eqref{eq:lower_bound_DC}. This assumption will be used to show that the stochastic measure updates~$(\hat{\nu}_k)_{k \ge 0}$ remain bounded in total variation norm. 
\end{rem}

Interestingly, we can use these stochastic counterparts with a mini-batch strategy to reduce the variance of the approximations.  More precisely, at each step $k\geq1$, given $m_k\geq1$, a mini-batch sample size, we draw a $m_k$-sample of i.i.d.
random variables $\bm{Z}_{k+1}:= (Z_{1,k+1},\dots, Z_{m_k,k+1})$ satisfying \eqref{A1} and set 
\begin{equation}
\widehat{J'_{\hat\nu_k}}(\vt) := \frac{1}{m_k} \sum_{l=1}^{m_k} \widehat{J'_{\nuks}}(\vt,Z_{l,k+1}) \quad \mathrm{and} \quad \widehat{D_{k}}(\vt) :=  \frac{1}{m_k} \sum_{l=1}^{m_k} \widehat{D_{\nuks}}(\vt,Z_{l,k+1})\,.
\label{eq:minibatch}
\end{equation}

\noindent
Given a measure $\hat\nu_k=\sum_{j=1}^{p_k} {\omega}_j^{k} \delta_{{\vt}_j^{k}}$ (as in \eqref{def:nu}) composed of $p_k$ particles, the Push-Forward update (Definition~\ref{def:PF}) leads to the following  stochastic update $\sPF$ at step $k\geq1$.

\begin{defi}[Stochastic push-forward update $\sPF$]
Define $\hat{\nu}_{k^+} = \sPF(\hat{\nu}_k)$ as $\hat{\nu}_{k^+} = \sum_{j=1}^{p_k} {\omega}^{k^+}_j \delta_{{\vt}_j^{k^+}}$ with
\begin{equation}
\label{eq:up_w_pos_sto}
{\omega}^{k^+}_j = \omega^k_j e^{-\alpha \widehat{J'_{\hat\nu_k}}({\vt}^k_j)}\quad \text{and} \quad 
{\vt}_{j}^{k^+} ={\vt}_{j}^k-\beta\,\pi_\mathcal{X}\big({\vt}_{j}^k,\widehat{D_k}({\vt}_j^k),\beta \big)\,.
\end{equation}
where $\pi_\mathcal{X}(\cdot,\cdot,\cdot)$ denotes the projection operator over $\mathcal{X}$ introduced in Section \ref{sec:proj_gradient}.
\end{defi}
\end{subequations}

\paragraph{The mass tweaking (Birth and Death)}
Given the stochastic push-forward update 
$
\hat{\nu}_{k^+} = \sum_{j=1}^{p_k} {\omega}^{k^+}_j \delta_{{\vt}_j^{k^+}}
$, we now describe our stochastic update $\nukps \longmapsto \nukpps$, whose evolution involves both deletion $\nukps \longmapsto \nukkpps$ and creation $\nukkpps \longmapsto \nukpps$ of weighted particles. To this end, we define the \emph{pushed dual certificate} $\widehat{J'_{\hat\nu_{k^+}}}$ as in~\eqref{eq:minibatch} with a new independent $m_k$ mini-batch $\bm{Z}_{k+1}^+$ and the particle swarm $\hat{\nu}_{k^+}$ (see Step~\ref{step:pushed_DC} of Algorithm~\ref{algo:SCPGD_pro}).
We also need a \emph{decision rule} $\mathrm{\sf DR}\,:\,\bbR\times\bbR\to\bbR$ that takes as input the pushed dual certificate values and the push-forward weights $(\widehat{J'_{\hat\nu_{k^+}}}({\vt}_j^{k^+}),{\omega}^{k^+}_j)$ and outputs a deletion intensity. Finally, we are given some positivity (resp. negativity) schedules $(\poschedule)_k$ (resp. $(\negschedule)_k$).

Again, mass deletion only concerns regions where $\widehat{J'_{\hat\nu_{k^+}}} \geq 0$ (given by~\eqref{eq:minibatch} with a new independent $m_k$ mini-batch $\bm{Z}_{k+1}^+$ and the particle swarm $\hat{\nu}_{k^+}$) whereas our algorithm adds some mass in regions where~$\widehat{ J'_{\hat\nu_{k^+}}} \leq 0$.
For this purpose, we use the super-level (resp. sub-level) set of positivity (resp. negativity) of~$\widehat{J'_{\hat\nu_{k^+}}}$, \textit{i.e.,} we define $\Pkp\subseteq\mathrm{\sf Supp}(\nukps)$ and $\Nkp\subseteq\cX$ as:
\begin{subequations}
\label{eq:def_M_N_P_intro}
\begin{align}
    \Pkp
    &:= 
    \Big\{ 
    t\in\mathrm{\sf Supp}(\nukps)\,:\,
    \mathrm{\sf DR}\big(\widehat{J'_{\hat\nu_{k^+}}}(t),\nukps(\{t\})\big)
    \ge  \poschedule
    \Big\}\\
    \Nkp
    &:=
    \Big\{ 
        t\in\cX\,:\,
        \widehat{J'_{\hat\nu_{k^+}}}(t) \le \negschedule
    \Big\}\,,
\end{align}
\end{subequations}
where $\mathrm{\sf Supp}(\cdot)$ denotes the support of a measure and $(\negschedule)_k$ is a sequence of (small) positive numbers. We construct 
${\omega}^{k^+}_j \longrightarrow {\omega}^{k^{++}}_j$ by removing mass on $\Pkp$ and $\nukkpps \longrightarrow \nukpps$ adding mass on $\Nkp$. Define
\begin{subequations}
\label{eq:stoscheme_intro}
\begin{align}
    {\omega}^{k^{++}}_j
    &:= 
    \big(1-\bm{1}_{\Pkp}({\vt}_j^{k^+})\big)\,{\omega}^{k^+}_j
    \,,
    \label{eq:stoscheme_del_intro}\\
    \nukpps 
    &:= 
    \sum_{j=1}^{p_k} {\omega}^{k^{++}}_j \delta_{{\vt}_j^{k^+}} + \varepsilon_k \bm{1}_{\Nkp}(U_{k+1})\,\delta_{U_{k+1}} 
    \,,
    \label{eq:stoscheme_create_intro}
\end{align}
where the random variables $(U_l)_{l\in \mathbb{N}}$ are assumed independent from the other random variables sampled at step $k$ and uniformly sampled over $\mathcal{X}$, and, for any Borel set $\mathcal B\subseteq\cX$, $\bm{1}_{\mathcal{B}}(\cdot)$ denotes the indicator function of $\mathcal B$. The scheme \eqref{eq:stoscheme_create_intro} is implementable as it only requires the generation of a uniform random variable over the space $\mathcal{X}$ and a single evaluation of~$\widehat{\mJ'_{\hat\nu_{k^+}}}$. Also, we emphasize that the Positivity $(\poschedule)_k$ and Negativity $(\negschedule)_k$ schedules can be chosen adaptively depending on the stochastic dual certificate $\widehat{\mJ'_{\hat\nu_{k^+}}}$ and the weights $ \omega_j^{k^+}$. For instance, a valid strategy for the Death process is to target the particle with the largest ratio certificate value over weight, \textit{i.e.}, by setting a threshold related to $\max_j \big\{\widehat{\mJ'_{\hat\nu_{k^+}}}( t_j^{k^+})/ \omega_j^{k^+}\big\}$. Examples of explicit decision rules and specific tuning for constants are provided in Section~\ref{s:FP2}.
\end{subequations}

\paragraph{The Fast Spawn\&Prune Algorithm}
We have now all the ingredients to design an implementable procedure. The previous steps are gathered in Algorithm \ref{algo:SCPGD_pro}. 
\begin{center}
\begin{algorithm}[!th]
\newtext{\caption{\newtext{Birth and Death Stochastic Conic Particle Gradient Descent (\textsf{FS\&P}) \label{algo:SCPGD_pro}}}}
\begin{algorithmic}[1]
\Require{Learning rates $\alpha,\beta>0$; Mini-batch size schedule $(m_k)_{k \ge 1}$}; Exploration schedule $(\varepsilon_k)_{k \ge 1}$; Positivity (resp.~Negativity) schedules $(\poschedule)_k$ (resp.~$(\negschedule)_k$); Decision Rule $\mathrm{\sf DR}\,:\,\bbR\times\bbR\to\bbR$;
\State{Weights: $\weights^k$ and Positions: $\pos^k$}; 
\Comment{No specific initialization required}
    \For{$k =1,\ldots, K$ }\Comment{$K$ gradient steps}
        \State{Set $\hat \nu_k\longleftarrow\nu(\weights^{k},\pos^{k})\,;$}\Comment{Particle swarm}
        \State{Sample $\bm{Z}_{k+1}\longleftarrow (Z_{1,k+1},\dots, Z_{m_k,k+1})$ and compute stochastic values and gradients 
\[
        \widehat{\mJ'_{\hat \nu_k}}(\vt_j^k) := \frac{1}{m_k} \sum_{\ell=1}^{m_k} \widehat{\mJ'_{\hat\nu_k}}(\vt_j^k,Z_{\ell,{k+1}})\quad  \text{and} \quad 
        \widehat{\mD_{k}}(\vt_j^k) := \frac{1}{m_k} \sum_{\ell=1}^{m_k} \widehat{\mD_{\nu_k}}(\vt_j^k,Z_{\ell,{k+1}})
        \,;
\]
        \Comment{Stochastic mini-batch variables \eqref{eq:minibatch};}        
        }

        \State{Update weights and positions $\displaystyle\hat{\nu}_{k^+}  \longleftarrow \sum_{j=1}^{p_k} {\omega}^{k^+}_j \delta_{{\vt}_j^{k^+}}$ with
        \[
        \omega^{k^+}_j = \omega^{k}_j e^{-\alpha \widehat{\mJ'_{\hat\nu_k}}(\hat\vt_j^k)}\quad \text{and} \quad
        \vt_{j}^{k^+} =\vt_{j}^{k}-\beta\pi_\mathcal{X}\big(\vt_{j}^{k},\widehat{\mD_{k}}(\vt_j^k),\beta\big)\,;
        \]
        \Comment{Stochastic push-forward update \eqref{eq:up_w_pos_sto};}}

        \State{\label{step:pushed_DC}
        Sample $\bm{Z}_{k+1}^+\longleftarrow (Z_{1,k+1}^+,\dots, Z_{m_k,k+1}^+)$ and compute stochastic pushed dual certificate 
\[
        \widehat{\mJ'_{\hat\nu_{k^+}}}(\cdot) 
        \longleftarrow
        \frac{1}{m_k} \sum_{\ell=1}^{m_k} \widehat{\mJ'_{\hat\nu_{k^+}}}(\cdot,Z_{\ell,{k+1}}^+)
        \,;
\]
        \Comment{Stochastic mini-batch variables \eqref{eq:minibatch};}        
        }

        \State{Sample $U_{k+1} \sim \mathcal{U}_\mathcal{X}$  independent from the rest (Uniform measure on $\cX$) and compute 
        \begin{align*}
            \bm{1}_{\Pkp}({\vt}_j^{k^+})
            &\longleftarrow 1\  
                \text{if}\ \Big\{\mathrm{\sf DR}\big(\widehat{J'_{\hat\nu_{k^+}}}({\vt}_j^{k^+}),{\omega}^{k^+}_j\big)    \ge  \poschedule\Big\}
                \ \text{and}\ 0\ \text{otherwise}\,;
        \\
            \bm{1}_{\Nkp}(U_{k+1})
        &\longleftarrow 1\  
                \text{if}\ \Big\{\widehat{J'_{\hat\nu_{k^+}}}(U_{k+1}) \le \negschedule\Big\}
                \ \text{and}\ 0\ \text{otherwise}\,;
        \\
        \hat{\nu}_{k+1}  
        &\longleftarrow
        \sum_{j=1}^{p_k} \big(1-\bm{1}_{\Pkp}({\vt}_j^{k^+})\big)\,{\omega}^{k^+}_j\, \delta_{{\vt}_j^{k^+}} + \varepsilon_k \bm{1}_{\Nkp}(U_{k+1})\,\delta_{U_{k+1}}\,;
        \end{align*}
        \Comment{Mass tweaking \eqref{eq:stoscheme_intro};}
        }
    \EndFor
\end{algorithmic}
\end{algorithm}
\end{center}

\subsection{Global convergence results}
\label{sec:global_res_intro}
Our theoretical analysis proceeds in two steps. First, we analyze a deterministic version of the algorithm with continuous updates (Section~\ref{s:continuous}). Theorem~\ref{theo:convergence_deterministe} establishes that this method converges to the global optimum $\mu^\star$, escaping local minima thanks to the Birth process. We derive an explicit convergence rate of order $O(K^{-\frac{1}{2(2+d)}})$ for the minimum gap $\min_{k\le K} \{J(\nu_k) - J(\mu^\star)\}$, where $d$ is the dimension of the domain~$\mathcal{X}$. This dependence on $d$ reflects the computational cost of global exploration in a non-convex landscape. 

Second, we extend these results to the fully stochastic \textsf{FS\&P} algorithm (Section~\ref{s:algosto}). Theorem~\ref{thm:cvgce_sto_all} proves that, under suitable choices of learning rates, mini-batch sizes, and exploration schedules, the expected excess risk converges to zero, yielding a global minimization result. Specifically, we obtain a global convergence rate of order $O((\log K / K)^{\frac{1}{2(2+d)}})$, confirming that \textsf{FS\&P} achieves global optimization with computationally efficient stochastic updates. The mini-batch size scales as $m = K$, resulting in a total sample complexity of $\mathcal{O}(N^{-1/(4(2+d))})$ up to logarithmic factors (Corollary~\ref{cor:sample_complexity}). Finally, Theorem~\ref{thm:horizon_free} provides a horizon-free variant with iteration-dependent schedules ($m_k = k$, $\varepsilon_k = 1/\sqrt{k}$, $\beta_k = 1/k$) that achieves the same sample complexity rate without requiring prior knowledge of the total number of iterations.

\subsection{Related works}

\paragraph{Convex programming for sparse optimization on measures.}
Continuous sparse regression, often framed as the BLASSO problem, has been extensively studied through the lens of convex optimization. Early foundational works by \cite{candes2014towards}, \cite{azais2015spike}, and \cite{duval2015exact} established exact recovery guarantees using semidefinite programming or grid-free methods, focusing on the statistical properties of the minimizer. More recent contributions, such as \cite{poon2023geometry}, \cite{giard2025gaussian}, and \cite{de2025effective}, have further refined these statistical error bounds and extended the analysis to various geometries and metrics. However, these works generally analyze the static optimization problem rather than the algorithmic dynamics required to solve it efficiently in high dimensions.

\paragraph{Over-parameterized Gradient Descent and Global Convergence.}
The dynamic approach, which involves optimizing particle positions and weights via gradient descent, relies heavily on over-parametrization. The seminal works of \cite{chizat2018global} and \cite{chizat2022sparse} analyzed the mean-field limit of these particle systems. They established that in the regime where the number of particles tends to infinity, the gradient flow converges to the global optimum, provided that the initialization covers the entire domain. However, for a finite number of particles, \cite{chizat2022sparse} only guarantees local convergence to stationary points, or global convergence under restrictive assumptions, such as an exponential number of particles at initialization. The gap between the global convergence of the continuous flow and the local convergence of the discrete algorithm remains a significant theoretical hurdle.

\paragraph{Stochastic Algorithms and FastPart.}
To address the computational complexity of deterministic gradient descent, which scales quadratically with the number of particles, stochastic approximations were introduced in \cite{FastPartv1}. This method, referred to as FastPart, utilizes mini-batching and random features to achieve a time complexity of $O(1)$ per iteration with respect to the number of particles. While \cite{FastPartv1} proved the stability of the algorithm (boundedness of the total variation norm) and established convergence rates to stationary points of order $O(\log K / \sqrt{K})$, it did not guarantee global convergence from arbitrary sparse initializations. This work builds upon that foundation by integrating a mechanism to escape local minima.

\paragraph{Birth and Death Processes in Optimization.}
The idea of adding mass to ensure global optimality has antecedents in the Frank-Wolfe (conditional gradient) algorithm \cite{Bredis_Pikkarainen_13}, where particles are added iteratively to the support. However, Frank-Wolfe methods require locating the new atom at the global minimum of the Fréchet derivative (the dual certificate, see~\eqref{eq:Frechet_J}), which amounts to solving a non-convex optimization problem exactly. Our Birth process is considerably more flexible: it suffices to draw a random point from a sub-level set of the Fréchet derivative (defined by the threshold~$\negschedule$ in Algorithm~\ref{algo:SCPGD_pro}). This flexibility comes at a price—a sub-linear convergence rate—but one that is dimension-free up to the exponent $1/(2+d)$, thereby quantifying the cost of global exploration via the Birth process. The Death process is essentially harmless: it does not prevent the loss from decreasing, yet it reduces the per-iteration complexity of the algorithm. Our Death procedure is mathematically grounded in the Fréchet derivative and provides a principled criterion for safely removing particles from the support. To the best of our knowledge, this point of view is new.

\subsection{Notation}
Throughout the paper, $\CC>0$ denotes a generic constant used for upper bounds, while $\cc>0$ denotes a generic constant used for lower bounds; both $\CC$ and $\cc$ may change from line to line. Both constants are independent of $k$ and $d$. The Euclidean norm of a vector $x \in \bbR^d$ is denoted by $\|x\|$. For any set $A$, we denote by $\bm{1}_A$ its indicator function. The support of a measure $\mu$ is denoted by $\mathrm{\sf Supp}(\mu)$. A list of notation is provided in Table~\ref{tab:notation} in Appendix~\ref{app:list_notation}.

\section{Birth process for the deterministic CPGD}
\label{s:continuous}

Our objective in this section is to develop ideas that enable an effective exploration of the space~$\mathcal{X}$. We begin by focusing on a simplified setting in which the measures~$\nu_k$ remain continuous throughout the iterative process, and restrict to weight-only updates ($\beta = 0$ in Definition~\ref{def:PF}). This framework is convenient for developing and understanding the theoretical tools that lead to global convergence of the optimization procedure, which will be extended to the stochastic setting in Section~\ref{s:algosto}. The extension to position updates ($\beta > 0$) is carried out in Appendix~\ref{sec:beta_positive} for completeness.

\subsection{Update evolution\label{sec:update}}
The limitations of Proposition~\ref{prop:incre} lie in the fact that it provides no information about the sign of $J'_{\nu_k}$ on the entire domain~$\cX$. The core idea we pursue is to explicitly add some mass, at each iteration~$k$, on subsets of $\cX$ where $J'_{\nu_k}$ is negative. 
To this end, we design an iterative algorithm that generates, at each step $k$, a triple of positive measures $(\nuk, \nukp,\nukpp)$: the intermediate measure $\nukp$ is computed by performing a gradient descent step on $J$ starting from $\nuk$, while $\nukpp$
is obtained by modifying the mass of $\nuk^+$ in relevant regions.
Specifically, the transition $\nu_k \longmapsto \nu_k^+$ corresponds to a weight update only—i.e., setting $\beta = 0$ in Definition~\ref{def:PF}.
 In contrast, the transition $\nukp \longmapsto \nukpp$ involves modifying the mass in regions where $J'_{\nu_k^+} \leq 0$, and possibly removing mass from regions where $J'_{\nu_k^+} \geq 0$.
Such a construction ensures that the support of $\nuk$ remains included in $\mathcal{X}$ throughout the iterations, thereby avoiding the need for any projection or correction steps.
Formally, the iterative scheme alternates between two steps. Let $\nu_0$ be any positive measure supported on $\mathcal{X}$ with $\|\nu_0\|_{\mathrm{TV}} < \infty$.

\medskip

\noindent
\textbf{Weight update} ($k \longmapsto k^+$). The intermediate measure $\nukp$ is obtained through
\begin{equation}
\nukp = \Tnuka\nuk.
\label{eq:cgpd}
\end{equation}

\medskip

\noindent
\textbf{Birth-death step} ($k^+ \longmapsto k+1$). The measure $\nukpp$ is obtained by modifying the mass of $\nukp$. We require this step to satisfy Assumptions~\eqref{eqs:H_eps_determinist} and~\eqref{eq:HTVC} defined below.

\medskip

\noindent
\textbf{Assumption} $(\mathcal{H}_{\varepsilon})$.
Let $(\epk)_{k \ge 0}$ be a decreasing sequence with $\varepsilon_0 = 1$, and let $\lambda$ denote the Lebesgue measure. The transition $\nukp \longmapsto \nukpp$ satisfies $(\mathcal{H}_{\varepsilon})$ if the following three conditions hold for every $k \ge 1$:
\begin{subequations}
\renewcommand{\theequation}{\arabic{parentequation}\alph{equation}--$(\mathcal{H}_{\varepsilon})$}
\label{eqs:H_eps_determinist}
\begin{align}
        J(\nukpp)-J(\nukp) \leq \CC\, \varepsilon_k^2\,,
                \addtocounter{equation}{1}
	           \tag{\theparentequation\alph{equation}--$(\mathcal{H}^{\mathrm{smooth},1}_{\varepsilon})$}
    \label{eq:smooth_1} 
    \\%
        \|J'_{\nukpp}-J'_{\nukp}\|_{\infty} \leq \CC\, \varepsilon_k\,,
        \addtocounter{equation}{1}
        \tag{\theparentequation\alph{equation}--$(\mathcal{H}^{\mathrm{smooth},2}_{\varepsilon})$}
    \label{eq:smooth_2}
    \\
        \nukpp(\mathcal{B})\ge \epk\, \lambda\big(\mathcal{B}\cap \{J'_{\nukp}\le 0\}\big)\,,
        \addtocounter{equation}{1}
        \tag{\theparentequation\alph{equation}--$(\mathcal{H}^+_{\varepsilon})$}
    \label{eq:mass}
    \end{align}
for any Borel set $\mathcal{B}\subseteq\mathcal{X}$.
\end{subequations}

\medskip

\noindent
\textbf{Assumption} $\HTVC$.
A constant $\CTV > 0$ exists such that
\begin{equation}
\label{eq:HTVC}
    \addtocounter{equation}{1}
        \tag{\theequation--$\HTVC$}
    \| \nuk \|_{\mathrm{TV}} \leq \CTV \quad \forall k\in \mathbb{N}\,.
\end{equation}

\noindent
Note that Assumption~\eqref{eq:HTVC} implies that the sequence of values $J(\nuk)$ remains bounded throughout the iterations. Indeed, a rough computation yields:

\begin{equation}
J(\nuk) \leq \tfrac{1}{2}\bigl(\| {y}\|_\mathbb{H} + \|\Phi \nuk \|_\mathbb{H}\bigr)^2 + \kappa \| \nuk \|_{\mathrm{TV}} \leq \tfrac{1}{2}\bigl(\| {y}\|_\mathbb{H} + \CTV\bigr)^2 + \kappa \CTV.
\label{eq:boundJ}
\end{equation}

We emphasize that, at this stage, no specific update rule has been defined yet for constructing $\nukpp$ from~$\nukp$. In the next paragraph (Section~\ref{s:birth-death-proc}), we will present a strategy that is algorithmically well-motivated and satisfies both assumptions \eqref{eqs:H_eps_determinist} and \eqref{eq:HTVC}.
Such a strategy may not be unique, and other examples of transition rules could be proposed. Our objective is to show that, as long as properties \eqref{eqs:H_eps_determinist} and \eqref{eq:HTVC} hold, we obtain global convergence results with explicit convergence rates.
In Section~\ref{sec:conv_det}, we leverage Proposition~\ref{prop:incre} to derive the global convergence of our method, along with explicit rates.

\subsection{Transition $\nukp \longrightarrow \nukpp$}
\label{s:birth-death-proc}

In this section, we describe the key ingredients for designing a transition 
\[
\nukp \longrightarrow \nukkp \longrightarrow \nukpp
\]
that ensures our assumptions \eqref{eqs:H_eps_determinist} and \eqref{eq:HTVC} are satisfied. This transition is divided into two steps. The first step consists of removing mass from regions where $J'_{\nukp} \ge 0$, which typically leads to a decrease in the energy $J$. The second step involves adding mass in regions where $J'_{\nukp} \le 0$ in order to enhance the effectiveness of the subsequent weight update in the transition $\nukkp \longmapsto \nukpp$.

\medskip

\noindent
\textbf{Mass deletion $\nukp \longrightarrow \nukkp$.} The idea is to remove some mass on the set where $J'_{\nukp}$ is positive. 
More precisely, we can decide to cancel some subset of the domain $\mathcal{X}$ by considering
\begin{equation}
\nu_{k^{++}} = \nu_{k^+} (1- \bm{1}_{\mathcal{P}_{\nukp}}) \quad \mathrm{with} \quad \mathcal{P}_{\nukp} = \left\lbrace J_{\nukp}'> -2\alpha^{-1} \log \varepsilon_k + \CC_w \right\rbrace \cap \left\lbrace J_{\nukp}' >0 \right\rbrace,
\label{def:nukpp_alt1}
\end{equation}
where $\CC_w$ is a constant defined below in Remark~\ref{rem:weight_update_perturbation}.

\medskip

\begin{rem}[Weight-update perturbation bound]
\label{rem:weight_update_perturbation}
The deletion set $\mathcal{P}_{\nukp}$ is defined in terms of $J'_{\nukp}$, but the density identity $d\nukp(\vt)=e^{-\alpha J'_{\nuk}(\vt)}\,d\nuk(\vt)$ involves $J'_{\nuk}$. The constant $\CC_w$ in~\eqref{def:nukpp_alt1} is the precise gauge that bridges the two. We claim:
\begin{subequations}
\begin{align}
\label{eq:weight_update_perturbation}
&\|J'_{\nukp} - J'_{\nuk}\|_\infty \;\leq\; \CC_w, \qquad \CC_w \;:=\; \alpha\,(\CC_\mathcal{P} + \CTV + \kappa)\, e^{\alpha(\CC_\mathcal{P} + \CTV + \kappa)}\, \CTV,\\[2pt]
\label{eq:Pnukp_inclusion}
&\mathcal{P}_{\nukp} \;\subset\; \{J'_{\nuk} > -2\alpha^{-1}\log\varepsilon_k\}, \qquad\text{so that }\; e^{-\alpha J'_{\nuk}(\vt)} < \varepsilon_k^2 \text{ on } \mathcal{P}_{\nukp}.
\end{align}
\end{subequations}
Under condition~\eqref{eq:small_learning_rates}, $\alpha(\CC_\mathcal{P} + \CTV + \kappa) < 1/10$, hence $\CC_w < \CTV/5$. The proofs of~\eqref{eq:weight_update_perturbation} and~\eqref{eq:Pnukp_inclusion} are deferred to Section~\ref{s:proof_remark_perturbation}.
\end{rem}
\medskip

\noindent
\textbf{Mass creation $\nukkp \longrightarrow \nu_{k+1}$.} We introduce the set $\mathcal{N}_{\nukp}$, associated with the negative part of $J'_{\nukp}$, and defined as:
\begin{equation}
    \label{eq:negative}
\mathcal{N}_{\nukp} := \big\{ J'_{\nukp} \le 0 \big\}.
\end{equation}
We emphasize that the set $\mathcal{N}_{\nukp}$ is determined by the values of $J'_{\nukp}$, and this point will be carefully addressed below. We add some mass on $\mathcal{N}_{\nukp}$
defined in Equation \eqref{eq:negative} and define:
\begin{equation}
    \label{def:nukpp}
    \nukpp := \nukkp + \epk \bm{1}_{\mathcal{N}_{\nukp}} \lambda 
\end{equation}
Figure \ref{fig:evolution} provides a schematic representation to ease the understanding of the evolution from $\nukp$ to $\nukpp$. We stress that our assumptions are general enough to allow for alternative update strategies. For the sake of clarity, we have only focused on one specific scheme, but alternative approaches could be investigated. \\

\begin{figure}[h!]
\centering
\begin{tikzpicture}[scale=0.85]
  \draw[->] (-7.2,0) -- (7.2,0) node[below right] {$\mathcal{X}$};
  \draw[->] (0,-3) -- (0,5.5) node[left] {$J'_{\nukp}$};

  \draw[thick,blue,domain=-6.5:6.5,smooth,variable=\x,samples=200]
    plot ({\x}, {2*sin(1.2*\x r) - 0.5*\x + 1});

  \filldraw[black!70!black] (-4.1,5) circle (2pt) node[above left] {Maximum};

  \draw[dashed] (-6.8,2.5) -- (6.8,2.5); %

  \draw[thick, dash pattern=on 4pt off 2pt on 1pt off 3pt] (-5.9,0) -- (-5.9,2.5); %
  \draw[thick, dash pattern=on 4pt off 2pt on 1pt off 3pt] (-2.7,0) -- (-2.7,2.5); %
\draw[thick, dash pattern=on 4pt off 2pt on 1pt off 3pt] (-5.9,0.1) -- (-2.7,0.1); %
\node[right, font=\small] at (-5.95,0.6) {$\downarrow \nukp(1-\bm{1}_{\mathcal{P}_{\nukp}}\!) $};

\draw[thick, dash pattern=on 4pt off 2pt on 1pt off 3pt] (6.5,5) -- (7.5,5);
\node[right, font=\small] at (7.6,5) {$\mathcal{P}_{\nukp}$};

\draw[thick, dash pattern=on 2pt off 3pt on 6pt off 2pt] (-1.55,-0.1) -- (-0.65,-0.1); %
\node[right, font=\small] at (-1.85,0.4) {$\uparrow + \epk \lambda $};
\draw[thick, dash pattern=on 2pt off 3pt on 6pt off 2pt] (2.55,-0.1) -- (7,-0.1); %
\node[right, font=\small] at (5,0.4) {$\uparrow + \epk \lambda $};
\draw[thick, dash pattern=on 2pt off 3pt on 6pt off 2pt] (6.5,4.5) -- (7.5,4.5);
\node[right, font=\small] at (7.6,4.5) {$\mathcal{N}_{\nukp}$};

\end{tikzpicture}
\caption{Evolution from $\nukp$ to $\nukpp$: decrease on $\mathcal{P}_{\nukp}$, increase on $\mathcal{N}_{\nukp}$.\label{fig:evolution}}
\end{figure}

The next proposition ensures that the corresponding updated measure $\nukpp$ also satisfies the required assumptions: the boundedness of the total variation norm (Assumption~\eqref{eq:HTVC}) and the $\varepsilon$-smoothness (Assumptions~\eqref{eqs:H_eps_determinist}). The proof is deferred to Section~\ref{s:proof_hypdist}.%

\begin{proposition}
\label{prop:hypdist}
Assume~\eqref{eqs:hyp_HP}. Let $(\nuk)_{k \ge 1}$ defined according to~\eqref{def:nukpp_alt1} and~\eqref{def:nukpp}, and assume that the sequence $(\epk)_{k \ge 0}$ satisfies 
$\forall k \ge 0 \,: \epk \leq \UU  \alpha $.
Let us define 
$$
\RR := \frac{\|y\|_{\mathbb{H}}}{\cc_\mathcal{P}} e + \sqrt{\frac{e^3 \lambda(\cX)}{\cc_\mathcal{P}}}+\lambda(\cX)
$$
\begin{itemize}
    \item[$(\mathit{i})$] 
If we choose $\alpha \leq \frac{1}{1+\RR}$, then $\HTVC$ holds and:
$$
\forall k \ge 0 \qquad \|\nuk\|_{\mathrm{TV}} \leq  \|\nu_0\|_{\mathrm{TV}} \vee  2 \RR:= \CTV.
$$
    \item[$(\mathit{ii})$] If in addition $(\alpha,\beta)$ satisfies~\eqref{eq:small_learning_rates} (with the $\CTV$ of item i)), then Assumption~$\Heps$ holds true, \textit{i.e.} Equations~\eqref{eqs:H_eps_determinist} (\eqref{eq:smooth_1}, \eqref{eq:smooth_2} and \eqref{eq:mass}) are satisfied.
\end{itemize}
\end{proposition}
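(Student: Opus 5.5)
The plan is to prove $(\mathit{i})$ by induction on $k$, tracking the mass through the three sub-steps $\nuk\mapsto\nukp\mapsto\nukkp\mapsto\nukpp$. For $(\mathit{ii})$, I would check \eqref{eq:mass}, \eqref{eq:smooth_2} and \eqref{eq:smooth_1} directly from the explicit form $\nukpp-\nukp = -\nukp\bm{1}_{\mathcal{P}_{\nukp}} + \epk\bm{1}_{\mathcal{N}_{\nukp}}\lambda$, using the Fréchet identity \eqref{eq:Frechet_Jprime} and Remark~\ref{rem:weight_update_perturbation}.

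\emph{Step 1: TV bound.} Assume $\|\nuk\|_{\mathrm{TV}}\le \CTV$. By \eqref{eq:lower_bound_DC},
\[
J'_{\nuk}\ge \cc_{\mathcal P}\|\nuk\|_{\mathrm{TV}}-\|y\|_{\mathbb H}+\kappa .
\]
Hence, whenever $\|\nuk\|_{\mathrm{TV}}\ge \RR$, the certificate is bounded below by a positive quantity of order $\cc_{\mathcal P}\RR-\|y\|$ (positive by the first term of $\RR$), and
\[
\|\nukp\|_{\mathrm{TV}}=\int e^{-\alpha J'_{\nuk}}d\nuk\le e^{-\alpha(\cc_{\mathcal P}\|\nuk\|_{\mathrm{TV}}-\|y\|)}\|\nuk\|_{\mathrm{TV}} .
\]
Deletion only decreases mass. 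Creation adds at most $\epk\lambda(\cX)\le \alpha\lambda(\cX)$. So in the large-mass regime I would show
\[
e^{-\alpha(\cc_{\mathcal P}M-\|y\|)}M+\alpha\lambda(\cX)\le M ,
\]
using $1-e^{-x}\ge x/e$ for $x\le 1$, which is where $\alpha\le 1/(1+\RR)$ enters. In the small-mass regime $\|\nuk\|_{\mathrm{TV}}\le\RR$, I would instead use $J'_{\nuk}\ge -\|y\|$, giving $\|\nukp\|\le e^{\alpha\|y\|}\RR$. Adding $\alpha\lambda(\cX)$ then gives at most $2\RR$, since $\alpha\|y\|\le 1$ and the extra terms $\sqrt{e^3\lambda(\cX)/\cc_{\mathcal P}}+\lambda(\cX)$ in $\RR$ absorb the factor $e$ and the additive birth mass. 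The case split plus this bookkeeping with the constants of $\RR$ is the main obstacle. I expect to have to pick the threshold between the two regimes carefully, with the square-root term coming from optimizing $M\alpha\cc_{\mathcal P}M/e\ge\alpha\lambda(\cX)$-type inequalities. Induction then gives $\|\nuk\|_{\mathrm{TV}}\le\|\nu_0\|_{\mathrm{TV}}\vee2\RR$.

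\emph{Step 2: \eqref{eq:mass}.} By construction, $\nukpp\ge\epk\bm{1}_{\mathcal{N}_{\nukp}}\lambda$, which is exactly the lower bound required on any Borel $\mathcal B$.

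\emph{Step 3: \eqref{eq:smooth_2}.} By \eqref{eq:Frechet_J}, $J'_{\nukpp}-J'_{\nukp}=\langle\varphi_t,\Phi(\sigma)\rangle$ with $\sigma=\nukpp-\nukp$, so $\|J'_{\nukpp}-J'_{\nukp}\|_\infty\le\|\sigma\|_{\mathrm{TV}}$ because $|K|\le1$. Now $\|\sigma\|_{\mathrm{TV}}\le\nukp(\mathcal{P}_{\nukp})+\epk\lambda(\cX)$. By \eqref{eq:Pnukp_inclusion}, $d\nukp=e^{-\alpha J'_{\nuk}}d\nuk<\epk^2\,d\nuk$ on $\mathcal{P}_{\nukp}$, so $\nukp(\mathcal{P}_{\nukp})\le\epk^2\CTV\le\epk\CTV$. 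This yields the bound $\CC\epk$.

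\emph{Step 4: \eqref{eq:smooth_1}.} Apply \eqref{eq:Frechet_Jprime} with this $\sigma$ (admissible since $\nukpp\ge0$):
\[
J(\nukpp)-J(\nukp)=-\int_{\mathcal P_{\nukp}}J'_{\nukp}\,d\nukp+\epk\int_{\mathcal N_{\nukp}}J'_{\nukp}\,d\lambda+\tfrac12\|\Phi\sigma\|^2 .
\]
The first term is $\le0$ because $J'_{\nukp}>0$ on $\mathcal P_{\nukp}$. The second is $\le0$ because $J'_{\nukp}\le0$ on $\mathcal N_{\nukp}$. The last is at most $\tfrac12\|\sigma\|_{\mathrm{TV}}^2\le\CC\epk^2$ by Step~3. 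This gives \eqref{eq:smooth_1}; condition \eqref{eq:small_learning_rates} is used only through Remark~\ref{rem:weight_update_perturbation}.
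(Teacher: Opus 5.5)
Your proposal follows the paper's proof essentially step for step. It uses the same two-regime induction for the TV bound: in the large-mass regime, contraction via $1-e^{-x}\ge x/e$ beats the birth mass $\alpha\lambda(\cX)$, and in the small-mass regime $J'_{\nu_k}\ge-\|y\|_{\mathbb H}$. It then verifies $\Heps$ the same way, from $\nu_{k+1}-\nu_{k^+}=-\nu_{k^+}\bm{1}_{\mathcal P_{\nu_{k^+}}}+\varepsilon_k\bm{1}_{\mathcal N_{\nu_{k^+}}}\lambda$, the signs of $J'_{\nu_{k^+}}$ on $\mathcal P_{\nu_{k^+}}$ and $\mathcal N_{\nu_{k^+}}$, and the inclusion \eqref{eq:Pnukp_inclusion}.

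The one loose spot is the small-mass constant. Splitting at $\RR$ and using only $\alpha\|y\|_{\mathbb H}\le 1$ gives $e\RR+\lambda(\cX)$, which exceeds $2\RR$. The paper instead splits at $\mathfrak M=2\|y\|_{\mathbb H}/\cc_{\mathcal P}+\sqrt{2e\lambda(\cX)/\cc_{\mathcal P}}$. Above this threshold $\cc_{\mathcal P}u-\|y\|_{\mathbb H}\ge\cc_{\mathcal P}u/2$, and below it $e\mathfrak M+\lambda(\cX)\le 2\RR$.
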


\subsection{Discussion: the role of \texorpdfstring{$\alpha$}{alpha}}
\label{sec:role_of_alpha}

Among the algorithmic parameters $(\alpha,\beta,\varepsilon_k,m_k)$, the weight learning rate $\alpha$ plays a distinguished role: it is fixed \emph{a priori} from problem data alone, while every other parameter is then calibrated as a function of $\alpha$ and of intrinsic problem constants. We make this hierarchy explicit, since two upper bounds on $\alpha$ appear in the analysis and a careless reading suggests a circular dependence.

\paragraph{An apparent circularity.} The descent property of Proposition~\ref{prop:incre} requires Inequality~\eqref{eq:small_learning_rates}, which involves the uniform total-variation bound $\CTV$. In turn, $\CTV$ is produced by Proposition~\ref{prop:hypdist}~$(\mathit{i})$, which itself requires $\alpha\leq (1+\RR)^{-1}$. Read in this order, the requirements appear to chain into a loop ($\alpha$ small with $\CTV$, then $\CTV$ from $\alpha$ small).

\paragraph{Resolution: $\alpha$ is fixed first, from intrinsic data.} The loop is only apparent. The constant
\[
\RR \;=\; \frac{\|y\|_{\mathbb{H}}}{\cc_{\mathcal{P}}}\,e \;+\; \sqrt{\frac{e^{3}\,\lambda(\cX)}{\cc_{\mathcal{P}}}} \;+\; \lambda(\cX)
\]
depends only on the problem data $(\|y\|_{\mathbb{H}}, \cc_{\mathcal{P}}, \lambda(\cX))$ and is independent of $\alpha$. As a consequence, so is the candidate TV bound
\begin{subequations}

\begin{equation}\label{eq:CTV_explicit}
\CTV \;:=\; \|\nu_0\|_{\mathrm{TV}} \,\vee\, 2\RR.
\end{equation}
Substituting~\eqref{eq:CTV_explicit} into~\eqref{eq:small_learning_rates} turns the descent constraint into a numerical one. Hence $\alpha$ may be fixed once and for all according to the explicit, self-contained rule
\begin{equation}\label{eq:alpha_explicit}
0 \;<\; \alpha \;\leq\; \min\!\left\{\frac{1}{1+\RR}\,,\;\frac{1}{10\,(1+\CTV+\CC_{\mathcal{P}}+\kappa)\,(1\vee \CTV)}\right\},
\end{equation}
whose right-hand side depends only on $(\|y\|_{\mathbb{H}}, \cc_{\mathcal{P}}, \kappa, \lambda(\cX), \|\nu_0\|_{\mathrm{TV}}, \CC_{\mathcal{P}})$. Under~\eqref{eq:alpha_explicit}, Proposition~\ref{prop:hypdist} delivers both $\HTVC$ and $\Heps$, and the descent inequality of Proposition~\ref{prop:incre} applies along the trajectory $(\nuk)_{k\geq 0}$.
\end{subequations}

\paragraph{Cascade of derived parameters.} Once $\alpha$ is fixed via~\eqref{eq:alpha_explicit}, the remaining parameters are calibrated downstream:
\begin{itemize}
\item the position learning rate must obey $\beta \leq \big(2\CC_{\mathcal{P}}(\CC_{\mathcal{P}}+3\CTV)\,e^{1/5}\big)^{-1}$ (the second part of~\eqref{eq:small_learning_rates}, with $\CTV$ now an explicit numerical constant);
\item the birth schedule satisfies $\varepsilon_k \leq \alpha$ for all $k$, with the horizon-dependent or horizon-free calibrations specified in Theorem~\ref{theo:convergence_deterministe}; the constraint $\max_k \varepsilon_k \leq \alpha$ reflects that small learning rates slow the exponential decay induced by $e^{-\alpha J'_{\nuk}}$, so the injected mass $\epk\lambda$ may accumulate before being absorbed.
\end{itemize}
The same hierarchy governs the stochastic setting of Section~\ref{s:algosto}: there, $\RR$ is replaced by $\widehat{\RR}=\tfrac{\bm{H}}{\bm{G}}\,e + \sqrt{e^{3}/\bm{G}} + 1$, intrinsic to Assumption~\eqref{A2}, and an additional Hoeffding cap $\alpha\leq \sqrt{8\log 8}\,/\,\MM$ enters via~\eqref{eq:hoeffding_alpha_condition}; the downstream calibration of $\beta$, $\varepsilon_k$ and $m_k$ proceeds identically.

\paragraph{Optimal magnitude of $\alpha$.} Within the admissible range~\eqref{eq:alpha_explicit}, the rates of Theorem~\ref{theo:convergence_deterministe} scale as $\alpha^{-1/(2+d)}$, and those of Theorem~\ref{thm:cvgce_sto_all} as $\alpha^{-1/2}$: it is therefore advantageous to take $\alpha$ as large as the constraints allow. This is the standard situation in deterministic optimization, where the descent inequality enforces a ceiling on the step size and saturating it yields the best worst-case rate.

\subsection{Global convergence result \label{sec:conv_det}}
We now establish a convergence result for the sequence $(J(\nu_k) - J(\nu^\star))_{k \in \mathbb{N}}$. In particular, we derive several distinct bounds on the sequence $( \min_{i \leq k} J(\nu_i) - J(\nu^\star))_{k \in \mathbb{N}}$ (assertions~i) and~ii) of Theorem~\ref{theo:convergence_deterministe}) and on the sequence $(J(\nu_k) - J(\nu^\star))_{k \in \mathbb{N}}$ itself (assertion~iii) of Theorem~\ref{theo:convergence_deterministe}). We also consider the cases where $\varepsilon$ depends on $k$ (horizon-dependent convergence) and where it does not (horizon-free convergence). Theorem~\ref{theo:convergence_deterministe} is stated with generic constants. The interested reader can refer to the proofs in Sections~\ref{sec:descent_det} and~\ref{s:proofconv1}, where the dependence of these constants with respect to our mathematical framework is made precise.

\begin{subequations}
\begin{theo}
\label{theo:convergence_deterministe}
 Assume that Assumption $\Heps$ stated in Equation \eqref{eqs:H_eps_determinist} holds, that $(\alpha,\beta)$ satisfies condition \eqref{eq:small_learning_rates} and that $\HTVC$ is satisfied. For any final horizon time $K \ge 2$, we have:
 \begin{itemize}
\item[$i)$]
 If $(\epk)_{k \geq 0}$ is \textit{non-adaptive} and
$ \epk= \varepsilon =  \sqrt{\frac{\CC}{ K}} \leq \alpha, \forall k \in \{1,\ldots,K\}$,
then we have:
\begin{equation}
\label{eq:ConvCont1}
\forall K \ge 2 \qquad
\min_{1 \leq k \leq K} \left\{ J(\nuk)-J(\nu^\star) \right\} \leq \CC\, \Lip^{\frac{2+2d}{2+d}}\alpha^{-\frac{1}{2+d}}K^{-\frac{1}{2(2+d)}}.
 \end{equation}
\item[$ii)$] If $(\epk)_{k \geq 0}$ is \textit{horizon-free} and  $\epk=
 \sqrt{\frac{\CC}{ (k+1)}} \leq \alpha$, then we have:
\begin{equation}
\min_{1 \leq k \leq K} \left\{ J(\nuk)-J(\nu^\star) \right\} \leq  \CC\, \Lip^{\frac{2+2d}{2+d}} \alpha^{-\frac{1}{2+d}}K^{-\frac{1}{2(2+d)}} \log(K)^{\frac{1}{(2+d)}}. %
 \end{equation}
\item[$iii)$] If $(\epk)_{k \geq 0}$ is
$ \epk = \varepsilon = \CC \left(\frac{\Lip^{2+2d}}{(d+1)\alpha } \right)^{\frac{1}{5+2d}} K^{-\frac{3+d}{5+2d}}$, then we have
$$J(\nuk)-J(\nu^\star)\leq \CC \left(\frac{\Lip^{2+2d}}{(d+1) \alpha} \right)^{\frac{2}{5+2d}} K^{-\frac{1}{5+2d}}.$$
 \end{itemize}
 In all three items, the generic constant $\CC$ may depend polynomially on $(\|\nu^\star\|_{\mathrm{TV}}/\Lip)$; in item~$iii)$, $\CC$ also depends polynomially on the initial excess $J(\nu_1)-J^\star$ (which is finite under~$\HTVC$).
\end{theo}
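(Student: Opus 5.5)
The argument combines three ingredients: a one-step descent inequality, a convexity lower bound tied to the negative part of $J'$, and a lower bound on $\int |J'_{\nuk}|^2\,d\nuk$ supplied by the birth mass. Set $\Delta_k := J(\nuk)-J(\nu^\star)\ge 0$.

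\textbf{Step 1 (one-step recursion).} Proposition~\ref{prop:incre} with $\beta=0$, valid since $\|\nuk\|_{\mathrm{TV}}\le\CTV$ by $\HTVC$, gives $J(\nukp)-J(\nuk)\le -\tfrac34\alpha\int|J'_{\nuk}|^2d\nuk$. Adding \eqref{eq:smooth_1} yields
\[
\Delta_{k+1}\le \Delta_k-\tfrac34\alpha\int_\cX |J'_{\nuk}|^2\,d\nuk+\CC\epk^2 .
\]

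\textbf{Step 2 (convexity).} By \eqref{eq:Frechet_Jprime} with $\sigma=\nu^\star-\nuk$,
\[
\Delta_k\le \int J'_{\nuk}\,d\nuk-\int J'_{\nuk}\,d\nu^\star\le \int |J'_{\nuk}|\,d\nuk+\|\nu^\star\|_{\mathrm{TV}}\,m_k,\qquad m_k:=\max\big(0,-\min_\cX J'_{\nuk}\big).
\]
By Cauchy--Schwarz and $\HTVC$, the first term is at most $\sqrt{\CTV}\,(\int|J'_{\nuk}|^2d\nuk)^{1/2}$. It remains to control $m_k$ by the descent term, and this is the crux.

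\textbf{Step 3 (birth lower bound; the main obstacle).} Let $t_k$ be a minimiser of $J'_{\nuk}$ and suppose $m_k>0$. By Remark~\ref{rem:lipschitz_J_prime}, $J'_{\nuk}\le -m_k/2$ on the ball $B_k$ of radius $r=m_k/(2\Lip)$ around $t_k$. Step 3 is to transfer this to the previous step: by \eqref{eq:smooth_2} together with the weight-update perturbation bound of Remark~\ref{rem:weight_update_perturbation} (which is $O(\alpha)$), $J'_{\nu_{k-1^+}}\le 0$ on $B_k$ as long as $m_k\gtrsim \epkm+\alpha$-level terms. Then \eqref{eq:mass} gives $\nuk(B_k)\ge\epkm\,\lambda(B_k\cap\cX)\ge \cc\,\epkm(m_k/\Lip)^d$, using that the convex body $\cX$ has an interior cone condition. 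Hence
\[
\int|J'_{\nuk}|^2d\nuk\ \ge\ \tfrac{m_k^2}{4}\,\nuk(B_k)\ \ge\ \cc\,\epkm\,m_k^{2+d}\Lip^{-d}.
\]
The delicate point is the case where $m_k$ is comparable to the perturbation size. There I would simply absorb $m_k\le \CC\epkm$ into an additive error $\|\nu^\star\|_{\mathrm{TV}}\CC\epkm$. If the $\alpha$-perturbation is not small enough, I would instead compare $J'_{\nuk}$ directly with $J'_{\nu_{k-1^+}}$ via the density $e^{-\alpha J'}$ bound.

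\textbf{Step 4 (summing and choosing $\varepsilon$).} Write $D_k:=\int|J'_{\nuk}|^2d\nuk$. Telescoping Step 1 and using the bound~\eqref{eq:boundJ} on $\Delta_1$ gives
\[
\tfrac34\alpha\sum_{k\le K}D_k\le \CC+\CC\sum_k\epk^2 .
\]
For item i), $\varepsilon=\sqrt{\CC/K}$, so some $k\le K$ has $D_k\le \CC/(\alpha K)$. At that $k$, Step 3 gives $m_k\le (\Lip^d/(\alpha K\varepsilon))^{1/(2+d)}\sim(\Lip^d\alpha^{-1}K^{-1/2})^{1/(2+d)}$, plus an additive $O(\varepsilon)$ term. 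Then Step 2 gives $\Delta_k\lesssim \sqrt{D_k}+\|\nu^\star\|_{\mathrm{TV}}m_k$, which is dominated by $K^{-1/(2(2+d))}$. Tracking the factors of $\Lip$ (including the $\Lip$ coming from $\CTV$-type normalisations) produces the displayed constant $\Lip^{(2+2d)/(2+d)}$.

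Item ii) is identical with $\sum_k\epk^2\sim\log K$ and $\epk\ge\varepsilon_K$, which gives the extra $\log(K)^{1/(2+d)}$ factor.

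For item iii), combining Steps 2 and 3 gives $\Delta_k\lesssim\sqrt{D_k}+(\Lip^dD_k/\varepsilon)^{1/(2+d)}$, hence $D_k\gtrsim \varepsilon\Lip^{-d}\Delta_k^{2+d}$ when $\Delta_k$ is small. Step 1 then becomes the recursion $\Delta_{k+1}\le\Delta_k-\cc\,\alpha\varepsilon\Lip^{-d}\Delta_k^{2+d}+\CC\varepsilon^2$. The standard lemma for such recursions yields
\[
\Delta_K\lesssim (\alpha\varepsilon K)^{-1/(1+d)}+\big(\varepsilon\,\Lip^d/\alpha\big)^{1/(2+d)} .
\]
Balancing the two terms in $\varepsilon$ gives the prescribed $\varepsilon\propto K^{-(3+d)/(5+2d)}$ and the rate $K^{-1/(5+2d)}$, with dependence on $\Delta_1$ entering through the transient phase.
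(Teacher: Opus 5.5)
\textbf{Gap in item ii).} Your extraction step picks the index $k\le K$ that minimises $D_k=\int|J'_{\nuk}|^2d\nuk$. It then uses $\Delta_k\lesssim\sqrt{D_k}+\|\nu^\star\|_{\mathrm{TV}}\,m_k$ with $m_k\le \CC\epkm\vee(\Lip^dD_k/(\cc\,\epkm))^{1/(2+d)}$. For a constant schedule this is fine. In item ii), however, the first alternative needs an \emph{upper} bound on $\epkm$ at the selected index, and you only used the lower bound $\epk\ge\varepsilon_K$. The minimiser of $D_k$ over $\{1,\dots,K\}$ may be an early index, where $\epkm=\sqrt{\CC/k}$ is of order $\alpha$ and does not decay with $K$. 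So ``identical'' does not give the claimed rate.

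The fix is easy. Telescope Step~1 only over $k\in\{\lceil K/2\rceil,\dots,K\}$, bounding $\Delta_{\lceil K/2\rceil}\le J(\nu_{\lceil K/2\rceil})$ uniformly by \eqref{eq:boundJ}. Since $\sum_{k\ge K/2}\epk^2=O(1)$, some $k\in[K/2,K]$ has $D_k\le\CC/(\alpha K)$ and $\epkm\asymp K^{-1/2}$. This reproduces the rate of item i), even without the $\log K$ factor. The paper avoids the issue differently. It turns the two-case bound into the per-step inequality $f(k)-f(k+1)\ge\alpha\Lip^{-d}\epkm[A^{-1}f(k)]^{2+d}-\CC\epkm^2$, where the $\epkm$-floor gets multiplied by $\epkm$ and becomes summable. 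It then telescopes with weights $\epkm$ as in \eqref{eq:inegalite_importante}, which handles variable schedules automatically.

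\textbf{Two corrections.} First, the weight-update perturbation of Remark~\ref{rem:weight_update_perturbation} plays no role in Step~3. $\nuk$ is obtained from $\nukm$ by the birth--death step only, so \eqref{eq:smooth_2} alone gives $\|J'_{\nuk}-J'_{\nukm}\|_\infty\le\CC\epkm$. Hence $J'_{\nukm}\le 0$ on $B_k$ as soon as $m_k\ge 2\CC\epkm$. If an $O(\alpha)$ threshold really entered, it would leave a floor $\|\nu^\star\|_{\mathrm{TV}}\,O(\alpha)$ that does not vanish with $K$ and would destroy all three rates. Your fallback silently drops it, which is correct only because it never belongs there. The ``density bound'' alternative is also misdirected: it relates $\nukm$ to $\nu_{k-1}$, not to $\nuk$.

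Second, in item iii) your relation $\Delta_k\lesssim\sqrt{D_k}+(\Lip^dD_k/\varepsilon)^{1/(2+d)}$ omits the additive $\CC\varepsilon$ from the case $m_k\le\CC\varepsilon$. The contraction therefore holds only when $\Delta_k\gtrsim\varepsilon$; this adds a harmless $O(\varepsilon)$ floor. More importantly, balancing $(\alpha\varepsilon K)^{-1/(1+d)}$ against $(\varepsilon\Lip^d/\alpha)^{1/(2+d)}$ does not give $\varepsilon\propto K^{-(3+d)/(5+2d)}$. It gives $\varepsilon\propto K^{-(2+d)/(3+2d)}$ and the faster rate $K^{-1/(3+2d)}$. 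With the prescribed $\varepsilon$, each of your terms is $O(K^{-1/(5+2d)})$, so the stated bound still follows, and your argument actually proves more.

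\textbf{Comparison with the paper.} Apart from these points, your route is sound and genuinely different. You control the negative part of the \emph{current} certificate $J'_{\nuk}$ and transfer the sign condition back to $J'_{\nukm}$ in order to invoke \eqref{eq:mass}. The paper instead works with $\min J'_{\nukm}$ and the set $\Theta_{k-1^+}\subset\{J'_{\nukm}\le 0\}$, so \eqref{eq:mass} applies directly. It transfers $|J'_{\nuk}|^2$ instead, at the cost of a second use of \eqref{eq:smooth_2} in the convexity step. Your treatment of $\lambda(B_k\cap\cX)$ through an interior cone condition is more careful than Lemma~\ref{lem:geometric_sublevel}, whose ball may leave $\cX$; the constant then depends on $\cX$, not only on $d$. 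For item iii), your direct analysis of $\Delta_{k+1}\le\Delta_k-a\Delta_k^{2+d}+b$ is the standard one. It is sharper than the paper's compensated sequence $\bar f$, which carries the cumulative term $\CC\sum_k\epk^2=\CC K\varepsilon^2$ in \eqref{eq:bound_bar_mK} rather than the per-step floor $(b/a)^{1/(2+d)}$. That is exactly the ``technical'' degradation the paper acknowledges.
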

\end{subequations}

Items~$i)$ and~$ii)$ concern the minimal value of the functional $J$ along the first $K$ iterates of the algorithm, differing only in the choice of $\varepsilon$ at the end. Both results yield similar convergence rates, up to constants and a logarithmic term. In particular, using a horizon-free calibration for $\varepsilon$ introduces only an additional logarithmic factor in $k$ in the convergence rate.  
Item~$iii)$ provides a stronger result concerning the value of the last iterate, but the convergence rate obtained is slightly weaker than those in $i)$ and $ii)$. This difference is essentially technical, arising from a proof method based on a compensator/penalty strategy to construct a decreasing sequence—a method that results in a degraded convergence rate.
\begin{rem}
\begin{itemize}
    \item
    Although the setting considered in this section is specific — focusing on continuous measures and updates — we can still observe that the global convergence rates are slower than those reported in, e.g., \cite{chizat2022sparse} or \cite{FastPartv1}, which achieve rates of order $1/\sqrt{k}$. In our case, the convergence exhibits a dependence on the dimension: the volume of the region where $J'_{\nu_k} < 0$ at each iteration significantly influences the behavior of the objective $J$ (see, e.g., Proposition~\ref{prop:boundvk} and its proof). This is formalized by a geometric lemma on the volume of the target region, which ensures that $\lambda(\{J'_{\nu_k} \le 0\}) \ge C_d \mathfrak{L}^{-d} |\min_x J_{\nu_k}'(x)|^d$. This bounds the birth process probability from below, preventing it from vanishing arbitrarily fast compared to the loss gap. Nevertheless, unlike the aforementioned works, our method does not impose any \emph{local} specific constraints on the initialization measure $\nu_0$. The algorithm allows for a dynamic evolution of the support of $\nuk$ throughout the iterations, which enables \emph{global} convergence.  
However, this adjustment of the algorithm to redistribute the mass of the measure in regions where $J'_{\nuk}$ is negative affects the convergence speed in a dimension-dependent manner.

\item In the deterministic optimization literature, it is uncommon to observe convergence rates that depend on the problem’s dimension, particularly in convex optimization. The ellipsoid method in convex optimization, introduced by Shor, Yudin, and Nemirovsky (see \textit{e.g.} \cite{NemirovskiYudin}), achieves linear convergence rates, with the rate inversely proportional to the dimension. This degradation also stems from the curse of dimensionality, arising from the geometric optimization strategy—specifically, from the dependence of the ellipsoid’s volume on the dimension.  
Sometimes, particularly in Quasi-Newton and Newton methods, the dependence on the problem dimension is somewhat hidden within the overall computational cost: each iteration requires a number of operations that increases with $d$, while the total number of iterations remains essentially independent of $d$.

\item When comparing our method with that of Theorem~4.2 of \cite{chizat2022sparse}, we observe that their result is derived under the assumption that the initialization lies within a basin where a Polyak-Lojasiewicz inequality holds — an assumption that is both very strong and restrictive, and which does not hold in full generality.  
Moreover, Proposition~H.1 in \cite{chizat2022sparse} also exhibits a hidden dependence 
on the dimension in the setting where the density is uniformly lower bounded for the continuous-time gradient flow — an assumption that cannot be satisfied easily in a discretized counterpart. In particular, the proof of Proposition~H.1 in \cite{chizat2022sparse} is valid only in dimension $1$, and the rate also deteriorates as the dimension of the ambient space increases.
\end{itemize}
\end{rem}

\begin{rem}[Extension to $\beta > 0$]
The analysis of this section is carried out under the simplification $\beta = 0$ in order to develop and illustrate the theoretical tools—screening, birth-death dynamics, and descent inequalities—that will be central to the stochastic convergence theory of Section~\ref{s:algosto}.
Theorem~\ref{theo:convergence_beta} in Appendix~\ref{sec:beta_positive} extends Theorem~\ref{theo:convergence_deterministe} to $\beta > 0$ and establishes the same three convergence rates under the learning-rate condition~\eqref{eq:small_learning_rates}. The proof, given there for completeness, follows the same strategy with additional perturbation estimates controlling the effect of the position update~$T_{\nu,\beta}$.
\end{rem}

\section{Stochastic algorithm and stochastic convergence properties}
\label{s:algosto}

The previous section provides the main tools and ideas allowing the conic gradient descent to converge toward a global minimum.
The main ingredient consists of adding, at each iteration $k$, mass on some specific regions (namely where $J'_{\nu_k}$ is negative).
However, this principle is not feasible in practice since it involves continuous measures.
We investigate in this section the implementable Algorithm \ref{algo:SCPGD_pro} based on the birth and death process.  Similarly to the deterministic analysis, we first provide generic assumptions.
Then, we will exhibit some specific updates that will fit our requirements. We finally provide convergence results of the expected risk towards 0, leading to a global optimization result.

\subsection{Notation and assumptions on the stochastic update}
\label{s:assumption_sto}
We consider in this section the generic construction discussed in Section \ref{s:BirthandDeath}. Recall that the sequence $(\nuks)_{k\in \mathbb{N}}$ is built in two steps at each iteration $k$: first, a stochastic instance of a CPGD algorithm $\nuks \longrightarrow \nukps$, associated with a specific descent property; then, an additional update $\nukps \longrightarrow \nukpps$, which modifies the mass of the current measure at some strategic locations.
Both successive updates may involve stochastic computations. \\

\noindent
\textbf{Assumption} $(\hat{\mathcal{H}}_\mathfrak{F})$: There exist two increasing collections of $\sigma$-algebras $(\Fk)_{k\in\mathbb{N}}$ and $(\Fk^+)_{k\in\mathbb{N}}$ such that $\Fkm \subset \Fkm^+ \subset \Fk \subset \Fk^+$
for any $k\in\mathbb{N}$, and such that $\nuks$ is $\Fk$-measurable
and $\nukps$ is $\Fk^+$-measurable.
Equivalently, $(\Fk)_{k \ge 1}$ is adapted to $(\nuks)_{k \ge 1}$ and $(\Fk^+)_{k \ge 1}$ is adapted to $(\nukps)_{k \ge 1}$.\\

We impose the following requirements on the updates. Throughout, $\CC>0$ (resp.\ $\cc>0$) denotes a generic upper-bound (resp.\ lower-bound) constant that may change from line to line.

\begin{subequations}
\begin{itemize}
\item \textbf{Assumption} $(\hat{\mathcal{H}}^\infty_{\mathrm{TV}})$: There exists a constant $\CTV$ such that almost surely:
$$ \|
\hat \nu_k \|_{\mathrm{TV}} \leq \CTV\quad \forall k\in \mathbb{N}.$$

    \item Iteration $k \longmapsto k^+$: $\nukps$ satisfies the following descent property that is the stochastic counterpart of Proposition \ref{prop:incre}:\\
    \\
    \textbf{Assumption} $(\hat{\mathcal{H}}_D)$: For any $k \ge 1$:
    \[
    \mathbb{E}  \left[ J(\hat{\nu}_{k^+}) \big\vert \mathfrak{F}_{k} \right] - J(\hat\nu_k)  \leq  - \frac{\alpha}{2}  \|J'_{\hat\nu_k}\|^2_{\hat\nu_k} +
      \CC
\left(\frac{\alpha^2}{m_k}+\beta^2 +\frac{\beta}{m_k} \right),
    \]
    where $\alpha,\beta$ denote tunable parameters of the algorithm.
    
\item Iteration $k^+ \longmapsto k+1$: Let $a>0$ whose value will be made precise later on. The stochastic update $\hat \nu_{k+1}$ satisfies the following assumptions:
    \begin{itemize}  
    \item \textbf{Assumption}
$(\hat{\mathcal{H}}^+_{\varepsilon,a}):$
For any $k \ge 0$, 
for any Borel set $\mathcal{B}\subseteq \mathcal{X}$:
\begin{equation*}
\mathbb{E}\big[  \hat\nu_{k+1}(\mathcal{B})\, \vert \mathfrak{F}_{k}^+\big] \geq \cc \varepsilon_{k} \lambda\big(\mathcal{B}\cap \lbrace J'_{\hat\nu_{k^+}} <0 \rbrace\big) - 
\CC\varepsilon_k m_k^{-a}%
\label{eq:mass_sto}
\end{equation*}

where $\lambda$ stands for the Lebesgue measure.

\item \textbf{Assumption} $(\hat{\mathcal{H}}^{\text{smooth},1}_{\varepsilon}):$
For any $k \ge 1$:
    \begin{equation}
                \addtocounter{equation}{1}
        \tag{\theequation--$(\hat{\mathcal{H}}^{\text{smooth},1}_{\varepsilon})$}
\mathbb{E} \left[    J(\hat\nu_{k+1})-J(\hat\nu_{k^+}) \, \vert  \mathfrak{F}_{k}^+ \right] \leq \CC \left( \epk^2 + \epk \sqrt{\frac{\log m_k}{m_k}} \right) \quad
    \label{eq:smooth_1_as}
    \end{equation}
\item \textbf{Assumption} $(\hat{\mathcal{H}}^{\text{smooth},2}_{\varepsilon}):$
For any $k \ge 1$:
    \begin{equation}
                    \addtocounter{equation}{1}
        \tag{\theequation--$(\hat{\mathcal{H}}^{\text{smooth},2}_{\varepsilon})$}
    \|J'_{\hat\nu_{k+1}}-J'_{\hat\nu_{k^+}}\|_{\infty} \leq \CC \varepsilon_k \quad \text{a.s.}
    \label{eq:smooth_2_as}
    \end{equation}
            \end{itemize}
\end{itemize}
\end{subequations}
\noindent
The next section provides an example of an implementable stochastic update that satisfies all these assumptions.

\subsection{Fast Spawn\&Prune: a conic particle birth/death process}
\label{s:FP2}
Although the setting considered in Section \ref{s:algosto} is quite general, it encompasses Algorithm \ref{algo:SCPGD_pro}. The setting of Section \ref{s:algosto} may be seen as a minimal sufficient set of conditions for the global convergence of the stochastic dynamic. We provide here specific instances of the mass tweaking step introduced in \eqref{eq:stoscheme_intro}. Then, we prove that the resulting algorithm satisfies all the requirements introduced in  Section~\ref{s:assumption_sto}.

\subsubsection{Transition $\nukps \longrightarrow \nukpps$}
Inspired by Section \ref{s:birth-death-proc} for the deterministic side, we now describe our sequence of stochastic updates $\nuks \longrightarrow \nukps \longrightarrow \nukpps$, whose evolution still involves both deletion $\nukps \longrightarrow \nukkpps$ and creation $\nukkpps \longrightarrow \nukpps$ of weighted particles.
Again, mass deletion should only concern areas where $J'_{\hat\nu_k} \geq 0$ whereas our algorithm adds some mass in areas where $J'_{\hat\nu_k} \leq 0$. Since handling these objects might be time-consuming at each iteration, we deal instead with the sets of positivity and negativity of $J'_{\hat\nu_k}$, \textit{i.e.} we define $\Pkp$ and $\Nkp$ as:
\begin{align}
\label{eq:def_M_N_P}
\Pkp
    &:= 
    \left\{  t_j^{k^+}: \widehat{J'_{\nukps}}(t_j^{k^+})  \ge  0 \ \mathrm{and} \  \omega_j^{k+} \leq \sqrt{2}\epk\right\}\\ 
\Nkp
    &:=
    \Big\{ t \in \mathcal{X}: \widehat{J'_{\hat\nu_{k^+}}}(t)\le c_a \sqrt{\frac{\log m_k}{m_k}} \Big\}\,,\notag
\end{align}
where $c_a$ is a positive constant depending on $a>0$ {involved in $(\hat{\mathcal{H}}^+_{\varepsilon,a})$}.

\begin{subequations}
\paragraph{Mass deletion $\nukps \longrightarrow \nukkpps$}
\label{s:delet_sto}
We build $\nukkpps$ from $\nukps$ while removing some mass on $\Pkp$ and define:
\begin{equation}
\nukkpps:=  \nukps (1- \bm{1}_{\Pkp}(V_{k+1}) \delta_{V_{k+1}})  \quad \mathrm{with} \quad V_{k+1} \sim \mathcal{U}_{\mathrm{supp}(\nukps)}
\label{eq:stoscheme_del}
\end{equation}
\paragraph{Mass creation $\nukkpps \longrightarrow \nukpps$}
For any $k\in \mathbb{N}$, given $\hat\nu_{k^+}$, define
 \begin{equation}
    \nukpps = \nukkpps + \varepsilon_k \bm{1}_{\Nkp}(U_{k+1})
    \delta_{U_{k+1}} \quad \mathrm{with} \quad U_{k+1} \sim \mathcal{U}_{\mathcal{X}}\,.
    \label{eq:stoscheme}
\end{equation}
\end{subequations}
Contrary to the deterministic update described in the previous section, the scheme \eqref{eq:stoscheme} is implementable.
Indeed, it only requires the generation of a pair of uniform random variables over the spaces $\mathrm{supp}(\nukps)$ and $\mathcal{X}$ and pointwise evaluations of $\widehat{J}'_{\nukps}$.\\

\begin{rem}
We stress that the constant $c_a$ involved in~\eqref{eq:def_M_N_P} is positive. Hence, the set~$\Nkp$ is not exactly defined as the negative part of $\widehat{J}'_{\hat\nu_k}$. Recall that, according to the KKT conditions (see Proposition \ref{prop:minimization})  our initial target is the negative part of $J'_{\hat\nu_k}$. Since we use a stochastic approximation of $J'_{\hat\nu_k}$, we have to ensure a complete exploration of this latter set. In this context, choosing $c_a>0$ allows to control some kind of Type II error.
\end{rem}

Thanks to the previous definitions, we can now state that the sequence of measures $(\nuks,\nukps)_{k \ge 1}$ satisfies our previous assumptions. The proof of the next proposition is deferred to Section \ref{sec:appendix_sto}.

\begin{subequations}
\begin{proposition}
    \label{prop:hypsto}
    Define 
    \[ \widehat{\RR}= \frac{\mathbf{H}}{\mathbf{G}} e + \sqrt{\frac{e^3}{\mathbf{G}}} +1.\] 
    Then provided $\max_k \varepsilon_k \leq \alpha $ and $\alpha \leq (1+\widehat{\RR})^{-1}$, then $\|\hat\nu_k\|_{TV} \leq \widehat{\RR}\wedge \|\nu_0\|_{TV} := \CTV$ for all $k\geq 1$.  
    Assume moreover that $(\alpha,\beta)$ satisfies~\eqref{eq:small_learning_rates} and 
    \begin{equation}
        \label{eq:hoeffding_alpha_condition}
        \alpha \leq \frac{1}{\MM}\sqrt{8\log 8},
    \end{equation}
    and that the mini-batch schedule satisfies $p_k \leq p_0 + m_k$ (equivalently $k \leq m_k$, which holds in particular for $m_k = K$ and $m_k = k$).
    Then, for any $a>0$, picking the threshold $c_a=\MM\sqrt{2a}$ in \eqref{eq:def_M_N_P} ensures that the sequence $(\nuks,\nukps)_{k \ge 1}$ of Algorithm~\ref{algo:SCPGD_pro} enjoys
    $(\hat{\mathcal{H}}_D)$, $(\hat{\mathcal{H}}^+_{\varepsilon,a})$, $(\hat{\mathcal{H}}^{\text{smooth},1}_{\varepsilon})$, $(\hat{\mathcal{H}}^{\text{smooth},2}_{\varepsilon})$ and $(\hat{\mathcal{H}}^\infty_{\mathrm{TV}})$.
\end{proposition}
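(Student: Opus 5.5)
I will verify the five assumptions one at a time, starting with $(\hat{\mathcal{H}}^\infty_{\mathrm{TV}})$, since $\CTV$ feeds into the other four.

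\emph{Bounded total variation.} I would prove by induction that $\|\hat\nu_k\|_{\mathrm{TV}}\le \CTV$, splitting particles by the sign of the stochastic certificate. Write $M_k=\|\hat\nu_k\|_{\mathrm{TV}}$. By the lower bound in \eqref{A2}, each exponential weight satisfies $e^{-\alpha \widehat{J'_{\hat\nu_k}}(t)}\le e^{-\alpha(\bm G M_k-\bm H+\kappa)}$. Hence $M_{k^+}\le M_k e^{-\alpha(\bm G M_k-\bm H)}$. The map $x\mapsto x e^{-\alpha(\bm G x-\bm H)}$ has a maximum of order $e^{\alpha\bm H}/(\alpha\bm G e)$. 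I would rather use the contraction regime: if $M_k\ge \bm H/\bm G+$ a margin, then $M_{k^+}\le M_k(1-c\alpha)$. Deletion only decreases mass. Birth adds at most $\varepsilon_k\le\alpha$. Balancing the decrease $\alpha \bm G M_k(M_k-\bm H/\bm G)$, expanded to second order using $\alpha\le(1+\widehat\RR)^{-1}$, against the added $\alpha$ gives a quadratic inequality whose root is bounded by $\frac{\bm H}{\bm G}e+\sqrt{e^3/\bm G}+1=\widehat\RR$. This mirrors the deterministic Proposition~\ref{prop:hypdist}(i), with $\lambda(\cX)$ replaced by the single Dirac of mass $\varepsilon_k$. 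Below the threshold, the one-step growth is at most a factor $e^{\alpha\bm H}\le e$ plus $\alpha$, which stays below $\widehat\RR$.

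\emph{Descent $(\hat{\mathcal{H}}_D)$.} I would redo the proof of Proposition~\ref{prop:incre} with the noisy certificate. Expand $J(\hat\nu_{k^+})-J(\hat\nu_k)$ using \eqref{eq:Frechet_Jprime} with $\sigma=\hat\nu_{k^+}-\hat\nu_k$. Write $e^{-\alpha(J'+\bar\xi)}=1-\alpha(J'+\bar\xi)+O(\alpha^2)$, where $\bar\xi$ is the mini-batch average noise. Conditionally on $\mathfrak F_k$, the linear noise term has zero mean. The quadratic terms contribute $\alpha^2\mathbb E|\bar\xi|^2\le \alpha^2\MM^2/m_k$. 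The position step is handled the same way, with $\zeta$ giving $\beta/m_k$ and curvature giving $\beta^2$. The loss from $3/4$ to $1/2$ absorbs the cross terms.

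\emph{Smoothness conditions.} $(\hat{\mathcal{H}}^{\text{smooth},2}_{\varepsilon})$ follows from $|J'_\mu-J'_\nu|\le \|\mu-\nu\|_{\mathrm{TV}}$, since $K\le1$. Birth adds mass $\varepsilon_k$, and deletion removes one particle of weight at most $\sqrt2\varepsilon_k$, so the bound is $(1+\sqrt2)\varepsilon_k$. For $(\hat{\mathcal{H}}^{\text{smooth},1}_{\varepsilon})$, I would use \eqref{eq:Frechet_Jprime}. For deletion, $\sigma=-\omega\delta_t$ gives $-\omega J'_{\hat\nu_{k^+}}(t)+\omega^2/2$. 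On $\Pkp$ we have $\widehat{J'}\ge0$, so $J'\ge-|\bar\xi|$. Taking expectations with Hoeffding, $\mathbb E|\bar\xi|\lesssim\sqrt{\log m_k/m_k}$, which yields $\CC(\varepsilon_k^2+\varepsilon_k\sqrt{\log m_k/m_k})$. For birth, the increment is $\varepsilon_k J'(U)+\varepsilon_k^2/2$ on $\{\widehat{J'}(U)\le c_a\sqrt{\log m_k/m_k}\}$, so $J'(U)\le c_a\sqrt{\cdot}+|\bar\xi|$, giving the same bound.

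\emph{Mass assumption $(\hat{\mathcal{H}}^+_{\varepsilon,a})$; expected main obstacle.} This is the hardest step. The birth term gives
\[
\mathbb E[\hat\nu_{k+1}(\mathcal B)\mid\mathfrak F_k^+]\ge \frac{\varepsilon_k}{\lambda(\cX)}\,\mathbb P\bigl(U\in\mathcal B,\ \widehat{J'}(U)\le c_a\sqrt{\log m_k/m_k}\bigr).
\]
On $\{J'<0\}$, the only failure event is $\bar\xi(U)>c_a\sqrt{\log m_k/m_k}$. By Hoeffding with $c_a=\MM\sqrt{2a}$, this has probability at most $m_k^{-a}$. The mass already present and the deleted particle must also be handled. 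Deletion removes at most one particle, and I would need it not to destroy mass on $\mathcal B\cap\{J'<0\}$. Because deletion requires $\widehat{J'}\ge0$, it acts on $\{J'<0\}$ only on a noise event, so it costs at most $\sqrt2\varepsilon_k\cdot p_k\, m_k^{-a}$ after a union bound over particles. Here the hypothesis $p_k\le p_0+m_k$ enters: after adjusting $a$ by one, this is absorbed into $\CC\varepsilon_k m_k^{-a}$. Condition \eqref{eq:hoeffding_alpha_condition} is where I expect to need it to make this Hoeffding calibration consistent with the $\alpha$-scale of the noise in the weights.
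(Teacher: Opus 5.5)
Your plan follows the paper's proof assumption by assumption. The TV bound uses the same threshold/contraction argument as Proposition~\ref{prop:hypdist}~$(\mathit{i})$; like the paper's, it only gives a bound of the form $\|\nu_0\|_{\mathrm{TV}}\vee\mathrm{const}$, so the $\wedge$ in the statement has to be read as $\vee$. $(\hat{\mathcal{H}}^{\text{smooth},2}_{\varepsilon})$ comes from $|K|\le 1$ with constant $1+\sqrt2$, and $(\hat{\mathcal{H}}^{\text{smooth},1}_{\varepsilon})$ from a death/birth split. $(\hat{\mathcal{H}}^+_{\varepsilon,a})$ comes from conditioning on $U_{k+1}$, which is independent of $\bm Z^+_{k+1}$, and applying pointwise Hoeffding with $c_a=\MM\sqrt{2a}$.

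The one real misstep is your handling of deletion in that last step. The assumption is only a lower bound and the post-deletion measure is nonnegative, so the paper simply discards it and keeps the birth term: $\mathbb{E}[\hat\nu_{k+1}(\mathcal B)\mid\mathfrak F_k^+]\ge\varepsilon_k\,\mathbb P(U_{k+1}\in\mathcal B\cap\Nkp\mid\mathfrak F_k^+)\ge\varepsilon_k\lambda(\mathcal B\cap\{J'_{\hat\nu_{k^+}}<0\})/\lambda(\cX)-\varepsilon_k m_k^{-a}$. Your display has a spurious extra factor $1/\lambda(\cX)$ in front of the probability. Your deletion correction is not merely superfluous but incorrect, for two reasons. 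First, the threshold defining $\Pkp$ is $0$, not $c_a\sqrt{\log m_k/m_k}$. So at a particle where $J'_{\hat\nu_{k^+}}$ is negative but close to $0$, the event $\{\widehat{J'_{\hat\nu_{k^+}}}\ge 0\}$ is not small; it can have probability close to $1/2$. Second, ``adjusting $a$ by one'' after a union bound over $p_k\le p_0+m_k$ particles would mean either changing the prescribed threshold $c_a$ or proving only $(\hat{\mathcal{H}}^+_{\varepsilon,a-1})$. Drop the correction; this step is actually the easiest one.

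You also misplace where the remaining hypotheses enter.

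\emph{Condition \eqref{eq:hoeffding_alpha_condition}.} The birth-step Hoeffding bound involves no $\alpha$. The condition is used for $(\hat{\mathcal{H}}_D)$. There the paper factors $\mathbb{E}[e^{-\alpha\widehat{J'_{\hat\nu_k}}(t)}\mid\mathfrak F_k]=e^{-\alpha J'_{\hat\nu_k}(t)}\,\mathbb{E}[e^{-\alpha\bar\xi(t)}\mid\mathfrak F_k]$ and bounds the second factor minus one by $O(\alpha^2\MM^2/m_k)$ via Hoeffding's lemma (Proposition~\ref{prop:hoeffding}). Your joint second-order Taylor expansion is a valid alternative, with \eqref{eq:hoeffding_alpha_condition} serving to make the factor $e^{\alpha|\bar\xi|}\le e^{\alpha\MM}$ in the remainder a constant. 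In either version, absorbing the resulting $\CC\alpha^2\|J'_{\hat\nu_k}\|^2_{\hat\nu_k}$ into the linear descent needs an extra $\MM$-dependent smallness of $\alpha$. The paper imposes $\alpha\le(4\mathfrak C)^{-1}$ in Proposition~\ref{prop:hypsto4}; this is not inherited from the $3/4$ of Proposition~\ref{prop:incre}.

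\emph{Condition $p_k\le p_0+m_k$.} The paper uses it in the death part of $(\hat{\mathcal{H}}^{\text{smooth},1}_{\varepsilon})$, via a maximal inequality over the $p_k$ particle locations. If you instead condition on the single uniformly drawn $V_{k+1}$, as your sketch implicitly does, you get $\mathbb{E}|\bar\xi(V_{k+1})|\le\MM/\sqrt{m_k}$ directly, and that hypothesis becomes unnecessary.

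\emph{Position update.} It is not ``handled the same way'' as the weights. The map $v\mapsto\pi_{\cX}(t,v,\beta)$ is nonlinear, so the gradient noise does not enter linearly with zero mean. Linearizing around $\pi_{\cX}(t,\nabla J'_{\hat\nu_k}(t),\beta)$ with the $1$-Lipschitz bound \eqref{eq:Ghadimi2} only yields $\beta/\sqrt{m_k}$. To get $\beta/m_k$, use \eqref{eq:Ghadimi1} and Young as the paper does: $-\beta\langle\pi_{\cX}(t,\widehat{D_k}(t),\beta),\nabla J'_{\hat\nu_k}(t)\rangle\le-\tfrac{\beta}{2}\|\pi_{\cX}(t,\widehat{D_k}(t),\beta)\|^2+\tfrac{\beta}{2}\|\widehat{D_k}(t)-\nabla J'_{\hat\nu_k}(t)\|^2$, then take conditional expectations.
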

\end{subequations}

\subsection{Global convergence results}

We have now all the ingredients to establish convergence rates for Algorithm \ref{algo:SCPGD_pro}. The following theorem provides a control on the minimal value of the sub-optimality value sequence $(J(\hat\nu_k) - J(\nu^\star))_{k\in \{1,\ldots,K\}}$ in a finite horizon scenario.

\begin{theo}
\label{thm:cvgce_sto_all}
Assume that the stochastic sequences  $(\nuks,\nukps)_{k \ge 1}$ satisfy
 $\hat{\mathcal{H}}_D$, $(\hat{\mathcal{H}}^+_{\varepsilon,a})$ with $a \ge \frac{d}{2(2+d)}$, $\hat{\mathcal{H}}^{\text{smooth},1}_{\varepsilon}$, $\hat{\mathcal{H}}^{\text{smooth},2}_{\varepsilon}$ and $(\hat{\mathcal{H}}^\infty_{\mathrm{TV}})$.
 For any final horizon iterate $K$, we define $\hat{\rho}_K$ as the lowest value of the excess loss along the $K$ iterations:
\[
    \hat{\rho}_K := \min_{1 \leq k \leq K}
        \big\{J(\nuks)-J(\nu^\star)\big\}\,.
\]
If we choose:
$
\alpha\leq \frac{1}{\MM}\sqrt{8\log 8},
\quad
\beta\leq\frac{1}{\alpha^{d/4}\sqrt{K}},
\quad
\epk=\frac{1}{\sqrt{K}} \quad \text{and} \quad m_k=K,
$
then the sequence $(\nuks,\nukps)_{k \ge 1}$ verifies the \emph{global} convergence rate:
 \begin{equation}
     \label{eq:global_conv_rate_sto}
     \mathbb{E}[\hat{\rho}_K] \leq \CC \alpha^{-1/2} \left(\frac{\log K}{K}\right)^{\frac{1}{2(2+d)}}.
 \end{equation}
\end{theo}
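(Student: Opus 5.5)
We combine the one-step descent of $(\hat{\mathcal{H}}_D)$ with the smoothness assumptions, telescope over $K$ iterations, and lower bound the dissipation term $\|J'_{\hat\nu_k}\|^2_{\hat\nu_k}$ by a power of the excess loss $\hat\rho_K$. This lower bound holds because the birth mass guaranteed by $(\hat{\mathcal{H}}^+_{\varepsilon,a})$ sits on the negative region of the certificate, and that region has large volume when the loss gap is large.

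\textbf{Step 1 (one-step inequality).} Chaining $(\hat{\mathcal{H}}_D)$ and \eqref{eq:smooth_1_as} through the tower property over $\Fk\subset\Fk^+$ gives
\[
\mathbb{E}\big[J(\hat\nu_{k+1})\big]-\mathbb{E}\big[J(\hat\nu_k)\big]\le -\tfrac{\alpha}{2}\,\mathbb{E}\|J'_{\hat\nu_k}\|^2_{\hat\nu_k}+\CC\Big(\tfrac{\alpha^2}{m_k}+\beta^2+\tfrac{\beta}{m_k}+\varepsilon_k^2+\varepsilon_k\sqrt{\tfrac{\log m_k}{m_k}}\Big).
\]
Summing over $k\le K$ with $m_k=K$, $\varepsilon_k=K^{-1/2}$ and $\beta\le \alpha^{-d/4}K^{-1/2}$, the error terms contribute at most $\CC\sqrt{\log K}$ (up to the $\alpha$-dependent constant). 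By \eqref{eq:boundJ} and $(\hat{\mathcal{H}}^\infty_{\mathrm{TV}})$, $J$ is bounded, so
\[
\tfrac{\alpha}{2}\sum_{k\le K}\mathbb{E}\|J'_{\hat\nu_k}\|^2_{\hat\nu_k}\le \CC\sqrt{\log K}.
\]

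\textbf{Step 2 (lower bound on the dissipation).} Let $\rho_k=J(\hat\nu_k)-J(\nu^\star)$ and $\eta_k=-\min_\cX J'_{\hat\nu_k}$. Convexity together with \eqref{eq:Frechet_Jprime} gives
\[
\rho_k\le\langle J'_{\hat\nu_k},\hat\nu_k-\nu^\star\rangle\le \|J'_{\hat\nu_k}\|_{\hat\nu_k}\sqrt{\CTV}+\|\nu^\star\|_{\mathrm{TV}}\,\eta_k .
\]
So either $\|J'_{\hat\nu_k}\|_{\hat\nu_k}\gtrsim\rho_k$, or $\eta_k\gtrsim\rho_k$. In the second case I use the volume lemma mentioned after Theorem~\ref{theo:convergence_deterministe}: $J'$ is $\Lip$-Lipschitz, so $\{J'_{\hat\nu_k}\le-\eta_k/2\}$ contains a ball of radius $\eta_k/(2\Lip)$, of volume at least $C_d\Lip^{-d}\eta_k^d$. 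By \eqref{eq:smooth_2_as} and the weight-update perturbation (the analogue of Remark~\ref{rem:weight_update_perturbation}), the same ball lies in $\{J'_{\hat\nu_{k^+}}<0\}$ once $\eta_k\gg\varepsilon_k+\alpha$-perturbations. Then $(\hat{\mathcal{H}}^+_{\varepsilon,a})$ places expected mass at least $\cc\varepsilon_k\Lip^{-d}\eta_k^d-\CC\varepsilon_kK^{-a}$ on that ball, where $J'_{\hat\nu_{k+1}}\le-\eta_k/4$ (again by \eqref{eq:smooth_2_as}). Hence
\[
\mathbb{E}\|J'_{\hat\nu_{k+1}}\|^2_{\hat\nu_{k+1}}\gtrsim \varepsilon_k\,\eta_k^{2+d}\Lip^{-d}-\CC\varepsilon_kK^{-a}.
\]
In either case $\|J'\|^2$ at step $k$ or $k+1$ is at least $\cc\,\varepsilon_k\rho_k^{2+d}$ minus error terms, using $\varepsilon_k\le1$ to absorb the first case.

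\textbf{Step 3 (conclusion).} Summing, using $\rho_k\ge\hat\rho_K$ and Jensen on the convex map $x\mapsto x^{2+d}$, gives
\[
\alpha K\varepsilon\,\big(\mathbb{E}\hat\rho_K\big)^{2+d}\lesssim\sqrt{\log K}+\alpha K\varepsilon K^{-a}.
\]
With $\varepsilon=K^{-1/2}$ this yields $(\mathbb{E}\hat\rho_K)^{2+d}\lesssim\alpha^{-1}\sqrt{\log K/K}+K^{-a}$. The condition $a\ge d/(2(2+d))$ is what keeps the $K^{-a}$ remainder from dominating after the $(2+d)$-th root (I would check this exponent bookkeeping carefully, possibly bounding the remainder via $\rho_k\ge\hat\rho_K$ at scale $K^{-1/(2(2+d))}$). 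Taking the $(2+d)$-th root gives \eqref{eq:global_conv_rate_sto}, with the $\alpha$ power loosened to $\alpha^{-1/2}$.

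\textbf{Main obstacle.} The hard part is Step 2 in the stochastic setting. Three things must be handled there:
\begin{itemize}
\item the certificate perturbations between $\hat\nu_k$, $\hat\nu_{k^+}$ and $\hat\nu_{k+1}$, which are random;
\item the case distinction, which is made on the random quantity $\eta_k$;
\item the fact that the birth bound holds only conditionally on $\Fk^+$, while the dissipation is evaluated at the next iterate.
\end{itemize}
I would first try to work with $\mathbb{E}[\cdot\,|\,\Fk^+]$ and indicator events $\{\eta_k\ge \CC\varepsilon_k\}$, treating the complementary event as a direct $O(\varepsilon_k)$ bound on $\rho_k$.
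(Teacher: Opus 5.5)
Steps~1 and~3 follow the paper's skeleton: descent plus $(\hat{\mathcal{H}}^{\text{smooth},1}_{\varepsilon})$, telescoping, then Jensen. Step~2, however, does not go through under the stated hypotheses with your choice of index. You set $\eta_k=-\min J'_{\hat\nu_k}$. You then need the ball $\{J'_{\hat\nu_k}\le-\eta_k/2\}$ to lie inside $\{J'_{\hat\nu_{k^+}}<0\}$, because $(\hat{\mathcal{H}}^+_{\varepsilon,a})$ only plants mass where the \emph{post-update} certificate is negative. That requires a sup-norm bound on $J'_{\hat\nu_{k^+}}-J'_{\hat\nu_k}$, i.e.\ on the effect of the stochastic weight/position update. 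None of the five hypotheses supplies it: $(\hat{\mathcal{H}}^{\text{smooth},2}_{\varepsilon})$ only controls the step $\hat\nu_{k^+}\to\hat\nu_{k+1}$.

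Worse, the natural bound (the analogue of $\CC_w$ in Remark~\ref{rem:weight_update_perturbation}) is of order $\alpha$, and $\alpha$ is fixed independently of $K$. So your complementary event is $\eta_k\lesssim\varepsilon_k+\alpha$, not $\eta_k\lesssim\varepsilon_k$. It only yields $\rho_k\lesssim\|J'_{\hat\nu_k}\|_{\hat\nu_k}+\alpha\|\nu^\star\|_{\mathrm{TV}}$, a floor that does not vanish as $K\to\infty$, so the rate is lost. A sharper estimate of the form $\alpha\|J'_{\hat\nu_k}\|_{\hat\nu_k}+\beta+\alpha\cdot(\text{noise})$ would need the specific algorithm structure. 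It would also need almost-sure control of the mini-batch noise, which $\MM$ bounds only by a constant.

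The paper avoids this by shifting the index by one and splitting cases on $\hat v_{k-1^+}=\min J'_{\hat\nu_{k-1^+}}\wedge 0$:
\begin{itemize}
\item Convexity at $\hat\nu_k$ plus $(\hat{\mathcal{H}}^{\text{smooth},2}_{\varepsilon})$ gives $\int J'_{\hat\nu_k}\,d\nu^\star\ge(\hat v_{k-1^+}-\CC\varepsilon_{k-1})\|\nu^\star\|_{\mathrm{TV}}$.
\item The birth of step $k-1$, conditioned on $\mathfrak{F}_{k-1}^+$, puts $\hat\nu_k$-mass at least $\cc\,\varepsilon_{k-1}\lambda(\widehat\Theta_{k-1^+})-\CC\varepsilon_{k-1}m_{k-1}^{-a}$ on $\widehat\Theta_{k-1^+}=\{J'_{\hat\nu_{k-1^+}}\le\hat v_{k-1^+}/2\}$.
\item On that set, $(\hat{\mathcal{H}}^{\text{smooth},2}_{\varepsilon})$ alone gives $|J'_{\hat\nu_k}|^2\ge|\hat v_{k-1^+}|^2/8-\CC\varepsilon_{k-1}^2$.
\end{itemize}
Thus the dissipation at the \emph{same} step $k$ is bounded below, and only the $O(\varepsilon)$ birth/death perturbation is ever used (Propositions~\ref{prop:Decroissance_sto_1} and~\ref{prop:enfer_sto}).

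Separately, as written your Step~3 remainder $K^{-a}$ requires $a\ge 1/2$. To work with only $a\ge d/(2(2+d))$, build the threshold $|\hat v|^d\ge 2\CC m^{-a}$ into the case split, as the paper does. The remainder then enters as an additive $m^{-a/d}\le K^{-1/(2(2+d))}$ in the excess loss.
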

The schedule above satisfies $\epk \leq \alpha$ together with~\eqref{eq:small_learning_rates} (instantiated with $\widehat{\RR}$ as defined in Proposition~\ref{prop:hypsto}), so the hypotheses of Proposition~\ref{prop:hypsto} are met. Note that this requires $K\geq 1/\alpha^2$.

\begin{coro}[Sample complexity]
\label{cor:sample_complexity}
Under the assumptions of Theorem~\ref{thm:cvgce_sto_all}, the total number of stochastic oracle calls over $K$ iterations is $N = K \times m = K^2$. Expressed in terms of~$N$, the convergence rate reads:
$$
\mathbb{E}[\hat{\rho}_K] \leq \CC \alpha^{-1/2} \left(\frac{\log N}{\sqrt{N}}\right)^{\frac{1}{2(2+d)}}.
$$
In particular, $\mathbb{E}[\hat{\rho}_K] = \mathcal{O}\left(N^{-\frac{1}{4(2+d)}} (\log N)^{\frac{1}{2(2+d)}} \right)$.
\end{coro}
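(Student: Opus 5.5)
This corollary is a direct reparametrization of Theorem~\ref{thm:cvgce_sto_all}. The plan is to count oracle calls under the prescribed schedule, express $K$ in terms of $N$, and substitute into~\eqref{eq:global_conv_rate_sto}.

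\emph{Step 1: count the oracle calls.} Under the calibration of Theorem~\ref{thm:cvgce_sto_all}, every iteration draws two mini-batches of size $m_k=K$: the batch $\bm{Z}_{k+1}$ used in Step~4 of Algorithm~\ref{algo:SCPGD_pro}, and the batch $\bm{Z}_{k+1}^+$ used in Step~\ref{step:pushed_DC}. Each sample of $\bm{Z}_{k+1}$ yields both $\widehat{J'}$ and $\widehat{D}$, and these can be counted as one oracle call. The single uniform draws $U_{k+1}$ and $V_{k+1}$ do not consume oracle samples. Hence the total number of calls is $N = cK^2$ with an absolute constant $c\in\{1,2\}$. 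Following the statement's convention we take $N=K\cdot m=K^2$; the factor $2$ would only change constants.

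\emph{Step 2: change variables.} From $N=K^2$ we get $K=\sqrt{N}$ and $\log K=\tfrac12\log N$, so
\[
\frac{\log K}{K}=\frac{\log N}{2\sqrt{N}}\le \frac{\log N}{\sqrt{N}}.
\]
Plugging this into~\eqref{eq:global_conv_rate_sto} and using that $x\mapsto x^{1/(2(2+d))}$ is increasing gives
\[
\mathbb{E}[\hat\rho_K]\le \CC\alpha^{-1/2}\Bigl(\frac{\log N}{\sqrt{N}}\Bigr)^{\frac{1}{2(2+d)}},
\]
where the factor $2^{-1/(2(2+d))}\le 1$ is absorbed into $\CC$.

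\emph{Step 3: read off the exponent.} Splitting the power gives
\[
\Bigl(\frac{\log N}{\sqrt{N}}\Bigr)^{\frac{1}{2(2+d)}}=N^{-\frac{1}{4(2+d)}}(\log N)^{\frac{1}{2(2+d)}},
\]
which is the stated $\mathcal{O}$-bound.

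\emph{Main point to check.} The only real care needed is that the hypotheses of Theorem~\ref{thm:cvgce_sto_all} remain valid. In particular the requirement $K\ge 1/\alpha^2$, noted after the theorem, becomes $N\ge \alpha^{-4}$, so the bound holds for $N$ large enough. The condition $k\le m_k$ from Proposition~\ref{prop:hypsto} is automatic because $m_k=K\ge k$.
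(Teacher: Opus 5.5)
Your proposal is correct and follows the paper's proof: set $N=K^2$, so $K=\sqrt N$, substitute into \eqref{eq:global_conv_rate_sto}, and use $\log\sqrt N\le\log N$. Your extra remarks only affect constants or the range of validity, which the paper leaves implicit: the possible factor $2$ from the second mini-batch $\bm{Z}_{k+1}^+$, and the requirement $K\ge 1/\alpha^2$ becoming $N\ge\alpha^{-4}$.
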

\begin{proof}
Since $m = K$, the total number of oracle evaluations is $N = K^2$, hence $K = \sqrt{N}$. Substituting into~\eqref{eq:global_conv_rate_sto}:
\[
    \mathbb{E}[\hat{\rho}_K] 
        \leq \CC \alpha^{-1/2} \Big(\frac{\log \sqrt{N}}{\sqrt{N}}\Big)^{\frac{1}{2(2+d)}} 
        \leq \CC \alpha^{-1/2} \Big(\frac{\log N}{\sqrt{N}}\Big)^{\frac{1}{2(2+d)}}\,,
\]
and the result is proven.
\end{proof}

The horizon-dependent tuning of Theorem~\ref{thm:cvgce_sto_all} requires the knowledge of $K$ in advance to set $\beta$ and $m$.
The following result removes this requirement by using iteration-dependent schedules.

\begin{theo}[Horizon-free variant]
\label{thm:horizon_free}
Under the same assumptions as Theorem~\ref{thm:cvgce_sto_all}, choose the iteration-dependent schedules:
$$
m_k = k\vee 1, \quad \varepsilon_k = \min\!\Big(\alpha,\, \frac{1}{\sqrt{k\vee 1}}\Big), \quad \beta_k = \frac{1}{k\vee 1},
$$
with $\alpha$ a fixed constant satisfying \eqref{eq:small_learning_rates} (independent of $K$). 
Then, for any horizon $K \ge 4/\alpha^2$,
\begin{equation}
    \label{eq:horizon_free_rate}
    \mathbb{E}[\hat{\rho}_K] \leq \CC \alpha^{-1/2} \left(\frac{(\log K)^{3}}{K}\right)^{\frac{1}{2(2+d)}}.
\end{equation}
In particular, no prior knowledge of $K$ is needed: the algorithm is ``any-time''. The total number of oracle calls is $N = \sum_{k=1}^K k = \frac{K(K+1)}{2}$, yielding the sample complexity
$$\mathbb{E}[\hat{\rho}_K] = \mathcal{O}\left( N^{-\frac{1}{4(2+d)}} (\log N)^{\frac{3}{2(2+d)}} \right).$$
\end{theo}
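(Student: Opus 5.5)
We retrace the proof of Theorem~\ref{thm:cvgce_sto_all}, now with iteration-dependent $(m_k,\varepsilon_k,\beta_k)$. The goal is a single one-step inequality for the excess $\delta_k := \mathbb{E}[J(\nuks)] - J(\nu^\star)$ whose error terms are summable up to logarithms. First, the hypotheses of Proposition~\ref{prop:hypsto} still hold. We have $m_k = k$, so $k \le m_k$ and $p_k \le p_0 + m_k$. Also $\varepsilon_k \le \alpha$ by construction, and $\beta_k \le \beta_1 = 1$, which we can take to satisfy~\eqref{eq:small_learning_rates} after rescaling $\beta_k = \beta_0/k$ if needed (only the constant $\CC$ changes). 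Hence $(\hat{\mathcal H}_D)$, $(\hat{\mathcal H}^+_{\varepsilon,a})$, $(\hat{\mathcal H}^{\mathrm{smooth},1}_\varepsilon)$, $(\hat{\mathcal H}^{\mathrm{smooth},2}_\varepsilon)$ and $(\hat{\mathcal H}^\infty_{\mathrm{TV}})$ hold with step-dependent parameters. Chaining $(\hat{\mathcal H}_D)$ with $(\hat{\mathcal H}^{\mathrm{smooth},1}_\varepsilon)$ through the tower property gives
\[
\mathbb{E}[J(\nukpps)] - \mathbb{E}[J(\nuks)] \le -\frac{\alpha}{2}\,\mathbb{E}\|J'_{\nuks}\|^2_{\nuks} + \CC\Big(\frac{\alpha^2}{k} + \frac{1}{k^2} + \frac{1}{k^2} + \frac{1}{k} + \frac{1}{\sqrt{k}}\sqrt{\frac{\log k}{k}}\Big).
\]
For $k \ge 1/\alpha^2$ we have $\varepsilon_k = k^{-1/2}$, so the total error per step is $r_k \lesssim \sqrt{\log k}/k$.

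The second step is the "exploration" lower bound, reused from the proof of Theorem~\ref{thm:cvgce_sto_all} (and its deterministic analogue, Proposition~\ref{prop:boundvk}). Convexity gives $\delta_k \le \langle J'_{\nuks}, \nuks - \nu^\star\rangle \le \|J'_{\nuks}\|_{L^1(\nuks)} + \CTV\,|\min J'_{\nuks}|$, and Cauchy--Schwarz gives $\|J'\|_{L^1(\nu)} \le \sqrt{\CTV}\,\|J'\|_{\nu}$. The birth mass from $(\hat{\mathcal H}^+_{\varepsilon,a})$ sits on the set $\{J' < 0\}$, whose volume is at least $C_d\Lip^{-d}|\min J'|^d$ by the geometric lemma. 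Combined with the next weight update and $(\hat{\mathcal H}^{\mathrm{smooth},2}_\varepsilon)$, this injected mass forces an additional decrease of order $\alpha\,\varepsilon_k\,\Lip^{-d}|\min J'|^{2+d}$, minus the bias $\varepsilon_k m_k^{-a}$. The condition $a \ge \frac{d}{2(2+d)}$ is exactly what makes this bias $\varepsilon_k k^{-a}$ negligible at the target precision. Summing the two mechanisms, as in the fixed-schedule proof, yields
\[
\delta_{k+1} \le \delta_k - \cc\,\alpha\,\varepsilon_k\,\min\big(\delta_k, \rho\big)^{2+d}\mathbf{1}_{\{\text{$\min_{i\le k}\delta_i \ge \rho$}\}} + r_k,
\]
where $\rho$ is a free threshold and Jensen moves the expectation inside the convex power.

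The third step is a telescoping argument with threshold $\rho$. Suppose $\mathbb{E}[\hat\rho_K] > \rho$; the expectation of a minimum needs care here, so we handle it as in Theorem~\ref{thm:cvgce_sto_all}, replacing $\delta_k$ by the running-min event version. Summing from $k_0 = \lceil 4/\alpha^2\rceil$ to $K$ gives
\[
\cc\,\alpha\,\rho^{2+d}\sum_{k\le K}k^{-1/2} \le \delta_{k_0} + \sum_{k \le K} r_k \lesssim 1 + (\log K)^{3/2}.
\]
The left side is of order $\alpha\rho^{2+d}\sqrt{K}$, so $\rho^{2+d} \lesssim \alpha^{-1}(\log K)^{3/2}K^{-1/2}$. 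This gives $\rho \lesssim \alpha^{-1/(2+d)}\big((\log K)^3/K\big)^{1/(2(2+d))}$, and since $\alpha \le 1$ this is at most $\alpha^{-1/2}$ times the same factor, which is~\eqref{eq:horizon_free_rate}. The condition $K \ge 4/\alpha^2$ guarantees that the first $k_0 \le K/\ldots$ steps, where $\varepsilon_k = \alpha$, cost only a constant, and that $\sum_{k_0}^K k^{-1/2} \gtrsim \sqrt{K}$. The sample complexity then follows from $N = K(K+1)/2$, so $K \asymp \sqrt{N}$ and $\log K \asymp \log N$.

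The main obstacle is the second step with time-varying parameters. We must check that the birth-induced decrease, produced at step $k$ with $\varepsilon_k$, is realised at step $k+1$ with $m_{k+1}$ and a different noise level, and that the Hoeffding threshold $c_a\sqrt{\log m_k/m_k}$ in $\Nkp$ (Type II control) does not lose more than logarithmic factors. I would first try to bound the loss from $k$ to $k+1$ crudely by $\varepsilon_k/\varepsilon_{k+1} \le \sqrt{2}$ and $m_{k+1}/m_k \le 2$. The extra $(\log K)$ powers compared with Theorem~\ref{thm:cvgce_sto_all} come precisely from summing the $\sqrt{\log k}/k$ errors.
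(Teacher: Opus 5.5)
Your skeleton matches the paper's. You build a one-step inequality from $(\hat{\mathcal H}_D)$, $(\hat{\mathcal H}^{\mathrm{smooth},1}_{\varepsilon})$ and the exploration bound of Proposition~\ref{prop:enfer_sto}, then apply Jensen's inequality and a telescoping sum, using the two estimates $\sum_k\varepsilon_{k-1}\gtrsim\sqrt K$ and $\sum_k s_k=O((\log K)^{3/2})$. Extracting the minimum through $\mathbb{E}[\hat\rho_K]\le\min_k\delta_k$ and a threshold $\rho$ is an equivalent variant of the paper's bound on $\mathbb{E}[\hat\rho_K^{2+d}]\sum_k\varepsilon_{k-1}$.

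The gap is at the one place where the hypothesis on $a$ is used. You describe the Type~II defect as a bias $\varepsilon_k m_k^{-a}$, leave it out of $r_k$, and assert that $a\ge\frac{d}{2(2+d)}$ makes it negligible at the target precision. Neither natural reading of this works.
\begin{itemize}
\item Read as an additive loss in the decrease of Proposition~\ref{prop:Decroissance_sto_1}, it is of order $\alpha\varepsilon_{k-1}m_{k-1}^{-a}\asymp\alpha k^{-1/2-a}$. Its sum is $\asymp\alpha K^{1/2-a}=\alpha K^{1/(2+d)}$ at the boundary value of $a$. This only yields $\rho\lesssim K^{-d/(2(2+d)^2)}$, and the loss would be harmless only if $a\ge\frac12$.
\item Read as a threshold (asking $|\hat v_{k-1^+}|^d\ge 2\CC m_{k-1}^{-a}$ whenever the excess exceeds $\rho$), it holds only for $k\gtrsim\rho^{-d/a}=\rho^{-2(2+d)}$. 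At the target value of $\rho$ this is of order $K$ up to logarithms and powers of $\alpha$, so you cannot sum from $k_0$.
\item Besides, a condition on the deterministic $\delta_k$ gives no pathwise information on the random $|\hat v_{k-1^+}|$, which is what the birth mechanism acts on.
\end{itemize}

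What is needed is the case split of Proposition~\ref{prop:enfer_sto}, performed pathwise before taking expectations. Either $|\hat v_{k-1^+}|\gtrsim \varepsilon_{k-1}\vee m_k^{-a/d}$, and the birth mass forces a decrease $\gtrsim\alpha\varepsilon_{k-1}|\hat v_{k-1^+}|^{2+d}$. Or $|\hat v_{k-1^+}|^{2+d}\lesssim\varepsilon_{k-1}^{2+d}+m_k^{-(2+d)a/d}$. Hence $F(k)^{2+d}\lesssim \mathbb{E}[\widehat\Delta_{k^+}\mid\mathfrak F_k]/(\alpha^{(2+d)/2}\varepsilon_{k-1})+\varepsilon_{k-1}^{2+d}+m_k^{-(2+d)a/d}+\dots$, and the defect enters $s_k$ as $\alpha^{(2+d)/2}\varepsilon_{k-1}m_k^{-(2+d)a/d}$.

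With $m_k=k$ and $(2+d)a/d\ge\frac12$ (which is exactly $a\ge\frac{d}{2(2+d)}$), this term is at most $\alpha^{(2+d)/2}k^{-1}$. It sums to $O(\log K)$ and is absorbed into $(\log K)^{3/2}$; that is the actual role of the condition on $a$. Add this term and $\alpha^{(2+d)/2}\varepsilon_{k-1}^{3+d}$ to $r_k$. Your bound $r_k\lesssim\sqrt{\log k}/k$ then survives, and after Jensen the recursion $\delta_k-\delta_{k+1}\ge\cc\,\alpha^{(2+d)/2}\varepsilon_{k-1}\delta_k^{2+d}-s_k$ holds for every $k$, so no threshold is needed.

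Finally, start the sum at the saturation index $k_\alpha=\lceil 1/\alpha^2\rceil$, not at $\lceil 4/\alpha^2\rceil$. For $K$ close to $4/\alpha^2$ your range is almost empty. With $K\ge 4k_\alpha$ instead, $\sum_{k=k_\alpha}^{K}k^{-1/2}\ge 2(\sqrt K-\sqrt{k_\alpha})\ge\sqrt K$.
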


\medskip

\noindent The proofs of Theorems~\ref{thm:cvgce_sto_all} and~\ref{thm:horizon_free} are displayed in Section \ref{s:cvgce_sto_all}. They follow essentially the same lines as the deterministic scheme discussed in Section \ref{s:continuous}. However, the stochastic approximation of the Fréchet derivative of $J$ along the trajectory of the algorithm requires careful control. We stress that the final convergence rate does not significantly differ from~\eqref{eq:ConvCont1}. The main difference with the latter is a logarithmic term in $K$, which is a consequence of the stochastic approximation. Furthermore, note that the cap $\varepsilon_k \le \alpha$ ensures the hypothesis $\epk \le \alpha$ of Proposition~\ref{prop:hypsto} for every $k \ge 0$, and the monotonicity $\beta_k \le \beta_0$ guarantees that~\eqref{eq:small_learning_rates} (instantiated with $\widehat{\RR}$) holds at every step.

\newpage

\section{Numerical experiments}
\label{sec:exp}

In this section, we illustrate the capabilities of the \emph{FS\&P} algorithm on two distinct tasks: density estimation using Gaussian Mixture Models (GMM) on synthetic data, and a regression task using two-layer neural networks (NN) on the California Housing dataset.
In both cases, we compare the standard Full-Batch and Stochastic Conic Particle Gradient Descent (CPGD) against our proposed versions augmented with the Birth and Death (BD) processes.
A companion Jupyter notebook reproducing every experiment, figure and table of this section --- with explicit cross-references to the paper --- is available at \url{https://github.com/ydecastro/Fast-Spawn-Prune} (see \texttt{section4\_companion.ipynb}).

\subsection{Gaussian Mixture Models (GMM) with Fixed Covariance}
In this first experiment, we illustrate the ability of our dynamic particle system to \emph{escape local minima thanks to the birth process}.
We consider a simple density estimation problem where we observe $n$ i.i.d.\ points $X_1,\dots,X_n \in \mathbb{R}^2$ from a 2D Gaussian mixture ($n=24,000$).
The goal is to recover the means and weights of this mixture by minimizing a regularized $L^2$ fitting criterion over the space of discrete positive measures as in~\cite{de2021supermix}.
The birth process acts as an \emph{exploration mechanism}. Following our theoretical birth criterion, the algorithm evaluates the Fréchet derivative of the objective (or the dual certificate)~$J'_\nu(t)$ at proposed locations.
When the first-order optimality condition is violated---that is, when~$J'_\nu(t)$ is negative in some region of the state space---new particles are spawned in these promising areas.
Empirically, the continuous infusion of mass in regions of negative gradient allows the interacting particle system to efficiently escape poor local configurations and quickly recover all the true modes of the synthetic distribution (see Figure~\ref{fig:gmm_positions}).

\paragraph{Loss function and Kernel}
The goal of the optimization is to minimize the regularized~$L^2$ distance between the smoothed empirical measure $\hat{f}_\tau^n = \frac{1}{n}\sum_{i=1}^n \phi_\tau(\cdot - X_i)$ and the model.
For a discrete positive measure $\nu = \sum_j \omega_j \delta_{t_j}$, the objective is given by:
\begin{equation}
    J(\nu) = \frac{1}{2}\|\phi_\tau * \nu - \hat{f}_\tau^n\|_{L^2(\mathbb{R}^2)}^2 + \kappa\|\nu\|_{\mathrm{TV}},
\end{equation}
where $\phi_\tau = \mathcal{N}(0, \tau^2 I_2)$ is a Gaussian smoothing kernel, see~\cite{de2021supermix} for further details on smoothing applied to GMM for BLASSO.
The associated feature map is $\varphi_t(x) = \mathcal{N}(x; t, (1+\tau^2)I_2)$, which yields the smooth Gram matrix $K(t_i, t_j) = \mathcal{N}(t_i; t_j, 2(1+\tau^2)I_2)$.

\paragraph{Verification of the assumptions}
Because the Gaussian kernel $K(t_i, t_j)$ is bounded, strictly positive, and twice continuously differentiable, this formulation perfectly satisfies the mathematical assumptions of the paper.
Most notably, it respects Assumption~$(\mathcal{H}_{\mathcal{P}})$ which requires $\mathcal{C}^2$ spatial smoothness for the theoretical descent lemma.

\subsection{Training two-layer neural networks on the California Housing dataset}
Next, we turn to a real-world dataset to assess the impact of the death process in typical machine learning applications.
We consider the regression task on the California housing dataset using a two-layer neural network with ReLU activations.
As the ReLU function is positively homogeneous, we can equivalently rewrite the network by normalizing the locations (the incoming weights of the neurons) to live on the unit Euclidean sphere $\cX = \mathbb{S}^{d-1}$.
The optimization is performed over this connected space by minimizing the regularized empirical risk with respect to the weights $\bm{W}$ and locations $\bm{T}$.
We use a stochastic gradient descent (SGD) scheme to update the weights and locations over a sequence of mini-batches.
In this context, while the birth process can introduce new neurons to increase the capacity of the network, the death process ensures that the size of the network remains controlled.
In practice, the death rule removes particles whose weight falls below a given threshold or whose contribution becomes negligible.
Our numerical experiments demonstrate that the death process harms neither the convergence nor the quality of the final predictive results.
Indeed, \emph{pruning the inactive neurons significantly reduces the computational burden and prevents over-parameterization without deteriorating the generalization error}.
This simple experiment perfectly illustrates the theoretical insights of the paper, indicating that the dynamic interactions and systematic particle suppressions do not prevent the system from reaching an optimal configuration.

\paragraph{Loss function and Kernel}
We apply the algorithm to a regression task on the California Housing dataset ($n=18,576$ samples, $d=8$ features).
The objective is to minimize the regularized Mean Squared Error (MSE):
\begin{equation}
    J(\nu) = \frac{1}{2} \|y - \Phi(\nu)\|_{\mathbb{H}}^2 + \kappa \|\nu\|_{\mathrm{TV}},
\end{equation}
with the prediction function being a two-layer ReLU network $f_\nu(x) = \Phi(\nu)(x)$.
Here, the feature map is given by affine ReLU neurons $\varphi_t(x) = \mathrm{ReLU}(\langle v, x\rangle + b)$, where the parameters $t = (v, b)$ are constrained to the unit ball $\bar{B}(0,1) \subset \mathbb{R}^{d+1}$. Concretely, the incoming weight vector $v$ is normalized to lie on the unit sphere $\mathbb{S}^{d-1}\subset\mathbb{R}^d$ (as described above) and the bias $b\in[-1,1]$; the pair $(v,b)$ therefore belongs to $\mathbb{S}^{d-1}\times[-1,1]\subset \bar{B}(0,1)\subset\mathbb{R}^{d+1}$, reconciling the two descriptions.
The empirical kernel relies on the inner product over the $n$ observations: $K(t_i, t_j) = \frac{1}{n}\sum_{k=1}^n \mathrm{ReLU}(\langle v_i, x_k\rangle + b_i)\mathrm{ReLU}(\langle v_j, x_k\rangle + b_j)$.

\paragraph{Verification of the assumptions}
Unlike the GMM experiment, the ReLU activation function is continuous but not everywhere differentiable.
Strictly speaking, it does not fully satisfy the bounded $\mathcal{C}^2$ spatial smoothness assumption~$(\mathcal{H}_{\mathcal{P}})$ established in the theoretical sections.
In practice, the gradients are computed using the ReLU subgradient, and the empirical method remains highly effective and robust despite this theoretical mismatch.

\subsection{Experimental Results and Discussion}

In this section, we present the empirical results of our experiments, highlighting the benefits of integrating the Birth and Death (BD) process into both Full-Batch and Stochastic Conic Particle Gradient Descent~(CPGD).

\paragraph{Birth and Death process}
Particles are removed from the support if their contribution becomes negligible or falls into regions where the dual certificate is strongly positive.
Specifically, a particle at $t_i$ with weight~$\omega_i$ is pruned if $J'_\nu(t_i) / \omega_i > \tau_{\text{death}}$.
Particles are added by evaluating $J'_\nu(t)$ on a set of randomly sampled candidate positions to find regions~$\mathcal{N}_\nu$ where the first-order optimality condition is violated.
For the stochastic setting, the targeted birth level is set to spawn particles where $J'_\nu(t) < \tau_{\text{birth}} \sqrt{\log(m_k) / m_k}$, where $m_k$ is the mini-batch size.

\subsubsection{Dynamics of the Birth and Death Process}

To understand how the network capacity adapts during training, we plot the number of active particles over the optimization iterations in Figure~\ref{fig:bd_events}.

\begin{figure}[htpb]
    \centering
    \includegraphics[width=0.48\linewidth]{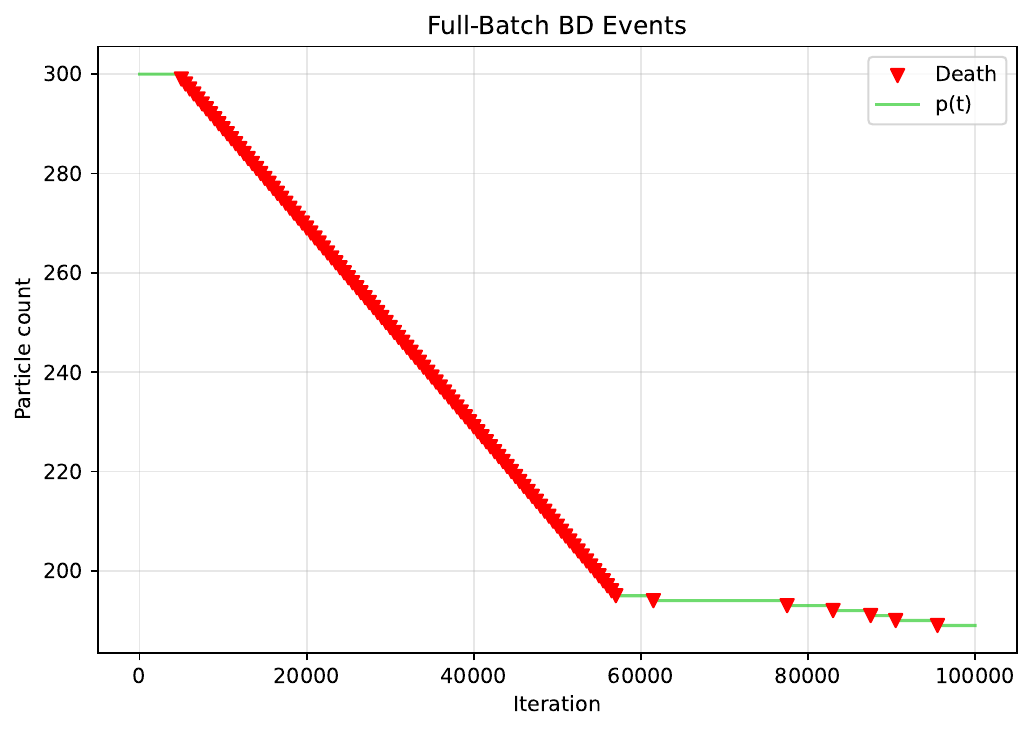}
    \includegraphics[width=0.48\linewidth]{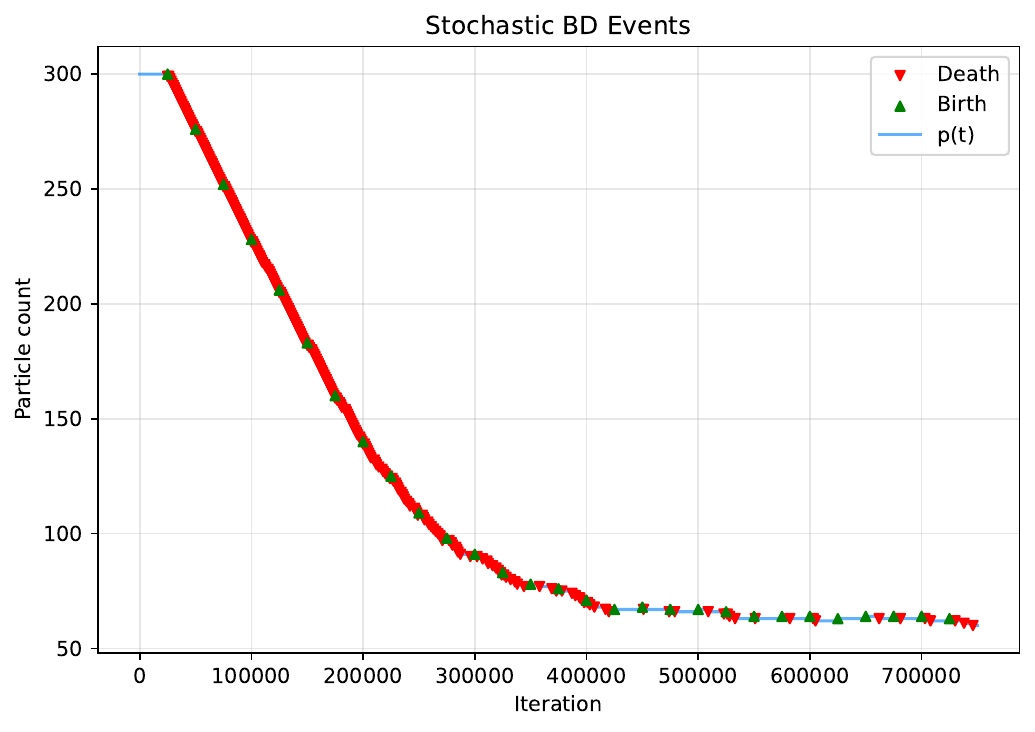}
    \includegraphics[width=0.48\linewidth]{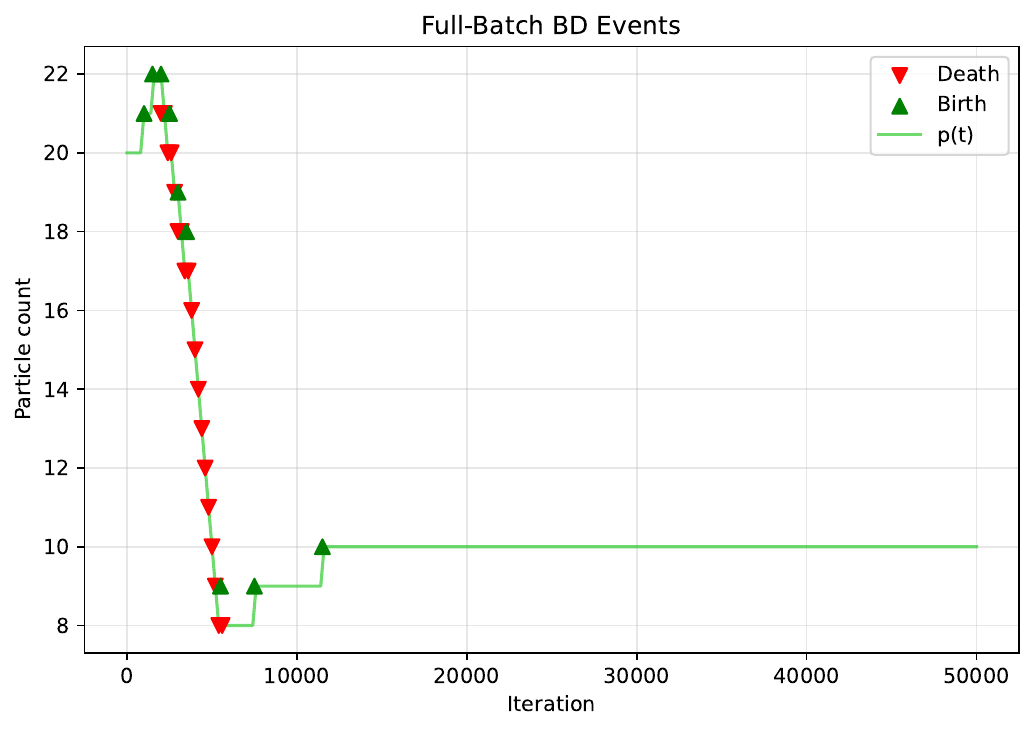}
    \includegraphics[width=0.48\linewidth]{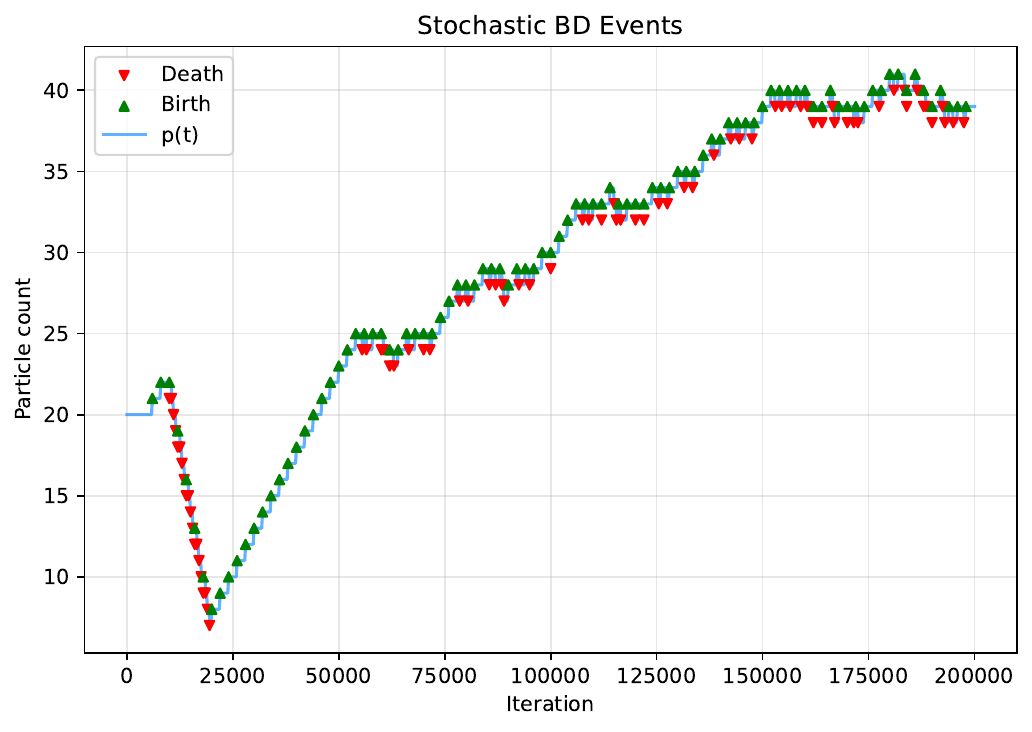}
    \caption{Particle count over iterations for GMM (bottom) and Neural Network (top) regression tasks.
    }
    \label{fig:bd_events}
\end{figure}

The dynamic capacity of the model is visible through the sharp drops (Death events pruning inactive particles) and spikes (Birth events) in Figure~\ref{fig:bd_events}.
In the Neural Network example (top row) we see that an over-parametrized layer is reduced by $50$--$70\%$ by the death process.
In the bottom row (GMM toy example), the right panel shows a pruning of spurious particles and an exploration phase (as in the left panel) which ends with a stabilization around $p=39$ (and $p=10$ in the left panel).

\newpage

\subsubsection{Convergence and Generalization}

Figure~\ref{fig:loss_vs_time} illustrates the temporal evolution of the optimization metrics.
We compare the standard CPGD methods against our proposed BD-augmented variants in terms of Wall-Clock time.

\begin{figure}[htpb]
    \centering
    \includegraphics[width=0.49\linewidth]{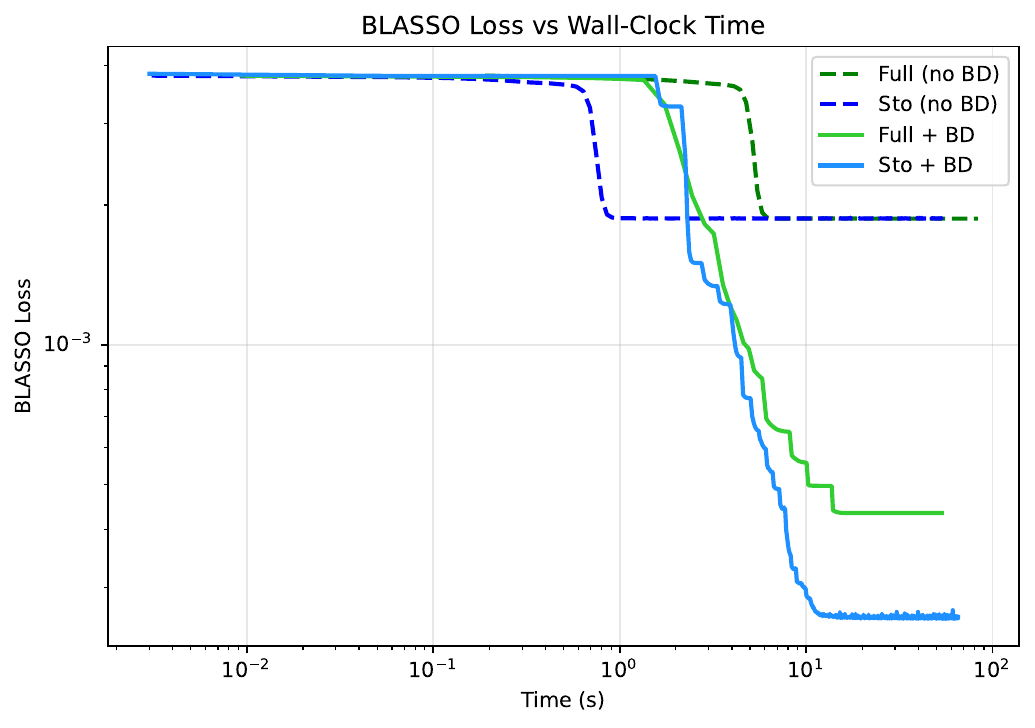}
    \includegraphics[width=0.49\linewidth]{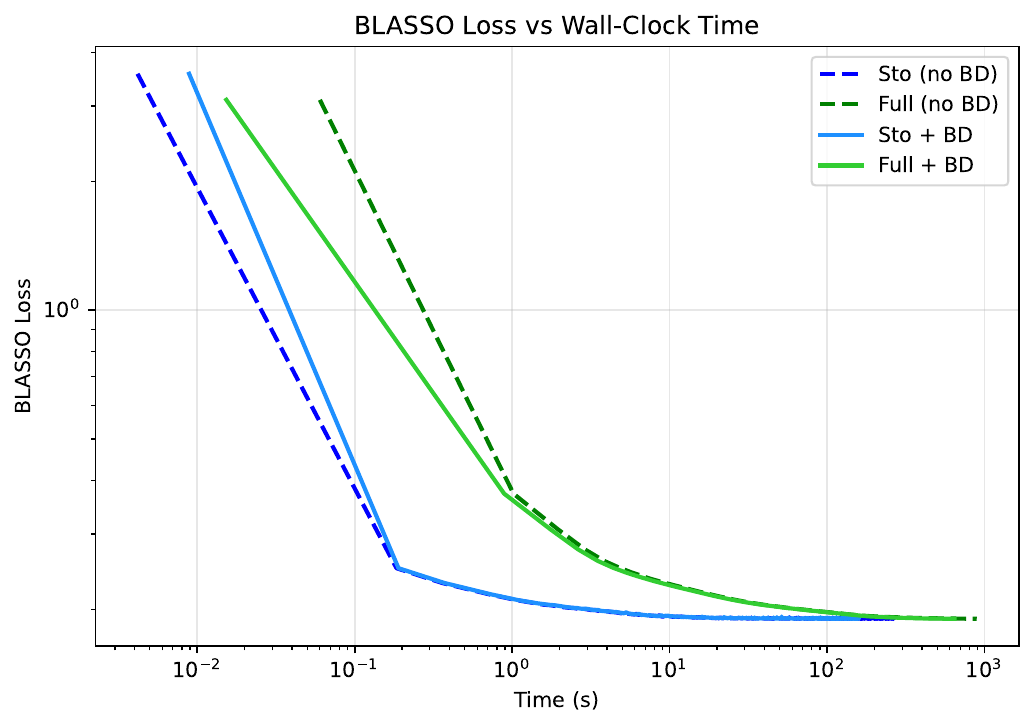}
    \caption{BLASSO loss evaluated on the GMM (left) and Neural Network (right) regression tasks.}
    \label{fig:loss_vs_time}
\end{figure}

As observed in Figure~\ref{fig:loss_vs_time} (left panel--GMM), the standard algorithms quickly plateau into local minima.
The BD mechanisms, however, periodically inject new particles in regions where the dual certificate is highly negative, allowing the loss to drop further. Also, the convergence is not altered by the death process (right panel--NN): the solution gets sparser and sparser (with fewer and fewer neurons) and achieves the same performance as the large neural network with $300$ hidden neurons.

\subsubsection{Spatial Distribution of Particles in GMM}

Finally, we visualize the end-state positions of the particles for the 2D Gaussian Mixture Model experiment in Figure~\ref{fig:gmm_positions}.
The baseline models (top row) suffer from the presence of spurious particles that fail to align with the true means, keeping the meaningful particle count significantly below $p=20$ (the number of initial particles).

\begin{figure}[htpb]
    \centering
    \includegraphics[width=0.49\linewidth]{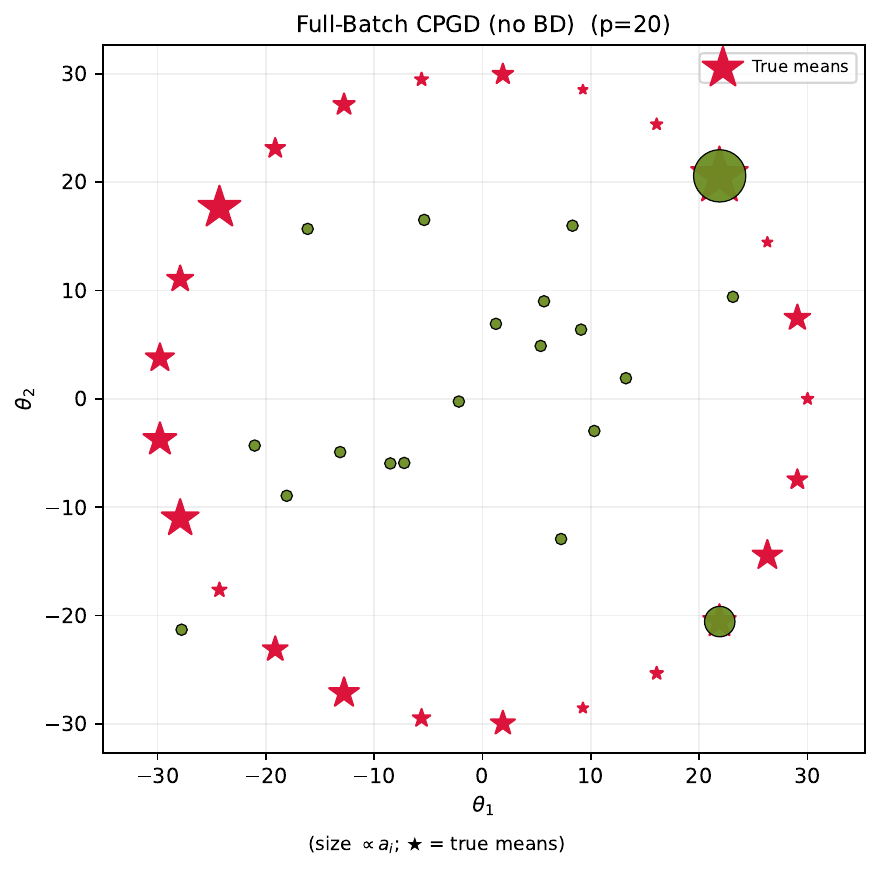}
    \includegraphics[width=0.49\linewidth]{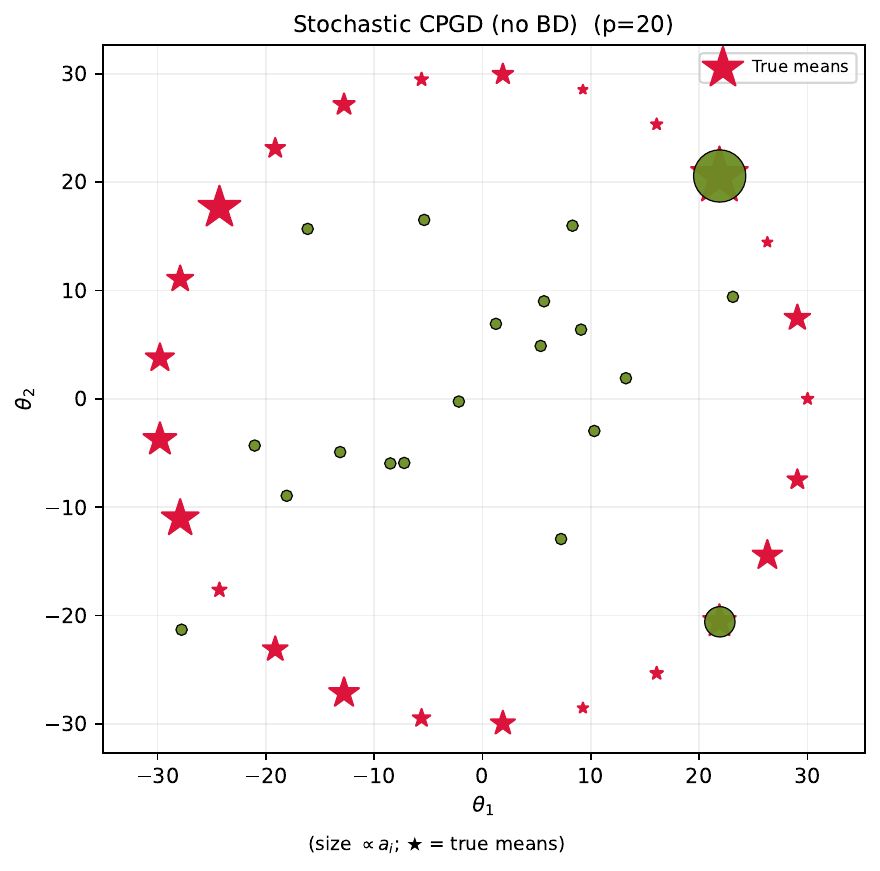}
    \includegraphics[width=0.49\linewidth]{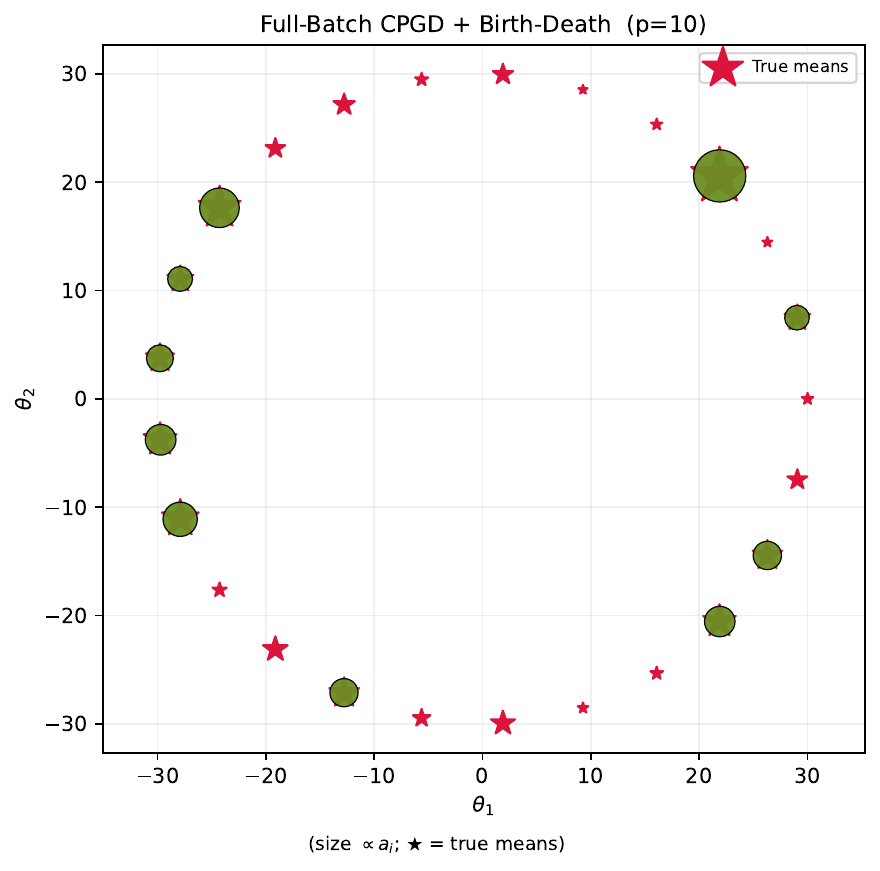}
    \includegraphics[width=0.49\linewidth]{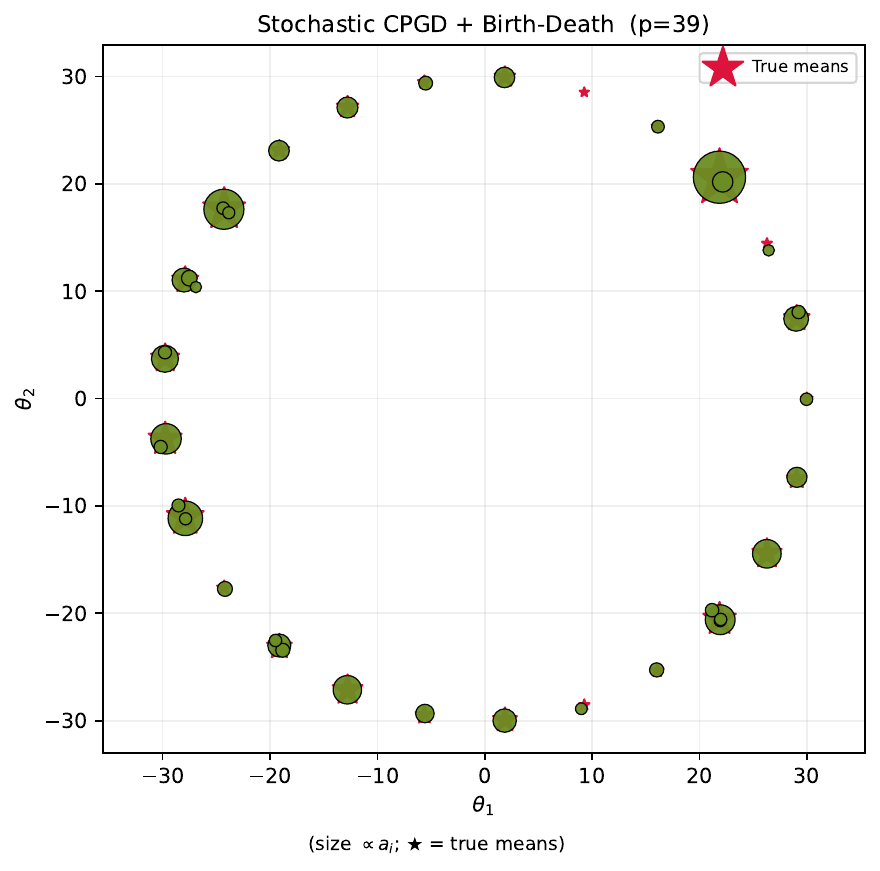}
    \caption{Final positions of the particles $t=(t_1, t_2)$ relative to the true cluster means (represented by cross marks) in the GMM experiment.
    Top row: baseline CPGD without BD. Bottom row: CPGD with BD.
    The size and color of the markers correspond to the weights $\omega_i$ of the particles.
    A common random seed has been fixed for shared initialization and batch sampling.}
    \label{fig:gmm_positions}
\end{figure}

\medskip

In contrast, the Full-Batch model equipped with the BD process (bottom-left) recovers the true means and cleanly prunes all unnecessary components, albeit yielding an under-representation with $p=10$ particles.
The stochastic counterpart (bottom-right) successfully maps out the target distribution and maintains a matching set of particles (all targets are identified except the smallest one, which has a weight of $0.0005$).

\newpage

\begin{table}[htpb]
    \centering
    \caption{Summary of shared hyperparameters across the two main experiments.}
    \label{tab:hyperparameters}
    \begin{tabular}{lcc}
        \toprule
        \textbf{Parameter} & \textbf{GMM (Fixed Covariance)} & \textbf{Neural Net. (California)} \\
        \midrule
        Train Samples ($n$) & $24,000$ & $18,576$ \\
        Input Dimension ($d$) & 2 & 8 \\
        Initial Particles ($p$) & 20 & 300 \\
        TV Regularization ($\kappa$) & $0.0001$ & $0.0005$ \\
        Full-Batch Size & $24,000$ & $18,576$ \\
        Full-Batch Iterations & $50,000$ & $100,000$ \\
        Stochastic Batch Size & 256 & 256 \\
        Stochastic Iterations & $200,000$ & $750,000$ \\
        Death Threshold ($\tau_{\text{death}}$) & 5.0 & 5.0 \\
        \bottomrule
    \end{tabular}
\end{table}

\subsubsection{Performance Analysis and Experimental Parameters}

To formalize the empirical observations, we report the exact hyper-parameters used for both the GMM and NN regression tasks in Table~\ref{tab:hyperparameters}, and the quantitative final results in Tables~\ref{tab:gmm_results} and \ref{tab:nn_results}.

\begin{table}[htpb]
    \centering
    \caption{Final results for the 2D GMM Experiment. The BD variations significantly lower the final loss and yield a better TV norm (true is $1$) while maintaining or reducing computation time.}
    \label{tab:gmm_results}
    \begin{tabular}{lcccccc}
        \toprule
        \textbf{Method} & \textbf{Loss} &  \textbf{TV} & $\mathbf{p_{\text{final}}}$ & \textbf{Time (s)} & \textbf{Deaths} & \textbf{Births} \\
        \midrule
        Full-Batch & 0.001869 & 0.2760 & 20 & 83.67 & 0 & 0 \\
        Stochastic & 0.001872 & 0.2863 & 20 & 55.02 & 0 & 0 \\
        Full-Batch + BD & 0.000434 & 0.7788 & 10 & 61.88 & 19 & 9 \\
        Stochastic + BD & \textbf{0.000259} &  \textbf{0.9786} & 39 & 65.23 & 78 & 97 \\
        \bottomrule
    \end{tabular}
\end{table}

In the GMM experiment, the dataset consists of $n=24,000$ training samples generated from $K=25$ true components.
The optimization regularizes the total variation (TV) norm with $\kappa = 0.0001$, starting from an initial set of $p=20$ particles.
As shown in Table~\ref{tab:gmm_results}, integrating the Birth-Death (BD) process yields a drastic improvement in the final BLASSO loss.
For example, the Full-Batch variant sees its loss drop from $0.001869$ to $0.000434$ while trimming the final number of particles down to $p=10$.
The Stochastic+BD method provides the lowest overall loss ($0.000259$) and proxy test error, managing a highly dynamic capacity (78 deaths and 97 births) to settle at $p=39$ particles.

\begin{table}[htpb]
    \centering
    \caption{Final results for the Two-Layer Neural Network regression on the California Housing dataset. The BD process yields highly parsimonious networks and faster training times without sacrificing Test MSE.}
    \label{tab:nn_results}
    {
    \begin{tabular}{lccccccc}
        \toprule
        \textbf{Method} %
        & \textbf{MSE} & \textbf{MSE (test)} & $\mathbf{p_{\text{final}}}$ & \textbf{Time (s)} & \textbf{Deaths} & \textbf{Births} \\
        \midrule
        Full-Batch %
        & 0.365476 & 0.393433 & 300 & 892.2 & 0 & 0 \\
        Stochastic %
        & 0.366231 & 0.393154 & 300 & 265.1 & 0 & 0 \\
        Full-Batch + BD %
        & \textbf{0.365506} & 0.393495 & 189 & 644.0 & 111 & 0 \\
        Stochastic + BD %
        & 0.366369 & \textbf{0.392429} & \textbf{60} & \textbf{158.9} & 269 & 29 \\
        \bottomrule
    \end{tabular}
    }
\end{table}

\newpage

For the California Housing NN regression, the model starts over-parameterized with $p=300$ initial particles (neurons) to learn from $n=18,576$ training samples with $d=8$ input features.
The regularization is set to $\kappa = 0.0005$. Table~\ref{tab:nn_results} highlights that the Stochastic+BD algorithm prunes the network down to just $p=60$ neurons (via 269 death events and 29 births). This significant reduction in parameters translates to a $40\%$ decrease in wall-clock training time (from $265.1$s to $158.9$s) compared to the standard Stochastic CPGD, all while achieving a slightly better test MSE of $0.392429$.
The Full-Batch+BD method similarly reduces both the particle count ($p=189$) and the training time ($644.0$s compared to $892.2$s).
In both experimental settings, the death threshold was consistently set to $\tau_{\text{death}} = 5.0$.

\newpage

\vskip 0.2in
\bibliography{biblio}

\newpage

\appendix

\section{Technical Lemmas}

\subsection{Existence of the Feature Map}
\begin{lem}
    \label{lem:dual_Phi}
    Let $\Phi\,:\,\mathcal{M}(\mathcal{X})\to\mathbb{H}$ be a \emph{bounded linear} and weak-* continuous operator, then its dual operator $\Phi^\star\,:\,\mathbb{H}\to\mathcal{C}(\mathcal{X})$ reads
    \[
    \Phi^\star\,:\,h\in\mathbb{H}\longmapsto \big(t\in\mathcal{X}\longmapsto\langle\varphi_t,h\rangle_{\mathcal H}\big)\in\mathcal{C}(\mathcal{X})\,,
    \]
    where we identified the pre-dual space $\mathcal{C}(\mathcal{X})$ as a subspace of the dual $\mathcal{M}(\mathcal{X})^\star$, and where $\varphi_t:=\Phi\, \delta_t$ with~$\delta_t$ the Dirac measure at $t\in\mathcal{X}$. 
    
    Moreover, for every $\mu\in\mathcal M(\mathcal X)$, one has the Bochner integral representation in $\mathbb{H}$:
    \[
        \Phi\mu \,=\, \int_{\mathcal X} \varphi_t\,\mathrm d\mu(t).
    \]
\end{lem}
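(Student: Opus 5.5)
We argue by duality between $\mathcal{C}(\mathcal{X})$ and $\mathcal{M}(\mathcal{X})$. Weak-* continuity is what makes the adjoint land in the predual $\mathcal{C}(\mathcal{X})$ rather than in the bidual $\mathcal{M}(\mathcal{X})^\star$.

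\textbf{Step 1: the adjoint lies in $\mathcal{C}(\mathcal{X})$.} Fix $h\in\mathbb{H}$. The linear functional $\ell_h:\mu\longmapsto\langle h,\Phi\mu\rangle_{\mathbb H}$ is the composition of $\Phi$, which is weak-* to weak continuous, with the weakly continuous functional $\langle h,\cdot\rangle$. Hence $\ell_h$ is weak-* continuous on $\mathcal{M}(\mathcal{X})$. A standard fact of locally convex duality says that the weak-* continuous linear functionals on the dual $X^\star$ of a normed space $X$ are exactly the evaluations at elements of $X$. Applied with $X=\mathcal{C}(\mathcal{X})$, this gives a unique $g_h\in\mathcal{C}(\mathcal{X})$ such that $\ell_h(\mu)=\int g_h\,\mathrm d\mu$ for every $\mu$. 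We set $\Phi^\star h:=g_h$; this operator is linear and bounded, with $\|g_h\|_\infty\le\|\Phi\|\,\|h\|$. Testing against $\mu=\delta_t$ gives
\[
g_h(t)=\langle h,\Phi\delta_t\rangle_{\mathbb H}=\langle \varphi_t,h\rangle_{\mathbb H},
\]
which is the claimed formula. As a by-product, $t\longmapsto\varphi_t$ is weakly continuous.

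\textbf{Step 2: Bochner measurability and integrability of $t\mapsto\varphi_t$.} The bound $\|\varphi_t\|\le\|\Phi\|$ holds because $\|\delta_t\|_{\mathrm{TV}}=1$. Since $\mathbb{H}$ is separable, Pettis' measurability theorem applies: a weakly measurable function (here even weakly continuous) with values in a separable space is strongly measurable. Being bounded, $t\mapsto\varphi_t$ is therefore Bochner integrable with respect to $|\mu|$ for every finite signed measure $\mu$. This defines $I(\mu):=\int\varphi_t\,\mathrm d\mu(t)\in\mathbb{H}$.

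\textbf{Step 3: identification $I(\mu)=\Phi\mu$.} Bochner integrals commute with bounded linear functionals. So for each $h\in\mathbb{H}$,
\[
\langle h,I(\mu)\rangle=\int\langle h,\varphi_t\rangle\,\mathrm d\mu(t)=\int g_h\,\mathrm d\mu=\langle h,\Phi\mu\rangle,
\]
where the middle equality is Step 1. Since this holds for all $h$, we conclude $I(\mu)=\Phi\mu$.

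The main obstacle is Step 1, namely justifying that weak-* continuity places $\ell_h$ in $\mathcal{C}(\mathcal{X})$. The point is that weak-* continuity is more than boundedness: it uses that the dual of $(\mathcal{M},\sigma(\mathcal{M},\mathcal{C}))$ is $\mathcal{C}$. If one prefers to avoid the general theorem, there is an elementary check of the continuity of $g_h$ directly. If $t_n\to t$, then $\delta_{t_n}\rightharpoonup^\ast\delta_t$, so $g_h(t_n)\to g_h(t)$; this uses only sequential weak-* continuity. One then identifies $\ell_h(\mu)=\int g_h\,\mathrm d\mu$ as follows. The identity holds on finite combinations of Diracs. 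These combinations are weak-* dense in $\mathcal{M}(\mathcal{X})$, and this density can be achieved by bounded nets. Both sides of the identity are weak-* continuous on bounded sets, so the identity extends to all of $\mathcal{M}(\mathcal{X})$.
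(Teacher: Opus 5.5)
Your proposal is correct and follows the paper's proof essentially step for step. Weak-* continuity of $\mu\mapsto\langle h,\Phi\mu\rangle_{\mathbb{H}}$, together with the identification of the dual of $(\mathcal{M}(\mathcal{X}),\sigma(\mathcal{M}(\mathcal{X}),\mathcal{C}(\mathcal{X})))$ with $\mathcal{C}(\mathcal{X})$, defines $\Phi^\star h$; evaluation at $\delta_t$ gives the formula; and Pettis' theorem (using separability of $\mathbb{H}$) plus testing against every $h$ yields the Bochner representation. Your optional elementary check (continuity of $g_h$ via $\delta_{t_n}\to\delta_t$, then extension by weak-* density of discrete measures, which can be done with bounded sequences since $\mathcal{X}$ is compact metric) is also valid and only makes the duality fact self-contained.
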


\begin{proof}
    Fix $h\in\mathbb{H}$ and consider the linear functional $L_h:\mathcal{M}(\mathcal{X})\to\mathbb R$ defined by $L_h(\mu):=\bracket{h,\Phi\mu}_\mathbb{H}$. Since~$\Phi$ is weak-* continuous and $h\longmapsto \bracket{h,\cdot}_\mathbb{H}$ is continuous on $\mathbb{H}$, the map $L_h$ is linear and weak-* continuous on $\mathcal{M}(\mathcal{X})$. By definition of the weak-* topology on a dual space $E^\star$ (here $E^\star = \mathcal{M}(\mathcal{X})$), the continuous linear functionals on $(E^\star, \text{weak-*})$ are precisely the evaluations by elements of the pre-dual space $E$ (here $E = \mathcal{C}(\mathcal{X})$). Formally, $(E^\star, \sigma(E^\star, E))^\star \cong E$. Hence, there exists a unique $f_h\in\mathcal{C}(\mathcal{X})$ such that
    \[
        \bracket{h,\Phi\mu}_\mathbb{H}\,=\,\bracket{f_h,\mu}_{\mathcal{C}(\mathcal{X}),\mathcal{M}(\mathcal{X})}\quad\text{for all }\mu\in\mathcal{M}(\mathcal{X}).
    \]
    Define $\Phi^\star h:=f_h\in\mathcal{C}(\mathcal{X})$. Then $\Phi^\star:\mathbb{H}\to\mathcal{C}(\mathcal{X})$ is linear and bounded, with
    \[
        \|\Phi^\star h\|_\infty\,=\,\sup_{\|\mu\|_{\mathrm{TV}}\le 1}\big|\bracket{\Phi^\star h,\mu}\big|\,=\,\sup_{\|\mu\|_{\mathrm{TV}}\le 1}\big|\bracket{h,\Phi\mu}_\mathbb{H}\big|\,\le\,\|h\|_\mathbb{H}\,\|\Phi\|.
    \]
    For $t\in\mathcal{X}$, let $\delta_t$ be the Dirac measure at $t$ and set $\varphi_t:=\Phi\delta_t\in\mathbb{H}$. Evaluating the identity at $\mu=\delta_t$ yields
    \begin{equation}
\notag
                (\Phi^\star h)(t)\,=\,\bracket{\Phi^\star h,\delta_t}\,=\,\bracket{h,\Phi\delta_t}_\mathbb{H}\,=\,\bracket{\varphi_t,h}_\mathbb{H}.
    \end{equation}
    Hence $\Phi^\star h$ is precisely the continuous function $t\longmapsto \bracket{\varphi_t,h}_\mathbb{H}$, as claimed.

    We now prove the integral representation. First, note that 
    \[
    \|\varphi_t\|_\mathbb{H}=\|\Phi\delta_t\|_\mathbb{H}\le \|\Phi\|\,\|\delta_t\|_{\mathrm{TV}}=\|\Phi\|
    \]
    for all $t\in\mathcal{X}$, so $t\longmapsto\varphi_t$ is bounded. Next, for each fixed $h\in\mathbb{H}$, we already identified $\Phi^\star h\in\mathcal{C}(\mathcal{X})$ and established $(\Phi^\star h)(t)=\bracket{\varphi_t,h}_\mathbb{H}$. Since $\Phi^\star h\in\mathcal{C}(\mathcal{X})$, the scalar map $t\longmapsto(\Phi^\star h)(t)=\bracket{\varphi_t,h}_\mathbb{H}$ is continuous on~$\mathcal{X}$. Hence $t\longmapsto\varphi_t$ is weakly continuous (i.e., all scalar evaluations $\bracket{\varphi_t,h}_\mathbb{H}$ are continuous in~$t$). Because $\mathbb{H}$ is separable, weak measurability/continuity implies strong (Bochner) measurability by Pettis' theorem, see~\cite[Theorem 1.1.6]{hytonen2016analysis}. Consequently, for any finite signed measure $\mu\in\mathcal{M}(\mathcal{X})$, the Bochner integral $\int_{\mathcal{X}}\varphi_t\,\mathrm d\mu(t)$ is well-defined in $\mathbb{H}$ and satisfies, for all $h\in\mathbb{H}$,
    \begin{equation}
        \label{eq:Fubini_Hilbert}
                \Big\langle h, \int_{\mathcal{X}}\varphi_t\,\mathrm d\mu(t) \Big\rangle_\mathbb{H} \,=\, \int_{\mathcal{X}} \bracket{h,\varphi_t}_\mathbb{H}\,\mathrm d\mu(t).
    \end{equation}
Using the identity $\bracket{h,\Phi\mu}_\mathbb{H}=\bracket{\Phi^\star h,\mu}=\int_{\mathcal{X}} (\Phi^\star h)(t)\,\mathrm d\mu(t)=\int_{\mathcal{X}}\bracket{h,\varphi_t}_\mathbb{H}\,\mathrm d\mu(t)$, we deduce that 
    \[
        \bracket{h,\Phi\mu}_\mathbb{H}=\big\langle h, \int_{\mathcal{X}}\varphi_t\,\mathrm d\mu(t) \big\rangle_\mathbb{H}
    \] 
for all $h\in\mathbb{H}$. By uniqueness of the Riesz representation in $\mathbb{H}$, this implies
    \[
        \Phi\mu \,=\, \int_{\mathcal{X}} \varphi_t\,\mathrm d\mu(t),
    \]
    which is the desired representation.
\end{proof}

\subsection{Lipschitz Continuity of the Feature Map}

\begin{lem}
\label{lem:lipschitz_feature_map}
Assume that the kernel $K$ satisfies Assumption ($\mathcal{H}_{\mathcal{P}}$), specifically that $K(t,t)=1$ for all $t \in \cX$ and that its second derivatives are bounded by $\CC_{\mathcal{P}}$. Then, the kernel metric $d_K$ satisfies the following Lipschitz inequality:
\begin{equation}
    d_K(s,t) := \|\varphi_t - \varphi_s\|_{\bbH} \leq \sqrt{\CC_{\mathcal{P}}} \|t-s\|\,,
\end{equation}
where $\|\cdot\|_{\bbH}$ denotes the norm in the Hilbert space $\bbH$ and $\|\cdot\|$ is the Euclidean norm.
\end{lem}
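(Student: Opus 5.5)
We expand the squared kernel distance and write it through the kernel itself. Since $\varphi_s,\varphi_t\in\bbH$ and $K(s,t)=\langle\varphi_s,\varphi_t\rangle_{\bbH}$, we have
\[
d_K(s,t)^2=\|\varphi_t-\varphi_s\|_{\bbH}^2 = K(t,t)+K(s,s)-2K(s,t) = 2\big(1-K(s,t)\big),
\]
where the last equality uses the normalization $K(\vt,\vt)=1$ from \eqref{eq:normalization_kernel}. The task is therefore to show $1-K(s,t)\le \tfrac{\CC_{\mathcal{P}}}{2}\|t-s\|^2$.

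Fix $t$ and consider $g(u):=K(u,t)$ as a function of the first variable on $\cX$. By the Cauchy--Schwarz inequality and $K(u,u)=1$ for all $u$, we have $g(u)\le \|\varphi_u\|_{\bbH}\|\varphi_t\|_{\bbH}=1=g(t)$. Hence $t$ is a maximizer of $g$ over $\cX$. To conclude $\nabla g(t)=0$ we need $t$ to be an interior point. For $t\in\mathrm{int}(\cX)$ this follows directly. For boundary points, we would instead use that $g$ is $\mathcal C^2$: either extend via density (since $\cX$ is the closure of its interior, prove the bound for interior $t$ and pass to the limit by continuity of both sides in $(s,t)$), or note that the symmetric function $u\mapsto K(u,u)-1\equiv 0$ gives $2\nabla_sK(u,u)=0$ along all of $\cX$, with directional derivatives taken within the convex set. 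The density route is the cleanest, and we adopt it.

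For interior $t$, we apply Taylor's formula with integral remainder along the segment $[t,s]\subset\cX$ (convexity):
\[
K(s,t)=g(s)=g(t)+\langle\nabla g(t),s-t\rangle+\int_0^1(1-\theta)\,(s-t)^\top\nabla_s^2K(t+\theta(s-t),t)\,(s-t)\,\mathrm d\theta .
\]
Since $g(t)=1$ and $\nabla g(t)=0$, the Hessian bound in \eqref{eq:upper_bound_kernel_gradient_hessian} gives $1-K(s,t)\le \tfrac12\CC_{\mathcal{P}}\|s-t\|^2$. Therefore $d_K(s,t)^2\le \CC_{\mathcal{P}}\|t-s\|^2$, and taking square roots yields the claim. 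Finally, we extend to boundary $t$ by continuity of $K$.

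The main obstacle is justifying $\nabla_sK(t,t)=0$, that is, first-order stationarity at the diagonal including near the boundary. The density/continuity argument handles this, since the constant does not depend on $t$.
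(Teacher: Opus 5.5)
Your proof is correct and follows the paper's argument: write $d_K(s,t)^2=2(1-K(s,t))$, use Cauchy--Schwarz and $K(u,u)=1$ to see that $K(\cdot,t)$ is maximized at $t$ so its gradient vanishes there, and bound the second-order Taylor remainder by $\tfrac12\CC_{\mathcal{P}}\|s-t\|^2$. Your density argument for boundary points $t$ adds a small piece of care that the paper skips, since the paper invokes stationarity at the maximizer without checking that $t$ is interior; it is valid because $\cX$ is convex and equals the closure of its interior.
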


\begin{proof}
By the definition of the kernel metric $d_K$ and the relation $K(s,t) = \langle \varphi_s, \varphi_t \rangle_{\bbH}$, we have:
\begin{align*}
    d_K(s,t)^2 &= \|\varphi_t - \varphi_s\|_{\bbH}^2 \\
    &= \langle \varphi_t - \varphi_s, \varphi_t - \varphi_s \rangle_{\bbH} \\
    &= \langle \varphi_t, \varphi_t \rangle_{\bbH} - 2\langle \varphi_t, \varphi_s \rangle_{\bbH} + \langle \varphi_s, \varphi_s \rangle_{\bbH} \\
    &= K(t,t) - 2K(s,t) + K(s,s)\,.
\end{align*}
Using the normalization property $K(u,u)=1$ for all $u \in \cX$ from Assumption ($\mathcal{H}_{\mathcal{P}}$), this simplifies to:
\begin{equation}
\label{eq:dist_kernel_identity}
    d_K(s,t)^2 = 2(1 - K(s,t))\,.
\end{equation}
Consider the function $g(s) := K(t,s)$. From the normalization and the Cauchy-Schwarz inequality, we know that $K(s,t) \leq \sqrt{K(s,s)K(t,t)} = 1$. Thus, $g(s)$ achieves its global maximum at $s=t$, implying that the gradient vanishes at this point: $\nabla_s K(t,s)|_{s=t} = 0$.

Applying a second-order Taylor expansion of $K(t,s)$ with respect to $s$ around $t$, there exists $\xi$ on the segment $[s,t]$ such that:
\[
    K(t,s) = K(t,t) + \langle \nabla_s K(t,t), s-t \rangle + \frac{1}{2} (s-t)^\top \nabla^2_s K(t,\xi) (s-t)\,.
\]
Substituting $K(t,t)=1$ and $\nabla_s K(t,t)=0$:
\[
    1 - K(t,s) = - \frac{1}{2} (s-t)^\top \nabla^2_s K(t,\xi) (s-t)\,.
\]
Substituting this back into \eqref{eq:dist_kernel_identity}:
\[
    d_K(s,t)^2 = - (s-t)^\top \nabla^2_s K(t,\xi) (s-t) = |(s-t)^\top \nabla^2_s K(t,\xi) (s-t)|\,.
\]
By Assumption ($\mathcal{H}_{\mathcal{P}}$), the Hessian is bounded, i.e., $\|\nabla^2_s K(\cdot,\cdot)\|_\infty \leq \CC_{\mathcal{P}}$. Therefore:
\[
    d_K(s,t)^2 \leq \CC_{\mathcal{P}} \|s-t\|^2 \implies d_K(s,t) \leq \sqrt{\CC_{\mathcal{P}}} \|s-t\|\,.
\]
\end{proof}

\paragraph{Comment on the Kernel Metric $d_K$:} The kernel metric $d_K(s,t) := \|\varphi_t - \varphi_s\|_{\bbH}$ measures the Hilbertian distance between data points after they have been mapped into the high-dimensional feature space $\bbH$. It defines the ``pullback'' geometry of the feature space onto the input space $\cX$. The lemma shows that for smooth kernels (specifically $C^2$ kernels like the Gaussian kernel), this map is Lipschitz continuous with respect to the Euclidean distance on $\cX$. This ensures that points close in the input space $\cX$ remain close in the feature space $\bbH$, a critical property for the complexity (with respect to the dimension) of the particle gradient descent algorithms discussed in the paper.

\subsection{Fréchet derivatives}

We consider the objective function $J(\nu)$ defined as the sum of a data-fitting term $R(\nu)$ and the total variation norm, i.e., $J(\nu) = R(\nu) + \kappa\|\nu\|_{\mathrm{TV}}$. The following lemmas establish the Fréchet derivatives of these components.

\begin{lem}[Derivative of the Risk Term]
\label{lem:1.2a}
Let $R(\nu) = \frac{1}{2} \| \int_{\cX} \varphi_{x} \mathrm{d}\nu(x) - y \|_{\mathbb{H}}^2$ be the risk functional defined on the space of measures $\mathcal{M}(\cX)$, where $x \longmapsto \varphi_x$ is the feature map into a Hilbert space $\mathbb{H}$ and $y \in \mathbb{H}$ is the target. Then, the Fréchet derivative of $R$ at $\nu$, denoted by $R'(\nu)$, is the function on $\cX$ given by:
\begin{equation}
\label{eq:1.2a}
\forall t \in \cX\,,\qquad
R'(\nu)(t) = \left\langle \int_{\cX} \varphi_{x} \mathrm{d}\nu(x) - y, \varphi_t \right\rangle_{\mathbb{H}}.
\end{equation}
\end{lem}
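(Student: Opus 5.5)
We compute $R(\nu+\sigma)-R(\nu)$ directly by expanding the squared Hilbert norm; no abstract differentiation is needed. By Lemma~\ref{lem:dual_Phi}, $\Phi$ is linear, so $\Phi(\nu+\sigma)=\Phi\nu+\Phi\sigma$. For $\nu,\sigma\in\mathcal{M}(\cX)$, expanding $\frac12\|\Phi\nu-y+\Phi\sigma\|_{\mathbb H}^2$ gives the exact identity
\[
R(\nu+\sigma)-R(\nu)=\langle \Phi\nu-y,\Phi\sigma\rangle_{\mathbb H}+\tfrac12\|\Phi\sigma\|_{\mathbb H}^2 .
\]

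The first term is the candidate linear part. To put it in the form of a function on $\cX$ paired with $\sigma$, we use the Bochner representation $\Phi\sigma=\int_\cX\varphi_t\,\mathrm d\sigma(t)$ from Lemma~\ref{lem:dual_Phi}. We then apply the exchange identity~\eqref{eq:Fubini_Hilbert} with $h=\Phi\nu-y$, which gives
\[
\langle \Phi\nu-y,\Phi\sigma\rangle_{\mathbb H}=\int_\cX\langle \Phi\nu-y,\varphi_t\rangle_{\mathbb H}\,\mathrm d\sigma(t)=\langle R'(\nu),\sigma\rangle .
\]
Here $R'(\nu)(t)=\langle\int\varphi_x\,\mathrm d\nu(x)-y,\varphi_t\rangle_{\mathbb H}$, as claimed in~\eqref{eq:1.2a}. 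This function equals $\Phi^\star(\Phi\nu-y)$, so Lemma~\ref{lem:dual_Phi} shows it lies in $\mathcal C(\cX)$. The map $\sigma\mapsto\langle R'(\nu),\sigma\rangle$ is therefore a bounded linear functional on $\mathcal{M}(\cX)$, with norm at most $\|\Phi\|\,\|\Phi\nu-y\|_{\mathbb H}$.

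It remains to show the remainder is $o(\|\sigma\|_{\mathrm{TV}})$. By boundedness of $\Phi$,
\[
\tfrac12\|\Phi\sigma\|_{\mathbb H}^2\le\tfrac12\|\Phi\|^2\|\sigma\|_{\mathrm{TV}}^2,
\]
and dividing by $\|\sigma\|_{\mathrm{TV}}$ sends this to zero as $\|\sigma\|_{\mathrm{TV}}\to0$. This proves Fréchet differentiability with derivative $R'(\nu)$.

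The only delicate step is identifying the $\mathbb H$-inner product with the integral against $\sigma$. That identity is supplied by the Bochner/Pettis argument already carried out in Lemma~\ref{lem:dual_Phi}, so once it is invoked the rest of the proof is routine. The exact expansion also yields, as a by-product, the quadratic part of~\eqref{eq:Frechet_Jprime}.
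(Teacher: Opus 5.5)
Your proposal is correct and takes essentially the same route as the paper: you expand the squared norm, rewrite the linear term as an integral against $\sigma$ via the exchange identity \eqref{eq:Fubini_Hilbert}, and bound the quadratic remainder by $O(\|\sigma\|_{\mathrm{TV}}^2)$. You are slightly more explicit than the paper, which asserts the remainder bound without justification and does not show that $R'(\nu)$ is continuous; you justify the first through $\|\Phi\|$ and the second through the identity $R'(\nu)=\Phi^\star(\Phi\nu-y)$.
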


\begin{proof}
Let $\nu \in \mathcal{M}(\cX)$ and consider a perturbation $\sigma \in \mathcal{M}(\cX)$. We expand the term $R(\nu + \sigma)$:
\begin{align*}
R(\nu + \sigma) &= \frac{1}{2} \left\| \int_{\cX} \varphi_{x} d(\nu + \sigma)(x) - y \right\|_{\mathbb{H}}^2 \\
&= \frac{1}{2} \left\| \left(\int_{\cX} \varphi_{x} \mathrm{d}\nu(x) - y\right) + \int_{\cX} \varphi_{x} d\sigma(x) \right\|_{\mathbb{H}}^2 \\
&= \frac{1}{2} \left\| \int_{\cX} \varphi_{x} \mathrm{d}\nu(x) - y \right\|_{\mathbb{H}}^2 + \left\langle \int_{\cX} \varphi_{x} \mathrm{d}\nu(x) - y, \int_{\cX} \varphi_{x} d\sigma(x) \right\rangle_{\mathbb{H}} 
+ \frac{1}{2} \left\| \int_{\cX} \varphi_{x} d\sigma(x) \right\|_{\mathbb{H}}^2.
\end{align*}
The first term is $R(\nu)$. The third term is of order $O(\|\sigma\|_{\mathrm{TV}}^2)$. The second term is the linear part in $\sigma$. Using~\eqref{eq:Fubini_Hilbert}, we can rewrite the inner product as:
$$
\left\langle \int_{\cX} \varphi_{x} \mathrm{d}\nu(x) - y, \int_{\cX} \varphi_{x} d\sigma(x) \right\rangle_{\mathbb{H}} = \int_{\cX} \left\langle \int_{\cX} \varphi_z \mathrm{d}\nu(z) - y, \varphi_{x} \right\rangle_{\mathbb{H}} d\sigma(x).
$$
This identifies the Fréchet derivative $R'(\nu)$ as the function $t \longmapsto \langle \int_{\cX} \varphi_{x} \mathrm{d}\nu(x) - y, \varphi_t \rangle_{\mathbb{H}}$, proving \eqref{eq:1.2a}.
\end{proof}

\begin{lem}[Derivative of the Regularization Term]
\label{lem:1.2b}
Consider the regularization term $H(\nu) = \kappa\nu(\cX)$ for non-negative measures $\nu \in \mathcal{M}_+(\cX)$ (which corresponds to the TV norm for non-negative measures). Its Fréchet derivative is constant:
\begin{equation}
\label{eq:1.2b}
\forall t \in \cX\,,\qquad
H'(\nu)(t) = \kappa\,.
\end{equation}
\end{lem}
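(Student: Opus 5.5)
The plan is to compute the first variation of $H$ directly, mirroring the proof of Lemma~\ref{lem:1.2a} for the risk term. Fix $\nu\in\mathcal{M}_+(\cX)$ and a perturbation $\sigma\in\mathcal{M}(\cX)$ such that $\nu+\sigma\in\mathcal{M}_+(\cX)$. This is the admissible class of directions used in~\eqref{eq:Frechet_Jprime}, and within it $H$ is the restriction of the linear functional $\mu\longmapsto\kappa\,\mu(\cX)$.

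\textbf{Step 1.} By additivity of measures,
\[
H(\nu+\sigma)-H(\nu)=\kappa\,(\nu+\sigma)(\cX)-\kappa\,\nu(\cX)=\kappa\,\sigma(\cX)=\int_\cX \kappa\,\mathrm d\sigma(t)=\langle \kappa\,\bm{1}_\cX,\sigma\rangle .
\]
Here $\bm{1}_\cX$ is the constant function, which lies in $\mathcal{C}(\cX)$, and the pairing is the $\mathcal{C}(\cX)$--$\mathcal{M}(\cX)$ duality.

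\textbf{Step 2.} The map $\sigma\longmapsto\kappa\,\sigma(\cX)$ is linear. It is also bounded, since $|\sigma(\cX)|\le\|\sigma\|_{\mathrm{TV}}$. The remainder after subtracting it is identically zero, hence trivially $o(\|\sigma\|_{\mathrm{TV}})$. Therefore $H$ is Fréchet differentiable along admissible directions, and its derivative is represented by the constant function $t\longmapsto\kappa$, which is~\eqref{eq:1.2b}.

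\textbf{Step 3.} Combining this with Lemma~\ref{lem:1.2a} recovers~\eqref{eq:Frechet_J}. The second-order term $\tfrac12\|\Phi\sigma\|_{\mathbb H}^2$ in~\eqref{eq:Frechet_Jprime} comes entirely from the risk part, because $H$ contributes no remainder.

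The only delicate point is the domain. The TV norm is not differentiable on all of $\mathcal{M}(\cX)$, since its variation would involve the sign of the measure. This is why the statement restricts to the nonnegative cone, where $\|\mu\|_{\mathrm{TV}}=\mu(\cX)$ is linear, together with the symmetrization trick. I would therefore state explicitly that differentiability is understood along directions keeping $\nu+\sigma$ nonnegative. Beyond that, the argument is routine.
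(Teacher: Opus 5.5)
Your proposal is correct and follows the paper's proof essentially verbatim. You restrict to perturbations $\sigma$ with $\nu+\sigma\in\mathcal{M}_+(\cX)$, use linearity to get $H(\nu+\sigma)-H(\nu)=\kappa\,\sigma(\cX)=\int_\cX\kappa\,\mathrm d\sigma$, and read off the constant representative $t\longmapsto\kappa$. Your extra remarks (bounded linear part, zero remainder, and recovering~\eqref{eq:Frechet_J} together with Lemma~\ref{lem:1.2a}) are accurate additions, not a different route.
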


\begin{proof}
Let $\nu \in \mathcal{M}_+(\cX)$ and let $\sigma$ be a perturbation such that $\nu + \sigma \in \mathcal{M}_+(\cX)$. The functional $H$ is linear:
$$
H(\nu + \sigma) = \kappa(\nu(\cX) + \sigma(\cX)) = \kappa\nu(\cX) + \kappa\sigma(\cX).
$$
We can write $\kappa\sigma(\cX)$ as the integral against the constant function $\kappa$:
$$
\kappa\sigma(\cX) = \int_{\cX} \kappa\, d\sigma(x).
$$
Thus, the linear variation is represented by the constant function $t \longmapsto \kappa$. Therefore, the Fréchet derivative is $H'(\nu)(t) = \kappa$ for all $t \in \cX$.
\end{proof}

\subsection{Symmetrization trick}
\label{sec:symmetrization}

Following \citet{chizat2022sparse}, we address the optimization problem over the space of signed measures $\mathcal{M}(\mathcal{X})$ by lifting it to the space of non-negative measures on an augmented domain. We introduce the extended space $\tilde{\mathcal{X}} := \mathcal{X} \times \{-1, +1\}$ and associate to any signed measure $\mu \in \mathcal{M}(\mathcal{X})$ a non-negative measure $\nu \in \mathcal{M}_+(\tilde{\mathcal{X}})$. The correspondence is established through the linear map $P: \mathcal{M}_+(\tilde{\mathcal{X}}) \to \mathcal{M}(\mathcal{X})$ defined by:
\begin{equation}
\notag
    \mu = P(\nu) := \nu(\cdot, +1) - \nu(\cdot, -1).
\end{equation}
In the context of the linear model where observations are given by $\int_{\mathcal{X}} \varphi_x \mathrm{d}\mu(x)$, we define the augmented feature map $\tilde{\varphi}: \tilde{\mathcal{X}} \to \mathbb{H}$ as $\tilde{\varphi}(x, s) := s \varphi(x)$ for any $(x,s) \in \tilde{\mathcal{X}}$. Consequently, the linear measurements satisfy:
\begin{equation}
\notag
    \int_{\mathcal{X}} \varphi(x) d\mu(x) = \int_{\tilde{\mathcal{X}}} \tilde{\varphi}(\tilde{x}) d\nu(\tilde{x}).
\end{equation}
Furthermore, the total variation norm satisfies $\|\mu\|_{\text{TV}} \leq \nu(\tilde{\mathcal{X}})$, with equality holding if and only if the positive and negative parts of $\mu$ have disjoint supports (which is verified for optimal solutions of sparse problems). This symmetrization allows us to solve the signed problem by applying the conic particle gradient descent algorithm to the non-negative measure $\nu$ on the space $\tilde{\mathcal{X}}$.

\subsection{Lipschitz Continuity of the Fréchet derivative}

By smoothness of the kernel~\eqref{eq:upper_bound_kernel_gradient_hessian}, the Fréchet derivative $J'_\nu$ is twice continuously differentiable for any $\nu$. Define for any twice continuously differentiable $\psi$:
\begin{equation}
   \|\psi\|_{\mathcal C^2(\cX)}:=\max\{\|\psi\|_\infty,\|\nabla\psi\|_\infty,\|\nabla^2\psi\|_\infty\}\,.
   \label{eq:normC2X}
\end{equation}
The next lemma gives an upper bound on $\|J'_\nu\|_{\mathcal C^2(\cX)}$. We already know from~\eqref{eq:lower_bound_DC} that $\|\nu\|_{\mathrm{TV}}+\CC_{\mathcal{P}}+\kappa\ge\|J_\nu'({t}) \|_\infty$.
\begin{lem}
\label{lem:lipschitz_J_prime}
Assume that Assumption $(\mathcal{H}_{\mathcal{P}})$ holds. For any measure $\nu \in \mathcal{M}(\mathcal{X})$, the function $t \longmapsto J'_\nu(t)$ is Lipschitz continuous with constant:
$$
\mathfrak{L}(\nu) =\|\nabla J'_\nu\|_\infty\leq {\sqrt{\CC_{\mathcal P}}} \,(\CC_{\mathcal P}+\|\nu\|_{\mathrm{TV}}).
$$
and the dual certificate is gradient-Lipschitz with constant:
\[
\|\nabla^2 J'_\nu\|_\infty\leq \CC_{\mathcal P}\bigl(\|\nu\|_{\mathrm{TV}}+\|y\|_{\mathbb{H}}\bigr),
\]
so that the full $\mathcal{C}^2$ norm satisfies:
\[
\|J'_\nu\|_{\mathcal C^2(\cX)}\leq \CC_{\mathcal P}\bigl(\|\nu\|_{\mathrm{TV}}+\CC_{\mathcal P}\bigr)+\kappa.
\]
\end{lem}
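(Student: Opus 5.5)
We will work directly from the explicit formula~\eqref{eq:Frechet_J}, written in kernel form as in~\eqref{eq:lower_bound_DC}:
\[
J'_\nu(t)=\int_\cX K(s,t)\,\mathrm d\nu(s)-\langle\varphi_t,y\rangle_{\mathbb H}+\kappa .
\]
The plan is to bound the $t$-derivatives of each term separately and uniformly in $t\in\cX$.

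\emph{Differentiation under the integral.} Since $\cX$ is compact, $K$ is $\mathcal C^2$, and $\nu$ is a finite measure, the first term can be differentiated under the integral sign (dominated convergence). This gives $\nabla\!\int K(s,t)\,\mathrm d\nu(s)=\int\nabla_t K(s,t)\,\mathrm d\nu(s)$, and similarly for the Hessian. By symmetry of $K$, these are the first-variable derivatives controlled in~\eqref{eq:upper_bound_kernel_gradient_hessian}. The resulting bounds are $\CC_{\mathcal P}\|\nu\|_{\mathrm{TV}}$ for both the gradient and the Hessian.

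\emph{Data term, first order.} The map $g(t)=\langle\varphi_t,y\rangle$ is handled through the feature map. By Lemma~\ref{lem:lipschitz_feature_map},
\[
|g(t)-g(s)|\le\|y\|_{\mathbb H}\,\|\varphi_t-\varphi_s\|_{\mathbb H}\le\sqrt{\CC_{\mathcal P}}\,\|y\|_{\mathbb H}\,\|t-s\|\le\sqrt{\CC_{\mathcal P}}\,\CC_{\mathcal P}\,\|t-s\| ,
\]
using $\|y\|_{\mathbb H}\le\CC_{\mathcal P}$.

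\emph{Measure term, matching the stated constant.} For the Lipschitz constant I would bound this term the same way rather than through $\|\nabla_tK\|_\infty$:
\[
\Big|\int\big(K(s,t)-K(s,t')\big)\,\mathrm d\nu(s)\Big|=\Big|\Big\langle\int\varphi_s\,\mathrm d\nu(s),\,\varphi_t-\varphi_{t'}\Big\rangle\Big|\le\|\nu\|_{\mathrm{TV}}\,\sqrt{\CC_{\mathcal P}}\,\|t-t'\| ,
\]
using $\|\varphi_s\|=\sqrt{K(s,s)}=1$. Adding the two pieces gives $\mathfrak L(\nu)\le\sqrt{\CC_{\mathcal P}}(\CC_{\mathcal P}+\|\nu\|_{\mathrm{TV}})$. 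Since $\cX$ is convex, equal to the closure of its interior, and $J'_\nu$ is $\mathcal C^1$, this Lipschitz constant equals $\|\nabla J'_\nu\|_\infty$ (mean value theorem on segments).

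\emph{Data term, second order: the main obstacle.} The Hessian of $g$ is $\langle\nabla^2\varphi_t,y\rangle$, so we must show that $t\mapsto\varphi_t$ is twice differentiable in $\mathbb H$ with $\|\partial_i\partial_j\varphi_t\|$ controlled by $\CC_{\mathcal P}$. What I would try first is the reproducing-kernel identity: norms of feature-map derivatives are mixed kernel derivatives, e.g. $\|\partial_i\varphi_t\|^2=\partial_{s_i}\partial_{t_i}K(s,t)|_{s=t}$. The difficulty is that these involve mixed derivatives of $K$, or fourth derivatives for second-order terms, which the assumption does not literally control. The fallback is to write $g$ via $y=\Phi\mu+r$ when $y$ lies in the range of $\Phi$, reducing to the kernel Hessian bound, and otherwise to interpret the assumption as bounding $\|\nabla^2\varphi_t\|$ in the same sense as Lemma~\ref{lem:lipschitz_feature_map}. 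In either case this yields the bound $\CC_{\mathcal P}\|y\|_{\mathbb H}$, hence $\|\nabla^2J'_\nu\|_\infty\le\CC_{\mathcal P}(\|\nu\|_{\mathrm{TV}}+\|y\|_{\mathbb H})$.

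\emph{Assembling the $\mathcal C^2$ norm.} Combine three bounds:
\begin{itemize}
\item the sup bound from~\eqref{eq:lower_bound_DC}, namely $\|J'_\nu\|_\infty\le\|\nu\|_{\mathrm{TV}}+\CC_{\mathcal P}+\kappa$;
\item the gradient bound $\sqrt{\CC_{\mathcal P}}(\CC_{\mathcal P}+\|\nu\|_{\mathrm{TV}})$;
\item the Hessian bound $\CC_{\mathcal P}(\|\nu\|_{\mathrm{TV}}+\CC_{\mathcal P})$.
\end{itemize}
Taking the maximum and using $\CC_{\mathcal P}\ge1$ (which holds because $K(t,t)=1\le\|K\|_\infty\le\CC_{\mathcal P}$), each bound is at most $\CC_{\mathcal P}(\|\nu\|_{\mathrm{TV}}+\CC_{\mathcal P})+\kappa$, as claimed.
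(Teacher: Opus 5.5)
\textbf{Genuine gap: the Hessian of the data term.} The gap is the step you flag yourself: the second-order bound on the data term $g(t)=\langle\varphi_t,y\rangle_{\mathbb H}$. The only assumption on $y$ is $\|y\|_{\mathbb H}\le\CC_{\mathcal P}$. So a bound $|u^\top\nabla^2 g(t)\,v|\le\CC_{\mathcal P}\|y\|_{\mathbb H}$ valid for all admissible $y$ is equivalent, after taking the supremum over $y$, to $\|D^2_t\varphi_t[u,v]\|_{\mathbb H}\le\CC_{\mathcal P}$ for unit $u,v$. That is a bound on $\partial^2_s\partial^2_tK$ on the diagonal, which $(\mathcal H_{\mathcal P})$ does not provide; it does not even guarantee that $t\mapsto\varphi_t$ is twice differentiable in $\mathbb H$.

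Neither fallback repairs this. If $y=\Phi\mu$, the kernel bound gives $\CC_{\mathcal P}\|\mu\|_{\mathrm{TV}}$. But $\|y\|_{\mathbb H}=\|\Phi\mu\|_{\mathbb H}\le\|\mu\|_{\mathrm{TV}}$ runs in the wrong direction to turn this into $\CC_{\mathcal P}\|y\|_{\mathbb H}$, and the residual $r$ is left untreated. The second fallback is simply a change of hypothesis. So the sentence ``in either case this yields the bound $\CC_{\mathcal P}\|y\|_{\mathbb H}$'' is unsupported.

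Everything else is sound. The Lipschitz part is the paper's argument split into two pieces. Your direct treatment of the measure term in the Hessian (differentiating under the integral and using symmetry of $K$) is rigorous. The remark $\CC_{\mathcal P}\ge 1$ is exactly what the final assembly of the $\mathcal C^2$ norm needs.

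\textbf{The paper's route does not close it either.} The paper adds a non-degeneracy assumption, $\overline{\mathrm{span}}\{\varphi_s:s\in\cX\}=\mathbb H$. It then asserts $\|D^2_t\varphi_t[u,v]\|_{\mathbb H}=\sup_{s\in\cX}|\langle\varphi_s,D^2_t\varphi_t[u,v]\rangle_{\mathbb H}|=\sup_s|u^\top\nabla^2_tK(s,t)\,v|\le\CC_{\mathcal P}$. Do not import this route. Density only identifies the norm with a supremum over the unit ball of the span. The set $\{\varphi_s\}$ is a much smaller subset of the unit sphere, so only ``$\ge$'' holds in the middle equality.

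A concrete counterexample: take the Gaussian kernel $K(s,t)=e^{-(s-t)^2/(2\sigma^2)}$ on an interval, with $\mathbb H$ its RKHS, where density holds by construction. Then $\|\partial^2_t\varphi_t\|^2_{\mathbb H}=\partial^2_s\partial^2_tK(s,t)|_{s=t}=3/\sigma^4$, whereas $\sup_s|\partial^2_tK(s,t)|=1/\sigma^2$. For $\sigma<1$ the choice $\CC_{\mathcal P}=1/\sigma^2$ is admissible. Taking $\nu=0$ and $y=\partial^2_t\varphi_{t_0}/\|\partial^2_t\varphi_{t_0}\|_{\mathbb H}$ gives $|\nabla^2J'_0(t_0)|=\sqrt3/\sigma^2>\CC_{\mathcal P}(\|\nu\|_{\mathrm{TV}}+\|y\|_{\mathbb H})$.

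So the obstacle you found is real, and the Hessian bound (hence the $\mathcal C^2$ bound) needs an explicit extra hypothesis, e.g.\ $\sup_t\|D^2_t\varphi_t[u,v]\|_{\mathbb H}\le\CC_{\mathcal P}$ for unit $u,v$. Under it your split argument closes immediately. State it as an assumption rather than claiming it follows from $(\mathcal H_{\mathcal P})$.
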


\begin{proof}
Throughout the proof we use the expression of the Fréchet derivative~\eqref{eq:Frechet_J}:
$$
J'_\nu(t) = \langle \Phi \nu - y,\, \varphi_t \rangle_{\mathbb{H}} + \kappa,
$$
and the common bound:%
\begin{equation}\label{eq:Phinu_minus_y_bound}
\|\Phi\nu - y\|_\mathbb{H} \leq \|\nu\|_{\mathrm{TV}} + \|y\|_\mathbb{H}.
\end{equation}

\medskip

\noindent
$\bullet$
For any $s,t\in\mathcal{X}$:
\begin{align*}
|J'_\nu(t)-J'_\nu(s)| &= |\langle \Phi\nu-y,\,\varphi_t-\varphi_s\rangle_\mathbb{H}|
\leq \|\Phi\nu-y\|_\mathbb{H}\,\|\varphi_t-\varphi_s\|_\mathbb{H}.
\end{align*}
By Lemma~\ref{lem:lipschitz_feature_map}, $\|\varphi_t-\varphi_s\|_\mathbb{H}\leq\sqrt{\CC_{\mathcal P}}\|t-s\|$.
Combined with \eqref{eq:Phinu_minus_y_bound}:
$$
|J'_\nu(t)-J'_\nu(s)| \leq \sqrt{\CC_{\mathcal P}}\,(\|\nu\|_{\mathrm{TV}}+\|y\|_\mathbb{H})\,\|t-s\|,
$$
establishing $\mathfrak{L}(\nu)=\|\nabla J'_\nu\|_\infty\leq\sqrt{\CC_{\mathcal P}}\,(\|\nu\|_{\mathrm{TV}}+\|y\|_\mathbb{H})$.

\medskip

\noindent
$\bullet$
Differentiating $J'_\nu(t)=\langle\Phi\nu-y,\varphi_t\rangle_\mathbb{H}+\kappa$ twice under the inner product (justified by the $\mathcal{C}^2$ smoothness of $t\mapsto\varphi_t$ under Assumption \eqref{eqs:hyp_HP}) gives, for any $t\in\mathcal{X}$ and unit vectors $u,v\in\mathbb{R}^d$:
$$
u^\top \nabla^2_t J'_\nu(t)\, v
= \langle \Phi\nu - y,\; D^2_t\varphi_t[u,v]\rangle_\mathbb{H}.
$$
By Cauchy-Schwarz and \eqref{eq:Phinu_minus_y_bound}:
$$
|u^\top \nabla^2_t J'_\nu(t)\,v|
\;\leq\; \|\Phi\nu-y\|_\mathbb{H}\;\|D^2_t\varphi_t[u,v]\|_\mathbb{H}
\;\leq\; (\|\nu\|_{\mathrm{TV}}+\|y\|_\mathbb{H})\;\|D^2_t\varphi_t[u,v]\|_\mathbb{H}.
$$
It remains to bound $\|D^2_t\varphi_t[u,v]\|_\mathbb{H}$.
Since $K(s,t)=\langle\varphi_s,\varphi_t\rangle_\mathbb{H}$, differentiating twice with respect to $t$ gives:
$$
u^\top\nabla^2_t K(s,t)\,v = \langle\varphi_s,\,D^2_t\varphi_t[u,v]\rangle_\mathbb{H}.
$$
Since $\varphi_s=\Phi\delta_s$, the closed linear span $\overline{\mathrm{span}}\{\varphi_s:s\in\mathcal{X}\}\subset\mathbb{H}$ coincides with the closure of $\Phi(\mathcal{M}(\mathcal{X}))$. We assume throughout that this closure equals the whole of $\mathbb{H}$ (the standard non-degeneracy condition for the RKHS associated with $K$). Then any unit vector $h\in\mathbb{H}$ is the $\mathbb{H}$-limit of finite linear combinations $\sum_i\alpha_i\varphi_{s_i}$, and by the Hahn--Banach theorem the operator norm of a continuous linear functional on $\mathbb{H}$ equals its supremum over $\overline{\mathrm{span}}\{\varphi_s\}$. Combined with $\|\varphi_s\|_\mathbb{H}=1$, this yields
$$
\|D^2_t\varphi_t[u,v]\|_\mathbb{H}
= \sup_{\|h\|_\mathbb{H}=1}\langle h,D^2_t\varphi_t[u,v]\rangle_\mathbb{H}
\;=\; \sup_{s\in\mathcal{X}}\big|\langle\varphi_s,D^2_t\varphi_t[u,v]\rangle_\mathbb{H}\big|
\;=\; \sup_{s\in\mathcal{X}}|u^\top\nabla^2_t K(s,t)\,v|
\;\leq\; \|\nabla^2 K\|_\infty \;\leq\; \CC_{\mathcal P}.
$$
Hence $\|\nabla^2 J'_\nu\|_\infty\leq\CC_{\mathcal P}(\|\nu\|_{\mathrm{TV}}+\|y\|_\mathbb{H})$.

\medskip

\noindent
$\bullet$
Combining with the $\|\cdot\|_\infty$ bound from~\eqref{eq:lower_bound_DC} and using $\|y\|_\mathbb{H}\leq\CC_\mathcal{P}$:
$$
\|J'_\nu\|_{\mathcal C^2(\cX)}
= \max\bigl(\|J'_\nu\|_\infty,\|\nabla J'_\nu\|_\infty,\|\nabla^2 J'_\nu\|_\infty\bigr)
\leq \CC_{\mathcal P}\bigl(\|\nu\|_{\mathrm{TV}}+\CC_{\mathcal P}\bigr)+\kappa.%
$$
\end{proof}

\subsection{Generalized descent}

Some useful properties of this generalized projected gradient can be found for instance in \cite{Ghadimi-Lan-Zhang}. In particular Lemma 1 of \cite{Ghadimi-Lan-Zhang} may be stated as follows.
\begin{lem}[\cite{Ghadimi-Lan-Zhang}\label{lem:Ghadimi-Lan-Zhang}]
The projection operator $\pi_{\cX}(\vt,{v},\beta)$ satisfies the two properties:
\begin{subequations}
\begin{itemize}
    \item  Correlation of the projected gradient and gradient lower bound:   for any $\vt \in \cX$, ${v} \in \bbR^d$ and $\beta>0$:
\begin{equation}
\langle  {v}, \pi_{\cX}(\vt,{v},\beta) \rangle \ge \|\pi_{\cX}(\vt,{v},\beta)\|^2\,.
\label{eq:Ghadimi1}
\end{equation}
\item $1$-Lipschitz inequality for projection:
 For any $\vt \in \cX$, $(v_1,v_2) \in \bbR^d$ and $\beta>0$:
\begin{equation}
\|\pi_{\cX}(\vt,v_1,\beta) -\pi_{\cX}(\vt,v_2,\beta) \|  \le \|v_1-v_2\|\,.
\label{eq:Ghadimi2}
\end{equation}
\end{itemize}
\end{subequations}
\end{lem}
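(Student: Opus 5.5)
The plan is to reduce both properties to the first-order optimality condition of the strongly convex problem defining $\vt^+_{t,v,\beta}$, together with the fact that this point is a Euclidean projection onto the compact convex set $\cX$.

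\textbf{Step 1: variational inequality.} Completing the square gives
\[
\langle u,v\rangle+\tfrac{1}{2\beta}\|u-\vt\|^2=\tfrac{1}{2\beta}\|u-(\vt-\beta v)\|^2+\text{const},
\]
where the constant does not depend on $u$. Hence $\vt^+_{t,v,\beta}=P_\cX(\vt-\beta v)$, the Euclidean projection onto $\cX$. It exists and is unique because $\cX$ is closed, convex and nonempty. The first-order optimality condition of a convex objective over a convex set reads
\[
\big\langle v+\tfrac1\beta(\vt^+_{t,v,\beta}-\vt),\,u-\vt^+_{t,v,\beta}\big\rangle\ge 0\qquad\forall u\in\cX .
\]
Substituting $\vt^+_{t,v,\beta}-\vt=-\beta\,\pi_\cX(\vt,v,\beta)$ from \eqref{eq:gradient_generalise}, this becomes
\[
\langle v-\pi_\cX(\vt,v,\beta),\,u-\vt^+_{t,v,\beta}\rangle\ge0\qquad\text{for all }u\in\cX.
\]

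\textbf{Step 2: proof of \eqref{eq:Ghadimi1}.} The key (and only delicate) observation is that $\vt$ itself belongs to $\cX$, so it is an admissible test point. Choosing $u=\vt$ gives $u-\vt^+_{t,v,\beta}=\beta\,\pi_\cX(\vt,v,\beta)$. The inequality becomes $\beta\langle v-\pi_\cX,\pi_\cX\rangle\ge0$. Dividing by $\beta>0$ yields \eqref{eq:Ghadimi1}.

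\textbf{Step 3: proof of \eqref{eq:Ghadimi2}.} Write $p_i:=\vt^+_{t,v_i,\beta}=P_\cX(\vt-\beta v_i)$ for $i=1,2$. Use the variational inequality for $v_1$ with test point $u=p_2$, and the one for $v_2$ with test point $u=p_1$. Adding the two gives
\[
\langle(\vt-\beta v_1-p_1)-(\vt-\beta v_2-p_2),\,p_2-p_1\rangle\le0 .
\]
Rearranging, $\|p_1-p_2\|^2\le\beta\langle v_2-v_1,\,p_1-p_2\rangle$, and Cauchy--Schwarz gives $\|p_1-p_2\|\le\beta\|v_1-v_2\|$. This is the usual nonexpansiveness of the projection. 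Since $\pi_\cX(\vt,v_1,\beta)-\pi_\cX(\vt,v_2,\beta)=(p_2-p_1)/\beta$, dividing by $\beta$ gives \eqref{eq:Ghadimi2}.

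No step is a real obstacle. The only points requiring care are justifying the variational characterization (convexity of $\cX$ and strong convexity of the objective) and remembering that $\vt\in\cX$ in Step 2.
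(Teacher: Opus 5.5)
Your proof is correct: the identification $\vt^+_{t,v,\beta}=P_{\cX}(\vt-\beta v)$, the choice of test point $u=\vt\in\cX$ in the variational inequality for \eqref{eq:Ghadimi1}, and summing the two variational inequalities with swapped test points for \eqref{eq:Ghadimi2} all check out. The paper gives no proof of its own and simply cites Ghadimi--Lan--Zhang, who prove these properties by the same optimality-condition argument in a more general Bregman/composite setting; your write-up is a self-contained specialization of it to the squared Euclidean distance with no nonsmooth term.
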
 

\section{Descent properties: technical results}
\label{app:tec}
The goal of this Appendix section is to provide some technical proofs of the key descent properties stated in Proposition \ref{prop:incre}.%

\begin{subequations}
\subsection{Proof of Proposition \ref{prop:incre}}
Our argument follows the same lines as in the proof of \cite[Lemma~2.5]{chizat2022sparse}.
We decompose $\nu^+$ as $\nu^+=\nu+(\tnu-\nu)+(\nu^+-\tnu)$, where $\tnu=\Tnua \nu$ and
$\nu^+=T^{\sharp}_{\nu,\beta} \tnu$.
Invoke~\eqref{eq:Frechet_Jprime} with $\sigma=\nu^+-\nu $ to get that
\begin{align*}
    \quad J(\nu^+)-J(\nu)&=\langle J'_\nu,\sigma\rangle + \frac12\|\Phi(\sigma)\|_\mathbb{H}^2\\
    &= \int_{\cX}J'_\nu(\mathrm{d}\nu^+-\mathrm{d}\tnu) + \int_{\cX}J'_\nu(\mathrm{d}\tnu-\mathrm{d}\nu) +\frac{1}{2}\left\|\int_{\cX}\varphi_\vt(\mathrm{d}\nu^+(\vt)-\mathrm{d}\nu(\vt))\right\|_\mathbb{H}^2 \\ 
    &\leq \underbrace{\int_{\cX}J'_\nu(\mathrm{d}\nu^+-\mathrm{d}\tnu)}_{:=A_1}+ \underbrace{\int_{\cX}J'_\nu(\mathrm{d}\tnu-\mathrm{d}\nu)}_{:=A_2}\\
    &\qquad + \underbrace{\left\|\int_{\cX}\varphi_\vt(\mathrm{d}\nu^+(\vt)-\mathrm{d}\tnu(\vt))\right\|_\mathbb{H}^2}_{:=B_1}
    + \underbrace{\left\|\int_{\cX}\varphi_\vt(\mathrm{d}\tnu(\vt)-\mathrm{d}\nu(\vt))\right\|_\mathbb{H}^2}_{:=B_2}
\end{align*}
In order to study the previous terms, we 
define $\delta=\alpha ( \|J'_\nu\|_{\mathcal C^2(\cX)} \vee 1)$.\\

\noindent
\underline{Study of $A_1$.} We use the proximal update defined in Equation \eqref{eq:gradient_generalise}:
\begin{align*}
    A_1 &= \int [J'_{\nu}(\vt-\beta \pi_{\cX}(\vt,\nabla J'_{\nu}(\vt),\beta)) - J'_{\nu}(\vt)] \text{d}\tnu(\vt)\\
    & \leq \int \left( - \beta \langle \pi_{\cX}(\vt,\nabla J'_{\nu}(\vt),\beta)),\nabla J'_{\nu}(\vt)\rangle + \frac{\beta^2 \|\pi_{\cX}(\vt,\nabla J'_{\nu}(\vt),\beta)\|^2 }{2} \|\nabla^2 J'_{\nu}\|_{\infty}\right)\text{d}\tnu(\vt).\\
\end{align*}
By Lemma~\ref{lem:lipschitz_J_prime},
$\|\nabla^2 J'_{\nu}\|_{\infty} \leq \CC_{\mathcal P}(\CTV+\CC_{\mathcal P})$.
By Lemma~\ref{lem:Ghadimi-Lan-Zhang} $ii)$ and Lemma~\ref{lem:lipschitz_J_prime},
$$\|\pi_{\cX}(\vt,\nabla J'_{\nu}(\vt),\beta)\|^2 \leq \|\nabla J'_{\nu}\|^2_{\infty} \leq \CC_{\mathcal P}(\CTV+\CC_{\mathcal P})^2.$$
Using $\int\|\pi_{\cX}\|^2\,\mathrm{d}\tnu\leq e^{\delta}\int\|\pi_{\cX}\|^2\,\mathrm{d}\nu$ (same ratio argument as \eqref{eq:descente-proj}), the Taylor remainder of $A_1$ satisfies:
\begin{equation}
\frac{\beta^2}{2}\|\nabla^2 J'_{\nu}\|_{\infty}\int_{\cX}\|\pi_{\cX}(\vt,\nabla J'_{\nu}(\vt),\beta)\|^2\,\mathrm{d}\tnu
\leq \frac{\beta}{2}\,\CC_{\mathcal P}(\CTV+\CC_{\mathcal P})\,e^{\delta}\,\|g_{\nu}^{\beta}\|^2_{L^2(\nu)}.
\label{eq:A1_remainder}
\end{equation}
We are led to study the first order term. Starting with \eqref{eq:Ghadimi1}, we get
\begin{align}
\int &- \beta \langle \pi_{\cX}(\vt,\nabla J'_{\nu}(\vt),\beta)),\nabla J'_{\nu}(\vt)\rangle \text{d}\tnu(\vt)\nonumber \\
& \leq - \beta \int \| \pi_{\cX}(\vt,\nabla J'_{\nu}(\vt),\beta))\|^2 \text{d}\tnu(\vt) \nonumber \\
&   \leq - \beta \int \| \pi_{\cX}(\vt,\nabla J'_{\nu}(\vt),\beta))\|^2 \text{d}\nu(\vt)
  + \beta \int \| \pi_{\cX}(\vt,\nabla J'_{\nu}(\vt),\beta))\|^2 \left|e^{-\alpha J'_{\nu}(\vt)}-1 \right|\text{d}\nu(\vt)\nonumber \\
  & \leq - \beta \int \| \pi_{\cX}(\vt,\nabla J'_{\nu}(\vt),\beta))\|^2 \text{d}\nu(\vt) + \alpha \beta 
  \int \| \pi_{\cX}(\vt,\nabla J'_{\nu}(\vt),\beta))\|^2  |J'_{\nu}(\vt)| e^{\alpha |J'_{\nu}(\vt)|} \text{d}\nu(\vt)\nonumber \\
  & \leq - \beta (1- \alpha (\kappa+\|\nu\|_{TV}+\|{y}\|_{\mathbb{H}})e^{\delta})\int \| \pi_{\cX}(\vt,\nabla J'_{\nu}(\vt),\beta))\|^2 \text{d}\nu(\vt),
  \label{eq:descente-proj}
\end{align}
where we have used \eqref{eq:Frechet_J} and rough upper bounds for the last inequality. \\

\noindent
\underline{Study of $A_2$.}
Observe that for any $\psi\in\mathcal C^2(\cX)$, one has
\begin{align}
\label{eq:decomposition-order-1}
    \int_{\cX}\psi(\mathrm{d}\tnu-\mathrm{d}\nu)
    &=\int_{\cX}\big(e^{-\alpha J'_\nu(\vt)}-1\big)\psi(\vt)\,\mathrm{d}\nu(\vt)
    \end{align}
We will use the standard inequality:
\begin{equation}
    \label{eq:expo}
    \forall u \in \mathbb{R} \qquad |e^{-u}-1+u| \leq \frac{u^2}{2} e^{|u|}
\end{equation}
and a first and second Taylor expansion on $\psi$. For any $(\vt,\vt+\bm{h})\in \mathcal{X}$:
\begin{equation}\label{eq:Taylor}
|\psi(\vt+\bm{h}) - \psi(\vt)| \leq \|\bm{h}\| \|\nabla \psi\|_{\infty} \quad \text{and} \quad
|\psi(\vt+\bm{h}) - \psi(\vt)-\langle \bm{h},\nabla \psi(\vt)\rangle| \leq \frac{\|\bm{h}\|^2}{2} \|\nabla^2\psi\|_{\infty}.
\end{equation}
We then use Equation \eqref{eq:expo} and Equation \eqref{eq:Taylor} and observe that
the first term of the right hand side of Equation \eqref{eq:decomposition-order-1} may be upper-bounded as follows:

\begin{eqnarray*}
    \big(e^{-\alpha J'_\nu(\vt)}-1\big)\psi(\vt)
    &=& - \alpha J'_\nu(\vt)\psi(\vt)+ \big(e^{-\alpha J'_\nu(\vt)}-1+\alpha J'_\nu(\vt) \big)\psi(\vt)\\
    & \leq &  - \alpha J'_\nu(\vt)\psi(\vt)  + \frac{\alpha^2 J'_\nu(\vt)^2}{2} e^{\alpha |J'_\nu(\vt)|} \|\psi\|_{\infty}.
\end{eqnarray*}
We then integrate with respect to $\text{d} \nu$ and get (while omitting the variable $\vt$ for the sake of readability):
\begin{align}
     &\int_{\cX}\psi(\mathrm{d}\tnu-\mathrm{d}\nu)  \leq - \int_{\cX}  \alpha J'_\nu\psi  
     \text{d} \nu 
     +  \underbrace{ \frac{\alpha^2 \|\psi\|_{\infty}}{2}  \int_{\cX} |J'_\nu|^2 e^{\alpha |J'_\nu|} \text{d} \nu }_{\text{Remainder}(\psi)} 
 \label{eq:decente-intermediaire}
\end{align}

\noindent
We are led to study the remainder term. Replacing $\psi$ by $J'_\nu$ in \eqref{eq:decente-intermediaire}, we get:
\begin{align*}
    \text{Remainder}(J'_\nu)   & \leq \frac{\alpha \|J'_\nu\|_{\infty} e^{\alpha \|J'_\nu\|_{\infty}}  }{2}
    \|g_{\nu}^{\alpha}\|^2_{L^2(\nu)}.
\end{align*}
Using $\delta=\alpha ( \|J'_\nu\|_{\mathcal C^2(\cX)} \vee 1)$, we then deduce the following bound: 
\begin{equation}
A_2= \int_{\cX}J'_\nu(\mathrm{d}\tnu-\mathrm{d}\nu) \leq - \|g_{\nu}^{\alpha}\|^2_{L^2(\nu)}
\left(1-\frac{\delta e^{\delta}}{2}\right).
\label{eq:appA}
\end{equation}
\noindent
\underline{Study of $B_1$.} To upper bound the second order terms, we use the $\sqrt{\CC_{\mathcal P}}$-Lipschitz continuity of $t\mapsto\varphi_t$ from Lemma~\ref{lem:lipschitz_feature_map}, so that $\|\varphi_t(a)-\varphi_t(b)\|_\mathbb{H}^2\leq\CC_{\mathcal P}\|a-b\|^2$:
\begin{align}
    B_1 & = \left\|\int_{\cX}\varphi_\vt(\mathrm{d}\nu^+(\vt)-\mathrm{d}\tnu(\vt))\right\|_\mathbb{H}^2 \nonumber\\
    & = \left\|\int_{\cX}\varphi_\vt(\vt-\beta \pi_{\cX}(\vt,\nabla J'_{\nu}(\vt),\beta)) -\varphi_\vt(\vt) \mathrm{d}\tnu(\vt)\right\|_\mathbb{H}^2 \nonumber\\
    & \leq \beta^2 \CC_{\mathcal P} \|\tnu\|_{TV}^2 \left(\int \| \pi_{\cX}(\vt,\nabla J'_{\nu}(\vt),\beta))\| \frac{\text{d}\tnu(\vt)}{\|\tnu\|_{TV}}\right)^2 \nonumber\\
    & \leq \beta^2 \CC_{\mathcal P} \|\tnu\|_{TV} \int \| \pi_{\cX}(\vt,\nabla J'_{\nu}(\vt),\beta))\|^2 \text{d}\tnu(\vt) \nonumber\\
    & \leq \beta^2 \CC_{\mathcal P} \CTV e^{2 \delta}\int \| \pi_{\cX}(\vt,\nabla J'_{\nu}(\vt),\beta))\|^2 \text{d}\nu(\vt) \label{eq:descente-proj2}
\end{align}
where we used Lemma \ref{lem:Ghadimi-Lan-Zhang} $ii)$ and the Jensen inequality on the normalized measure $\frac{\text{d}\tnu(\vt)}{\|\tnu\|_{TV}}$.%

\noindent
\underline{Study of $B_2$.} Since $K\leq 1$, for any signed measure $\mu=\mu^+-\mu^-$ one has $\|\Phi\mu\|_\mathbb{H}^2\leq(\mu^+(\cX)+\mu^-(\cX))^2=\|\mu\|_{\mathrm{TV}}^2$; hence:
\begin{align*}
\left\|\int_{\cX}\varphi_\vt(\mathrm{d}\tnu(\vt)-\mathrm{d}\nu(\vt))\right\|_\mathbb{H}^2
&=\int_\cX
\Big(\int_\cX
K(\vs,\vt)
\,(\mathrm{d}\tnu(\vs)-\mathrm{d}\nu(\vs))
\Big)
\,(\mathrm{d}\tnu(\vt)-\mathrm{d}\nu(\vt)) \\
&\leq \|\tnu-\nu\|_{\mathrm{TV}}^2\\
&=\Big(\int_{\cX}\big|e^{-\alpha J'_\nu(\vt)}-1\big|\,\mathrm{d}\nu(\vt)\Big)^2\\
&\leq \Big(\int_{\cX}
    \alpha |J'_\nu(\vt)| + \frac{\alpha^2 |J'_\nu(\vt)|^2}{2} e^{\alpha |J'_\nu(\vt)|}\,\mathrm{d}\nu(\vt)\Big)^2\\
&\leq 2 \| \nu\|_{\mathrm{TV}} \int_{\cX} \Big(\alpha^2 |J'_\nu(\vt)|^2 + \frac{\alpha^4 |J'_\nu(\vt)|^4}{4}e^{2 \alpha |J'_\nu(\vt)|} \Big)\mathrm{d}\nu(\vt),
\end{align*}
where we used in the last line the Cauchy-Schwarz inequality and $(u+v)^2 \leq 2 u^2+2v^2$.
Using again $\delta$ defined above, we deduce that:
\begin{equation}
B_2=\left\|\int_{\cX}\varphi_\vt(\mathrm{d}\tnu(\vt)-\mathrm{d}\nu(\vt))\right\|_\mathbb{H}^2 \leq 
2\alpha \CTV \|g_{\nu}^\alpha\|^2_{L^2(\nu)} \left( 1 +\frac{ \delta^2 e^{2 \delta}}{4}\right).
\label{eq:appB}
\end{equation}

Gathering \eqref{eq:descente-proj} with \eqref{eq:A1_remainder} and \eqref{eq:descente-proj2} on one side, and
\eqref{eq:appA} with \eqref{eq:appB} on the other side, we deduce the upper bound:
\begin{align*}
J(\nu^+)-J(\nu)& \leq  - \|g_{\nu}^\alpha\|^2_{L^2(\nu)} \left(1 -\frac{\delta e^{\delta}}{2} - 2\alpha \CTV \left( 1 +\frac{ \delta^2 e^{2 \delta}}{4}\right)\right) \\
& - \|g_{\nu}^\beta\|^2_{L^2(\nu)} \left( 1-\alpha (\kappa+\|\nu\|_{TV}+\|y\|_{\mathbb{H}})e^{\delta} - \beta \CC_{\mathcal P}\!\left(\CTV e^{2\delta}+\frac{(\CTV+\CC_{\mathcal P})\,e^{\delta}}{2}\right)\right).
\end{align*}
We are then led to choose
$$
\alpha < \frac{1}{10(1+ \nutv + \|{y}\|_{\mathbb{H}}+\kappa)(1\vee \CTV)},
$$
Such a constraint on $\alpha$ ensures that $ \delta e^{\delta} < \frac{1}{7}$, which occurs as soon as $\alpha$ is such that $\delta < \frac{1}{10}$. By Lemma~\ref{lem:lipschitz_J_prime}, since $\|\nu\|_\mathrm{TV}\leq\CTV$ and $\|y\|_\mathbb{H}\leq\CC_\mathcal{P}$:
\[
\|J'_\nu\|_{\mathcal C^2(\cX)}\leq \CC_{\mathcal P}(\CTV+\CC_{\mathcal P})+\kappa,
\]
so $\delta=\alpha(\|J'_\nu\|_{\mathcal C^2(\cX)}\vee 1)$ is small under the constraint on $\alpha$.
It is then straightforward to verify that:
$$
\delta e^{\delta} > \max\left(\alpha, \alpha \frac{\delta^2 e^{2 \delta}}{4}\right).$$
In the meantime, using $e^{\delta}\leq e^{1/5}$ and $e^{2\delta}\leq e^{1/5}$, the choice $\beta \leq \frac{1}{2\CC_{\mathcal P}(\CC_{\mathcal P}+3\CTV)\,e^{1/5}}$ yields
$$\beta\CC_{\mathcal P}\!\left(\CTV e^{2\delta}+\frac{(\CTV+\CC_{\mathcal P})\,e^{\delta}}{2}\right) \leq \frac{\beta\CC_{\mathcal P}\,e^{1/5}(3\CTV+\CC_{\mathcal P})}{2} \leq \frac{1}{4},$$
which finally entails:
\begin{equation}\label{eq:descente_finale}
J(\nu^+)-J(\nu) \leq  - \frac{3}{4} \left( \|g_{\nu}^\alpha\|^2_{L^2(\nu)}  +\|g_{\nu}^\beta\|^2_{L^2(\nu)} \right)
\end{equation}

\end{subequations}

\subsection{Linearization of the mirror descent strategy}

This section concerns conditional expectations on the mirror descent strategy.
Below, we state a general result on $\widehat{J'_{\nu}}(\vt) = \frac{1}{m} \sum_{l=1}^m
\widehat{J'_{\nu}}(\vt,Z_l)$ where $(Z_1,\ldots,Z_m)$ stands for a mini-batch sample of size $m$.   
\begin{proposition}\label{prop:hoeffding}
Assume that $\alpha \leq \sqrt{8\log 8}\, \MM^{-1}$, then:
\[
\forall \vt \in \cX: \qquad 
\left|\mathbb{E}\left[ e^{-\alpha \widehat{J'_{\nu}}(\vt,Z)} -(1-\alpha J'_{\nu}(\vt))\ \big|\ \nu  \right]\right| \leq \frac{\alpha^2 \MM^2}{m} e^{-\alpha J'_{\nu}(\vt)} + \frac{\alpha^2J'_{\nu}(\vt)^2}{2} e^{\alpha |J'_{\nu}(\vt)|}.
\]
    \end{proposition}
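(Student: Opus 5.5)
We split the quantity around the deterministic exponential $e^{-\alpha J'_\nu(\vt)}$:
\[
e^{-\alpha \widehat{J'_{\nu}}(\vt)}-(1-\alpha J'_\nu(\vt)) = e^{-\alpha J'_\nu(\vt)}\big(e^{-\alpha \bar\xi(\vt)}-1\big) + \big(e^{-\alpha J'_\nu(\vt)}-1+\alpha J'_\nu(\vt)\big),
\]
where $\bar\xi(\vt)=\frac1m\sum_{l=1}^m \xi_\nu(\vt,Z_l)$ is the averaged noise from the mini-batch \eqref{eq:minibatch}, so that $\widehat{J'_\nu}(\vt)=J'_\nu(\vt)+\bar\xi(\vt)$. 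Conditionally on $\nu$, the second bracket is deterministic. By \eqref{eq:expo} with $u=\alpha J'_\nu(\vt)$, its absolute value is at most $\frac{\alpha^2 J'_\nu(\vt)^2}{2}e^{\alpha|J'_\nu(\vt)|}$. This is exactly the second term of the claimed bound. It therefore remains to show
\[
0\le \mathbb{E}\big[e^{-\alpha\bar\xi(\vt)}\,\big|\,\nu\big]-1\le \frac{\alpha^2\MM^2}{m},
\]
after which the triangle inequality concludes.

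For this moment generating function bound, we use \eqref{A1}–\eqref{A2}. Conditionally on $\nu$, the variables $\xi_\nu(\vt,Z_l)$ are i.i.d., centered, and a.s.\ bounded by $\MM$. Hoeffding's lemma gives $\mathbb{E}[e^{-s\xi_\nu(\vt,Z_l)}]\le e^{s^2\MM^2/2}$ for every $s\in\mathbb{R}$. Applying it with $s=\alpha/m$ and using independence yields
\[
\mathbb{E}\big[e^{-\alpha\bar\xi(\vt)}\big]\le \exp\!\Big(\frac{\alpha^2\MM^2}{2m}\Big).
\]
Jensen's inequality and $\mathbb{E}\bar\xi=0$ give the lower bound $\ge 1$. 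We then linearize the upper bound with $e^x-1\le x e^x$. Setting $x=\alpha^2\MM^2/(2m)$, we get $e^{x}-1\le 2x$ as soon as $x\le\log 2$. This is where the hypothesis $\alpha\le\sqrt{8\log 8}\,\MM^{-1}$ enters: it gives $\alpha^2\MM^2/2\le 4\log 8$, so $x\le 4\log 8/m$. For $m\ge 6$ this is below $\log 2$.

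The main obstacle is the small-$m$ regime, and more generally matching the paper's exact constant. If the stated threshold only yields $e^x-1\le C x$ with some $C$ slightly larger than $2$, the constant would be $C\alpha^2\MM^2/(2m)$. A sharper route uses the exact second-order expansion instead of the exponential bound. For bounded centered variables with $|s\xi|\le a$, one has
\[
\mathbb{E}[e^{-s\xi}]-1\le \tfrac{s^2\MM^2}{2}\cdot\frac{e^{a}-1-a}{a^2/2},
\]
and one can carry this through the product $\prod_l(1+\cdot)$. I would try the Hoeffding/linearization route first and adjust the numeric condition on $\alpha$ only where needed to obtain the factor $\frac{\alpha^2\MM^2}{m}$. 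Multiplying by $e^{-\alpha J'_\nu(\vt)}>0$ gives the first term of the claimed bound, and adding the two pieces completes the proof.
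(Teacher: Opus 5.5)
Your decomposition (splitting off $e^{-\alpha J'_\nu(\vt)}$, bounding the deterministic bracket by \eqref{eq:expo}, then controlling $\mathbb{E}[e^{-\alpha\bar\xi}]$ from below by Jensen and from above by Hoeffding's lemma) is exactly the paper's proof. Your Hoeffding constant $e^{s^2\MM^2/2}$ for a centred variable with $|\xi|\le\MM$ is the correct one.

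The gap you flag in the small-$m$ regime is real, and it is worse than you suggest. First, an arithmetic slip: $4\log 8/m=12\log 2/m\le\log 2$ requires $m\ge 12$, not $m\ge 6$. Using the full range $x\lesssim 1.256$ on which $e^x-1\le 2x$ holds, you only reach $m\ge 7$.

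Second, and more importantly, the gap cannot be closed under the stated hypothesis, because the statement is false for small $m$. Take $\xi_\nu(\vt,Z)=\MM Z$ with $Z$ a Rademacher sign. This satisfies \eqref{A1}--\eqref{A2} for $\bm{H}$ large and does not change the Lipschitz constant. Take also a point with $J'_\nu(\vt)=0$ (e.g.\ on the support of a minimiser) and $m=1$. The left-hand side is $\cosh(\alpha\MM)-1$ and the right-hand side is $\alpha^2\MM^2$. At $\alpha\MM=\sqrt{8\log 8}\approx 4.08$ these are about $28.5$ and $16.6$. The same computation, comparing $\cosh(\alpha\MM/m)^m-1$ with $\alpha^2\MM^2/m$, shows failure for every $m\le 5$. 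By convexity, $\mathbb{E}e^{s\xi}\le\cosh(s\MM)$ for any centred $|\xi|\le\MM$, so this example is extremal. Hence no Bennett-type refinement of the kind you sketch can recover the stated constant.

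The paper's proof obtains the factor $1$ for all $m\ge 1$ only because it writes the Hoeffding bound as $e^{\alpha^2\MM^2/(8m)}$. That uses $(b-a)^2/8$ with $b-a=\MM$ instead of $2\MM$. With that incorrect exponent, the cap $\alpha\le\sqrt{8\log 8}\,\MM^{-1}$ gives $e^{\alpha^2\MM^2/(8m)}\le 8$, and hence $\frac{\alpha^2\MM^2}{8m}\cdot 8=\frac{\alpha^2\MM^2}{m}$.

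Your fallback is therefore the right conclusion, and you should commit to it rather than try to match the stated constant. There are two options:
\begin{itemize}
\item Tighten the cap to $\alpha\MM\le\sqrt{2\log 2}$. Then $x=\alpha^2\MM^2/(2m)\le\log 2$ for every $m\ge 1$, and $e^x-1\le xe^x\le 2x$ yields exactly $\alpha^2\MM^2/m$.
\item Keep the stated cap and accept a larger absolute constant. Since $e^x\le 8^{4/m}\le 8^4$, you get $2048\,\alpha^2\MM^2/m$.
\end{itemize}
Either corrected version is all that is used downstream, since the proof of Proposition~\ref{prop:hypsto4} only needs a term of the form $\CC\alpha^2/m_k$.
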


\begin{proof} Below, we compute the expectation with respect to the randomness brought by the mini-batch sample.
Let $\vt \in \mathcal{X}$. We begin with the following decomposition:
    \begin{align*}
    \lefteqn{\mathbb{E} \left[   \alpha  J'_{\nu}(\vt) + e^{-\alpha \widehat{J'_{\nu}}(\vt)} -  1   \right]}\\
    & = 
    \alpha  J'_{\nu}(\vt) - 1 + \mathbb{E}\left[e^{-\alpha \widehat{J'_{\nu}}(\vt)}  \right]  \\
    & = 
     \left[ \alpha  J'_{\nu}(\vt) - 1 + e^{-\alpha  J'_{\nu}(\vt)} \mathbb{E}\left[e^{-\alpha [\widehat{J'_{\nu}}(\vt)- J'_{\nu}(\vt)]}  \right]\right] \\
    & =   \left[ \alpha  J'_{\nu}(\vt ) - 1 + e^{-\alpha  J'_{\nu}(\vt)}\right]
     +   e^{-\alpha  J'_{\nu}(\vt)}
     \mathbb{E}\left[e^{-\alpha [\widehat{J'_{\nu}}(\vt)- J'_{\nu}(\vt)]} - 1  \right] \\
     & =  \left[ \alpha  J'_{\nu}(\vt ) - 1 + e^{-\alpha  J'_{\nu}(\vt)}\right] 
     +   e^{-\alpha  J'_{\nu}(\vt)}
     \mathbb{E}\left[e^{-\frac{\alpha}{m} \sum_{l=1}^m [\widehat{J'_{\nu}}(\vt,Z_l)- J'_{\nu}(\vt)]} - 1 
     \right]\\
     & =  \left[ \alpha  J'_{\nu}(\vt ) - 1 + e^{-\alpha  J'_{\nu}(\vt)}\right] 
     +   e^{-\alpha  J'_{\nu}(\vt)}\left(
     \left(\mathbb{E}\left[e^{-\frac{\alpha}{m} [\widehat{J'_{\nu}}(\vt,Z_l)- J'_{\nu}(\vt)]}\right]\right)^m - 1
     \right).
\end{align*}
To derive an upper bound, we use the inequality $|e^{-u}-1+u| \leq \frac{u^2}{2} e^{|u|}$ which holds for any $u\in \mathbb{R}$ for the first term and 
we apply the Hoeffding Lemma to the random variable $\widehat{J'_{\nu}}(\vt,Z)- J'_{\nu}(\vt)$, which is a centered random variable almost surely bounded by $\MM$. According to Assumption \eqref{A1}, we obtain that:
\[
\left|
\mathbb{E}\left[e^{-\alpha [\widehat{J'_{\nu}}(\vt,Z)- J'_{\nu}(\vt)]} - 1 \right] \right|\leq e^{\frac{\MM^2 \alpha^2}{8 m}} - 1 \leq \frac{\alpha^2 \MM^2}{8 m} e^{\frac{\alpha^2 \MM^2}{8 m}} \leq \frac{\alpha^2 \MM^2}{m} \\
\]
where thanks to our assumption on $\alpha$, we observe that $e^{\frac{\alpha^2 \MM^2}{8 m}} \leq 8$.
We finally obtain the desired upper bound.
\end{proof}

\section{Proofs of the deterministic results}
In this paragraph, we present all the proofs related to the deterministic results introduced at the beginning of our work. In particular, we establish the proof of the key results of Section \ref{s:birth-death-proc} 
and Section \ref{sec:conv_det}. 

\subsection{Total variation boundedness}
\label{s:proof_hypdist}

\begin{proof}[Proof of Proposition \ref{prop:hypdist}, $i)$]
We address \eqref{eq:HTVC}. %
Let $k\in \mathbb{N}$ be fixed. Our starting point is the relationship
\[
\nukpp  = \nukkp + \epk \bm{1}_{\mathcal{N}_{\nukp}} \lambda  
 = \left( 1 - \bm{1}_{\mathcal{P}_{\nukp}} \right) \nukp
+ \epk \bm{1}_{\mathcal{N}_{\nukp}} \lambda.
\]
Computing the total variation norm, we obtain:
\[
\| \nu_{k+1} \|_{\mathrm{TV}} \leq \|\nukp\|_{\mathrm{TV}} + \epk \lambda(\cX) \leq   \int_{\mathcal{X}} e^{-\alpha J_{\nu_k}'(\vt)} d\nu_k(\vt) + \max_{j\ge 0}\varepsilon_j\, \lambda(\mathcal{X}).
\]
According to Assumption \eqref{eqs:hyp_HP}, observe that for any $\vt\in \mathcal{X}$
\[
J_{\nu_{k}}'(\vt) = \int_\mathcal{X} \langle \varphi_\vt ,\varphi_\vs \rangle d\nu_k(\vs) - \langle {y},\varphi_\vt\rangle + \kappa \geq \cc_{\mathcal{P}} \| \nu_k\|_{\mathrm{TV}} - \| {y} \|_\mathbb{H} + \kappa \ge  - \| {y} \|_\mathbb{H} + \kappa  .
\]
We then deduce that thanks to the condition $\epk \leq \UU  \alpha$:
$$
\| \nu_{k+1} \|_{\mathrm{TV}}  \leq e^{-\alpha (\cc_{\mathcal{P}} \| \nuk \|_{\mathrm{TV}}- \| {y} \|_\mathbb{H}) }\| \nuk \|_{\mathrm{TV}} +\UU   \lambda(\mathcal{X}) \alpha
$$
We define $\mathfrak{M}$ as $\mathfrak{M} = \frac{2 }{\cc_{\mathcal{P}}} \| {y} \|_\mathbb{H} +  \sqrt{\frac{2 e \UU  \lambda(\cX)}{\cc_{\mathcal{P}}}}$ and we verify that
$$
u \ge \mathfrak{M} \Longrightarrow 
\cc_{\mathcal{P}} u - \| {y} \|_\mathbb{H} \ge \frac{\cc_{\mathcal{P}} u}{2}.
$$
We now consider the two different cases:
\begin{itemize}
    \item   If $\|\nuk\|_{\mathrm{TV}} \ge \mathfrak{M}$, then 
    \begin{align*}
\| \nu_{k+1} \|_{\mathrm{TV}} & \leq e^{-\alpha 
\frac{\cc_{\mathcal{P}}}{2} \|\nuk\|_{\mathrm{TV}}}   \| \nu_{k} \|_{\mathrm{TV}}   + \UU   \lambda(\mathcal{X}) \alpha \\
& = \| \nu_{k} \|_{\mathrm{TV}} - (1-e^{-\alpha 
\frac{\cc_{\mathcal{P}}}{2} \|\nuk\|_{\mathrm{TV}}} )  \| \nu_{k} \|_{\mathrm{TV}}   + \UU   \lambda(\mathcal{X}) \alpha \\
& = \| \nu_{k} \|_{\mathrm{TV}} -\frac{2}{\alpha \cc_{\mathcal{P}}} \varphi\left(\alpha 
\frac{\cc_{\mathcal{P}}}{2} \|\nuk\|_{\mathrm{TV}}\right)   + \UU   \lambda(\mathcal{X}) \alpha,
    \end{align*}
    where $\varphi$ is defined by $\varphi(t)=  t(1-e^{-t})$. We check that $\varphi$ is an increasing function
 so that when $\|\nuk\|_{\mathrm{TV}} \ge \mathfrak{M}$, then
    $$
    \| \nu_{k+1} \|_{\mathrm{TV}} \leq \|\nuk\|_{\mathrm{TV}} 
    - \frac{2}{\alpha \cc_{\mathcal{P}}} \varphi\left( \alpha 
\frac{\cc_{\mathcal{P}}}{2} \mathfrak{M} \right)
    + \UU   \lambda(\mathcal{X}) \alpha = \|\nuk\|_{\mathrm{TV}} 
     - \mathfrak{M} \left(1- e^{- \alpha 
\frac{\cc_{\mathcal{P}}}{2} \mathfrak{M}}\right) 
    + \UU   \lambda(\mathcal{X}) \alpha.
    $$
    Thanks to our condition on $\alpha$, we know that
    $\alpha \frac{\cc_{\mathcal{P}}}{2} \mathfrak{M} \leq 1,$ and the convex inequality
 when $t \in [0,1] \,: e ^{-t} \leq 1-t/e$ yields:   
      $$
    \| \nu_{k+1} \|_{\mathrm{TV}} \leq \|\nuk\|_{\mathrm{TV}} - \mathfrak{M} \times \frac{\alpha \frac{\cc_{\mathcal{P}}}{2} \mathfrak{M}}{e} + \UU   \lambda(\mathcal{X}) \alpha.
    $$
    Thanks to our definition of $\mathfrak{M}$, we then observe that in this case $
    \| \nu_{k+1} \|_{\mathrm{TV}} \leq \|\nuk\|_{\mathrm{TV}}$.
    \item   If $\|\nuk\|_{\mathrm{TV}} \le \mathfrak{M}$, then we use the straightforward upper bound:
    $$
     \| \nu_{k+1} \|_{\mathrm{TV}} \leq  \mathfrak{M} e^{\alpha \| {y} \|_\mathbb{H}} + \UU  \lambda(\cX) \alpha \leq  e \mathfrak{M} + \UU  \lambda(\cX) 
    $$
\end{itemize}
A direct induction argument then shows that
$$
\forall k \ge 0 \qquad \| \nu_{k} \|_{\mathrm{TV}} \leq \| \nu_{0} \|_{\mathrm{TV}} \vee\left( e \mathfrak{M}  +\UU  \lambda(\cX) \right).
$$
Recalling $\mathfrak{M} = \frac{2\|y\|_\mathbb{H}}{\cc_\mathcal{P}} + \sqrt{\frac{2e\lambda(\cX)}{\cc_\mathcal{P}}}$ and $\RR = \frac{e\|y\|_\mathbb{H}}{\cc_\mathcal{P}} + \sqrt{\frac{e^3\lambda(\cX)}{\cc_\mathcal{P}}} + \lambda(\cX)$, we have $e\mathfrak{M} + \lambda(\cX) = \frac{2e\|y\|_\mathbb{H}}{\cc_\mathcal{P}} + \sqrt{\frac{2e^3\lambda(\cX)}{\cc_\mathcal{P}}} + \lambda(\cX) \leq 2\RR$, since $\sqrt{2}\le 2$ and $\lambda(\cX)\le 2\lambda(\cX)$. The proof bound is thus at most the proposition's statement bound $\|\nu_0\|_\mathrm{TV}\vee 2\RR=:\CTV$, concluding the proof of the boundedness of the sequence $(\nuk)_{k \ge 0}$.
\end{proof}

\subsection{$\varepsilon$-smoothness evolution}
\label{s:proof_remark_perturbation}
In this paragraph, we now establish that our sequence $(\nuk)_{k \ge 0}$ verifies Assumption $\Heps$ given by Equation \eqref{eqs:H_eps_determinist}, \textit{i.e.} we establish the second part of  Proposition \ref{prop:hypdist}. We first prove the perturbation bound and the deletion-set inclusion stated without proof in Remark~\ref{rem:weight_update_perturbation}, on which the proof of Proposition~\ref{prop:hypdist}~$ii)$ relies.

\medskip

\begin{proof}[Proof of Remark~\ref{rem:weight_update_perturbation}]

\noindent\emph{Proof of~\eqref{eq:weight_update_perturbation}.} The Fréchet identity~\eqref{eq:Frechet_J}, with $\|\varphi_\vt\|_\mathbb{H}=1$, gives
\[
|J'_{\nukp}(\vt) - J'_{\nuk}(\vt)| \;\leq\; \int_\cX \big|e^{-\alpha J'_{\nuk}(s)} - 1\big|\, d\nuk(s).
\]
The uniform bound $\|J'_{\nuk}\|_\infty \leq \CC_\mathcal{P} + \CTV + \kappa$ (from~\eqref{eq:lower_bound_DC} and~\eqref{eq:HTVC}), the elementary inequality $|e^{-x}-1| \leq |x|\,e^{|x|}$, and $\|\nuk\|_{\mathrm{TV}}\leq\CTV$ now yield~\eqref{eq:weight_update_perturbation}.

\smallskip
\noindent\emph{Proof of~\eqref{eq:Pnukp_inclusion}.} For $\vt\in\mathcal{P}_{\nukp}$, definition~\eqref{def:nukpp_alt1} gives $J'_{\nukp}(\vt) > -2\alpha^{-1}\log\varepsilon_k + \CC_w$, and~\eqref{eq:weight_update_perturbation} then yields $J'_{\nuk}(\vt) \geq J'_{\nukp}(\vt) - \CC_w > -2\alpha^{-1}\log\varepsilon_k$.
\end{proof}

\medskip

\begin{proof}[Proof of Proposition \ref{prop:hypdist}, $ii)$]
Recall that the update defined in Section~\ref{sec:update} yields:    

\begin{equation}
    \nukpp := \nukkp + \epk \bm{1}_{\mathcal{N}_{\nukp}} \lambda = \nu_{k^+} (1 - \bm{1}_{\mathcal{P}_{\nukp}}) + \epk \bm{1}_{\mathcal{N}_{\nukp}} \lambda  ,
\end{equation}
where $\nukkp$ is a positive measure. 

\paragraph{Assumption $(\mathcal{H}_\varepsilon^+)$} The definition of $\mathcal{N}_{\nukp} = \{J'_{\nukp} \leq 0\}$  implies Assumption $\mathcal{H}_\varepsilon^+$. %

\paragraph{Assumption $(\mathcal{H}_\varepsilon^{\mathrm{smooth},1})$}
\label{s:proof_delet}

First, using \eqref{eq:Frechet_Jprime},
\begin{eqnarray*}
J(\nu_{k+1}) - J(\nu_{k^+})
& = & \int J_{\nu_{k^+}}' d(\nu_{k+1}- \nu_{k^+}) + \frac{1}{2} \| \Phi(\nu_{k+1}- \nu_{k^+}) \|_\mathbb{H}^2, \\
& \leq & - \int_{\mathcal{P}_{\nukp}} J_{\nu_{k^+}}' d\nu_{k^+} +\varepsilon_k \int_{\mathcal{N}_{\nukp}} J_{\nu_{k^+}}' d\lambda \\
&&\quad +  \| \Phi(\nu_{k^+} \bm{1}_{\mathcal{P}_{\nukp}} )\|_\mathbb{H}^2 + \| \Phi ( \epk \bm{1}_{\mathcal{N}_{\nukp}} \lambda) \|_\mathbb{H}^2 ,\\
& \leq &  \| \Phi(\nu_{k^+} \bm{1}_{\mathcal{P}_{\nukp}} )\|_\mathbb{H}^2 + \| \Phi ( \epk \bm{1}_{\mathcal{N}_{\nukp}} \lambda) \|_\mathbb{H}^2,
\end{eqnarray*}
since $\mathcal{P}_{\nukp} \subset \lbrace J_{\nu_k^+}' >0 \rbrace $ and $\mathcal{N}_{\nukp} \subset \lbrace J_{\nu_k^+}' <0 \rbrace$. Now, by Remark~\ref{rem:weight_update_perturbation}, it holds the following inclusion $\mathcal{P}_{\nukp} \subset \{J'_{\nuk} > -2\alpha^{-1}\log\varepsilon_k\}$, so that $e^{-\alpha J'_{\nuk}(\vt)} < \varepsilon_k^2$ on $\mathcal{P}_{\nukp}$. Now, using that $d\nukp(\vt) = e^{-\alpha J'_{\nuk}(\vt)} d\nuk(\vt)$ and $\|\varphi_\vt\|_\mathbb{H} = 1$:
\begin{eqnarray*}
\| \Phi(\nu_{k^+} \bm{1}_{\mathcal{P}_{\nukp}}) \|_\mathbb{H}^2
& = & \left\| \int_{\mathcal{P}_{\nukp}} \varphi_\vt d\nu_{k^+}(\vt) \right\|_\mathbb{H}^2, \\
& \leq & \nu_{k^+}(\mathcal{P}_{\nukp}) \times \int_{\mathcal{P}_{\nukp}} \| \varphi_\vt \|_\mathbb{H}^2 d\nu_{k^+}(\vt), \\
& = & \left[\int_{\mathcal{P}_{\nukp}} e^{-\alpha J_{\nu_k}'(\vt)} d\nu_k(\vt)\right]^2 \leq \| \nu_k\|_{\mathrm{TV}}^2 \varepsilon_k^4 \leq \| \nu_k\|_{\mathrm{TV}}^2 \varepsilon_k^2,
\end{eqnarray*}
where the last inequality uses $\varepsilon_k \leq 1$; the tighter $\varepsilon_k^4$ bound shows that the prune-set contribution is asymptotically dominated by the birth-set $\varepsilon_k^2$ term derived below. Similarly, by Cauchy--Schwarz,
\begin{eqnarray*}
\| \Phi ( \epk \bm{1}_{\mathcal{N}_{\nukp}} \lambda) \|_\mathbb{H}^2
& = & \varepsilon_k^2 \left\| \int_{\mathcal{N}_{\nukp}} \varphi_s \, d\lambda(s) \right\|_\mathbb{H}^2, \\
& \leq & \varepsilon_k^2 \, \lambda(\mathcal{N}_{\nukp}) \int_{\mathcal{N}_{\nukp}} \|\varphi_s\|_\mathbb{H}^2 \, d\lambda(s) \;=\; \varepsilon_k^2 \, \lambda(\mathcal{N}_{\nukp})^2 \;\leq\; \lambda(\mathcal{X})^2 \varepsilon_k^2,
\end{eqnarray*}
using $\|\varphi_s\|_\mathbb{H} = 1$.
This entails that Assumption $(\mathcal{H}_\varepsilon^{\mathrm{smooth},1})$ is satisfied.

\paragraph{Assumption $(\mathcal{H}_\varepsilon^{\mathrm{smooth},2})$} Concerning $(\mathcal{H}_\varepsilon^{\mathrm{smooth},2})$, we use again the inclusion $\mathcal{P}_{\nukp} \subset \{J'_{\nuk} > -2\alpha^{-1}\log\varepsilon_k\}$ (Remark~\ref{rem:weight_update_perturbation}). For any $\vt\in \mathcal{X}$:
\begin{eqnarray*}
\left| J_{\nu_{k+1}}'(\vt) - J_{\nu_{k^+}}'(\vt) \right|
& = & \left| \langle \varphi_\vt , \Phi(\nu_{k+1} - \nu_{k^+})\rangle \right|, \\
& = & \left| \int \langle \varphi_u, \varphi_\vt \rangle_\mathbb{H} d(\nu_{k+1} - \nu_{k^+})(u) \right|, \\
& \leq & \left| \int_{\mathcal{P}_{\nukp}} \langle \varphi_u, \varphi_\vt \rangle_\mathbb{H} d\nu_{k^+}(u) \right| + \left| \varepsilon_k \int_{\mathcal{N}_{\nukp}} \langle \varphi_u, \varphi_\vt \rangle_\mathbb{H} d\lambda(u) \right|, \\
& \leq & \int_{\mathcal{P}_{\nukp}}  e^{-\alpha J_{\nu_k}'(u)} d\nu_k(u) + \varepsilon_k \lambda(\mathcal{X}), \\
& \leq & \|\nu_k\|_{\mathrm{TV}} \varepsilon_k^2  + \varepsilon_k \lambda(\mathcal{X}). %
\end{eqnarray*}
This inequality is uniform in $\vt\in \mathcal{X}$, which proves the desired result. 
\end{proof}

\subsection{One-step analysis  \label{sec:descent_det}}
We introduce:
\begin{equation}\label{eq:min}
\vkd = \min (J'_{\nukm}) \wedge 0 \quad \mathrm{and} \quad \Thetakm := \left\{ J'_{\nukm} \leq \frac{\vkd}{2}\right\}.\end{equation}
We now present a quantitative result that explicitly connects the weight update $T_{\nu, \alpha}$ to Assumption~\eqref{eqs:H_eps_determinist}, by relating the quantity $\vkd$ to the evolution of the energy $J$ under the update $T_{\nu, \alpha}$.
Proposition \ref{prop:boundvk} below is largely inspired by Proposition~H.1 in \cite{chizat2022sparse}. Here, to relate the increments $J(\nukp) - J(\nuk)$ to the minimum of $J'_{\nukm}$, we rely on two main ingredients: the descent property (see Proposition \ref{prop:incre}) and Assumption $\Heps$, which ensures that sufficient mass is present where $J'_{\nu_k}$ is negative.

\begin{proposition}
\label{prop:boundvk}
Assume $\Heps$ and $\HTVC$ hold so that Equations \eqref{eqs:H_eps_determinist} and \eqref{eq:HTVC} are satisfied, if $\alpha$ is chosen such that \eqref{eq:small_learning_rates} holds and assume that $\vkd^2 \geq 24\epkm^2 \CC^2$, then:
\[
J(\nukp)-J(\nuk) \leq - \frac{3\alpha}{2} C_d \Lip^{-d} |\vkd|^{2+d} \epkm.
\]
where $\Lip$ is introduced in \eqref{def:lip_uniform_J'} and $C_d$ in Lemma \ref{lem:geometric_sublevel}.
\end{proposition}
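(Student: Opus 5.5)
The plan is to combine three ingredients: the descent property of Proposition~\ref{prop:incre} with $\beta=0$, the mass lower bound \eqref{eq:mass} inherited from the previous birth step, and the geometric sublevel lemma (Lemma~\ref{lem:geometric_sublevel}) applied to the set $\Thetakm$. First, since $\|\nuk\|_{\mathrm{TV}}\le\CTV$ by \eqref{eq:HTVC} and \eqref{eq:small_learning_rates} holds, Proposition~\ref{prop:incre} gives
\[
J(\nukp)-J(\nuk)\le -\tfrac34\,\alpha\int_\cX |J'_{\nuk}|^2\,\mathrm d\nuk \le -\tfrac34\,\alpha\int_{\Thetakm} |J'_{\nuk}|^2\,\mathrm d\nuk .
\]
Restricting the integral to $\Thetakm$ is harmless because the integrand is nonnegative. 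It therefore suffices to bound $\int_{\Thetakm}|J'_{\nuk}|^2\,\mathrm d\nuk$ from below by a multiple of $\epkm\,|\vkd|^{2+d}C_d\Lip^{-d}$.

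Second, I will control $J'_{\nuk}$ pointwise on $\Thetakm$. Applying \eqref{eq:smooth_2} at step $k-1$ gives $\|J'_{\nuk}-J'_{\nukm}\|_\infty\le \CC\epkm$. On $\Thetakm$ we have $J'_{\nukm}\le \vkd/2<0$, hence
\[
J'_{\nuk}\le \vkd/2+\CC\epkm .
\]
The hypothesis $\vkd^2\ge 24\epkm^2\CC^2$ means $\CC\epkm\le |\vkd|/\sqrt{24}$. Therefore $J'_{\nuk}\le -|\vkd|(1/2-1/\sqrt{24})<0$, which gives $|J'_{\nuk}|^2\ge c_0\,\vkd^2$ on $\Thetakm$ for an absolute constant $c_0$.

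Third, I will lower-bound the mass that $\nuk$ puts on $\Thetakm$. Since $\Thetakm\subset\{J'_{\nukm}\le 0\}$, \eqref{eq:mass} at step $k-1$ gives $\nuk(\Thetakm)\ge \epkm\,\lambda(\Thetakm)$. The geometric Lemma~\ref{lem:geometric_sublevel} then handles the volume. The argument is that $J'_{\nukm}$ is $\Lip$-Lipschitz, so on $\cX\cap B(x^\star,|\vkd|/(2\Lip))$ around a minimizer $x^\star$ we have $J'_{\nukm}\le \vkd/2$. Because $\cX$ is convex and equals the closure of its interior, this intersection has volume at least $C_d\Lip^{-d}|\vkd|^d$. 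Multiplying the three bounds yields
\[
J(\nukp)-J(\nuk)\le -\tfrac34 c_0\,\alpha\,C_d\Lip^{-d}|\vkd|^{2+d}\epkm .
\]

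The main obstacle is bookkeeping of the numerical constant so that it matches the stated $\tfrac32$. With the crude threshold $\vkd/2$, the factor $c_0=(1/2-1/\sqrt{24})^2$ is far smaller than $2$. I expect to resolve this by absorbing the numerical factor into the convention for $C_d$ in Lemma~\ref{lem:geometric_sublevel}, since the paper's constants are generic. If that is not available, I would shrink the radius of the ball used in the geometric lemma and adjust the threshold $24$ accordingly. A secondary point to check is that the geometric lemma applies when the minimizer lies on $\partial\cX$; convexity together with nonempty interior of $\cX$ should give a uniform cone condition that still yields the $|\vkd|^d$ volume bound.
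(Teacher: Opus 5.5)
Your proposal is correct and follows the paper's route: descent with $\beta=0$ restricted to $\Thetakm$, transfer from $J'_{\nukm}$ via $(\mathcal{H}^{\mathrm{smooth},2}_{\varepsilon})$, then $(\mathcal{H}^+_{\varepsilon})$ at step $k-1$ and the ball-volume bound. For the transfer you argue pointwise on the sign where the paper uses $(a+b)^2\ge a^2/2-b^2$; both give a factor near $1/12$. Your concern about the ball leaving $\cX$ is a point the paper ignores, and your convexity fix makes $C_d$ depend on $\cX$ as well.

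Your worry about the constant is also well founded. The paper's own chain gives $-\frac38\alpha\bigl(\frac{\vkd^2}{4}-2\epkm^2\CC^2\bigr)\le-\frac1{16}\alpha\vkd^2$, not $-\frac32\alpha\vkd^2$. So $\frac32$ survives only by taking the merely existential $C_d$ of Lemma~\ref{lem:geometric_sublevel} smaller, which is your first resolution and is harmless downstream. Your fallback of shrinking the ball radius cannot reach $\frac32$ against a fixed $C_d$, since $\frac34$ times a pointwise factor below $1$ stays under $\frac34$.
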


\begin{subequations}
 \begin{proof}
Consider $k\in \mathbb{N}^\star$. Our starting point is Proposition \ref{prop:incre}: we keep only the negative descent contribution produced by $|J'_{\nuk}|^2$ and observe that, with $\alpha$ small enough so that \eqref{eq:small_learning_rates} holds and with $\beta=0$, we have:
\begin{align}
J(\nukp)-J(\nuk) 
& \leq - \frac{3}{4} \|g_{\nuk}^\alpha\|^2_{L^2(\nuk)} \nonumber\\
& = - \frac{3}{4} \alpha \int_{\mathcal{X}} |J'_{\nuk}|^2 d\nuk \label{eq:inter-inter1} \\
& \leq - \frac{3}{4}
\alpha \int_{\Thetakm} |J'_{\nuk}|^2 d\nuk.
\label{eq:inter1}
\end{align}
Now, according to Assumption $(\mathcal{H}^{\text{smooth},2}_{\varepsilon})$, we know that 
$|J'_{\nuk}(t)-J'_{\nukm}(t)| \leq \CC \epkm$ uniformly for all values of $t$.
In particular, for any $t\in \mathcal{X}$, we deduce from the inequality
$(a+b)^2 \geq \frac{a^2}{2}-b^2$ that:
\begin{equation}
|J'_{\nu_k}(t)|^2 \geq \frac{1}{2} |J'_{\nukm}(t)|^2 -  \CC^2 \epkm^2 
\label{eq:inter2}
\end{equation}
Using \eqref{eq:inter1} together with \eqref{eq:inter2} leads to:
\begin{align*}
J(\nukp)-J(\nuk) 
& \leq - \frac{3}{8}\alpha \int_{\Thetakm} |J'_{\nukm}|^2 d\nu_k + \frac{3}{4}\alpha {\CC}^2 \epkm^2\int_{\Thetakm} d\nu_k, \\
& \leq -\frac{3}{32}\alpha \vkd^2 \int_{\Thetakm} d\nu_k + \frac{3}{4}\alpha {\CC}^2 \epkm^2   \int_{\Thetakm} d\nu_k, \\
& = - \frac{3}{8} \alpha \left[  \frac{\vkd^2}{4} - 2 \epkm^2 \CC^2 \right]\int_{\Thetakm} d\nu_k,
\end{align*}
where we used the definition of $\Thetakm$ to lower bound $|J'_{\nukm}|^2$ in the first line. In particular, in the specific regime considered here, namely when $24\epkm^2  \CC^2 \leq \vkd^2$, we deduce that:
$$
J(\nukp)-J(\nuk) \leq - \frac{3}{2} \alpha  \vkd^2 \int_{\Thetakm} d\nu_k.
$$
Finally, Assumption $(\mathcal{H}^+_{\varepsilon})$ yields:
\begin{equation} 
J(\nukp)-J(\nuk) \leq -\frac{3}{2}\alpha \epkm \vkd^2 \lambda(\Thetakm),
\label{eq:inter3}
\end{equation}

\noindent We are led to lower bound $\lambda(\Thetakm)$. To this end, we introduce the following geometric lemma:
\begin{lemma}[Geometric lower bound on the sub-level set]
\label{lem:geometric_sublevel}
    Let $g: \mathcal{X} \subset \mathbb{R}^d \to \mathbb{R}$ be a $\Lip$-Lipschitz function achieving a minimum value $v^\star = \min_{x \in \mathcal{X}} g(x) \le 0$. There exists a purely dimensional constant $C_d > 0$ such that the Lebesgue measure of its sub-level sets satisfies:
    $$
    \lambda(\{x \in \mathcal{X} : g(x) \le 0\}) \ge \lambda\left(\left\{x \in \mathcal{X} : g(x) \le \frac{v^\star}{2}\right\}\right) \ge C_d \Lip^{-d} |v^\star|^d.
    $$
\end{lemma}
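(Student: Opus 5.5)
The first inequality is immediate: since $v^\star\le 0$, the set $\{g\le v^\star/2\}$ is contained in $\{g\le 0\}$, and Lebesgue measure is monotone. All the work is in the second inequality. The plan is to locate a minimizer and use the Lipschitz bound to exhibit a ball, intersected with $\mathcal{X}$, on which $g$ stays below $v^\star/2$. We then lower bound the volume of that intersection using the fact that $\mathcal{X}$ is a compact convex body (closure of its interior).

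First, the construction. By compactness of $\mathcal{X}$ and continuity of $g$, pick $x^\star\in\mathcal{X}$ with $g(x^\star)=v^\star$. If $v^\star=0$ the claim is trivial, so assume $v^\star<0$ and set $r:=|v^\star|/(2\Lip)$. For any $x\in\mathcal{X}$ with $\|x-x^\star\|\le r$,
\[
g(x)\le v^\star+\Lip\,\|x-x^\star\|\le v^\star+\tfrac{|v^\star|}{2}=\tfrac{v^\star}{2}.
\]
Hence
\[
\lambda\big(\{g\le v^\star/2\}\big)\ge \lambda\big(\mathcal{X}\cap B(x^\star,r)\big).
\]

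The main obstacle is the next step: lower bounding $\lambda(\mathcal{X}\cap B(x^\star,r))$ by $c\,r^d$ uniformly in $x^\star\in\mathcal{X}$. When $x^\star$ sits on the boundary, or at a sharp corner of $\mathcal{X}$, only a fraction of the ball lies inside $\mathcal{X}$. I would use convexity here. Since $\mathcal{X}$ has nonempty interior, fix a ball $B(x_0,\rho)\subset\mathcal{X}$, and let $D=\mathrm{diam}(\mathcal{X})$. By convexity the cone $\mathrm{conv}(\{x^\star\}\cup B(x_0,\rho))$ lies in $\mathcal{X}$. For $r\le D$, this cone contains the homothetic ball
\[
B\big(x^\star+\tfrac{r}{D}(x_0-x^\star),\ \tfrac{r\rho}{D}\big)\subset B(x^\star,r)\cap\mathcal{X},
\]
because $\|x_0-x^\star\|\le D$ implies $\tfrac{r}{D}\|x_0-x^\star\|+\tfrac{r\rho}{D}\le r$ up to replacing $D$ by $D+\rho$. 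This gives a volume of at least $\omega_d(\rho/(D+\rho))^d r^d$, where $\omega_d$ is the volume of the unit ball. If instead $r> D$, then $\mathcal{X}\subset B(x^\star,r)$, so the measure equals $\lambda(\mathcal{X})$. This regime cannot really occur once the constant is adjusted: $|v^\star|\le \Lip D$ holds when $\mathcal{X}$ contains a point where $g\ge 0$. Otherwise we simply absorb it by noting $\lambda(\mathcal{X})\ge \omega_d\rho^d\ge c\,(r\wedge D)^d$, together with the boundedness of $|v^\star|$ along the trajectory.

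Combining the two steps gives
\[
\lambda\big(\{g\le v^\star/2\}\big)\ge \omega_d\Big(\tfrac{\rho}{2(D+\rho)}\Big)^d \Lip^{-d}|v^\star|^d.
\]
This yields $C_d:=\omega_d(\rho/(2(D+\rho)))^d$. Strictly speaking, $C_d$ depends on the shape of $\mathcal{X}$ (through $\rho/D$) as well as on $d$. I would state this explicitly, noting that it is "purely dimensional" for a fixed domain such as a cube or a ball.
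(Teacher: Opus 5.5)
Your argument follows the paper's route. The paper also notes that $\mathcal{X}\cap B\big(x^\star,|v^\star|/(2\Lip)\big)$ lies in the sub-level set around a minimizer $x^\star$, and then simply asserts that its volume is at least that of a full Euclidean ball of that radius. That last step is not justified when $x^\star\in\partial\mathcal{X}$. Your convexity step supplies the missing argument: the cone $\mathrm{conv}(\{x^\star\}\cup B(x_0,\rho))\subset\mathcal{X}$ contains a ball of radius $r\rho/(D+\rho)$ inside $B(x^\star,r)$. Your remark that the constant must depend on the shape of $\mathcal{X}$ is also correct and necessary. On $\mathcal{X}=[0,1]\times[0,\delta]$ with $g(x)=\Lip x_1-\Lip/2$, the sub-level set $\{g\le v^\star/2\}$ has area $\delta/4$, while the right-hand side is $C_2/4$. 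So no purely dimensional constant works as $\delta\to 0$.

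The point to correct is your treatment of large $r$. It is not true that this regime ``cannot really occur''. If $g<0$ on all of $\mathcal{X}$, nothing bounds $|v^\star|/\Lip$. For $g\equiv v^\star$ the left-hand side is $\lambda(\mathcal{X})$, while the right-hand side grows like $|v^\star|^d$, so the stated inequality is simply false there. Your final display is therefore justified only when $|v^\star|\le 2\Lip(D+\rho)$, which holds in particular whenever $g\ge 0$ somewhere, by your own observation. In general, what your argument proves is
\[
\lambda\big(\{g\le v^\star/2\}\big)\ \ge\ \omega_d\Big(\frac{\rho}{D+\rho}\Big)^d\min\Big(\frac{|v^\star|}{2\Lip},\,D+\rho\Big)^d .
\]
To reach the form $C\,\Lip^{-d}|v^\star|^d$, the constant must also depend on an a priori bound $|v^\star|\le V$. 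The lemma thus needs this extra hypothesis. It is available in the paper's application: $J'_\nu\ge -\mathfrak{C}_{\mathcal P}+\kappa$ gives $|v^\star|\le\mathfrak{C}_{\mathcal P}$. You should state this dependence explicitly rather than absorbing it silently into $C_d$.
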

Recall from Lemma \ref{lem:lipschitz_J_prime} that for any $\nu\in \mathcal{M}(\mathcal{X})$,  $J'_{\nu}$ is a $\mathfrak{L}(\nu)$-Lipschitz function with $\mathfrak{L}(\nu) \leq \Lip$ as soon as $\| \nu\|_{TV} \leq \CTV$ (see Remark \ref{rem:lipschitz_J_prime}). In the meantime, we know from Assumption~\eqref{eq:HTVC} that $(\nuk)_{k \ge 1}$ and $(\nukp)_{k \ge 1}$ are two bounded sequences in terms of their TV-norm.   Therefore, $\mathfrak{L}(\nu_k)\leq \Lip$.
By applying Lemma~\ref{lem:geometric_sublevel} to $g = J'_{\nukm}$ and observing that $\vkd = \min_{x} J'_{\nukm}(x) \le 0$, we have:
\begin{align*}
\forall x\in \mathcal{X}: \qquad 
    |x-\argmin J'_{\nukm}| \leq \frac{|\vkd|}{2\Lip} & 
         \Longrightarrow |J'_{\nukm}(x)-\min(J'_{\nukm})| \leq \Lip \times \frac{|\vkd|}{2\Lip}, \\
         &  \Longrightarrow J'_{\nukm}(x) \leq \frac{\vkd}{2} \le 0,\\
& \Longrightarrow x \in \Thetakm.         
\end{align*}
We then deduce that 
$$
\left\{ x \in \mathcal{X} : |x-\argmin J'_{\nukm}| \leq \frac{|\vkd|}{2\Lip} \right\} \subset 
\Thetakm.
$$
This inclusion entails that the volume is bounded by that of a Euclidean ball of radius $\frac{|\vkd|}{2\Lip}$, so for some constant $C_d > 0$:
$$
\lambda(\Thetakm) \ge C_d \Lip^{-d} |\vkd|^{d}.
$$
We finally obtain from the previous lower bound and from \eqref{eq:inter3} that:
$$
J(\nukp)-J(\nuk) \leq - \frac{3\alpha}{2} C_d \Lip^{-d} |\vkd|^{2+d} \epkm.
$$
This concludes the proof. 
\end{proof}
\end{subequations}
 
The following proposition describes the evolution of the cost function itself along the iterations. For this purpose, we introduce the non-negative sequence $(\Delta_{k^+})_{k \geq 1}$ and the auxiliary sequence $(\Delta_k)_{k \geq 1}$ defined by
\begin{equation}\label{def:Delta}
\forall k \geq 1, \quad \Delta_{k^+} := J(\nuk) - J(\nukp) \quad \text{and} \quad \Delta_k := J(\nu_{k-1}) - J(\nu_k).
\end{equation}
These two sequences are then associated with our longitudinal evolution as follows.

\[
\forall k \ge 1 \qquad
\underbrace{\overbrace{\nu_{k-1}\longrightarrow\nukm}^{\Delta_{{k-1}^+}}\longrightarrow
\lefteqn{\overbrace{\phantom{\nuk\longrightarrow\nukp}}^{\Delta_{k^+}}}\nuk}_{\Delta_k}\longrightarrow\nukp.
\]
We emphasize that, from Proposition \ref{prop:incre}, the sequence $(\Delta_{k^+})_{k \geq 1}$ is non-negative. However, we cannot draw the same conclusion for the sequence $(\Delta_k)_{k \geq 1}$.

\begin{proposition}
Assuming \eqref{eqs:H_eps_determinist} and \eqref{eq:HTVC}, and that $\alpha$ is chosen so that \eqref{eq:small_learning_rates} holds,
then 
$$
\forall k \geq 1 \qquad 
J(\nu_k) - J^\star \leq \CC \left(\|\nu^\star\|_{\mathrm{TV}} \vee \Lip\right) \max\left( \left[  \frac{\Lip^d \Delta_{k^+}}{\alpha \epkm} \right]^{\frac{1}{2+d}} ; \  \epkm \right).$$
\label{prop:upper-bound-increment}
\end{proposition}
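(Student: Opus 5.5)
Combine convexity of $J$ over measures with Proposition~\ref{prop:boundvk}. By \eqref{eq:Frechet_Jprime} with $\sigma=\nu^\star-\nu_k$ (the quadratic term is nonnegative),
\[
J^\star - J(\nu_k) \ge \langle J'_{\nu_k},\nu^\star-\nu_k\rangle = \int J'_{\nu_k}\,d\nu^\star - \int J'_{\nu_k}\,d\nu_k .
\]
This gives
\[
J(\nu_k)-J^\star \le \int J'_{\nu_k}\,d\nu_k - \|\nu^\star\|_{\mathrm{TV}}\min(J'_{\nu_k}\wedge 0).
\]
So two quantities must be bounded by the right-hand side of the statement: the ``stationarity'' term $\int J'_{\nu_k}d\nu_k$ on the support, and the ``global'' term $|\min J'_{\nu_k}\wedge 0|$.

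\textbf{Global term.} By $(\mathcal{H}^{\mathrm{smooth},2}_\varepsilon)$, $\|J'_{\nu_k}-J'_{\nu_{k-1^+}}\|_\infty\le \CC\varepsilon_{k-1}$, hence $|\min J'_{\nu_k}\wedge 0|\le |v_{k-1^+}|+\CC\varepsilon_{k-1}$. Then split into two cases. If $v_{k-1^+}^2<24\CC^2\varepsilon_{k-1}^2$, then $|v_{k-1^+}|\le \CC\varepsilon_{k-1}$. Otherwise Proposition~\ref{prop:boundvk} gives $\Delta_{k^+}\ge \tfrac{3\alpha}{2}C_d\Lip^{-d}|v_{k-1^+}|^{2+d}\varepsilon_{k-1}$, so $|v_{k-1^+}|\le \CC\big[\Lip^d\Delta_{k^+}/(\alpha\varepsilon_{k-1})\big]^{1/(2+d)}$. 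In both cases $|\min J'_{\nu_k}\wedge0|$ is at most $\CC$ times the max in the statement. Multiplying by $\|\nu^\star\|_{\mathrm{TV}}$ yields that contribution.

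\textbf{Stationarity term.} Write $\int J'_{\nu_k}d\nu_k\le \int |J'_{\nu_k}|d\nu_k\le \sqrt{\CTV}\big(\int|J'_{\nu_k}|^2d\nu_k\big)^{1/2}$ by Cauchy--Schwarz. Proposition~\ref{prop:incre} (with $\beta=0$) gives $\int|J'_{\nu_k}|^2d\nu_k\le \tfrac{4}{3\alpha}\Delta_{k^+}$. This leaves a term of order $\sqrt{\Delta_{k^+}/\alpha}$, which is not of the advertised form $[\Lip^d\Delta_{k^+}/(\alpha\varepsilon_{k-1})]^{1/(2+d)}$. This comparison is the main obstacle.

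I would handle it as follows. Since $\Delta_{k^+}$ is bounded by \eqref{eq:boundJ} and $\varepsilon_{k-1}\le\alpha\le1$, set $x:=\Lip^d\Delta_{k^+}/(\alpha\varepsilon_{k-1})$.
\begin{itemize}
\item If $x\le1$, then $\sqrt{\Delta_{k^+}/\alpha}=\sqrt{x\,\varepsilon_{k-1}\Lip^{-d}}\le \Lip^{-d/2}x^{1/2}\le \Lip^{-d/2}x^{1/(2+d)}$.
\item If $x>1$, the max in the statement exceeds $1$. The trivial bound $J(\nu_k)-J^\star\le \CC$ (from \eqref{eq:boundJ}) then suffices, after absorbing constants into $\CC(\|\nu^\star\|_{\mathrm{TV}}\vee\Lip)$.
\end{itemize}
The bookkeeping of how $\Lip$, $\CTV$, $\|\nu^\star\|_{\mathrm{TV}}$ enter the prefactor is routine. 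I expect it to be exactly what produces the factor $(\|\nu^\star\|_{\mathrm{TV}}\vee\Lip)$, using $\CTV\lesssim\Lip$ from \eqref{def:lip_uniform_J'}. Collecting the two terms gives the claimed inequality for every $k\ge1$.
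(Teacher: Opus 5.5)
Your proof is correct and has the same skeleton as the paper's. You use the convexity split $J(\nu_k)-J^\star\le\int J'_{\nu_k}\,d\nu_k-\int J'_{\nu_k}\,d\nu^\star$. The first integral is handled by Cauchy--Schwarz plus Proposition~\ref{prop:incre}. The second is handled by $(\mathcal{H}^{\mathrm{smooth},2}_\varepsilon)$, with the same case split on $v_{k-1^+}^2$ versus $24\CC^2\varepsilon_{k-1}^2$ feeding Proposition~\ref{prop:boundvk}.

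You genuinely differ from the paper at the step you flagged as the main obstacle. The paper disposes of the $\sqrt{\Delta_{k^+}/\alpha}$ term by invoking boundedness of $\Delta_{k^+}$ and $\varepsilon_k$. Its final display is therefore $\CC(\|\nu^\star\|_{\mathrm{TV}}\vee\Lip)\max\big(\alpha^{-1/2}[\Lip^d\Delta_{k^+}/\varepsilon_{k-1}]^{1/(2+d)};\varepsilon_{k-1}\big)$. This is weaker than the statement, since $\alpha^{-1/2}\ge\alpha^{-1/(2+d)}$ for $\alpha<1$.

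Your dichotomy on $x=\Lip^d\Delta_{k^+}/(\alpha\varepsilon_{k-1})$ avoids this loss:
\begin{itemize}
\item when $x\le 1$, you use $x^{1/2}\le x^{1/(2+d)}$;
\item when the right-hand side exceeds $1$, you use the a priori bound \eqref{eq:boundJ} on $J(\nu_k)-J^\star$.
\end{itemize}
This recovers the proposition exactly as stated. In particular it gives the $\alpha^{-1/(2+d)}$ scaling that the proof of Theorem~\ref{theo:convergence_deterministe} actually relies on.

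Two bookkeeping points:
\begin{itemize}
\item You only need $\varepsilon_{k-1}\le 1$, which follows from $\varepsilon_0=1$ and monotonicity in $(\mathcal{H}_\varepsilon)$. The bound $\varepsilon_{k-1}\le\alpha$ you cite is not a hypothesis of this proposition.
\item $\Lip\ge 1$, because $\CC_{\mathcal P}\ge K(t,t)=1$. So the factor $\Lip^{-d/2}$ is at most $1$, and $\sqrt{\CTV}\le\Lip$ is absorbed into the prefactor.
\end{itemize}
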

\noindent  
The above result relates the evolution of the sequence $J(\nu_k)$ to the increments $J(\nu_k^+) - J(\nu_k)$, and is primarily based on Proposition~\ref{prop:boundvk}.
This result involves a trade-off in the choice of the parameter $\varepsilon_k$ (the amount of mass available in regions where $J_{\nu_k}'$ is negative), which must be selected carefully. On the one hand, $\varepsilon_k$ needs to be sufficiently large to control the increment of the objective function (see Proposition~\ref{prop:boundvk}); on the other hand, they contribute an additional term that affects the value of $J(\nu_k)$.

\medskip

\begin{subequations}
\begin{proof}
Using \eqref{eq:Frechet_Jprime}, we have in a first time
$$ J^\star - J(\nu_k) = \int_\mathcal{X} J_{\nu_k}' d(\nu^\star - \nu_k) + \frac{1}{2} \| \Phi(\nu_k-\nu^\star)\|^2.$$
This implies 
\begin{equation}
J(\nuk) - J^\star \leq  \int_\mathcal{X} J_{\nu_k}' d (\nuk - \nu^\star) = \underbrace{\int_\mathcal{X} J_{\nu_k}' d \nuk}_{:=A} -  \underbrace{\int_\mathcal{X} J_{\nu_k}' d \nu^\star}_{:=B}.
\label{eq:cvg1}
\end{equation}

\noindent \underline{Study of $A$:}
We can first observe that \eqref{eq:inter-inter1} can be written as
\begin{equation}
\int_{\mathcal{X}} |J'_{\nuk}|^2 d\nuk \leq 4 \frac{J(\nuk)-J(\nukp)}{3 \alpha} = \frac{4\Delta_{k^+}}{3 \alpha}.
 \label{eq:inter-inter1_a}
\end{equation}
Then, the Cauchy-Schwarz inequality associated with Equation \eqref{eq:inter-inter1_a} yields:

\begin{equation} 
\label{eq:cvg2}
|A| = \left| \int_\mathcal{X} J_{\nu_k}' d\nu_k\right| \leq \left[ \|\nu_k\|_{\mathrm{TV}} \int_\mathcal{X} |J_{\nu_k}'|^2 d\nu_k \right]^{1/2} \leq \left[ 4 \frac{\|\nu_k\|_{\mathrm{TV}}}{3\alpha} \Delta_{k^+} \right]^{1/2}.
\end{equation}

\noindent \underline{Study of $B$:}
We use the smoothness of $\nu \longmapsto J'_\nu$ induced by $(\mathcal{H}^{\text{smooth},2}_{\varepsilon})$ and obtain that:
\begin{align*}
B=\int_\mathcal{X} J_{\nu_k}'\text{d}\nu^\star & = \int_\mathcal{X} J_{\nukm}'\text{d}\nu^\star + 
\int_\mathcal{X} (J_{\nu_k}'-J_{\nukm}')\text{d}\nu^\star \\
& \geq \vkd \|\nu^\star\|_{\mathrm{TV}} - \|\nu^\star\|_{\mathrm{TV}} \|J_{\nu_k}'-J_{\nukm}'\|_{\infty}\\
& \geq \vkd  \|\nu^\star\|_{\mathrm{TV}}  - \CC \epkm \|\nu^\star\|_{\mathrm{TV}}.
\end{align*}

\noindent
At this stage, two distinct cases may arise depending on the value of $\vkd$.

\begin{itemize}
\item \textbf{$1^\mathrm{st}$ case}: $\vkd \leq - 2 \sqrt{6}\CC \epkm$. Then, we get from the previous bound that
\begin{align*}
B = \int_\mathcal{X} J_{\nu_k}'\text{d}\nu^\star &\geq 
\vkd  \|\nu^\star\|_{\mathrm{TV}}  - \CC \epkm \|\nu^\star\|_{\mathrm{TV}}\\
& \geq \left( 1+\frac{1}{2\sqrt{6}}\right) \vkd  \|\nu^\star\|_{\mathrm{TV}} \\
& \geq 
-\left( 1+\frac{1}{2\sqrt{6}}\right) \|\nu^\star\|_{\mathrm{TV}} \left[ \frac{2\Delta_{k^+} \Lip^d}{3\alpha \epkm C_d} \right]^{\frac{1}{2+d}},
\end{align*}
 where the last line is obtained using Proposition \ref{prop:boundvk}.
We deduce that:
 $$
 \int_\mathcal{X} J_{\nu_k}'\text{d}\nu^\star \geq - \left( 1+\frac{1}{2\sqrt{6}}\right) \|\nu^\star\|_{\mathrm{TV}} \left[ \frac{2 \Lip^d \Delta_{k^+}}{3\alpha \epkm C_d} \right]^{\frac{1}{2+d}}.
 $$
 
\item \textbf{$2^\mathrm{nd}$ case:} $\vkd \geq - 2\sqrt{6}\CC  \varepsilon_{k-1} $. In such a situation we immediately have
$$
\int_\mathcal{X} J_{\nu_k}'\text{d}\nu^\star \geq \vkd \|\nu^\star\|_{\mathrm{TV}} - \CC \epkm \| \nu^\star\|_{\mathrm{TV}} \geq -(1+2\sqrt{6}) \CC \| \nu^\star\|_{\mathrm{TV}} \epkm.
$$
\end{itemize}

\noindent
Regardless of the value of $\vkd$, we then get 
\begin{equation} 
\int_\mathcal{X} J_{\nu_k}'\text{d}\nu^\star \geq - \|\nu^\star\|_{\mathrm{TV}} \min \left( (1+2\sqrt{6})\CC \epkm; \left( 1+ \frac{1}{2\sqrt{6}}\right) \left[ \frac{2 \Lip^d \Delta_{k^+}}{3\alpha \epkm C_d} \right]^{\frac{1}{2+d}}\right).
\label{eq:cvg3a}
\end{equation}
Using Equations \eqref{eq:cvg1}, \eqref{eq:cvg2} and \eqref{eq:cvg3a}, we deduce that 
\begin{eqnarray*} 
J(\nu_k)-J^\star \leq \CC \left(\|\nu^\star\|_{\mathrm{TV}} \vee \Lip \right)
\max\left( \left[  \frac{\Lip^d \Delta_{k^+}}{\alpha \epkm} \right]^{\frac{1}{2+d}} ; \left[\frac{\Delta_{k^+}}{\alpha } \right]^{\frac{1}{2}} ; \ \epkm \right).
\end{eqnarray*}
We conclude while observing that $(\Delta_{k^+})_{k \ge 1}$ and $(\epk)_{k \ge 1}$ are two bounded sequences, which implies that:
$$
J(\nu_k)-J^\star \leq \CC  \left(\|\nu^\star\|_{\mathrm{TV}} \vee \Lip \right) \max\left( \alpha^{-1/2} \left[  \frac{\Lip^d \Delta_{k^+}}{\epkm} \right]^{\frac{1}{2+d}} ; \  \epkm \right).
$$
\end{proof}
\end{subequations}

\subsection{Proof of the deterministic global convergence\label{s:proofconv1}}
Below, we finally provide the proof of our global convergence result in the deterministic situation, stated in Theorem \ref{theo:convergence_deterministe}. We will use the key property obtained in Section \ref{sec:descent_det}.

\medskip

\begin{subequations}
\begin{proof}[Proof of Theorem \ref{theo:convergence_deterministe}]
We introduce the function $f: \mathbb{N} \longrightarrow \mathbb{R}^+$ defined by $f(k)=J(\nu_k)-J^\star$ for all $k\in \mathbb{N}$.
The triangle inequality and Assumption $(\mathcal{H}^{\text{smooth},1}_{\varepsilon})$ yields
\begin{equation}
    \Deltakpp=J(\nuk)-J(\nukpp) 
    =J(\nuk)-J(\nukp)+J(\nukp)-J(\nukpp)
     \geq \Deltakp - \CC \epk^2. \label{eq:Key-deter}
\end{equation} 
Using Equation \eqref{eq:Key-deter} and the definition of $f$, we then obtain:
\begin{equation}\label{eq:tec-case0_bis}
f(k)-f(k+1)=  \Delta_{k+1} \ge  \Delta_{k^+} - \CC \varepsilon_k^2.
\end{equation}
Simultaneously, if $A=\CC \left(\|\nu^\star\|_{\mathrm{TV}} \vee \Lip \right)$, then we can apply Proposition \ref{prop:upper-bound-increment} to get
\[
[A^{-1} f(k)]^{2+d} \leq  \max \left( \frac{\Lip^d \Delta_{k^+}}{\alpha  \varepsilon_{k-1}},\varepsilon_{k-1}^{2+d}\right) \leq  \frac{\Lip^d \Delta_{k^+}}{\alpha  \varepsilon_{k-1}}+ \varepsilon_{k-1}^{2+d}.
\]
It leads to
\begin{equation}
\label{eq:tec-case1_bis}
\Delta_{k^+} \ge  \alpha \Lip^{-d} \varepsilon_{k-1} [A^{-1} f(k)]^{2+d} -  \alpha \Lip^{-d}  \varepsilon_{k-1}^{3+d} \ge 
  \alpha \Lip^{-d} \varepsilon_{k-1} [A^{-1} f(k)]^{2+d} -  \alpha \Lip^{-d} \varepsilon_{k-1}^{2} ,
\end{equation}
where the last inequality comes from the bound $\varepsilon_k \leq 1$. Hence, using together \eqref{eq:tec-case0_bis} and \eqref{eq:tec-case1_bis}, we get
\[
f(k)-f(k+1) \geq  \alpha \Lip^{-d} \varepsilon_{k-1} [A^{-1} f(k)]^{2+d} - (1+\alpha \Lip^{-d}) \varepsilon_{k-1}^{2}.
\]
We use a telescopic sum argument (summing for $k=1,\dots,K-1$), leading to the inequality
$$
 f(1) - f(K) \geq  \alpha \Lip^{-d}A^{-2-d} \sum_{k=1}^{K-1} \varepsilon_{k-1} f(k)^{2+d} -  (1+\alpha \Lip^{-d} ) \sum_{k=1}^{K-1} \varepsilon_{k-1}^{2},$$
 which in turn implies
 \begin{equation}\label{eq:inegalite_importante}
f(K) + \alpha \Lip^{-d}A^{-2-d} \sum_{k=1}^{K-1} \varepsilon_{k-1} f(k)^{2+d} \leq f(1) +  (1+\alpha \Lip^{-d} ) \sum_{k=1}^{K-1} \varepsilon_{k-1}^{2}.
\end{equation}

\medskip

\noindent
\underline{Proof of $i)$ and $ii)$:}
We introduce in the following the quantity $\rho_K$, defined as the minimum value of the sequence $(J(\nu_k) - J(\nu^\star))_{1\le k\le K}$ over the first $K$ iterations:
$$
\forall K\in \mathbb{N} \qquad \rho_K = \min_{0 \leq k \leq K} J(\nu_k)-J(\nu^\star) = \min_{0\leq k \leq K} f(k).
$$
Thanks to the definition of $\rho_K$, \eqref{eq:inegalite_importante} implies that 
\[
\rho_K^{2+d} \alpha \Lip^{-d}A^{-2-d} \sum_{k=1}^{K-1} \varepsilon_{k-1}  \leq  \left( f(1) + (1+\alpha \Lip^{-d}) \sum_{k=1}^{K-1} \varepsilon_{k-1}^{2}\right),
\]
which can be rewritten as
\begin{equation}
\rho_K \leq  \CC \left[\frac{\displaystyle1+(1 + \alpha \Lip^{-d}) \sum_{k=1}^{K-1} \varepsilon_{k-1}^{2}}{\displaystyle
\alpha \Lip^{-d}A^{-2-d} \sum_{k=1}^{K-1} \varepsilon_{k-1} }\right]^{\frac{1}{2+d}}.
\label{eq:inter_cvgce}
\end{equation}
Starting from \eqref{eq:inter_cvgce}, we now consider two different cases.

\medskip

\noindent
$\bullet$ \textit{Horizon dependent step-size sequence $i)$}
 Considering the case of a constant step-size sequence $(\epk)_{k \ge 0}$
 with $\epk=\varepsilon$ for all $k\in \mathbb{N}$, we deduce from \eqref{eq:inter_cvgce} that in this specific case
 $$
 \forall K \ge 0 \qquad \rho_K \leq \CC \frac{A \Lip^{\frac{d}{2+d}}}{\alpha^{\frac{1}{2+d}}} \left( \frac{1}{ (K-1) \varepsilon } + \varepsilon  (1+ \alpha \Lip^{-d} )\right)^{\frac{1}{2+d}}.
 $$
 It remains to optimize the previous upper bound in terms of $\varepsilon$. The trade-off between the two terms appearing in the r.h.s. of the previous bound is attained for 
 $$\varepsilon = \CC \sqrt{\frac{1}{ (K-1)(1+\alpha \Lip^{-d} )}},$$ 
 which yields
 \begin{equation}
\rho_K \leq \CC \frac{A \Lip^{\frac{d}{2+d}}}{\alpha^{\frac{1}{2+d}}}  \left(\frac{(1+ \alpha \Lip^{-d})}{(K-1)}\right)^{\frac{1}{2(2+d)}}.
 \end{equation}

\medskip

\noindent
$\bullet$ \textit{Horizon-free step-size sequence $ii)$}
It is also possible to derive a convergence rate with a horizon-free step-size sequence that does not depend on the horizon of the simulation. For this purpose, we simply consider the sequence:
$$\forall k \ge 0 \qquad \varepsilon_k = \frac{\CC}{\sqrt{ (k+1)}}.$$
In this case, we verify that:
$$
\sum_{k=1}^{K-1} \epk^2 \leq   \CC \sum_{k=1}^{K-1} \frac{1}{k} \leq  \CC [\log(K)+1] \quad \text{and} \quad \sum_{k=1}^{K-1} \epk =  \sum_{k=1}^K \frac{\CC}{\sqrt{k}} \ge 2 \CC   (\sqrt{K}-1).
$$
Then
$$ \rho_K\leq \CC \alpha^{-\frac{1}{(2+d)}} \left(\frac{  1}{ K}\right)^{\frac{1}{2(2+d)}} \log^{\frac{1}{2+d}}(K).
$$

\medskip

\noindent
\underline{Proof of $iii)$:}
We first introduce the function $\bar f: \mathbb{N} \longrightarrow \mathbb{R}$ defined as 
\begin{equation}
\bar f(K) = \max \left( J(\nu_K) - J^\star - \CC \sum_{k=1}^{K-1} \varepsilon_k^2 \ ; \ 0 \right) \quad \forall K\in \mathbb{N},
\label{eq:newf}
\end{equation}
where $\CC$ is here related to Assumption $(\mathcal{H}_\varepsilon^{\mathrm{smooth},1})$ (see \eqref{eq:smooth_1}).  This function is non-increasing as, for any $K\in \mathbb{N}$,
\begin{align*}
\bar f(K+1) & = \max \left( J(\nu_{K+1}) - J^\star - \CC \sum_{k=1}^{K} \varepsilon_k^2 \ ; \ 0 \right) \\
 &=  \max \left( J(\nu_{K+1})-J(\nu_{K^+})+J(\nu_{K^+})-J(\nu_K)+J(\nu_K)-J^\star- \CC \sum_{k=1}^{K} \varepsilon_k^2 \ ; \ 0 \right) \\
 & \leq \max \left( \CC \varepsilon_K^2+J(\nu_K)- J^\star - \CC \sum_{k=1}^{K} \varepsilon_k^2 \ ; \ 0 \right) \\
 & = \bar f(K)
\end{align*}
where we applied Assumption $\Heps$ in the third line and the fact that $J(\nukp)-J(\nuk)\leq 0$ (see Proposition~\ref{prop:incre}). We first assume that 
\begin{equation}
\bar f(K) \geq 0.
\label{eq:minorbarf}
\end{equation}
Remark that \eqref{eq:minorbarf} together with the non-increasing property of $\bar f$ entails that $\bar f(k) \geq 0$ for any $k\in \lbrace 1,\dots , K \rbrace$. We use the discrete integration by part relationship: for $p=\bar f(k+1)^{-1}$ and $q=\bar f(k)^{-1}$, 
\begin{equation}\label{eq:sum_power_d}
p^{1+d}-q^{1+d} = (p-q) \sum_{i=0}^d p^i q^{d-i}.    
\end{equation}
The relationship $\bar f(k) \ge \bar f(k+1)$ then provides, since every term $p^i q^{d-i}$ in \eqref{eq:sum_power_d} is bounded below by $q^d = \bar f(k)^{-d}$ (resp.\ by $p\, q^d$ for the $i\geq 1$ terms),
\begin{equation}
\frac{1}{\bar f(k+1)^{1+d}} - \frac{1}{\bar f(k)^{1+d}} \geq \left( \bar f(k)-\bar f(k+1) \right)  \frac{d+1}{\bar f(k+1)\,\bar f(k)^{1+d}}.
\label{eq:draft1}
\end{equation}
Using \eqref{eq:minorbarf}, we have
\begin{eqnarray}
\bar f(k) - \bar f(k+1)
& \geq & J(\nu_{k}) - J(\nu_k^+) + J(\nu_k^+) - J(\nu_{k+1})   + \CC \varepsilon_k^2, \nonumber \\
& = & \Delta_{k^+} + J(\nu_k^+) - J(\nu_{k+1})   + \CC \varepsilon_k^2, \nonumber \\
& \geq & \Delta_{k^+},
\label{eq:draft2}
\end{eqnarray}
since 
$$ J(\nu_k^+) - J(\nu_{k+1}) \geq - \CC \varepsilon_k^2,$$
according to Assumption $(\mathcal{H}_\varepsilon^{\mathrm{smooth},1})$. Hence, \eqref{eq:draft1} together with \eqref{eq:draft2} leads to
\begin{equation}
\frac{1}{\bar f(k+1)^{1+d}} - \frac{1}{\bar f(k)^{1+d}} \geq \Delta_{k^+} \frac{d+1}{\bar f(k+1)\,\bar f(k)^{1+d}} \geq \Delta_{k^+} \frac{d+1}{\bar f(k)^{2+d}},
\label{eq:draft3}
\end{equation}
where the last inequality uses $\bar f(k+1)\leq \bar f(k)$.
Since for all $k\geq 0: \bar f(k) \leq J(\nu_k) - J^\star = f(k)$,  
\eqref{eq:tec-case1_bis} entails that
\begin{equation}
\label{eq:tec-case1-nomin}
\Delta_{k^+} \ge  
 \alpha \varepsilon_{k-1} \Lip^{-d} [A^{-1} \bar f(k)]^{2+d} -  \alpha  \Lip^{-d} \varepsilon_{k-1}^{2} ,
\end{equation}
This last inequality, together with \eqref{eq:draft3}, leads to 
\begin{eqnarray*}
\frac{1}{\bar f(k+1)^{1+d}} - \frac{1}{\bar f(k)^{1+d}} 
& \geq &  \left( \alpha \Lip^{-d} \varepsilon_{k-1} [A^{-1}\bar f(k)]^{2+d} - \alpha \Lip^{-d}  \varepsilon_{k-1}^{2}\right)  \frac{d+1}{\bar f(k)^{2+d}} , \\
& = &  \left( \alpha \Lip^{-d} \varepsilon_{k-1} A^{-(2+d)}%
- \alpha \Lip^{-d} \varepsilon_{k-1}^{2}\frac{1}{\bar f(k)^{2+d}} \right) (d+1) .
\end{eqnarray*}
We then use a telescopic sum argument to obtain:
\begin{eqnarray*}
\frac{1}{\bar f(K)^{1+d}} - \frac{1}{\bar f(1)^{1+d}} 
& \geq & \CC (d+1) \left[ \alpha \Lip^{-d} A^{-(2+d)} \sum_{k=1}^{K-1} \varepsilon_{k-1} - \alpha \Lip^{-d}  \sum_{k=1}^{K-1} \varepsilon_{k-1}^2 \frac{1}{\bar f(k)^{2+d}} \right], \\
& \geq &\CC (d+1)\Lip^{-d} \left[ \alpha  A^{-(2+d)} \sum_{k=1}^{K-1} \varepsilon_{k-1} - \alpha \frac{1}{\bar f(K)^{2+d}} \sum_{k=1}^{K-1} \varepsilon_{k-1}^2  \right],
\end{eqnarray*}
since $\bar f$ is non-increasing. This last inequality can be re-written as:
$$  \alpha \Lip^{-d}  (d+1) \frac{1}{\bar f(K)^{2+d}} \sum_{k=1}^{K-1} \varepsilon_{k-1}^2 + \frac{1}{\bar f(K)^{1+d}} \geq  (d+1) \alpha \Lip^{-d} A^{-(2+d)} \sum_{k=1}^{K-1} \varepsilon_{k-1} + \frac{1}{\bar f(1)^{1+d}}.$$
Using again the monotonicity of $\bar{f}$, we deduce that:
\begin{multline*}
\frac{1}{\bar f(K)^{2+d}} \left[ \alpha \Lip^{-d}  (d+1)  \sum_{k=1}^{K-1} \varepsilon_{k-1}^2 + \bar{f}(1)\right] \\
\geq \CC (d+1) \Lip^{-d} \alpha A^{-(2+d)} \sum_{k=1}^{K-1} \varepsilon_{k-1} + \frac{1}{\bar f(1)^{1+d}},
\end{multline*}
where the monotonicity $\bar f(K)\leq \bar f(1)$ was used to upper bound $\bar f(K)^{-(1+d)}$ by $\bar f(1)\,\bar f(K)^{-(2+d)}$ (the finiteness of $\bar f(1) = J(\nu_1)-J^\star$ itself being ensured by $\HTVC$). The last inequality can be rewritten as
\[
\bar f(K) \leq \CC \left[  \frac{\alpha \Lip^{-d} (d+1)  \sum_{k=1}^{K-1} \varepsilon_{k-1}^2 + \bar{f}(1)}{(d+1) \alpha \Lip^{-d} A^{-(2+d)} \sum_{k=1}^{K-1} \varepsilon_{k-1} + \bar f(1)^{-(1+d)}} \right]^{\frac{1}{2+d}}.
\]
Since $\bar{f}(1) = J(\nu_1)-J^* \ge 0$, this leads to 
\begin{equation}
J(\nu_K) - J^* \leq \CC \left[  \frac{\alpha \Lip^{-d} (d+1)  \sum_{k=1}^{K-1} \varepsilon_{k-1}^2 + (J(\nu_1)-J^*)}{(d+1) \alpha \Lip^{-d} A^{-(2+d)} \sum_{k=1}^{K-1} \varepsilon_{k-1}} \right]^{\frac{1}{2+d}} + \CC \sum_{k=1}^{K-1} \varepsilon_{k}^2.
\label{eq:bound_bar_mK}
\end{equation}
In this context, we consider a horizon-dependent strategy, namely we set $\varepsilon_k = \varepsilon$ for any $k \in \lbrace 1,\dots, K\rbrace$. The bound \eqref{eq:bound_bar_mK} becomes in this case
\begin{eqnarray*} 
J(\nu_K) - J^\star
& \leq & \CC \left[  \frac{\alpha \Lip^{-d} (d+1)(K-1) \varepsilon^2 + (J(\nu_1)-J^*)}{(d+1) \alpha \Lip^{-d} A^{-(2+d)} (K-1) \varepsilon } \right]^{\frac{1}{2+d}} + \CC K \varepsilon^2, \\
& \leq & \CC \left[ \frac{1}{ A^{-(2+d)}} \varepsilon + \frac{J(\nu_1)-J^*}{(d+1) \Lip^{-d} \alpha A^{-(2+d)}} \times \frac{1}{(K-1)\varepsilon}\right]^{\frac{1}{2+d}} + \CC K \varepsilon^2\\
&\leq& \CC A \left[ \varepsilon + \frac{J(\nu_1)-J^*}{(d+1) \Lip^{-d} \alpha(K-1)\varepsilon}\right]^{\frac{1}{2+d}} + \CC K \varepsilon^2.
\end{eqnarray*}
Then, choosing $\varepsilon$ such that 
$$ \left( \frac{1}{(d+1) \alpha \Lip^{-d} A^{-(2+d)}}\times  \frac{1}{K\varepsilon} \right)^{\frac{1}{2+d}} =  K\varepsilon^2 \quad \Leftrightarrow \quad \varepsilon =  \left( \frac{A^{2+d}}{(d+1) \alpha \Lip^{-d}} \right)^{\frac{1}{5+2d}} K^{-\frac{3+d}{5+2d}},$$
and defining $\CC$ large enough to absorb $(J(\nu_1)-J^*)^{\frac{1}{2+d}}$, we obtain that (the constant $\CC$ depends polynomially on the initial excess, specifically as $(J(\nu_1)-J^\star)^{1/(2+d)}$):
$$ J(\nu_K) - J^\star \leq \CC  \left( \frac{\Lip^d\,A^{2+d}}{(d+1) \alpha } \right)^{\frac{2}{5+2d}} K^{-\frac{1}{5+2d}}.$$
To conclude the proof, we have to investigate the case where \eqref{eq:minorbarf} does not hold. Remark that the latter entails that 
\[
J(\nu_K) - J^\star \leq \CC \sum_{k=1}^K \varepsilon_k^2 \leq \CC K \epsilon^2 \leq \CC \left( \frac{\Lip^d\,A^{2+d}}{(d+1) \alpha } \right)^{\frac{2}{5+2d}} K^{-\frac{1}{5+2d}},
\]
keeping the same choice for $\varepsilon$. We then use  $A=\CC \left(\|\nu^\star\|_{\mathrm{TV}} \vee \Lip \right)$ in the final results.
\end{proof}
\end{subequations}

\section{Proof of the stochastic results\label{sec:appendix_sto}}

\subsection{Almost sure total variation bound}

We begin with the study of the almost sure TV norm upper bound, that will be then used throughout the rest of the proofs.

\begin{proposition}
\label{prop:proof_TVsto}
Define 
\[ \widehat{\RR}= \frac{\mathbf{H}}{\mathbf{G}} e + \sqrt{\frac{e^3}{\mathbf{G}}} +1\] 
and assume that $\alpha \le \frac{1}{1+\widehat{\RR}}$ and $\epk \leq \alpha$ for all $k \ge 1$, then the sequence $(\nuks)_{k \ge 0}$ satisfies:
$$
\forall k \ge 0 \qquad \|\nuks\|_{TV} \leq \widehat{\RR} \quad \text{a.s.}
$$
\end{proposition}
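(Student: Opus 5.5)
The plan is an induction on $k$ with invariant $\|\nuks\|_{TV}\le\widehat{\RR}$, holding almost surely. It mirrors the proof of Proposition~\ref{prop:hypdist}~$(\mathit{i})$, but I will sharpen the constants so that the bound closes exactly at $\widehat{\RR}$ rather than at a multiple of it. For the base case I take $\|\hat\nu_0\|_{TV}\le\widehat{\RR}$, which is the implicit initialization requirement. If it fails, the same argument shows that the bound $\|\hat\nu_0\|_{TV}\vee\widehat{\RR}$ propagates, since in that regime the mass decreases. I will also use $\bm{G}\le 1$, which is natural because $\bm{G}$ plays the role of $\cc_{\mathcal P}\le K\le 1$ (Remark~\ref{rem:K_leq_1}). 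I would make this explicit if needed.

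\textbf{One-step recursion.} Write $u=\|\nuks\|_{TV}$.
\begin{itemize}
\item \emph{Weight update.} The lower bound in \eqref{A2} holds for every realization of $Z$, so it also holds for the mini-batch average \eqref{eq:minibatch}. Using $\kappa\ge0$, we get $\widehat{J'_{\hat\nu_k}}\ge \bm{G}u-\bm{H}$ pointwise. Hence
\[
\|\nukps\|_{TV}=\sum_j\omega_j^k e^{-\alpha\widehat{J'_{\hat\nu_k}}(t_j^k)}\le u\,e^{-\alpha(\bm{G}u-\bm{H})}.
\]
The position update does not change the mass.
\item \emph{Deletion.} The step \eqref{eq:stoscheme_del} can only remove mass.
\item \emph{Birth.} The step \eqref{eq:stoscheme} adds at most $\varepsilon_k\le\alpha$.
\end{itemize}
Combining these three facts gives, almost surely,
\[
\|\nukpps\|_{TV}\le g(u):=u\,e^{-\alpha(\bm{G}u-\bm{H})}+\alpha .
\]

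\textbf{Case analysis.} Let $M$ be the positive root of $\bm{G}M^2-\bm{H}M=e$. Then
\[
M=\frac{\bm{H}+\sqrt{\bm{H}^2+4e\bm{G}}}{2\bm{G}}\le \frac{\bm{H}}{\bm{G}}+\sqrt{\frac{e}{\bm{G}}}.
\]
\begin{itemize}
\item \emph{Case $u\le M$.} The factor $e^{-\alpha\bm{G}u}$ is at most $1$, so $g(u)\le e^{\alpha\bm{H}}M+\alpha$. From $\alpha\le(1+\widehat{\RR})^{-1}$ we get $\alpha\le 1$. Since $\widehat{\RR}\ge e\bm{H}/\bm{G}\ge\bm{H}$ (using $\bm{G}\le 1$), we also get $\alpha\bm{H}\le1$. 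Therefore
\[
g(u)\le eM+1\le \frac{e\bm{H}}{\bm{G}}+\sqrt{\frac{e^3}{\bm{G}}}+1=\widehat{\RR}.
\]
This is exactly the stated constant, and it is the reason for the precise form of $\widehat{\RR}$.
\item \emph{Case $M\le u\le\widehat{\RR}$.} Set $t=\alpha(\bm{G}u-\bm{H})\ge0$. Then $t\le\alpha\bm{G}\widehat{\RR}\le \bm{G}\widehat{\RR}/(1+\widehat{\RR})\le1$. The convexity bound $1-e^{-t}\ge t/e$ on $[0,1]$ gives
\[
u-g(u)=u(1-e^{-t})-\alpha\ge \alpha\Big(\frac{u(\bm{G}u-\bm{H})}{e}-1\Big)\ge0,
\]
where the last inequality holds because $u\ge M$. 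So $\|\nukpps\|_{TV}\le u\le\widehat{\RR}$.
\end{itemize}
Both cases keep the invariant, which completes the induction.

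\textbf{Main obstacle.} The delicate point is landing exactly on $\widehat{\RR}$ instead of $2\widehat{\RR}$. Three choices make this work.
\begin{itemize}
\item The threshold $M$ is taken as the exact root of $\bm{G}u^2-\bm{H}u=e$. It is not obtained from the cruder deterministic device $\bm{G}u-\bm{H}\ge \bm{G}u/2$ used in the proof of Proposition~\ref{prop:hypdist}, which loses a factor $2$.
\item The birth step adds at most $\varepsilon_k\le\alpha\le1$ per iteration, in place of the term $\lambda(\cX)$ that appears in the deterministic scheme.
\item The two side conditions $\alpha\bm{H}\le1$ and $t\le1$ both have to follow from $\alpha\le(1+\widehat{\RR})^{-1}$. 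I expect to check these carefully, and this is where $\bm{G}\le1$ enters.
\end{itemize}
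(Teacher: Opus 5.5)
Your proof is correct and follows the paper's skeleton: a one-step bound $\|\hat\nu_{k+1}\|_{TV}\le u\,e^{-\alpha(\bm G u-\bm H)}+\alpha$, then a two-case threshold argument. The threshold is where you differ, and it matters. The paper only says that the rest follows the deterministic proof of Proposition~\ref{prop:hypdist}~$(\mathit{i})$. Transposed to this setting ($\cc_{\mathcal P}\to\bm G$, $\|y\|_{\mathbb H}\to\bm H$, $\lambda(\cX)\to 1$), that argument uses the threshold $\mathfrak M=2\bm H/\bm G+\sqrt{2e/\bm G}$, which comes from $\bm G u-\bm H\ge \bm G u/2$. It yields $\|\hat\nu_k\|_{TV}\le \|\hat\nu_0\|_{TV}\vee(e\mathfrak M+1)$, where $e\mathfrak M+1=2e\bm H/\bm G+\sqrt{2e^3/\bm G}+1$ strictly exceeds $\widehat{\RR}$ and is only bounded by $2\widehat{\RR}$. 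Your exact root $M$ of $\bm G M^2-\bm H M=e$ is what delivers the stated constant $\widehat{\RR}$. You are also right that the statement needs $\|\hat\nu_0\|_{TV}\le\widehat{\RR}$, since otherwise it already fails at $k=0$.

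Two small points. First, you need not assume $\bm G\le 1$; it follows from the hypotheses. By \eqref{eq:lower_bound_DC} and $|\xi_\nu|\le\MM$, you have $\widehat{J'_\nu}(t,Z)\le \|\nu\|_{TV}+\CC_{\mathcal P}+\kappa+\MM$. Comparing this with the lower bound in \eqref{A2} and letting $\|\nu\|_{TV}\to\infty$ forces $\bm G\le 1$.

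Second, your side remark that $\|\hat\nu_0\|_{TV}\vee\widehat{\RR}$ propagates when $\|\hat\nu_0\|_{TV}>\widehat{\RR}$ does not follow from the same argument. For $u>\widehat{\RR}$ you lose $t\le 1$. Fix this as the paper does, by monotonicity. The map $u\mapsto u\bigl(1-e^{-\alpha(\bm G u-\bm H)}\bigr)$ is increasing on $[M,\infty)$, so it suffices to check $u=M$. There $t=\alpha e/M\le\alpha\sqrt e<1$, because $M\ge\sqrt{e/\bm G}\ge\sqrt e$. Your convexity bound then gives $M(1-e^{-t})\ge Mt/e=\alpha$, so $g(u)\le u$ for every $u\ge M$.
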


\begin{proof}
Our proof follows essentially the same lines as the deterministic case, except that we have to take into account the randomness of our updates.
Let $k\in \mathbb{N}^*$ be fixed. According to \eqref{eq:stoscheme_del} and \eqref{eq:stoscheme}, we have
\[
\hat\nu_{k+1} =\nukps - \bm{1}_{\Pkp}(V_{k+1} )\nukps(V_{k+1})\delta_{V_{k+1}}  + \varepsilon_k \bm{1}_{\Nkp}(U_{k+1}) \delta_{U_{k+1}}
\]
Computing the total variation norm, we obtain:
\[
\| \hat\nu_{k+1} \|_{\mathrm{TV}} \leq \|\hat\nu_{k^+}\|_{\mathrm{TV}} + \epk \leq   \int_{\mathcal{X}} e^{-\alpha \widehat{J_{\nuks}'}(\vt)} d\nu_k(\vt) + \|\varepsilon \|_\infty.
\]
Next, observe that for any $\vt\in \mathcal{X}$, according to Assumption \eqref{A2}, we have 
\[
\widehat{J'_{\nuks}}(\vt) =   \frac{1}{m_k} \sum_{l=1}^{m_k} \widehat{J'_{\nuks}}(\vt,Z_{l,k}^+) \geq \bm{G} \| \nuks\|_{\mathrm{TV}} -\bm{H} + \kappa \ge  -\bm{H} + \kappa  \qquad \text{a.s.},
\]
so that we have the almost sure upper bound:
$$
\| \nukpps \|_{\mathrm{TV}} \leq  e^{- \alpha \bm{G} \| \nuks\|_{\mathrm{TV}}+\alpha \bm{H}}
\| \nuks\|_{\mathrm{TV}} + \alpha \qquad \text{a.s.}
$$
Then, the rest of the proof proceeds exactly following the same lines as in Proposition 
\ref{prop:hypdist}, $i)$, whose proof is located in Section \ref{s:proof_hypdist}.
\end{proof}

\subsection{Proof of Proposition \ref{prop:hypsto}}
\label{s:proof_hypsto}
The proof of Proposition \ref{prop:hypsto} is split into several parts, following all the  assumptions we need to verify accordingly. In particular, we establish below Propositions \ref{prop:hypsto1}, \ref{prop:hypsto2}, \ref{prop:hypsto3} and \ref{prop:hypsto4}.

\begin{proposition}[Assumption $\hat{\mathcal{H}}^+_{\varepsilon,a}$]
\label{prop:hypsto1}
For any $a>0$, set $c_a = \MM\sqrt{2a}$ in Equation~\eqref{eq:def_M_N_P}.
Then for any integer $k$, the iterate $(\nukps,\hat\nu_{k+1})$ of Algorithm
\ref{algo:SCPGD_pro}  satisfies for any $A\subset \mathcal{X}$:
$$
\mathbb{E}\left[  \hat\nu_{k+1}(A\cap \lbrace J'_{\hat\nu_{k^+}} < 0 \rbrace)\, \vert \mathfrak{F}_{k}^+\right]  \ge \varepsilon_{k}\frac{\lambda(A\cap \lbrace J'_{\hat\nu_{k^+}} < 0 \rbrace)}{\lambda(\mathcal{X})}
-  \varepsilon_k  m_k^{-a}
$$
\end{proposition}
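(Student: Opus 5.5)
We lower bound the conditional expectation by keeping only the birth contribution and then control the probability that the stochastic certificate misses a point where the true certificate is negative. Write $B := A\cap\{J'_{\hat\nu_{k^+}}<0\}$. Since $\hat\nu_{k+1} = \hat\nu_{k^{++}} + \varepsilon_k \bm{1}_{\Nkp}(U_{k+1})\delta_{U_{k+1}}$ and $\hat\nu_{k^{++}}\geq 0$, we have
\[
\hat\nu_{k+1}(B)\;\geq\;\varepsilon_k\,\bm{1}_{\Nkp}(U_{k+1})\,\bm{1}_{B}(U_{k+1}).
\]
The set $B$ is $\mathfrak{F}_k^+$-measurable because it depends only on $\hat\nu_{k^+}$. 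Conditionally on $\mathfrak{F}_k^+$, the variables $U_{k+1}\sim\mathcal{U}_\mathcal{X}$ and the fresh mini-batch $\bm{Z}^+_{k+1}$ are independent of each other and of $\mathfrak{F}_k^+$. By Fubini,
\[
\mathbb{E}[\hat\nu_{k+1}(B)\mid\mathfrak{F}_k^+]\;\geq\;\frac{\varepsilon_k}{\lambda(\mathcal{X})}\int_B \mathbb{P}\Big(\widehat{J'_{\hat\nu_{k^+}}}(u)\le c_a\sqrt{\tfrac{\log m_k}{m_k}}\;\Big|\;\mathfrak{F}_k^+\Big)\,d\lambda(u).
\]

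The core step is a pointwise bound for each fixed $u\in B$. There $J'_{\hat\nu_{k^+}}(u)<0$, so the event $\{\widehat{J'_{\hat\nu_{k^+}}}(u) > c_a\sqrt{\log m_k/m_k}\}$ implies
\[
\frac1{m_k}\sum_{l}\xi_{\hat\nu_{k^+}}(u,Z^+_{l,k+1}) > c_a\sqrt{\tfrac{\log m_k}{m_k}}.
\]
Conditionally on $\mathfrak{F}_k^+$, the summands are i.i.d., centered by \eqref{A1}, and bounded by $\MM$ by \eqref{A2}. Hoeffding's inequality for variables in $[-\MM,\MM]$ then gives a miss probability of at most
\[
\exp\big(-m_k^2 c_a^2 (\log m_k/m_k)/(2 m_k \MM^2)\big)=\exp\big(-c_a^2\log m_k/(2\MM^2)\big).
\]
With $c_a=\MM\sqrt{2a}$ this equals $m_k^{-a}$. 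Hence the acceptance probability is at least $1-m_k^{-a}$ uniformly on $B$.

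Plugging this in gives
\[
\mathbb{E}[\hat\nu_{k+1}(B)\mid\mathfrak{F}_k^+]\geq \frac{\varepsilon_k}{\lambda(\mathcal X)}\lambda(B)(1-m_k^{-a})\;\geq\;\varepsilon_k\frac{\lambda(B)}{\lambda(\mathcal X)} - \varepsilon_k m_k^{-a},
\]
using $\lambda(B)\le\lambda(\mathcal X)$.

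The main obstacle is measurability and conditioning: we must check that the fresh randomness $(U_{k+1},\bm{Z}^+_{k+1})$ is independent of $\mathfrak{F}_k^+$ and that $u\mapsto \widehat{J'_{\hat\nu_{k^+}}}(u)$ is jointly measurable, so that Fubini applies. Joint measurability follows from the Lipschitz continuity in $u$ given in $(\hat{\mathcal H}_{\mathrm{sto}})$. The Hoeffding constant convention matters too. With the range-$2\MM$ version, the exponent is $c_a^2/(2\MM^2)$, and that is exactly what makes $c_a=\MM\sqrt{2a}$ produce $m_k^{-a}$.
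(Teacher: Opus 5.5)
Your proof is correct and follows essentially the same route as the paper. You discard the post-death part $\hat\nu_{k^{++}}\geq 0$, use the independence of $U_{k+1}$ from the fresh mini-batch to reduce to a pointwise Hoeffding bound at a fixed location (range twice the noise bound, giving exactly $m_k^{-a}$ for the chosen $c_a$), and integrate over the uniform proposal. Your version is also slightly more explicit than the paper's: it uses $J'_{\hat\nu_{k^+}}(u)<0$ to turn the miss event into a one-sided deviation of the centered noise, and it states the measurability and independence requirements that the paper uses tacitly.
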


\begin{proof}
Consider any integer $k\in \mathbb{N}$ and any measurable set $A \subset \mathcal{X}$, we use the definition of $\hat\nu_{k+1}$:
\begin{align*}
\mathbb{E}&\left[  \hat\nu_{k+1}(A\cap \lbrace J'_{\hat\nu_{k^+}} < 0 \rbrace)\, \vert \mathfrak{F}_{k}^+\right] \\
&
=  \nukkpps(A\cap \lbrace J'_{\hat\nu_{k^+}} < 0 \rbrace) +
\varepsilon_{k} \mathbb{E}\left[
\bm{1}_{\Nkp}(U_{k+1})\delta_{U_{k+1}}(A\cap \lbrace J'_{\hat\nu_{k^+}} < 0 \rbrace)  \, \vert \mathfrak{F}_{k}^+\right], \\
& \geq \varepsilon_{k}
\int_{A\cap \lbrace J'_{\hat\nu_{k^+}} < 0  \rbrace}
\lambda(\mathcal{X})^{-1}dx - \varepsilon_{k} \mathbb{E}\left[
\int_{A\cap \lbrace J'_{\hat\nu_{k^+}} < 0  \rbrace}
\bm{1}_{\Nkp^c}(x)  \lambda(\mathcal{X})^{-1} \text{d}x \right] , \\
& \ge    \varepsilon_{k}\frac{\lambda(A\cap \lbrace J'_{\hat\nu_{k^+}} < 0 \rbrace)}{\lambda(\mathcal{X})} -  \varepsilon_{k} \mathbb{P} \left( J'_{\hat\nu_{k^+}}(U_{k+1}) < 0 \, \text{and} \, \widehat{J'}_{\hat\nu_{k^+}}(U_{k+1}) > c_a \sqrt{\frac{\log m_k}{m_k}} \right),
\end{align*}
where we used the definition of $\Nkp$. Recall in particular that the constant $c_a$ is positive. The key observation is that the sampled point $U_{k+1} \sim \mathrm{Uniform}(\mathcal{X})$ is \emph{independent} of the mini-batch $Z^+_{k+1}$ used to construct the stochastic certificate $\widehat{J'}_{\hat\nu_{k^+}}$. Conditioning on $U_{k+1} = x$ reduces the problem to bounding the deviation of an empirical mean at a single fixed point, where standard Hoeffding's inequality applies without any covering argument. Specifically, for any $x \in \mathcal{X}$:
\begin{equation}\label{eq:pointwise_hoeffding_birth}
    \mathbb{P} \left( \widehat{J'}_{\hat\nu_{k^+}}(x) > c_a \sqrt{\frac{\log m_k}{m_k}} \, \Bigg| \, U_{k+1} = x \right) \leq \exp\left(- \frac{m_k}{2 \MM^2} \left(c_a \sqrt{\frac{\log m_k}{m_k}}\right)^2 \right) = m_k^{-a}.
\end{equation}
Integrating over $U_{k+1}$ and setting $a = \frac{c_a^2}{2\MM^2}$, we obtain:
\begin{align*}
\mathbb{E}\left[  \hat\nu_{k+1}(A\cap \lbrace J'_{\hat\nu_{k^+}} < 0 \rbrace)\, \vert \mathfrak{F}_{k}^+\right]
& \ge \varepsilon_{k}\frac{\lambda(A\cap \lbrace J'_{\hat\nu_{k^+}} < 0 \rbrace)}{\lambda(\mathcal{X})}
-  \varepsilon_k  m_k^{-a},
\end{align*}
with $a=\frac{c_a^2}{2\MM^2}$. No covering of $\mathcal{X}$ is needed, so the dimension $d$ disappears from the birth threshold. This ensures that $\hat{\mathcal{H}}^+_{\varepsilon,a}$ holds with $\cc = \lambda(\mathcal{X})^{-1}$ and $\CC = 1$.
\end{proof}

\begin{proposition}[Assumption $\hat{\mathcal{H}}^{\text{smooth},2}_{\varepsilon}$.]
\label{prop:hypsto3}
For any integer $k$, the iterate $(\nukkpps,\hat\nu_{k+1})$ of Algorithm
\ref{algo:SCPGD_pro}  satisfies:
$$\| J'_{\hat\nu_{k+1}} - J'_{\hat\nu_{k^+}} \|_\infty \leq \CC\varepsilon_{k}.$$
\end{proposition}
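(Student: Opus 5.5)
We mimic the deterministic argument given for $(\mathcal{H}_\varepsilon^{\mathrm{smooth},2})$ in the proof of Proposition~\ref{prop:hypdist}~$ii)$. The starting point is the explicit form of the stochastic mass tweaking, \eqref{eq:stoscheme_del}--\eqref{eq:stoscheme}:
\[
\hat\nu_{k+1}-\hat\nu_{k^+}= -\bm{1}_{\Pkp}(V_{k+1})\,\hat\nu_{k^+}(\{V_{k+1}\})\,\delta_{V_{k+1}} + \varepsilon_k \bm{1}_{\Nkp}(U_{k+1})\,\delta_{U_{k+1}}.
\]
This is a signed measure with at most two atoms. By \eqref{eq:Frechet_J}, for every $\vt\in\cX$,
\[
J'_{\hat\nu_{k+1}}(\vt)-J'_{\hat\nu_{k^+}}(\vt)=\int_\cX K(u,\vt)\,\mathrm d(\hat\nu_{k+1}-\hat\nu_{k^+})(u).
\]
Because $0< K\le 1$ (Remark~\ref{rem:K_leq_1}), the right-hand side is bounded in absolute value by $\|\hat\nu_{k+1}-\hat\nu_{k^+}\|_{\mathrm{TV}}$, uniformly in $\vt$. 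It therefore suffices to bound the masses of the two atoms almost surely.

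The birth atom has mass at most $\varepsilon_k$, since the indicator is at most one. The deletion atom is only removed when $V_{k+1}\in\Pkp$. By definition \eqref{eq:def_M_N_P}, membership in $\Pkp$ forces the corresponding weight to satisfy $\omega_j^{k^+}\le\sqrt2\,\varepsilon_k$, so the removed mass is at most $\sqrt2\,\varepsilon_k$. If several particles share the location $V_{k+1}$, we interpret the deletion particle-wise, which is consistent with \eqref{eq:stoscheme_del}, and the bound still holds. Combining the two atoms gives
\[
\|J'_{\hat\nu_{k+1}}-J'_{\hat\nu_{k^+}}\|_\infty\le(1+\sqrt2)\,\varepsilon_k \quad\text{a.s.},
\]
so the claim holds with $\CC=1+\sqrt2$, independently of $\alpha$, $m_k$ and $\CTV$.

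The only delicate point is the deletion term. In the deterministic scheme, the bound on the deleted mass came from the threshold $-2\alpha^{-1}\log\varepsilon_k+\CC_w$ together with the $\CTV$ bound. Here the weight cap $\omega\le\sqrt2\varepsilon_k$ built into $\Pkp$ gives the bound directly and almost surely. Randomness from the mini-batches, the uniform draws and the sign of $\widehat{J'}$ plays no role, so no expectation or concentration argument is needed.
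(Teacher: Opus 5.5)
Your proof is correct and follows the paper's own argument. Like the paper, you write $J'_{\hat\nu_{k+1}}-J'_{\hat\nu_{k^+}}$ as $\langle\varphi_t,\Phi(\hat\nu_{k+1}-\hat\nu_{k^+})\rangle_{\mathbb H}$, split the difference into the death atom at $V_{k+1}$ (mass at most $\sqrt2\,\varepsilon_k$ from the weight cap in $\Pkp$) and the birth atom at $U_{k+1}$ (mass at most $\varepsilon_k$), and bound the kernel by one, which gives the constant $1+\sqrt2$. Your particle-wise reading of the deletion when several particles share a location is a small precision that the paper leaves implicit.
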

\begin{proof}
Using \eqref{eq:Frechet_J}, we have, for any $t\in \mathcal{X}$,
\[
J'_{\hat\nu_{k+1}}(t) - J'_{\hat\nu_{k^+}}(t) = \langle \varphi_t , \Phi (\hat\nu_{k+1} - \hat\nu_{k^+}) \rangle_\mathbb{H}.
\]
Hence, according to our update scheme,
\[
\sup_{t\in \mathcal{X}} |J'_{\hat\nu_{k+1}}(t) - J'_{\hat\nu_{k^+}}(t)| = \sup_{t\in \mathcal{X}} \left| \langle \varphi_t , \Phi \delta_{V_{k+1}}\rangle_\mathbb{H} \hat\nu_{k^+}(V_{k+1}) + \langle \varphi_t , \Phi \delta_{U_{k+1}}\rangle_\mathbb{H} \varepsilon_k \right| \leq \sqrt{2}\epk + \epk \leq \CC \epk,
\]
where we have used the Cauchy-Schwarz Inequality and Assumption ($\mathcal{H}_{\mathcal{P}}$).
\end{proof}

\begin{proposition}[Assumption $\hat{\mathcal{H}}^{\text{smooth},1}_{\varepsilon}$]
\label{prop:hypsto2}
For any integer $k$, the iterate $(\nukkpps,\hat\nu_{k+1})$ of Algorithm
\ref{algo:SCPGD_pro}  satisfies:
$$
\mathbb{E}\left[  J(\hat\nu_{k+1})- J(\nukps) \, \vert\mathfrak{F}_{k}^+ \right]\leq \CC \left( \epk \sqrt{\frac{\log m_k}{m_k}} + \epk^2\right).
$$
\end{proposition}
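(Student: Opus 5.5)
The plan is to reproduce the deterministic argument for $(\mathcal{H}^{\mathrm{smooth},1}_\varepsilon)$ in Proposition~\ref{prop:hypdist}~$(ii)$, with two changes: the deletion set $\Pkp$ and the birth set $\Nkp$ are now defined through the stochastic certificate $\widehat{J'_{\hat\nu_{k^+}}}$ rather than $J'_{\hat\nu_{k^+}}$, and the update only touches two atoms. Write $\sigma := \hat\nu_{k+1}-\nukps = -\bm{1}_{\Pkp}(V_{k+1})\,\omega_{V}\,\delta_{V_{k+1}} + \varepsilon_k\bm{1}_{\Nkp}(U_{k+1})\,\delta_{U_{k+1}}$, where $\omega_V:=\nukps(\{V_{k+1}\})$. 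By \eqref{eq:Frechet_Jprime},
\[
J(\hat\nu_{k+1})-J(\nukps)=\langle J'_{\nukps},\sigma\rangle+\tfrac12\|\Phi\sigma\|_{\mathbb H}^2 .
\]
For the quadratic term, $K\le 1$ gives $\|\Phi\sigma\|^2\le\|\sigma\|_{\mathrm{TV}}^2\le(\sqrt2\varepsilon_k+\varepsilon_k)^2\le \CC\varepsilon_k^2$, since deleted atoms have weight at most $\sqrt2\varepsilon_k$ by the definition \eqref{eq:def_M_N_P}. This term holds almost surely, so conditioning is not needed for it.

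The linear term has a death part and a birth part. The death part equals $-\bm{1}_{\Pkp}(V_{k+1})\,\omega_V\,J'_{\nukps}(V_{k+1})$. On $\Pkp$ we know $\widehat{J'}\ge0$, so $J'_{\nukps}(V_{k+1})\ge -|\xi|$, where $\xi$ is the mini-batch error at that point. This gives the bound $\omega_V(J'(V))_-\le\sqrt2\varepsilon_k\,|\widehat{J'}(V)-J'(V)|$. Its conditional expectation given $\mathfrak{F}_k^+$ (the particles are fixed, and $V_{k+1}$ and $\bm Z^+_{k+1}$ are independent of each other) is bounded through a Hoeffding estimate at the fixed support points. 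Since $\mathbb E|\bar\xi|\le \MM/\sqrt{m_k}$, this is at most $\CC\varepsilon_k\sqrt{\log m_k/m_k}$. One point needs care: the event $\Pkp$ depends on the same mini-batch, but bounding the indicator by $1$ removes that difficulty.

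The birth part equals $\varepsilon_k\bm{1}_{\Nkp}(U_{k+1})J'_{\nukps}(U_{k+1})$. On $\Nkp$, $J'(U)\le c_a\sqrt{\log m_k/m_k}+|\bar\xi(U)|$. Conditioning on $U_{k+1}=x$, which is independent of $\bm Z^+_{k+1}$ as in the proof of Proposition~\ref{prop:hypsto1}, gives $\mathbb E|\bar\xi(x)|\le\MM/\sqrt{m_k}$ pointwise, so this part is at most $\CC\varepsilon_k\sqrt{\log m_k/m_k}$. Adding the three contributions gives the claim, with $\CC$ depending on $c_a$, $\MM$ and $\CC_{\mathcal P}$.

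The main obstacle is the positive part of the linear term: a stochastic threshold can misclassify points, either adding mass where $J'>0$ or deleting mass where $J'<0$. The pointwise independence between the location ($U_{k+1}$ or $V_{k+1}$) and the fresh mini-batch $\bm Z^+_{k+1}$ is exactly what makes a single-point concentration bound sufficient, with no covering argument needed. I must check that $\omega_V$ and the support are $\mathfrak F_k^+$-measurable, which holds by $(\hat{\mathcal H}_{\mathfrak F})$.
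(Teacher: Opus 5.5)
Your proof is correct, and it gets the bound by a somewhat different route from the paper.

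\textbf{Single expansion versus two steps.} The paper splits $J(\hat\nu_{k+1})-J(\nukps)$ into a death step and a birth step and applies \eqref{eq:Frechet_Jprime} twice, at $\nukps$ and at $\nukkpps$. Its birth step therefore carries an extra cross term $(J'_{\nukkpps}-J'_{\nukps})(U_{k+1})$, which it controls with the $\CC\epk$ bound of Proposition~\ref{prop:hypsto3}. You expand once at $\nukps$ with the combined two-atom perturbation $\sigma$. The interaction between deletion and creation is then absorbed into $\tfrac12\|\Phi\sigma\|_{\mathbb H}^2\le\tfrac12\|\sigma\|_{\mathrm{TV}}^2$, which is cleaner.

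\textbf{Noise in the death part.} This is the more substantive difference. The paper bounds $|\widehat{J'}_{\nukps}-J'_{\nukps}|(V_{k+1})$ by its maximum over all $p_k$ particle locations, using a sub-Gaussian maximal inequality. That costs a $\sqrt{\log p_k}$ factor, and it is why Proposition~\ref{prop:hypsto} assumes $p_k\le p_0+m_k$. You instead use that $V_{k+1}$ is drawn independently of $\bm Z^+_{k+1}$ given $\mathfrak F_k^+$. This reduces the bound to a single-point variance estimate $\MM/\sqrt{m_k}$, with no dependence on $p_k$ and no need for that hypothesis.

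\textbf{What each route buys.} The paper's maximal bound is more robust: it still holds if the pruned particle is chosen using the same mini-batch, for example the particle with the largest ratio $\widehat{J'}/\omega$ suggested in the introduction. In that case your independence argument would no longer apply.

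\textbf{Shared edge case.} The comparison $\MM/\sqrt{m_k}\le\CC\sqrt{\log m_k/m_k}$ fails at $m_k=1$. As in the paper, those iterates must be absorbed into the constant separately.
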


\begin{proof}
The proof is inspired by the deterministic case, see Section \ref{s:proof_delet}, but we still need to handle the randomness brought by $\Pkp$ and $\Nkp$. We decompose the evolution of $J$ into two terms:
$$
J(\hat\nu_{k+1})-J(\hat\nu_{k^+}) =
J(\hat\nu_{k+1})- J(\nukkpps) + J(\nukkpps) - J(\hat\nu_{k^+}).
$$

\paragraph{Death part: $J(\nukkpps) - J(\hat\nu_{k^+})$.}
Using $\nukkpps - \hat\nu_{k^+} = -\nukps(V_{k+1}) \delta_{V_{k+1}} \mathbf{1}_{\Pkp}(V_{k+1})$:
\begin{align*}
    \lefteqn{J(\nukkpps) - J(\hat\nu_{k^+})}\\
    & = \int J'_{\nukkpps}\, d[\nukkpps -\hat\nu_{k^+}] + \frac{1}{2} \| \Phi(\nukkpps -\hat\nu_{k^+}) \|_\mathbb{H}^2 \\
    & = \Big[ -\widehat{J'}_{\nukps}(V_{k+1})\nukps(V_{k+1}) + \big(\widehat{J'}_{\nukps} - J'_{\nukps}\big)(V_{k+1})\nukps(V_{k+1}) + \tfrac{1}{2}\nukps(V_{k+1})^2 \|\Phi\delta_{V_{k+1}}\|_\mathbb{H}^2 \Big]\\
    &\qquad\times \mathbf{1}_{\Pkp}(V_{k+1}) \\
    & \leq  \varepsilon_k^2 + \sqrt{2}\,\varepsilon_k \max_{1\le j \le p_k}\left|\widehat{J'}_{\nukps}(\hat t^{k^+}_j) - J'_{\nukps}(\hat t^{k^+}_j)\right|,
\end{align*}
since on $\Pkp$ we have $\widehat{J'}_{\nukps}(V_{k+1}) \ge 0$ (so the first term is $\le 0$ and dropped), $\nukps(V_{k+1}) \leq \sqrt{2}\,\varepsilon_k$, and $\|\Phi\delta_{V_{k+1}}\|_\mathbb{H}^2 \leq 1$. The death process only evaluates $\widehat{J'}_{\nukps}$ at the $p_k$ active particle locations $(\hat t^{k^+}_j)_{1\le j \le p_k}$, which are $\mathfrak{F}_k^+$-measurable. By Hoeffding's inequality applied at each fixed particle position and a sub-Gaussian maximal bound over the $p_k \le m_k + p_0$ points:
$$
\mathbb{E}\left[\max_{1\le j \le p_k}\left|\widehat{J'}_{\nukps}(\hat t^{k^+}_j) - J'_{\nukps}(\hat t^{k^+}_j)\right| \, \Big\vert\mathfrak{F}_{k}^+ \right]\leq \CC \sqrt{\frac{\log m_k}{m_k}},
$$
since $\log(p_k) \le \log(m_k + p_0) = \mathcal{O}(\log m_k)$. Hence:
$$ \mathbb{E}\left[ J(\nukkpps) - J(\hat\nu_{k^+}) \, \vert\mathfrak{F}_{k}^+ \right]\leq  \epk^2+ \CC \epk  \sqrt{\frac{\log m_k}{m_k}}.$$

\paragraph{Birth part: $J(\hat\nu_{k+1}) - J(\nukkpps)$.}
Since $\hat\nu_{k+1} - \nukkpps = \epk \mathbf{1}_{\Nkp}(U_{k+1})\,\delta_{U_{k+1}}$:
\begin{align*}
\lefteqn{J(\hat\nu_{k+1})- J(\nukkpps)}\\
& = \int J'_{\nukkpps}\, d[\hat\nu_{k+1} -\nukkpps]  + \tfrac{1}{2} \| \Phi(\hat\nu_{k+1}-\nukkpps) \|_\mathbb{H}^2 \\
& = \epk \mathbf{1}_{\Nkp}(U_{k+1}) \Big[\widehat{J'}_{\nukps}(U_{k+1}) + \big(J'_{\nukps} - \widehat{J'}_{\nukps}\big)(U_{k+1}) + \big(J'_{\nukkpps} - J'_{\nukps}\big)(U_{k+1})\Big]\\
&\qquad + \tfrac{\epk^2}{2}\|\Phi\delta_{U_{k+1}}\|_\mathbb{H}^2 \mathbf{1}_{\Nkp}(U_{k+1}).
\end{align*}
We bound each contribution. By the definition of $\Nkp$ in~\eqref{eq:def_M_N_P}, it holds that 
\[
\widehat{J'}_{\nukps}(U_{k+1})\,\mathbf{1}_{\Nkp}(U_{k+1}) \le c_a \sqrt{(\log m_k)/m_k}\,.
\]
Proposition~\ref{prop:hypsto3} yields $\|J'_{\nukkpps} - J'_{\nukps}\|_\infty \le \CC \epk$, and $\|\Phi\delta_{U_{k+1}}\|_\mathbb{H}^2 \le 1$. Since $U_{k+1}\sim\mathrm{Uniform}(\mathcal{X})$ is independent of the mini-batch, conditioning on $U_{k+1}=x$ and applying pointwise Hoeffding gives
$$\mathbb{E}\big[\, \big|\widehat{J'}_{\nukps}(U_{k+1}) - J'_{\nukps}(U_{k+1})\big| \,\big|\, \mathfrak{F}_k^+\big] \le \frac{\CC}{\sqrt{m_k}}.$$
Taking the conditional expectation and using $1/\sqrt{m_k} \le \sqrt{(\log m_k)/m_k}$ for $m_k \ge 3$:
$$
\mathbb{E}\left[ J(\hat\nu_{k+1})- J(\nukkpps) \, \vert\mathfrak{F}_{k}^+ \right] \leq \CC \left( \epk \sqrt{\frac{\log m_k}{m_k}} + \epk^2 \right).
$$
The finitely many iterates with $m_k \in \{1,2\}$ contribute an $O(1)$ constant absorbed into $\mathfrak{C}$. Combining the birth and death parts establishes $\hat{\mathcal{H}}^{\text{smooth},1}_{\varepsilon}$.
\end{proof}

\begin{proposition}[Assumption $(\hat{\mathcal{H}}_{D})$]
\label{prop:hypsto4}
There exists a large enough constant $\mathfrak{C}$ such that, for any integer $k$, if $\alpha \leq (4\mathfrak{C})^{-1} \wedge \sqrt{8\log 8}\, \MM^{-1}$, then:
$$
\mathbb{E}[J(\nukps) \, \vert \mathfrak{F}_k] \leq J(\nuks) - \frac{\alpha}{2} \|J'_{\nuks}\|_{\nuks}^2 + \mathfrak{C} \left( \frac{\alpha^2}{m_k} + \beta^2 + \frac{\beta}{ m_k}\right).
$$
\end{proposition}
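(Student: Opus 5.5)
The plan is to mimic the deterministic proof of Proposition~\ref{prop:incre} (Appendix~\ref{app:tec}). We replace exact certificates by their mini-batch estimators and take conditional expectations given $\Fk$, under which $\nuks$ is fixed and only $\bm{Z}_{k+1}$ is random. As there, write $\tnu=\sum_j\omega_j^k e^{-\alpha\widehat{J'_{\nuks}}(\vt_j^k)}\delta_{\vt_j^k}$ for the stochastic weight update and $\nukps$ for its push-forward by the stochastic position map. Apply the exact expansion \eqref{eq:Frechet_Jprime} at $\nuks$ with $\sigma=\nukps-\nuks$. This splits $J(\nukps)-J(\nuks)$ into the linear weight term $A_2=\int J'_{\nuks}\,d(\tnu-\nuks)$, the linear position term $A_1=\int J'_{\nuks}\,d(\nukps-\tnu)$, and the quadratic terms $B_1,B_2$.

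\textbf{Weight term $A_2$.} We have $A_2=\sum_j\omega_j^k J'_{\nuks}(\vt_j^k)\,\big(e^{-\alpha\widehat{J'_{\nuks}}(\vt_j^k)}-1\big)$. Since $J'_{\nuks}(\vt_j^k)$ is $\Fk$-measurable, taking $\mathbb{E}[\cdot\mid\Fk]$ and invoking Proposition~\ref{prop:hoeffding} pointwise (this is where $\alpha\le\sqrt{8\log 8}\,\MM^{-1}$ enters) gives the following bound. The factor $e^{-\alpha\widehat{J'}}-1$ is replaced by $-\alpha J'_{\nuks}$ up to an error $\frac{\alpha^2\MM^2}{m_k}e^{-\alpha J'}+\frac{\alpha^2 J'^2}{2}e^{\alpha|J'|}$. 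Hence
\[
\mathbb{E}[A_2\mid\Fk]\le -\alpha\|J'_{\nuks}\|^2_{\nuks}+\tfrac{\alpha^2}{2}\|J'_{\nuks}\|_\infty e^{\alpha\|J'_{\nuks}\|_\infty}\|J'_{\nuks}\|^2_{\nuks}+\frac{\alpha^2\MM^2}{m_k}\|J'_{\nuks}\|_\infty e^{\alpha\|J'_{\nuks}\|_\infty}\CTV .
\]
By $(\hat{\mathcal{H}}^\infty_{\mathrm{TV}})$ and \eqref{eq:lower_bound_DC}, $\|J'_{\nuks}\|_\infty\le \CC_{\mathcal P}+\CTV+\kappa$. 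So for $\alpha\le(4\mathfrak C)^{-1}$ the middle term is at most $\frac{\alpha}{4}\|J'_{\nuks}\|^2_{\nuks}$, and the last term is $\mathfrak C\alpha^2/m_k$.

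\textbf{Term $B_2$.} Bound it by $\|\tnu-\nuks\|_{\mathrm{TV}}^2\le \CTV\sum_j\omega_j^k\big(e^{-\alpha\widehat{J'}}-1\big)^2$. Expanding $\widehat{J'}=J'+\bar\xi$, where $\bar\xi$ is the averaged noise with $\mathbb{E}[\bar\xi^2\mid\Fk]\le\MM^2/m_k$ by independence of the mini-batch, gives $\mathbb{E}[B_2\mid\Fk]\le \CC\alpha^2\CTV\big(\|J'\|^2_{\nuks}+\MM^2\CTV/m_k\big)$. For $\alpha$ small this is absorbed into $\frac{\alpha}{4}\|J'\|^2_{\nuks}+\mathfrak C\alpha^2/m_k$. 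Together with the $A_2$ bound this leaves the coefficient $-\alpha/2$ in front of $\|J'_{\nuks}\|^2_{\nuks}$.

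\textbf{Position terms $A_1$, $B_1$.} For $A_1$, a second-order Taylor expansion with Lemma~\ref{lem:lipschitz_J_prime} gives $A_1\le\int\big(-\beta\langle\nabla J'_{\nuks},\hat\pi\rangle+\frac{\beta^2}{2}\|\nabla^2J'\|_\infty\|\hat\pi\|^2\big)d\tnu$, where $\hat\pi=\pi_\cX(\vt,\widehat{D_k}(\vt),\beta)$. Lemma~\ref{lem:Ghadimi-Lan-Zhang} yields $\|\hat\pi\|\le\|\widehat{D_k}\|\le\CC$, so the quadratic part and $B_1$ (bounded as in \eqref{eq:descente-proj2}) are $\mathcal O(\beta^2)$. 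The delicate piece, and the main obstacle, is the first-order term. The noise $\zeta$ enters nonlinearly through the projection, and $\tnu$ depends on the same mini-batch. The plan is to write $\hat\pi=\pi+(\hat\pi-\pi)$ with $\pi=\pi_\cX(\vt,\nabla J'_{\nuks}(\vt),\beta)$, which is $\Fk$-measurable. Then $-\beta\langle\nabla J',\pi\rangle\le-\beta\|\pi\|^2\le0$ by \eqref{eq:Ghadimi1}, and this part is dropped. The remainder is at most $\beta\,\|\nabla J'\|_\infty\,\|\hat\pi-\pi\|\le\beta\CC\,\|\bar\zeta\|$ by \eqref{eq:Ghadimi2}. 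A plain Cauchy--Schwarz bound only gives $\beta/\sqrt{m_k}$, so this step must be refined. First replace $d\tnu$ by $d\nuks$ at cost $\beta\,\mathcal O(|e^{-\alpha\widehat{J'}}-1|)$, which is controlled through $\alpha$ and the noise. Then, if $\hat\pi$ is not linear in $\zeta$, fall back on Young's inequality $\beta\CC\|\bar\zeta\|\le \beta^2+\CC\,\mathbb{E}\|\bar\zeta\|^2\le\beta^2+\CC\MM^2/m_k$. This requires choosing constants so that the resulting $1/m_k$ term carries a factor $\beta$ or $\alpha^2$. The fallback is to bound $\mathbb{E}\|\bar\zeta\|^2\le\MM^2/m_k$ and accept $\beta/m_k$ via the decomposition $\beta\|\bar\zeta\|\le\frac{\beta}{2}(\sqrt{m_k}\|\bar\zeta\|^2/\!\sqrt{m_k}\ldots)$. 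Collecting all pieces with a single large $\mathfrak C$ concludes the proof.
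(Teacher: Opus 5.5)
Your handling of $A_2$ (Proposition~\ref{prop:hoeffding} applied pointwise), of $B_2$ (bias--variance for $\mathbb{E}[\widehat{J'}^2\mid\Fk]$), of $B_1$, and of the second-order Taylor remainder of $A_1$ (via $\|\hat\pi\|\le\|\widehat{D_k}\|$) is sound and essentially matches the paper. The gap is the one you flagged yourself: the first-order position term $-\beta\int\langle\nabla J'_{\nuks},\hat\pi\rangle\,d\tnu$.

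Once you split $\hat\pi=\pi+(\hat\pi-\pi)$ around the \emph{deterministic} direction and discard $-\beta\|\pi\|^2$, the remainder is controlled only by $\beta\,\CC\,\|\bar\zeta\|$. Any bound that is linear in $\|\bar\zeta\|$ is stuck at $\beta\,\mathbb{E}\|\bar\zeta\|\asymp\beta\MM/\sqrt{m_k}$. None of your fallbacks repairs this. Young's inequality as you wrote it leaves a bare $\CC\MM^2/m_k$, with no factor $\alpha^2$ or $\beta$. No tuning of constants helps either: $\beta/\sqrt{m_k}\le\mathfrak{C}(\alpha^2/m_k+\beta^2+\beta/m_k)$ fails for $\beta=\alpha/\sqrt{m_k}$ and $m_k>4\mathfrak{C}^2$ whenever $\alpha\le(4\mathfrak{C})^{-1}$. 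Equivalently, the split $\beta/\sqrt{m_k}\le\beta^2/(2\alpha^2)+\alpha^2/(2m_k)$ would force $\mathfrak{C}\gtrsim\alpha^{-2}$, which contradicts $\alpha\le(4\mathfrak{C})^{-1}$. Your closing ``decomposition'' does not yield an actual bound.

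The missing idea is to apply \eqref{eq:Ghadimi1} with the \emph{stochastic} direction $v=\widehat{D_k}(\vt)$, which is the vector that actually defines $\hat\pi$. Set $\bar\zeta=\widehat{D_k}(\vt)-\nabla J'_{\nuks}(\vt)$. Then, pointwise,
\[
-\beta\langle\nabla J'_{\nuks}(\vt),\hat\pi\rangle=-\beta\langle\widehat{D_k}(\vt),\hat\pi\rangle+\beta\langle\bar\zeta,\hat\pi\rangle\le-\beta\|\hat\pi\|^2+\beta\|\bar\zeta\|\,\|\hat\pi\|\le\frac{\beta}{4}\|\bar\zeta\|^2 .
\]
The noise now enters quadratically, and $\mathbb{E}[\|\bar\zeta\|^2\mid\Fk]\le\MM^2/m_k$ gives exactly the admissible $\beta/m_k$. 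No linearity of the projection is needed.

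This even works directly against $d\tnu$. By \eqref{A2}, the weights of $\tnu$ are at most $e^{\alpha\bm{H}}$ times the $\Fk$-measurable weights $\omega_j^k$. So your detour through $d\nuks$ is unnecessary. Its cost would in any case have to be bounded via Young as $\alpha^2|\widehat{J'}|^2+\beta^2\|\hat\pi\|^2$, not as a crude $\CC\alpha\beta$, which is not admissible.

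The paper uses this same stochastic-direction inequality followed by Young, but organizes the linear term differently. It pairs the position increment with the old weights $\hat\omega_j^k$. The weight/position interaction goes into a cross term $\sum_j|\hat\omega_j^{k^+}-\hat\omega_j^k|\,\|\hat t_j^{k^+}-\hat t_j^k\|$, which is $\mathcal{O}\big(\alpha^2\|J'_{\nuks}\|^2_{\nuks}+\alpha^2/m_k+\beta^2\big)$ after Young and bias--variance.
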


\begin{subequations}
\begin{proof}
\underline{Step 1: One-step evolution and second order term.}
Let $k\in \mathbb{N}^\star$ be fixed. According to \eqref{eq:Frechet_Jprime}:
\begin{equation}
J(\hat\nu_{k^+}) - J(\hat\nu_k) = \int_\cX J_{\hat\nu_k}' d(\hat\nu_{k^+}-\hat\nu_k) + \frac{1}{2} \| \Phi(\hat\nu_{k^+}-\hat\nu_k)\|_\mathds{H}^2.
\label{eq:dvp_J}
\end{equation}
Writing $\hat\nu_{k} = \sum_{j=1}^{p_k} \hat\omega_j^{k} \delta_{\hat{t}_j^{k}}$ and $\hat\nu_{k^+} = \sum_{j=1}^{p_k} \hat\omega_j^{k^+} \delta_{\hat{t}_j^{k^+}}$, we introduce the following measure $\tilde \nu_{k^+} = \sum_{j=1}^{p_k}\hat\omega_{j}^{k^+} \delta_{\hat t_j^k}$. Then, we deduce that:
\begin{eqnarray*}
\| \Phi(\hat\nu_{k^+}-\hat\nu_k)\|_\mathds{H}^2
& = & \| \Phi(\hat\nu_{k^+}-\tilde \nu_{k^+} + \tilde \nu_{k^+} - \hat\nu_k)\|_\mathds{H}^2, \\
& \leq & 2 \| \Phi(\hat\nu_{k^+}-\tilde \nu_{k^+})\|_\mathds{H}^2 + 2\| \Phi(\tilde\nu_{k^+}-\hat\nu_k)\|_\mathds{H}^2.
\end{eqnarray*}
First remark that, 
\begin{eqnarray}
\|\Phi(\hat\nu_{k^+}-\tilde \nu_{k^+})\|_\mathds{H}^2 
& = &  \left\| \sum_{j=1}^{p_k} \hat\omega_j^{k^+} (\varphi_{\hat t_j^{k^+}}-\varphi_{\hat t_j^{k}})\right\|_\mathds{H}^2, \nonumber \\
& \leq & \sum_{j=1}^{p_k} \hat\omega_j^{k^+} \times \sum_{j=1}^{p_k} \hat\omega_j^{k^+} \| \varphi_{\hat t_j^{k^+}} - \varphi_{\hat t_j^{k}} \|_\mathds{H}^2, \nonumber \\
& \leq & \mathfrak{C}_\mathcal{P} \| \hat\nu_{k^+}\|_{\mathrm{TV}} \sum_{j=1}^{p_k} \hat\omega_j^{k^+}\| \hat t_j^{k^+} - \hat t_j^{k} \|^2, \nonumber \\
& \leq &  \mathfrak{C}_\mathcal{P} \| \hat\nu_{k^+}\|_{\mathrm{TV}} \ \beta^2 \sum_{j=1}^{p_k} \hat \omega_j^{k^+} \|\pi_\mathcal{X}(\hat t_j^k,\widehat{\mD_{k}}(\hat t_j^k),\beta)\|^2,
\label{eq:step1a}
\end{eqnarray}
where we have used \eqref{eq:control_kernel_distance_euclidean_distance} and \eqref{eq:up_w_pos_sto}. Similarly
\begin{eqnarray}
\| \Phi(\tilde\nu_{k^+}-\hat\nu_k)\|_\mathds{H}^2
& = & \left\| \sum_{j=1}^{p_k} (\hat \omega_j^{k^+} - \hat\omega_j^{k}) \varphi_{\hat t_j^k} \right\|_\mathds{H}^2, \nonumber \\
& = & \left\| \sum_{j=1}^{p_k} \hat\omega_j^k (e^{-\alpha \widehat{\mJ'_{k}}(\hat t_j^k) }-1) \varphi_{\hat t_j^k} \right\|_\mathds{H}^2, \nonumber \\
& \leq & \sum_{j=1}^{p_k} \hat \omega_{j}^{k} \times \sum_{j=1}^{p_k} \hat\omega_{j}^{k} (e^{-\alpha \widehat{\mJ'_{k}}(\hat t_j^k) }-1)^2, \nonumber
\end{eqnarray}
where the last line comes from the Jensen inequality and from the fact that $\|\varphi_{t} \|_\mathds{H}^2 = 1$ for any $t \in \mathcal{X}$. Moreover, since the random variable $\widehat{\mJ'_{k}}(\vt)$ is bounded for any $\vt \in \mathcal{X}$, 
\begin{align}
\| \Phi(\tilde\nu_{k^+}-\hat\nu_k)\|_\mathds{H}^2 & \leq \CC \| \hat\nu_k \|_{\mathrm{TV}} \alpha^2 \sum_{j=1}^{p_k} \hat\omega_j^k \widehat{\mJ'_{k}}(\hat t_j^k)^2 \nonumber \\
& \leq \CC \| \hat\nu_k \|_{\mathrm{TV}} \alpha^2 \|\widehat{\mJ'_{k}}\|_{\hat\nu_k}^2. \nonumber
\label{eq:step2a}
\end{align}
Taking the conditional expectation, since the particles' weights and positions $(\hat\omega_j^k,\hat t_j^k)_{j \in [p_k]}$ are $\mathfrak{F}_k$-measurable and independent of the $k$-th iteration mini-batch, we can push the expectation inside the sum over the particles:
$$
\mathbb{E}\Big[\|\widehat{\mJ'_{k}}\|_{\hat\nu_k}^2 \, \big\vert \mathfrak{F}_k\Big]
= \sum_{j=1}^{p_k} \hat\omega_j^k \mathbb{E}\left[ |\widehat{\mJ'_{k}}(\hat t_j^k)|^2 \, \big\vert \mathfrak{F}_k \right].
$$
Using the pointwise bias-variance decomposition at each fixed particle position $\hat t_j^k \in \mathfrak{F}_k$:
$$
\mathbb{E}\left[ |\widehat{\mJ'_{k}}(\hat t_j^k)|^2 \, \big\vert \mathfrak{F}_k \right] = |\mJ'_{\nuks}(\hat t_j^k)|^2 + \mathrm{Var}\left( \widehat{\mJ'_{k}}(\hat t_j^k) \, \big\vert \mathfrak{F}_k \right) \leq |\mJ'_{\nuks}(\hat t_j^k)|^2 + \frac{\MM^2}{m_k},
$$
where $\MM^2$ provides a uniform bound on the variance of the stochastic gradient evaluations due to assumption~\eqref{A2}. Plugging this into the second-order term and using the almost sure boundedness of $(\|\nuks\|_{TV})_{k \ge 0}$:
$$
\mathbb{E} \left[\| \Phi(\tilde\nu_{k^+}-\hat\nu_k)\|_\mathds{H}^2 \, \vert \mathfrak{F}_k \right] \leq \CC \alpha^2 \|J'_{\nuks}\|_{\nuks}^2 + \CC \frac{\alpha^2}{m_k}.
$$
Gathering with Equation \eqref{eq:step1a}, we deduce that:
\begin{equation}
\mathbb{E} \left[
\| \Phi(\hat\nu_{k^+}-\hat\nu_k)\|_\mathds{H}^2 \, \vert \mathfrak{F}_k \right]\leq
\CC \alpha^2 \|J'_{\nuks}\|_{\nuks}^2 + \CC \frac{\alpha^2}{m_k}+\CC \beta^2.
\label{eq:step1_final}
\end{equation}

\noindent
\underline{Step 2: Study of the drift.}
We expand the first order term in \eqref{eq:dvp_J} and observe that:
\begin{align*}
    \int_\cX J_{\hat\nu_k}' &d(\hat\nu_{k^+}-\hat\nu_k)  = \sum_{j=1}^{p_k} \left[(\hat\omega_j^{k^+} - \hat\omega_j^k) J_{\hat\nu_k}'(\hat t_j^k)  + \hat\omega_j^k (J_{\hat\nu_k}'(\hat t_j^{k^+})-J_{\hat\nu_k}'(\hat t_j^{k}))\right]  \\
    & \qquad +  \sum_{j=1}^{p_k} (\hat\omega_j^{k^+}-\hat\omega_j^{k})  (J_{\hat \nu_k}'(\hat t_j^{k^+})-J_{\hat\nu_k}'(\hat t_j^{k})), \\
    & =  \sum_{j=1}^{p_k} \left[(\hat \omega_j^{k^+} - \hat\omega_j^k) J_{\hat\nu_k}'(\hat t_j^k)  + \hat\omega_j^k \langle \hat t_j^{k^+} - \hat t_j^{k} , \nabla J_{\hat\nu_k}' (\hat t_j^{k}) \rangle \right] \\ 
    & \qquad +  \sum_{j=1}^{p_k} \left[\hat\omega_j^k \langle \hat t_j^{k^+} - \hat t_j^{k} , \nabla^2 J_{\hat\nu_k}' (\upsilon_j^{k})(\hat t_j^{k^+} - \hat t_j^{k}) \rangle  +
    (\hat\omega_j^{k^+}-\hat\omega_j^{k})  \langle \nabla J_{\hat\nu_k}'(\tilde{\upsilon}_j^{k}), \hat t_j^{k^+} -\hat t_j^{k} \rangle\right],
\end{align*}
where $\upsilon_j^{k}$ and $\tilde{\upsilon}_j^{k}$ are some auxiliary points that belong to $({t}_j^k,{t}_j^{k^+})$ obtained with the help of first and second order  Taylor expansions. Using Proposition C.1 in \cite{FastPartv1}, we get 
\begin{eqnarray*}
\int_\cX J_{\hat\nu_k}' d(\hat\nu_{k^+}-\hat\nu_k)
& \leq & \sum_{j=1}^{p_k} \left[(\hat\omega_j^{k^+} - \hat\omega_j^k) J_{\hat\nu_k}'(\hat t_j^k)  + \hat\omega_j^k \langle \hat t_j^{k^+} - \hat t_j^{k} , \nabla J_{\hat\nu_k}' (\hat t_j^{k}) \rangle \right] \\ 
& & +  \|\nabla^2 J_{\hat\nu_k}' \|_{\infty} \sum_{j=1}^{p_k} \hat\omega_j^k \| \hat t_j^{k^+} - \hat t_j^{k} \|^2 +  \sum_{j=1}^{p_k} |\hat\omega_j^{k^+}-\hat\omega_j^{k}|  \times \| \nabla J_{\hat\nu_k}'\| \| \hat t_j^{k^+} -\hat t_j^{k}\|, \\
& \leq & \sum_{j=1}^{p_k} \left[(\hat\omega_j^{k^+} - \hat\omega_j^k) J_{\hat\nu_k}'(\hat t_j^k)  + \hat \omega_j^k \langle \hat t_j^{k^+} - \hat t_j^{k} , \nabla J_{\hat\nu_k}' (\hat t_j^{k}) \rangle \right] \\
& & + A \sum_{j=1}^{p_k} \left[ \hat\omega_j^k \| \hat t_j^{k^+} - \hat t_j^{k} \|^2+ |\hat \omega_j^{k^+}-\hat\omega_j^{k}|   \| \hat t_j^{k^+} -\hat t_j^{k}\| \right],
\end{eqnarray*}
where 
\[
A := (\|\hat\nu_k\|_{\mathrm{TV}} +\|y\|_\mathbb{H}) \CC_\mathcal{P}.
\]
Using the weights update \eqref{eq:up_w_pos_sto}, we obtain
\begin{eqnarray}
\int_\cX J_{\hat\nu_k}' d(\hat\nu_{k^+}-\hat\nu_k)
& \leq  &\sum_{j=1}^{p_k}   \hat \omega_j^k (e^{-\alpha \widehat{\mJ'_{k}}(\hat t_j^k) }-1) J_{\nuks}'(\hat t_j^k) +   \hat\omega_j^k \langle \hat t_j^{k^+}-\hat t_j^k,  \nabla J_{\nuks}' (\hat t_j^{k}) \rangle \nonumber \\
&  & +  A \sum_{j=1}^{p_k}  \left[ \hat\omega_j^k \|\hat t_j^{k^+}-\hat t_j^k\|^2  +    \hat\omega_j^k \left| e^{-\alpha \widehat{\mJ'_{k}}(\hat t_j^k) }-1\right|  \|\hat t_j^{k^+}-\hat t_j^k\|\right].
\label{eq:step2_inter1}
\end{eqnarray}
The first term of the right hand side of Equation \eqref{eq:step2_inter1} is dealt thanks to Proposition \ref{prop:hoeffding}: a straightforward conditional expectation argument yields:

\begin{align*}
\mathbb{E}\left[
\sum_{j=1}^{p_k}   \hat\omega_j^k (e^{-\alpha \widehat{\mJ'_{k}}(\hat t_j^k) }-1) J_{\nuks}'(\hat t_j^k) \, \vert \mathfrak{F}_k \right]
& \leq - \alpha \|J'_{\nuks}\|_{\nuks}^2 \\
& + \frac{\alpha^2 \|J'_{\nuks}\|_{\infty} \|\nuks\|_{TV}\MM^2}{m_k} e^{\alpha \|J'_{\nuks}\|_{\infty}} + \frac{\alpha^2 \|J'_{\nuks}\|_{\nuks}^2  }{2}  e^{\alpha \|J'_{\nuks}\|_{\infty}}
\end{align*}

We pay a specific attention to the second term of the right hand side  of Equation \eqref{eq:step2_inter1}. Using the generalized projected gradient  and its related properties (\textit{e.g.} Lemma  \ref{lem:Ghadimi-Lan-Zhang}), we get for any $j\in \lbrace 1,\dots, p \rbrace$,

\begin{align*}
  \hat\omega_j^k &\langle \hat t_j^{k^+}-\hat t_j^k,  \nabla J_{\nuks}' (\hat t_j^{k}) \rangle \\
&= - \beta\hat\omega_j^k \left\langle \pi_{\cX}\left(\hat t_j^k, \widehat{\mD_{k}}(\hat t_j^k) ,\beta\right) ,  \widehat{\mD_{k}}(\hat t_j^k)\right\rangle  + \beta \hat\omega_j^k \left\langle \pi_{\cX}\left(\hat t_j^k,\widehat{\mD_{k}}(\hat t_j^k) ,\beta\right)  , \nabla J_{\nuks}' (\hat t_j^{k}) - \widehat{\mD_{k}}(\hat t_j^k) \right\rangle, \\
& \leq - \beta \hat\omega_j^k \left\|\pi_{\cX}\left(\hat t_j^k,\widehat{\mD_{k}}(\hat t_j^k) ,\beta\right) \right\|^2 + \beta \hat\omega_j^k \left\langle \pi_{\cX}\left(\hat t_j^k,\widehat{\mD_{k}}(\hat t_j^k) ,\beta\right)  , \nabla J_{\nuks}' (\hat t_j^{k}) - \widehat{\mD_{k}}(\hat t_j^k) \right\rangle.
\end{align*}
Using the Young inequality, we get  
\[
\hat \omega_j^k \langle \hat t_j^{k^+} -\hat t_j^k,  \nabla J_{\nuks}' (\hat t_j^{k}) \rangle 
   \leq - \frac{\beta}{2} \hat\omega_j^k \left\|\pi_{\cX}\left(\hat t_j^k,\widehat{\mD_{k}}(\hat t_j^k) ,\beta\right) \right\|^2  +2\beta \hat \omega_j^k \left\| \widehat{\mD_{k}}(\hat t_j^k)-\nabla J_{\nuks}' (\hat t_j^{k})\right\|^2, 
\]
where we have used again Lemma \ref{lem:Ghadimi-Lan-Zhang}. Taking the conditional expectation w.r.t. $\mathfrak{F}_k$, we get
\begin{eqnarray*}
\lefteqn{\mathbb{E}\left[\hat\omega_j^k \langle \hat t_j^{k^+} -\hat t_j^k,  \nabla J_{\nuks}' (\hat t_j^{k}) \rangle \big\vert \mathfrak{F}_k \right] }\\
& \leq & -  \frac{\beta}{2} \hat \omega_j^k \mathbb{E} \left[ \left\|\pi_{\cX}\left(\hat t_j^k,\widehat{\mD_{k}}(\hat t_j^k) ,\beta\right) \right\|^2 \big\vert \mathfrak{F}_k \right] + 2\beta\hat \omega_j^k \mathbb{E}\left[ \left\| \widehat{\mD_{k}}(\hat t_j^k)-\nabla J_{\nuks}' (\hat t_j^{k})\right\|^2 \big\vert \mathfrak{F}_k\right]  , \\
& \leq & - \frac{\beta}{2} \hat \omega_j^k \mathbb{E} \left[ \left\|\pi_{\cX}\left(\hat t_j^k,\nabla J_{\nuks}' (\hat t_j^{k}) ,\beta\right) \right\|^2 \big\vert \mathfrak{F}_k \right] + 2\beta\hat \omega_j^k \mathbb{E}\left[ \left\| \widehat{\mD_{k}}(\hat t_j^k)-\nabla J_{\nuks}' (\hat t_j^{k})\right\|^2 \big\vert \mathfrak{F}_k\right] \\
& & - \beta \hat\omega_j^k  \mathbb{E} \left[ \left\langle \pi_{\cX}\left(\hat t_j^k,\nabla J_{\nuks}' (\hat t_j^{k}) ,\beta\right)  , \pi_{\cX}\left(\hat t_j^k,\nabla J_{\nuks}' (\hat t_j^{k}) ,\beta\right)  - \pi_{\cX}\left(\hat t_j^k,\widehat{\mD_{k}}(\hat t_j^k) ,\beta\right) \right\rangle  \big\vert \mathfrak{F}_k \right] \\
& \leq & - \frac{\beta}{4} \hat \omega_j^k \mathbb{E} \left[ \left\|\pi_{\cX}\left(\hat t_j^k,\nabla J_{\nuks}' (\hat t_j^{k}) ,\beta\right) \right\|^2 \big\vert \mathfrak{F}_k \right] +4 \beta\hat \omega_j^k \mathbb{E}\left[ \left\| \widehat{\mD_{k}}(\hat t_j^k)-\nabla J_{\nuks}' (\hat t_j^{k})\right\|^2 \big\vert \mathfrak{F}_k\right].
\end{eqnarray*}
At this step, we can take advantage of the mini-batch step to control the second expectation in the previous inequality. Indeed, according to \eqref{eq:minibatch}, we have
\begin{eqnarray*}
\mathbb{E}\left[ \left\| \widehat{\mD_{k}}(\hat t_j^k)-\nabla J_{\nuks}' (\hat t_j^{k})\right\|^2 \big\vert \mathfrak{F}_k\right]
& = & \frac{1}{m_k^2} \sum_{l=1}^{m_k} \mathbb{E} \left[ \left\| \zeta_{\nuks}(\hat t_j^k,Z_l^k)\right\|^2 \big\vert \mathfrak{F}_k \right], \\
& \leq & \frac{\MM^2}{m_k} \leq  \frac{\CC}{m_k}.
\end{eqnarray*}
This leads to
\[
\mathbb{E}\left[\hat\omega_j^k \langle \hat t_j^{k^+} -\hat t_j^k,  \nabla J_{\nuks}' (\hat t_j^{k}) \rangle \big\vert \mathfrak{F}_k \right]  \leq - \frac{\beta}{4} \hat\omega_j^k \mathbb{E} \left[ \left\|\pi_{\cX}(\hat t_j^k,\nabla J_{\nuks}' (\hat t_j^{k}) ,\beta) \right\|^2 \big\vert \mathfrak{F}_k \right] + \frac{\CC}{m_k}.
\]
Plugging this expression in \eqref{eq:step2_inter1}, using the almost sure TV boundedness and taking the conditional expectation, we get:
\begin{eqnarray}
\mathbb{E}\left[\int_\cX J_{\hat\nu_k}' d(\hat\nu_{k^+}-\hat\nu_k) \big|  \mathfrak{F}_k\right]
& \leq  &\sum_{j=1}^{p_k}   \hat\omega_j^k \mathbb{E} \left[(e^{-\alpha \widehat{\mJ'_{k}}(\hat t_j^k) }-1) \big | \mathfrak{F}_k \right]J_{\nuks}'(\hat t_j^k) +  \mathfrak{C} \frac{\beta}{m_k}\nonumber \\
& & - \frac{\beta}{4} \sum_{j=1}^{p_k}\hat \omega_j^k \mathbb{E} \left[ \left\|\pi_{\cX}(\hat t_j^k,\nabla J_{\nuks}' (\hat t_j^{k}) ,\beta) \right\|^2 \big\vert \mathfrak{F}_k \right]\nonumber\\
&  & +  \CC\sum_{j=1}^p  \mathbb{E}\left[ \left(\hat \omega_j^k \|\hat t_j^{k^+}-\hat t_j^k\|^2  +    \hat \omega_j^k \left| e^{-\alpha \widehat{\mJ'_{k}}(\hat t_j^k) }-1\right|  \|\hat t_j^{k^+}-\hat t_j^k\|\right) \big | \mathfrak{F}_k \right], \nonumber\\
& \leq &  - \alpha \|J'_{\nuks}\|_{\nuks}^2
- \frac{\beta}{4} \sum_{j=1}^{p_k}\hat \omega_j^k \mathbb{E} \left[ \left\|\pi_{\cX}(\hat t_j^k,\nabla J_{\nuks}' (\hat t_j^{k}) ,\beta) \right\|^2 \big\vert \mathfrak{F}_k \right] \nonumber\\
& & + \mathfrak{C} \left( \alpha^2  \|J'_{\nuks}\|_{\nuks}^2 + \frac{\alpha^2}{m_k} + \beta^2 + \frac{\beta}{m_k}\right), \label{eq:step2_final}
\end{eqnarray}
where we have used the almost sure boundedness of $(\|\nuks\|_{TV})_{k \ge 0}$ and a large enough $\mathfrak{C}$.
Considering now Equations \eqref{eq:step1_final} and \eqref{eq:step2_final}, using $\alpha \le (4\mathfrak{C})^{-1}$, the quadratic drift term $\mathfrak{C}\alpha^2 \|J'_{\nuks}\|_{\nuks}^2$ is absorbed by half of the linear descent $-\alpha \|J'_{\nuks}\|_{\nuks}^2$, and we finally obtain that:
\begin{align*}
\mathbb{E}\left[
J(\hat\nu_{k^+}) - J(\hat\nu_k)\big|  \mathfrak{F}_k\right] &\leq
-\frac{\alpha}{2} \|J'_{\nuks}\|_{\nuks}^2 - \frac{\beta}{4} \sum_{j=1}^{p_k}\hat\omega_j^k \mathbb{E} \left[ \left\|\pi_{\cX}(\hat t_j^k,\nabla J_{\nuks}' (\hat t_j^{k}) ,\beta) \right\|^2 \big\vert \mathfrak{F}_k \right] \\ &+ \CC \left(\frac{\alpha^2}{m_k} + \beta^2 + \frac{\beta}{m_k} \right).
\end{align*}
This bound delivers a stronger conclusion than stated in the proposition, since it retains the negative projected-gradient term $-\tfrac{\beta}{4}\sum_j\hat\omega_j^k \mathbb{E}[\|\pi_{\cX}(\hat t_j^k,\nabla J_{\nuks}'(\hat t_j^k),\beta)\|^2 \mid \mathfrak{F}_k]$; dropping it recovers exactly the statement of Proposition~\ref{prop:hypsto4}.
\end{proof}
\end{subequations}

\subsection{One-step analysis \label{s:proof_upper-bound-sto}}

We introduce below the stochastic counterpart of the set $\Thetakm$ used in the deterministic approach and defined in Equation \eqref{eq:min}, that is denoted as $\Thetakms$ and is given by:

$$\Thetakms := \left\{ J'_{\hat\nu_{k-1^+}} \leq \frac{\hat v_{k-1^+}}{2}\right\} \quad \mathrm{with} \quad \hat v_{k-1^+} = \min (J'_{\hat\nu_{k-1^+}}) \wedge 0. $$

\begin{proposition}\label{prop:Decroissance_sto_1}
Assume that the sequence $(\nuks,\nukps)_{k \ge 1}$ satisfies     $(\hat{\mathcal{H}}_D)$, $(\hat{\mathcal{H}}^+_{\varepsilon,a})$, $(\hat{\mathcal{H}}^{\text{smooth},1}_{\varepsilon})$, $(\hat{\mathcal{H}}^{\text{smooth},2}_{\varepsilon})$ and $(\hat{\mathcal{H}}^\infty_{\mathrm{TV}})$.
Then, for any $k \ge 1$:
\begin{eqnarray*}
\lefteqn{\mathbb{E}  \left[ J(\hat{\nu}_{k^+}) \big\vert \mathfrak{F}_{k} \right] - J(\hat\nu_k)  }\\
  & \leq & -  \cc \alpha  \varepsilon_{k-1} \left(\left[  |\vks|^2 - \CC\varepsilon_{k-1}^2 \right]\vee 0 \right) \left(|\vks |^d - \CC m_{k-1}^{-a}\right) +\mathfrak{C}\left(
   \frac{\alpha^2}{m_k}
   +\beta^2 +\frac{\beta}{m_k}\right).
\end{eqnarray*}
\end{proposition}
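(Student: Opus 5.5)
The plan follows the proof of Proposition~\ref{prop:boundvk} step by step. Stochastic ingredients replace the deterministic ones: $(\hat{\mathcal{H}}_D)$ replaces Proposition~\ref{prop:incre}, $(\hat{\mathcal{H}}^{\text{smooth},2}_{\varepsilon})$ replaces $(\mathcal{H}^{\mathrm{smooth},2}_{\varepsilon})$, and $(\hat{\mathcal{H}}^+_{\varepsilon,a})$ replaces $(\mathcal{H}^+_{\varepsilon})$. All quantities are kept conditional on $\mathfrak{F}_k$. Since $\hat\nu_{k-1^+}$ is $\mathfrak{F}_{k-1}^+\subset\mathfrak{F}_k$-measurable, the level $\vks$ and the set $\Thetakms$ are $\mathfrak{F}_k$-measurable. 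So is $\hat\nu_k$. Every factor built from these can therefore be taken outside $\mathbb{E}[\cdot\mid\mathfrak{F}_k]$.

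\emph{Step 1 (descent restricted to $\Thetakms$).} Start from $(\hat{\mathcal{H}}_D)$ and drop the part of $\|J'_{\hat\nu_k}\|^2_{\hat\nu_k}$ outside $\Thetakms$:
\[
\mathbb{E}[J(\hat\nu_{k^+})\mid\mathfrak{F}_k]-J(\hat\nu_k)\le -\tfrac{\alpha}{2}\int_{\Thetakms}|J'_{\hat\nu_k}|^2\,d\hat\nu_k+\mathfrak{C}\Big(\tfrac{\alpha^2}{m_k}+\beta^2+\tfrac{\beta}{m_k}\Big).
\]

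\emph{Step 2 (pass from $\hat\nu_k$ to $\hat\nu_{k-1^+}$ in the certificate).} By $(\hat{\mathcal{H}}^{\text{smooth},2}_{\varepsilon})$, almost surely $\|J'_{\hat\nu_k}-J'_{\hat\nu_{k-1^+}}\|_\infty\le\CC\varepsilon_{k-1}$. With $(a+b)^2\ge a^2/2-b^2$ and $|J'_{\hat\nu_{k-1^+}}|\ge|\vks|/2$ on $\Thetakms$, this gives, pointwise on $\Thetakms$,
\[
|J'_{\hat\nu_k}|^2\ge \tfrac18\big(|\vks|^2-\CC\varepsilon_{k-1}^2\big).
\]
The integrand is nonnegative, so we may replace this lower bound by its positive part $[\,|\vks|^2-\CC\varepsilon_{k-1}^2]\vee 0$. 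This factor is $\mathfrak{F}_k$-measurable, and it multiplies the mass $\hat\nu_k(\Thetakms)$.

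\emph{Step 3 (mass on $\Thetakms$).} Apply $(\hat{\mathcal{H}}^+_{\varepsilon,a})$ at index $k-1$ with $\mathcal{B}=\Thetakms$. This set is $\mathfrak{F}_{k-1}^+$-measurable and contained in $\{J'_{\hat\nu_{k-1^+}}<0\}$ whenever $\vks<0$; if $\vks=0$ the claimed bound is trivial because its main term vanishes. Hence the mass is at least $\cc\varepsilon_{k-1}\lambda(\Thetakms)-\CC\varepsilon_{k-1}m_{k-1}^{-a}$.

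\emph{Step 4 (volume of $\Thetakms$).} Lemma~\ref{lem:geometric_sublevel} applies to $J'_{\hat\nu_{k-1^+}}$. This function is $\Lip$-Lipschitz by $(\hat{\mathcal{H}}^\infty_{\mathrm{TV}})$ and Remark~\ref{rem:lipschitz_J_prime}, since the TV bound passes to $\hat\nu_{k-1^+}$ because the weight update only shrinks mass up to a bounded factor. It gives $\lambda(\Thetakms)\ge C_d\Lip^{-d}|\vks|^d$. Absorbing $C_d\Lip^{-d}$ and the constant $\cc$ into the generic constants yields the factor $\cc\,\varepsilon_{k-1}(|\vks|^d-\CC m_{k-1}^{-a})$. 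Multiplying the three factors reproduces the statement.

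\emph{Main obstacle.} Step 3 is the delicate point. $(\hat{\mathcal{H}}^+_{\varepsilon,a})$ controls $\mathbb{E}[\hat\nu_k(\Thetakms)\mid\mathfrak{F}_{k-1}^+]$, whereas the statement needs a bound conditional on $\mathfrak{F}_k$, where $\hat\nu_k(\Thetakms)$ is already known. I would first try to obtain the $\mathfrak{F}_k$ bound directly from the explicit structure of the birth step. The deterministic part of the mass comes from the surviving particles of $\hat\nu_{k-1^{++}}$, which are all retained. The random part is the newborn atom $\varepsilon_{k-1}\bm{1}_{\widehat{\mathcal N}_{\hat\nu_{k-1^+}}}(U_k)\delta_{U_k}$. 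The plan is to read the lower bound of Step 3 as a statement about this atom's contribution, integrated over the law of $(U_k,\bm{Z}_k^+)$, and to carry it through $(\hat{\mathcal{H}}^+_{\varepsilon,a})$ as the paper's assumption is phrased. If this identification cannot be made rigorous pathwise, the fallback is to keep $\hat\nu_k(\Thetakms)$ explicitly in the bound at this stage. The lower bound of Step 3 would then be inserted only later, via the tower property, when taking full expectations in the proof of Theorem~\ref{thm:cvgce_sto_all}. All the other steps transfer verbatim from the deterministic proof.
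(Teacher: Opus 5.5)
Steps 1, 2 and 4 follow the paper's argument exactly, and your ``main obstacle'' paragraph finds the real problem. Step~3 as written is still a gap, and your first remedy cannot close it. $(\hat{\mathcal{H}}^+_{\varepsilon,a})$ only lower-bounds $\mathbb{E}[\hat\nu_k(\Thetakms)\mid\mathfrak{F}_{k-1}^+]$, so the claim that ``the mass is at least $\cc\varepsilon_{k-1}\lambda(\Thetakms)-\CC\varepsilon_{k-1}m_{k-1}^{-a}$'' is false pathwise. Because $\hat\nu_k$ is $\mathfrak{F}_k$-measurable, conditioning on $\mathfrak{F}_k$ means the birth draw $U_k$ and the batch $\bm{Z}_k^+$ have already acted, so there is nothing left to average over. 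The $\mathfrak{F}_k$-conditional inequality can in fact fail. Suppose $\hat\nu_k=0$, because the last atom was pruned and the birth proposal was rejected. Then the left-hand side is exactly $0$. The right-hand side is negative whenever $|\vks|$ dominates $\varepsilon_{k-1}$ and $m_{k-1}^{-a/d}$, and $\beta$ and $1/m_k$ are small. So the statement with $\mathfrak{F}_k$ on the left does not follow from the assumptions.

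The repair is your fallback, but carried out inside this proof rather than postponed; this is exactly what the paper does. After Step~2 you have, almost surely, $\mathbb{E}[J(\hat\nu_{k^+})\mid\mathfrak{F}_k]-J(\hat\nu_k)\le-\tfrac{\alpha}{16}P_{k-1}\,\hat\nu_k(\Thetakms)+\mathfrak{C}\bigl(\tfrac{\alpha^2}{m_k}+\beta^2+\tfrac{\beta}{m_k}\bigr)$, where $P_{k-1}:=[\,|\vks|^2-\CC\varepsilon_{k-1}^2]\vee 0$. The property to exploit is that $P_{k-1}$ and $\Thetakms$ are $\mathfrak{F}_{k-1}^+$-measurable. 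You noted this, but only used it to pull them out of $\mathbb{E}[\,\cdot\mid\mathfrak{F}_k]$. Instead, take $\mathbb{E}[\,\cdot\mid\mathfrak{F}_{k-1}^+]$ of the inequality, using the tower property and $\mathfrak{F}_{k-1}^+\subset\mathfrak{F}_k$. Pull $P_{k-1}$ out, apply $(\hat{\mathcal{H}}^+_{\varepsilon,a})$ at index $k-1$ to $\mathbb{E}[\hat\nu_k(\Thetakms)\mid\mathfrak{F}_{k-1}^+]$, and finish with your Step~4. This gives the announced right-hand side for $\mathbb{E}[J(\hat\nu_{k^+})-J(\hat\nu_k)\mid\mathfrak{F}_{k-1}^+]$, not for the $\mathfrak{F}_k$-conditional quantity. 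The paper's own proof likewise ends with an $\mathfrak{F}_{k-1}^+$-conditional display, even though the statement is written with $\mathfrak{F}_k$. So state the result in that form; as you anticipated, it is enough once full expectations are taken in Theorem~\ref{thm:cvgce_sto_all}.
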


\begin{proof}
First, we use Assumption $(\hat{\mathcal{H}}_D)$ and obtain that: 
\begin{eqnarray*}
\mathbb{E}  \left[ J(\hat{\nu}_{k^+}) \big\vert \mathfrak{F}_{k} \right] - J(\hat\nu_k)  
& \leq &  - \frac{\alpha}{2}  \|J'_{\hat\nu_k}\|^2_{\hat\nu_k} + \mathfrak{C}\left(\frac{\alpha^2}{m_k}+\beta^2 +\frac{\beta}{m_k}\right), \\
& = & - \frac{\alpha}{2}  \int_{\mathcal{X}} |J'_{\nuks}|^2 \text{d}\nuks + \mathfrak{C}\left(\frac{\alpha^2}{m_k}+\beta^2  +\frac{\beta}{m_k}\right), \\
& \leq & -  \frac{\alpha}{2}  \int_{\Thetakms} |J'_{\nuks}|^2 \text{d}\nuks + \mathfrak{C}\left(\frac{\alpha^2}{m_k}+\beta^2  +\frac{\beta}{m_k}\right)\\
\end{eqnarray*}
We apply the Young inequality $|J'_{\nuks}(t)|^2 \ge \frac{1}{2} |J'_{\nukms}(t)|^2 - |J'_{\nuks}(t)-J'_{\nukms}(t)|^2$ to get, according to $(\hat{\mathcal{H}}^{\text{smooth},2}_{\varepsilon})$,
$$
\forall t \in \Thetakms, \qquad  
|J'_{\nuks}(t)|^2 \ge \frac{1}{2}|J'_{\nukms}(t)|^2 - \CC\varepsilon_{k-1}^2 \ge \frac{|\vks|^2}{8} - \CC\varepsilon_{k-1}^2.
$$
Thanks to the positivity of $\alpha  \int_{\Thetakms} |J'_{\nuks}|^2 \text{d}\nuks$, we then get:
$$
\mathbb{E}  \left[ J(\hat{\nu}_{k^+}) \big\vert \mathfrak{F}_{k} \right]-J(\nuks) \leq -  \frac{\alpha}{2} \left(\left[  \frac{|\vks|^2}{8} - \CC\varepsilon_{k-1}^2 \right]\vee 0 \right) \nuks(\Thetakms) +\mathfrak{C}\left(\frac{\alpha^2}{m_k}+\beta^2 +\frac{\beta}{m_k}\right).
$$
The term $\vks$ is $\mathfrak{F}_{k-1}^+$ measurable, and since $\mathfrak{F}_{k-1}^+ \subset \mathfrak{F}_k$, we get
\begin{align}
\lefteqn{\mathbb{E}\left[ J(\nukps)-J(\nuks) \, \vert \mathfrak{F}_{k-1}^+\right] }\nonumber \\
& = \mathbb{E} \left[ \mathbb{E}\left[ J(\nukps)-J(\nuks) \, \vert \mathfrak{F}_{k}\right] \, \vert \mathfrak{F}_{k-1}^+ \right] \nonumber\\
& \leq -  \frac{\alpha}{2} \left(\left[  \frac{|\vks|^2}{8} - \CC \varepsilon_{k-1}^2 \right]\vee 0 \right) \mathbb{E}\left[  \nuks(\Thetakms)\, \vert \mathfrak{F}_{k-1}^+\right]+\mathfrak{C}\left(\frac{\alpha^2}{m_k}+\beta^2 +\frac{\beta}{m_k}\right) ,\nonumber\\
& \leq -   \frac{\alpha}{2} \varepsilon_{k-1} \left(\left[  \frac{|\vks|^2}{8} - \CC \varepsilon_{k-1}^2\right]\vee 0 \right) \left[ \cc \lambda(\Thetakms) - \CC m_{k-1}^{-a}\right] \nonumber\\
&+\mathfrak{C}\left(\frac{\alpha^2}{m_k}+\beta^2 +\frac{\beta}{m_k}\right),\label{eq:intermediaire}
\end{align}
where we have used Assumption $(\hat{\mathcal{H}}^+_{\varepsilon})$ at iteration $k-1$ for the last inequality.  The last part of the proof is very similar to the one displayed in the deterministic case. First, recall that  $J'_{\nu}$ is a $\mathfrak{L}(\nu)$-Lipschitz function and thanks to the boundedness of the TV norm stated by $\hat{\mathcal{H}}_{\mathrm{TV}}$, we know that for any $k$, $J'_{\hat\nu_{k-1^+}}$ is $\Lip$-Lipschitz.
We then get:
\begin{align*}
\forall k \ge 1: \qquad 
    |x-\argmin J'_{\nukms}| \leq \frac{|\vks|}{2 \Lip} & 
         \Longrightarrow |J'_{\nukms}(x)-\min(J'_{\nukms})| \leq \Lip \times \frac{|\vks|}{2\Lip} \\
         &  \Longrightarrow J'_{\nukms}(x) \leq \frac{\vks}{2}\\
& \Longrightarrow x \in \Thetakms.         
\end{align*}
This leads to the $\mathfrak{F}_{k-1}^+$-measurable inequality:
$$
\lambda(\Thetakms) \ge \left| \frac{\vks}{2 \Lip}\right|^d.
$$
Using this lower bound in Equation \eqref{eq:intermediaire}
\begin{eqnarray*}
\lefteqn{\mathbb{E}\left[ J(\nukps) - J(\nuks) \, \vert \mathfrak{F}_{k-1}^+\right]}\\
 & \leq & -  \cc \alpha  \varepsilon_{k-1} \left(\left[  |\vks|^2 - \CC\varepsilon_{k-1}^2 \right]\vee 0 \right) \left(|\vks |^d - \CC m_{k-1}^{-a}\right) +\mathfrak{C}\left(
 \frac{\alpha^2}{m_k}+\beta^2 +\frac{\beta}{m_k}\right).
\end{eqnarray*}
\end{proof}

We introduce $\Deltakps$ that quantifies the amount of decrease on $J$ through the transportation map:
$$
\Deltakps = J(\nuks)-J(\nukps).
$$
As the evolution is now randomized, we emphasize that $\Deltakps$ is not necessarily greater than~$0$.
\begin{proposition}
Assume that the sequence $(\nuks,\nukps)_{k \ge 1}$ satisfies     $(\hat{\mathcal{H}}_D)$, $(\hat{\mathcal{H}}^+_{\varepsilon,a})$, $(\hat{\mathcal{H}}^{\text{smooth},1}_{\varepsilon})$, $(\hat{\mathcal{H}}^{\text{smooth},2}_{\varepsilon})$ and $(\hat{\mathcal{H}}^\infty_{\mathrm{TV}})$.
Then, for any $k \ge 1$, we have the $\mathfrak{F}_k$-measurable inequality:
\begin{eqnarray*} 
J(\nuks)-J^\star
& \leq & \CC \max\left( \alpha^{-1/2}\left[  \frac{\mathbb{E}[\Deltakps \, \vert \mathfrak{F}_{k}]}{ \varepsilon_{k-1}} \right]^{\frac{1}{2+d}} ;
\sqrt{\frac{\alpha}{m_k}};
\frac{\beta}{\sqrt{\alpha}} ; \sqrt{\frac{\beta}{\alpha m_k}}; \varepsilon_{k-1}  ; m_k^{-a/d} ; \right.\\
& & \left.
\left(\frac{\alpha}{m_k\epkm}\right)^{\frac{1}{2+d}};
\left(\frac{\beta^2+\frac{\beta}{m_k}}{\alpha \epkm}\right)^{\frac{1}{2+d}} \right).
\end{eqnarray*}
\label{prop:enfer_sto}
\end{proposition}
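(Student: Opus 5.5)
I would mimic the proof of Proposition~\ref{prop:upper-bound-increment}, replacing the deterministic descent inequality by its stochastic counterpart. The starting point is again the convexity identity~\eqref{eq:Frechet_Jprime} applied between $\nuks$ and $\nu^\star$, which gives the deterministic ($\mathfrak{F}_k$-measurable) bound
\[
J(\nuks)-J^\star \le \underbrace{\int_\cX J'_{\nuks}\,\mathrm d\nuks}_{A} - \underbrace{\int_\cX J'_{\nuks}\,\mathrm d\nu^\star}_{B}.
\]
For $A$, I would use Cauchy--Schwarz together with $(\hat{\mathcal{H}}^\infty_{\mathrm{TV}})$, so that $|A|\le \CTV^{1/2}\|J'_{\nuks}\|_{\nuks}$. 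Then $(\hat{\mathcal{H}}_D)$ rewritten gives
\[
\tfrac{\alpha}{2}\|J'_{\nuks}\|^2_{\nuks}\le \mathbb{E}[\Deltakps\mid\mathfrak{F}_k]+\mathfrak{C}\Big(\tfrac{\alpha^2}{m_k}+\beta^2+\tfrac{\beta}{m_k}\Big).
\]
Consequently $|A|\lesssim \alpha^{-1/2}\sqrt{\mathbb{E}[\Deltakps\mid\mathfrak{F}_k]}+\sqrt{\alpha/m_k}+\beta/\sqrt{\alpha}+\sqrt{\beta/(\alpha m_k)}$. This accounts for the second, third and fourth entries of the max. The square-root term in the conditional decrease is dominated by its $(2+d)$-th root version up to constants and powers of $\varepsilon_{k-1}$, because $\Deltakps$ and $\varepsilon_{k-1}$ are bounded, exactly as at the end of the deterministic proof.

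For $B$, I would use $(\hat{\mathcal{H}}^{\text{smooth},2}_{\varepsilon})$ at step $k-1$ to write $B\ge \|\nu^\star\|_{\mathrm{TV}}(\vks-\CC\varepsilon_{k-1})$, and then split into cases on the size of $|\vks|$.
\begin{itemize}
\item If $|\vks|\lesssim \varepsilon_{k-1}$, we get $-B\lesssim\varepsilon_{k-1}$.
\item If $|\vks|^d\le 2\CC m_{k-1}^{-a}$, we get $-B\lesssim m^{-a/d}$.
\item Otherwise both brackets in Proposition~\ref{prop:Decroissance_sto_1} are bounded below by constant multiples of $|\vks|^2$ and $|\vks|^d$. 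That proposition then yields
\[
\cc\,\alpha\,\varepsilon_{k-1}|\vks|^{2+d}\le \mathbb{E}[\Deltakps\mid\cdot]+\mathfrak{C}\Big(\tfrac{\alpha^2}{m_k}+\beta^2+\tfrac{\beta}{m_k}\Big),
\]
hence
\[
|\vks|\lesssim \Big[\tfrac{\mathbb{E}[\Deltakps]}{\alpha\varepsilon_{k-1}}\Big]^{\frac1{2+d}}+\Big(\tfrac{\alpha}{m_k\varepsilon_{k-1}}\Big)^{\frac1{2+d}}+\Big(\tfrac{\beta^2+\beta/m_k}{\alpha\varepsilon_{k-1}}\Big)^{\frac1{2+d}}.
\]
\end{itemize}
This produces the remaining entries of the max. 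The factor $\alpha^{-1/(2+d)}$ is then bounded by $\alpha^{-1/2}$ because $\alpha\le1$.

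The main obstacle is the conditioning mismatch. Proposition~\ref{prop:Decroissance_sto_1} controls the decrease conditionally on $\mathfrak{F}_{k-1}^+$ (since $\vks$ and the birth mass are only controlled in expectation given $\mathfrak{F}_{k-1}^+$), whereas the statement is phrased with $\mathbb{E}[\Deltakps\mid\mathfrak{F}_k]$. I would first try to reread the proof of Proposition~\ref{prop:Decroissance_sto_1} before taking the outer expectation. There the bound involves $\nuks(\Thetakms)$, which is $\mathfrak{F}_k$-measurable, so the natural fix is to carry $\nuks(\Thetakms)$ instead of its conditional expectation and only invoke $(\hat{\mathcal{H}}^+_{\varepsilon,a})$ when the final rate is computed in expectation. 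If that route is too delicate, I would state the inequality with the tower property and $\mathfrak{F}_{k-1}^+$-conditioning, which is all that is needed downstream once full expectations are taken in the proof of Theorem~\ref{thm:cvgce_sto_all}.
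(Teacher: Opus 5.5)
Your argument follows the paper's proof step for step. You use the convexity split; then Cauchy--Schwarz, $(\hat{\mathcal H}^\infty_{\mathrm{TV}})$ and $(\hat{\mathcal H}_D)$ for $\int J'_{\nuks}\,\mathrm d\nuks$, absorbing the square-root term via boundedness of $\Deltakps$; then $(\hat{\mathcal H}^{\text{smooth},2}_{\varepsilon})$, a case split on $\vks$ and Proposition~\ref{prop:Decroissance_sto_1} for $\int J'_{\nuks}\,\mathrm d\nu^\star$. The conditioning mismatch you flag is a genuine gap, and the paper's proof has the same gap. Proposition~\ref{prop:Decroissance_sto_1} is stated conditionally on $\mathfrak F_k$, but its proof only gives the bound conditionally on $\mathfrak F_{k-1}^+$, because $(\hat{\mathcal H}^+_{\varepsilon,a})$ only controls $\mathbb E[\nuks(\Thetakms)\mid\mathfrak F_{k-1}^+]$. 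The paper then uses it as an $\mathfrak F_k$-conditional bound.

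Your first fix does not close the gap. Carrying $\nuks(\Thetakms)$ only gives $\cc\,\alpha|\vks|^2\nuks(\Thetakms)\le\mathbb E[\Deltakps\mid\mathfrak F_k]+R_k$, with $R_k:=\mathfrak C(\alpha^2/m_k+\beta^2+\beta/m_k)$. There is no pointwise bound $\nuks(\Thetakms)\gtrsim\epkm|\vks|^d$, because the birth step adds one Dirac at a uniform point, which usually misses $\Thetakms$. (In the deterministic scheme, the mass $\varepsilon\lambda$ is spread over the whole negative set, which is why the analogous step works there.) As soon as you invoke $(\hat{\mathcal H}^+_{\varepsilon,a})$ you must condition on $\mathfrak F_{k-1}^+$, so this route collapses into your second one.

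In fact, with a constant independent of $K$, the $\mathfrak F_k$-measurable statement is false. Run Algorithm~\ref{algo:SCPGD_pro} from $\hat\nu_0=0$ with $J'_0=\kappa-\langle\varphi_\cdot,y\rangle_{\mathbb H}$ taking both signs on $\mathcal X$. With probability bounded away from zero (for large $K$), no particle is born at the first step. Then $\hat\nu_1=\hat\nu_{1^+}=0$ and $\widehat\Delta_{1^+}=0$, while $J(\hat\nu_1)-J^\star=J(0)-J^\star>0$ is fixed. Every other entry of the max tends to $0$ under the schedule of Theorem~\ref{thm:cvgce_sto_all}, so the inequality fails on that event.

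What your argument does prove, which is your second option, is the mixed-conditioning bound
\[
J(\nuks)-J^\star\le\CC\max\Big(\alpha^{-1/2}\big(\mathbb E[\Deltakps\mid\mathfrak F_k]+R_k\big)^{1/2};\ \Big[\frac{\mathbb E[\Deltakps\mid\mathfrak F_{k-1}^+]+R_k}{\alpha\epkm}\Big]^{\frac{1}{2+d}};\ \epkm;\ m_{k-1}^{-a/d}\Big).
\]
Keep $R_k$ inside the roots so that the bases are nonnegative. After raising to the power $2+d$ and taking full expectations, both conditional expectations become $\mathbb E[\Deltakps]$, so the proof of Theorem~\ref{thm:cvgce_sto_all} goes through unchanged. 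Note also that you obtain $m_{k-1}^{-a/d}$ rather than $m_k^{-a/d}$; the two agree up to constants for $m_k=K$ and $m_k=k$.
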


\begin{subequations}
\begin{proof} 
Thanks to the convexity of $J$, we have:
\begin{equation}
J(\nuks) - J^\star \leq  \int_\mathcal{X} J_{\nuks}' \text{d}(\nuks - \nu^\star).
\label{eq:cvgs1}
\end{equation}

\noindent
We now bound separately the two terms in the r.h.s. of \eqref{eq:cvgs1} and begin with the 
term $\int J'_{\nuks} \text{d} \nuks$.
The Cauchy-Schwarz inequality and Assumption \eqref{eq:HTVC} yield:
$$\left| \int_\mathcal{X} J_{\nuks}' \text{d}\nuks\right| \leq \left[ \|\nuks\|_{\mathrm{TV}} \int_\mathcal{X} |J_{\nuks}'|^2 \text{d}\nuks \right]^{1/2} \leq \CC  \|J'_{\nuks}\|_{\nuks}.$$
Thanks to Assumption $(\hat{\mathcal{H}}_D)$, we have:
$$ \frac{\alpha}{2} \|J'_{\nuks}\|_{\nuks}^2 \leq \mathbb{E}[\Deltakps \, \vert \mathfrak{F}_k]  + \mathfrak{C} %
\left(\frac{\alpha^2}{m_k}+ \beta^2 + \frac{\beta}{ m_k}\right),$$
which in turn implies that
\begin{equation}
\left| \int_\mathcal{X} J_{\nuks}' \text{d}\nuks\right| \leq  \frac{\CC}{\sqrt{\alpha}} \sqrt{ \mathbb{E}[\Deltakps \, \vert \mathfrak{F}_k] + \mathfrak{C}\left(\frac{\alpha^2}{m_k}+\beta^2+\frac{\beta}{m_k}\right)}.
\label{eq:cvgs2}
\end{equation}

\noindent
The second integral in \eqref{eq:cvgs1} is dealt with Assumption $(\hat{\mathcal{H}}^{\text{smooth},2}_{\varepsilon})$ leading first to the bound
$$\int_\mathcal{X} J_{\nuks}'\text{d}\nu^\star = \int_\mathcal{X} J_{\nukms}'\text{d}\nu^\star + \int_\mathcal{X} (J_{\nuks}' - J_{\nukms}')\text{d}\nu^\star \geq (\vks - \CC \varepsilon_{k-1}) \| \nu^\star\|_{\mathrm{TV}}. $$
Then, two different situations may occur according to the value of $\vks$.

\begin{itemize}
\item \textbf{$1^\mathrm{st}$ case}: $\vks \leq - \left(2 \CC \varepsilon_{k-1}\vee (2 \CC)^{1/d} m_k^{-a/d}\right) $. Then, we get from the previous bound that
$$ \int_\mathcal{X} J_{\nuks}'\text{d}\nu^\star \geq \frac{3}{2} \vks \| \nu^\star\|_{\mathrm{TV}}.$$
Simultaneously, as soon as $\vks \leq - 2 \CC \varepsilon_{k-1} $ and $|\vks|^d \geq 2 \CC m_k^{-a}$, Proposition~\ref{prop:Decroissance_sto_1} implies that:
\begin{eqnarray}
\mathbb{E}\Big[ -\Deltakps \, \vert \mathfrak{F}_{k}\Big] 
& \leq & - \cc \alpha \varepsilon_{k-1} \left(\left[  |\vks|^2 - \CC\varepsilon_{k-1}^2 \right]\vee 0 \right) \left(|\vks |^d-\CC m_{k-1}^{-a}\right)\notag\\
& &\quad +\mathfrak{C}\left(\frac{\alpha^2}{m_k}+\beta^2+ \frac{\beta}{m_k}\right), \nonumber \\
& \leq &   - \cc \alpha \varepsilon_{k-1}  |\vks|^{2+d} +\mathfrak{C}\left(\frac{\alpha^2}{m_k}+\beta^2+\frac{\beta}{m_k} \right).
\label{eq:iter-sto-1}
\end{eqnarray}
The last inequality implies that 
\[
|\vks|^{2+d} \leq \CC \frac{\left(\mathbb{E}\left[ \Deltakps \, \vert \mathfrak{F}_{k}\right]+\mathfrak{C}\left(\frac{\alpha^2}{m_k}+\beta^2 +\frac{\beta}{m_k}  \right)\right)}{\alpha  \varepsilon_{k-1}}.
\]
Hence, we deduce from these computations that:

$$
\int_\mathcal{X} J_{\nuks}'\text{d}\nu^\star  \ge - \CC 
\left[\frac{\left(\mathbb{E}\left[ \Deltakps \, \vert \mathfrak{F}_{k}\right]+\mathfrak{C}\left(\frac{\alpha^2}{m_k}+\beta^2 +\frac{\beta}{m_k} \right)\right)}{\alpha \varepsilon_{k-1}} \right]^{\frac{1}{2+d}}.
$$
\item \textbf{$2^\mathrm{nd}$ case:} $\vks \geq
- \left(2 \CC \varepsilon_{k-1}\vee (2 \CC)^{1/d} m_k^{-a/d}\right)$. In such a situation, we immediately have from Assumption $\hat{\mathcal{H}}_{\varepsilon}^{\text{smooth},2}$ that:
$$
\int_\mathcal{X} J_{\nuks}'\text{d}\nu^\star \geq 
(\vks- \CC \varepsilon_{k-1}) \| \nu^\star\|_{\mathrm{TV}}
\geq - \CC  (\varepsilon_{k-1} \vee m_k^{-a/d}).
$$
\end{itemize}

\vspace{0.5cm}
\noindent
Regardless of the value of $\vks$, we then get the almost sure inequality:
\begin{equation}
\int_\mathcal{X} J_{\nuks}'\text{d}\nu^\star \geq - \CC \max \left( \varepsilon_{k-1} ; m_k^{-a/d} ;\left[\frac{\left(\mathbb{E}\left[ \Deltakps \, \vert \mathfrak{F}_{k}\right]+\mathfrak{C}\left(\frac{\alpha^2}{m_k}+\beta^2 +\frac{\beta}{m_k}\right)\right)}{\alpha \varepsilon_{k-1}} \right]^{\frac{1}{2+d}}\right).
\label{eq:cvg3}
\end{equation}

Gathering Equations \eqref{eq:cvgs1}, \eqref{eq:cvgs2} and \eqref{eq:cvg3}, we deduce that:
\begin{eqnarray*} 
\lefteqn{J(\nuks)-J^\star }\\
& \leq &  \frac{\CC}{\sqrt{\alpha}} \left(\mathbb{E}[\Deltakps \, \vert \mathfrak{F}_{k}] + \mathfrak{C}\left(\frac{\alpha^2}{m_k}+\beta^2+\frac{\beta}{m_k}\right) \right)^{1/2}\\
& & \hspace{1cm} +\CC \max \left(  \varepsilon_{k-1} ;   m_k^{-a/d};\left[\frac{\left(\mathbb{E}\left[ \Deltakps \, \vert \mathfrak{F}_{k}\right]+\mathfrak{C}\left(\frac{\alpha^2}{m_k}+\beta^2 +\frac{\beta}{m_k}\right)\right)}{\alpha  \varepsilon_{k-1}} \right]^{\frac{1}{2+d}}\right)\\
& \leq & \CC \max\left( \left[  \frac{\mathbb{E}[\Deltakps \, \vert \mathfrak{F}_{k}]}{\alpha \varepsilon_{k-1}} \right]^{\frac{1}{2+d}} ; \left[   \frac{\mathbb{E}[ \Deltakps \, \vert \Fk]}{\alpha } \right]^{\frac{1}{2}} ;
\sqrt{\frac{\alpha}{m_k}};
\frac{\beta}{\sqrt{\alpha}} ; \sqrt{\frac{\beta}{\alpha m_k}}; \varepsilon_{k-1}  ; m_k^{-a/d} ; \right.\\
& & \left.
\left(\frac{\alpha}{m_k\epkm}\right)^{\frac{1}{2+d}};
\left(\frac{\beta^2+\frac{\beta}{m_k}}{\alpha \epkm}\right)^{\frac{1}{2+d}} \right).
\end{eqnarray*}
Since $(\Deltakps)_{k \ge 1}$ is a bounded sequence, we can verify that the last inequality is translated into:
\begin{eqnarray*} 
J(\nuks)-J^\star
& \leq & 
\CC \max\left( \alpha^{-1/2}\left[  \frac{\mathbb{E}[\Deltakps \, \vert \mathfrak{F}_{k}]}{ \varepsilon_{k-1}} \right]^{\frac{1}{2+d}} ;
\sqrt{\frac{\alpha}{m_k}};
\frac{\beta}{\sqrt{\alpha}} ; \sqrt{\frac{\beta}{\alpha m_k}}; \varepsilon_{k-1}  ; m_k^{-a/d} ; \right.\\
& & \left.
\left(\frac{\alpha}{m_k\epkm}\right)^{\frac{1}{2+d}};
\left(\frac{\beta^2+\frac{\beta}{m_k}}{\alpha \epkm}\right)^{\frac{1}{2+d}} \right).
\end{eqnarray*}
\end{proof}
\end{subequations}

\subsection{Proof of Theorem \ref{thm:cvgce_sto_all}}
\label{s:cvgce_sto_all}

\begin{proof}%
In what follows, we set:
$$ F(k) = J(\nuks) - J(\nu^\star) \quad \forall k\in \mathbb{N}^\star.$$
Let $k \geq 1$ be fixed. Then, we know from Assumption $(\widehat{\mathcal{H}}_{\varepsilon}^{\text{smooth},1})$
\begin{align}
\mathbb{E}\left[ 
F(k) - F(k+1) \, \vert \mathfrak{F}_k^+ \right]& =  \mathbb{E}\left[ 
J(\nuks) - J(\nukpps) \, \vert \mathfrak{F}_k^+ \right]  \nonumber\\
& =  J(\nuks) - J(\nukps) + 
\mathbb{E}\left[ J(\nukps) - J(\nukpps) \, \vert \mathfrak{F}_k^+ \right]  \nonumber\\
& \geq  \hat\Delta_{k^+} - \CC \left( \epk^2 + \epk \sqrt{\frac{\log m_k}{m_k}}\right).
\label{eq:mino_deltak}
\end{align}

Applying Proposition \ref{prop:enfer_sto}, we get
\begin{align*}
\cc F(k)^{2+d}
 &\leq  
  \alpha^{-\frac{2+d}{2}}\left[  \frac{\mathbb{E}[\Deltakps \, \vert \mathfrak{F}_{k}]}{ \varepsilon_{k-1}} \right] +
\left(\frac{\alpha}{m_k}\right)^{\frac{2+d}{2}} +
\left( \frac{\beta}{\sqrt{\alpha}}\right)^{2+d} + \left(\frac{\beta}{\alpha m_k}\right)^{\frac{2+d}{2}} + \epkm^{2+d}  + m_k^{-\frac{(2+d)a}{d}} +\\
&
\frac{\alpha}{m_k\epkm}+
\frac{\beta^2+\frac{\beta}{m_k}}{\alpha \epkm} .
\end{align*}
Then, using Equation \eqref{eq:mino_deltak}, the tower rule and then
multiplying each term by $\epkm \alpha^{\frac{2+d}{2}}$, we obtain:
\begin{align*}
\mathbb{E}[F(k) - F(k+1) \vert \mathfrak{F}_k ] &  = \mathbb{E}[\mathbb{E}[F(k) - F(k+1) \vert \mathfrak{F}_k^+ ] \, \vert \mathfrak{F}_k]\\
&\geq
\mathbb{E}[ \Deltakps \vert \mathfrak{F}_k ] - \CC \left( \epk^2 + \epk \sqrt{\frac{\log m_k}{m_k}}\right) \\
& \ge  \cc \alpha^{\frac{2+d}{2}} \epkm F(k)^{2+d} - s_k,
\end{align*}
where $s_k$ is given by:
\begin{align*}s_k& =
\frac{\alpha^{2+d} \epkm}{m_k^{\frac{2+d}{2}}}
+ \alpha^{\frac{2+d}{2}} \epkm \left(\frac{\beta}{\sqrt{\alpha}}\right)^{2+d}
+ \epkm \left(\frac{\beta}{m_k}\right)^{\frac{2+d}{2}}
+\epkm^{3+d} \alpha^{\frac{2+d}{2}}
+\epkm \alpha^{\frac{2+d}{2}} m_k^{\frac{-(2+d)a}{d}}\\
&
+\frac{\alpha^{\frac{4+d}{2}}}{m_k} + \alpha^{d/2}(\beta^2+\frac{\beta}{m_k})
+ \epk^2 +  \epk \sqrt{\frac{\log m_k}{m_k}}.
\end{align*}
Taking the global expectation on both side of the inequality, we get:
\[
\mathbb{E}[F(k)] - \mathbb{E}[F(k+1) ] \geq \cc \alpha^{\frac{2+d}{2}} \epkm \mathbb{E}[F(k)^{2+d}] - s_k.
\]
We  finally fix now a final horizon $K$ and use a telescopic sum argument to obtain
\[
\mathbb{E}[F(1)] - \mathbb{E}[F(K+1) ] \geq  \sum_{k=1}^{K} \left(\cc \alpha^{\frac{2+d}{2}}\epkm \mathbb{E}[F(k)^{2+d}] - s_k\right),
\]
which can be rewritten as
\[
\mathbb{E}[F(K+1) ] + \cc \alpha^{\frac{2+d}{2}} \sum_{k=1}^{K} \epkm \mathbb{E}[F(k)^{2+d}]  \leq \mathbb{E}[F(1)] + \sum_{k=1}^{K} s_k.
\]
Using $\hat{\rho}_{K}$ as the minimal value of $F(k)_{1 \leq k \leq K}$, this last inequality implies that 
\[
  \mathbb{E}[\hat{\rho}_{K}^{2+d}]  \leq \CC \frac{\mathbb{E}[F(1)] +  \sum_{k=1}^{K} s_k}{ 
  \alpha^{\frac{2+d}{2}}\sum_{k=1}^{K} \epkm}.
\]
Using the Jensen Inequality, we finally obtain 
\begin{equation}
  \mathbb{E}[\hat{\rho}_{K}]  \leq \CC \left(\frac{\mathbb{E}[F(1)] + \sum_{k=1}^{K} s_k}{ \alpha^{\frac{2+d}{2}} \sum_{k=1}^{K} \epkm}\right)^{\frac{1}{2+d}}.
  \label{eq:borne_cvgce_sto}
\end{equation}
We now consider the final tuning of our parameters to optimize the upper bound obtained in~\eqref{eq:borne_cvgce_sto}. For the sake of simplicity, we restrict our parametrization to constant step-size sequences that depend on the final horizon $K$ of simulation:
\textit{i.e.} $\alpha, \beta, \epk$ and $m_k$ are chosen constant with a value that only depends on $K$ (the final horizon of simulation).
A careful inspection of the terms involved in \eqref{eq:borne_cvgce_sto} shows that $\varepsilon$ needs to balance $\frac{1}{\alpha^{1+d/2} \varepsilon K}$ and $\frac{\varepsilon}{\alpha^{1+d/2}}$. A straightforward argument yields
$$
\forall k \in [1,K] \qquad \epk = \frac{1}{\sqrt{K}}.
$$
Then, we observe that the step-size $\alpha$ of the push-forward has to be chosen small enough (to guarantee the descent property stated in $\widehat{\mathcal{H}}_D$) but has to be lower bounded and cannot be chosen arbitrarily small.
Finally, $\beta$ has to be chosen small enough to make the biggest term $\frac{\beta^2}{\alpha \varepsilon}$ smaller than $\frac{1}{\alpha^{1+d/2} \sqrt{K}}$. We then deduce that:
$$
\beta \leq \frac{1}{\sqrt{K} \alpha^{d/4}}.
$$
At last, we setup the mini-batch size $m$. Thanks to the pointwise bounds established in Propositions~\ref{prop:hypsto1}, \ref{prop:hypsto2} and~\ref{prop:hypsto4}, the dominant mini-batch--dependent term in $s_k$ is now $\varepsilon \sqrt{\frac{\log m}{m}}$ (from the birth/death process), while the descent lemma contributes only $\frac{\alpha^{(4+d)/2}}{m}$. Setting
$$m=K$$
makes both terms of order $\mathcal{O}(\sqrt{\log K}/K)$ or smaller, so that $\sum_{k=1}^K s_k = \mathcal{O}(\sqrt{\log K})$.
According to these several choices, we then obtain the \textit{global} convergence rate of our horizon dependent sequence:
\begin{equation*}
  \mathbb{E}[\hat{\rho}_{K}]  \leq \CC \left( \alpha^{-(1+d/2)} \sqrt{\frac{\log K}{K}}\right)^{\frac{1}{2+d}} \leq \CC \alpha^{-1/2} \left(\frac{\log K}{K}\right)^{\frac{1}{2(2+d)}}.
\end{equation*}
This ends the proof of our final result.
\end{proof}

\begin{proof}[Proof of Theorem~\ref{thm:horizon_free}]
We start from the general bound~\eqref{eq:borne_cvgce_sto} with the iteration-dependent schedules $m_k=k\vee 1$, $\varepsilon_k = \min(\alpha,1/\sqrt{k\vee 1})$ and $\beta_k = 1/(k\vee 1)$. Let $k_\alpha := \lceil 1/\alpha^2\rceil$, so that $\varepsilon_k = \alpha$ for $k \le k_\alpha$ and $\varepsilon_k = 1/\sqrt{k}$ for $k > k_\alpha$. The first $k_\alpha$ iterations contribute only an additive $\mathcal{O}(1)$ to all sums and are absorbed into $\CC$.

\paragraph{The saturated regime $k\le k_\alpha$ is negligible in $K$.}
We make this $\mathcal{O}(1)$ statement precise. By Section~\ref{sec:role_of_alpha}, $\alpha$ is fixed once and for all from intrinsic problem data; in particular, $\alpha$ does not depend on $K$. Hence the threshold $k_\alpha = \lceil 1/\alpha^2\rceil$ is itself a constant independent of the horizon $K$, and although it may be large when $\alpha$ is small, it does not grow with~$K$.

On the saturated range $\{1,\dots,k_\alpha\}$ each summand entering $s_k$ in~\eqref{eq:borne_cvgce_sto} is uniformly bounded by a constant $\CC(\alpha,d,\bm H,\bm G,\MM)$ depending only on intrinsic data: indeed $\varepsilon_k=\alpha$ is constant, $\beta_k\leq \beta_1=1$, $m_k\geq 1$, and the assumed bounds in~\eqref{A2} together with $\HTVC$ provide $K$-independent uniform controls on every factor (compare the term-by-term estimates below). Therefore the total contribution of these terms is bounded by
\[
\sum_{k=1}^{k_\alpha} s_k \;\leq\; k_\alpha\,\CC(\alpha,d,\bm H,\bm G,\MM) \;=\; \mathcal{O}(1) \quad\text{in } K,
\]
which is additive, $K$-independent, and absorbed into the generic constant $\CC$ of~\eqref{eq:horizon_free_rate}. For the lower bound on $\sum_{k=1}^{K}\varepsilon_{k-1}$ established below, the saturated terms are nonnegative and may simply be dropped; the dominant $\sqrt{K}$ growth comes from the regime $k>k_\alpha$, which is precisely why the assertion requires $K\geq 4 k_\alpha$. Equivalently, the theorem assumes $K\geq 4/\alpha^2$, ensuring both that $K$ exceeds $k_\alpha$ enough to make the lower bound effective and that the saturated regime is dominated by the $\sqrt{K}$ contribution.

\paragraph{Lower bound on $\sum \varepsilon_{k-1}$.}
For $K \ge k_\alpha + 4$:
$$
\sum_{k=1}^{K} \varepsilon_{k-1} \;\geq\; \sum_{j=k_\alpha}^{K-1} \frac{1}{\sqrt{j}} \;\geq\; \int_{k_\alpha}^{K} x^{-1/2}\, \mathrm{d}x \;=\; 2\bigl(\sqrt{K}-\sqrt{k_\alpha}\bigr) \;\geq\; \sqrt{K},
$$
provided $K \ge 4 k_\alpha$.

\paragraph{Upper bound on $\sum s_k$.}
We bound each group of terms in $s_k$ separately. The dominant contributions are:
\begin{itemize}
\item \emph{Birth/death term:} $\displaystyle\sum_{k=1}^{K} \varepsilon_k \sqrt{\frac{\log m_k}{m_k}} \leq \sum_{k=1}^{K} \frac{\sqrt{\log k}}{k} \leq \frac{2}{3}(\log K)^{3/2} + \CC,$
using the integral bound $\int_1^K \frac{\sqrt{\log x}}{x}\,\mathrm{d}x = \frac{2}{3}(\log K)^{3/2}$.
\item \emph{Exploration term:} $\displaystyle\sum_{k=1}^{K} \varepsilon_k^2 \leq \sum_{k=1}^{K} \frac{1}{k} = \mathcal{O}(\log K)$.
\item \emph{Descent lemma term:}  $\displaystyle\sum_{k=1}^{K} \frac{\alpha^{(4+d)/2}}{m_k} = \alpha^{(4+d)/2}\sum_{k=1}^{K}\frac{1}{k} = \mathcal{O}(\log K)$.
\item \emph{Regularization term:} $\displaystyle\sum_{k=1}^{K} \alpha^{d/2}\left(\beta_k^2+\frac{\beta_k}{m_k}\right) = \alpha^{d/2}\sum_{k=1}^{K}\frac{2}{k^2} = \mathcal{O}(1)$.
\item \emph{Hoeffding term:} $\displaystyle\sum_{k=1}^{K} \epkm \alpha^{(2+d)/2} m_k^{-(2+d)a/d}$. With $a \ge \frac{d}{2(2+d)}$, we have $m_k^{-(2+d)a/d} \le 1/\sqrt{k}$, so the summand is at most $\alpha^{(2+d)/2}/k$, yielding $\mathcal{O}(\log K)$ at the boundary value of $a$ (and $\mathcal{O}(1)$ for any $a > \frac{d}{2(2+d)}$).
\item All remaining terms in $s_k$ involve strictly higher negative powers of $k$ (each $\le k^{-(3+d)/2}$) and yield convergent sums bounded by a constant.
\end{itemize}
Therefore, $\sum_{k=1}^K s_k = \mathcal{O}\left((\log K)^{3/2}\right)$.

\paragraph{Conclusion.}
Substituting into~\eqref{eq:borne_cvgce_sto}:
$$
\mathbb{E}[\hat{\rho}_{K}] \leq \CC \left(\frac{(\log K)^{3/2}}{\alpha^{(2+d)/2}\sqrt{K}}\right)^{\frac{1}{2+d}} = \CC \alpha^{-1/2}\left(\frac{(\log K)^3}{K}\right)^{\frac{1}{2(2+d)}}.
$$
Since $N = \sum_{k=1}^K k = K(K+1)/2$, we have $K = \Theta(\sqrt{N})$, and the sample complexity follows.
\end{proof}

\section{Extension to $\beta > 0$: position updates in the deterministic CPGD}
\label{sec:beta_positive}

In Section~\ref{s:continuous}, the deterministic analysis is carried out under the restriction $\beta = 0$, meaning that only the weights of the measure are updated at each iteration. In this appendix, we extend the analysis to the case $\beta > 0$, where the Push-Forward update (Definition~\ref{def:PF}) also moves the positions of the particles. Throughout this section, we assume that $(\alpha,\beta)$ satisfies condition~\eqref{eq:small_learning_rates}. The transition $\nuk \longmapsto \nukp$ now reads:
\begin{equation}
\label{eq:cgpd_beta}
\nukp = \Tnub^\sharp \Tnuka \nuk,
\end{equation}
where $\Tnub^\sharp$ denotes the push-forward by the proximal position update $\Tnub$ (Definition~\ref{def:PF}).

\subsection{Total Variation boundedness for $\beta > 0$}
\label{s:TV_beta}

The TV-norm boundedness established in Proposition~\ref{prop:hypdist}~$i)$ carries over to $\beta > 0$ without modification. Indeed, the push-forward map preserves the total variation norm: for any $\mu \in \cM_+(\cX)$,
\[
\| \Tnub^\sharp \mu \|_{\mathrm{TV}} = \| \mu \|_{\mathrm{TV}},
\]
since the push-forward merely redistributes mass without creating or destroying it. Therefore:
\[
\| \nukp \|_{\mathrm{TV}} = \| \Tnub^\sharp \Tnuka \nuk \|_{\mathrm{TV}} = \| \Tnuka \nuk \|_{\mathrm{TV}} = \int_\cX e^{-\alpha J'_{\nuk}(\vt)} d\nuk(\vt),
\]
which is the same starting point as in the proof of Proposition~\ref{prop:hypdist}~$i)$ (Section~\ref{s:proof_hypdist}). The remainder of that proof—the case analysis on whether $\|\nuk\|_{\mathrm{TV}}$ exceeds the threshold~$\mathfrak{M}$ defined there, leading to the bound $\|\nuk\|_{\mathrm{TV}} \leq \CTV$—applies verbatim, since it depends on $\nuk$ only through its TV-norm. %

\subsection{Smoothness assumptions for $\beta > 0$}
\label{s:smoothness_beta}

We verify that Assumptions~\eqref{eqs:H_eps_determinist} remain valid when the transition $\nukp \longrightarrow \nukpp$ is applied after a push-forward update with $\beta > 0$.

\paragraph{Assumption $(\mathcal{H}_\varepsilon^+)$.} This assumption concerns the birth process $\nukp \longrightarrow \nukpp$ only, and requires that
\[
\nukpp \bm{1}_{\{J'_{\nukp} \leq 0\}} \geq \lambda \epk \bm{1}_{\{J'_{\nukp} \leq 0\}}.
\]
The definition of the sets $\mathcal{N}_{\nukp} = \{J'_{\nukp} \leq 0\}$ depends on the post-update measure $\nukp$, which now incorporates the push-forward. This is the most delicate point and is discussed separately in Section~\ref{s:screening_beta} below.

\paragraph{Assumption $(\mathcal{H}_\varepsilon^{\mathrm{smooth},1})$.} The proof of Proposition~\ref{prop:hypdist}~$ii)$ (Section~\ref{s:proof_delet}) relies on the decomposition
\[
J(\nukpp) - J(\nukp) \leq \| \Phi(\nukp \bm{1}_{\mathcal{P}_{\nukp}}) \|_\mathbb{H}^2 + \| \Phi(\epk \bm{1}_{\mathcal{N}_{\nukp}} \lambda) \|_\mathbb{H}^2,
\]
where $\mathcal{P}_{\nukp} \subset \{J'_{\nukp} > 0\}$ and $\mathcal{N}_{\nukp} \subset \{J'_{\nukp} < 0\}$. These set inclusions hold by definition regardless of $\beta$. The subsequent bounds rely on the TV-norm of $\nukp$ (which is preserved by the push-forward, as shown above) and on the inclusion $\mathcal{P}_{\nukp}\subset\{J'_{\nuk}>-2\alpha^{-1}\log\varepsilon_k\}$ of Remark~\ref{rem:weight_update_perturbation}; the latter inclusion is sensitive to $\beta$ through the relationship between $\nuk$ and $\nukp$, and its $\beta>0$ counterpart is established in Section~\ref{s:screening_beta} below. Modulo this re-verification, Assumption $(\mathcal{H}_\varepsilon^{\mathrm{smooth},1})$ holds with the same constant $\CC$ up to a bounded multiplicative factor $e^{\alpha\beta\Lip^2}$.

\paragraph{Assumption $(\mathcal{H}_\varepsilon^{\mathrm{smooth},2})$.} The bound
\[
\| J'_{\nukpp} - J'_{\nukp} \|_\infty \leq \|\nuk\|_{\mathrm{TV}} \epk^2 + \epk \lambda(\cX)
\]
is obtained via the Cauchy--Schwarz inequality applied to the feature map $\Phi$, and depends on $\nukp$ only through its TV-norm. The argument is therefore identical to the $\beta = 0$ case.  %

\subsection{Extended one-step descent for $\beta > 0$}
\label{s:descent_beta}

\begin{proposition}
\label{prop:boundvk_beta}
Under Assumptions~\eqref{eqs:H_eps_determinist} and~\eqref{eq:HTVC}, if $(\alpha,\beta)$ satisfies~\eqref{eq:small_learning_rates} and $\vkd^2 \geq 24 \epkm^2 \CC^2$, then:
\[
J(\nukp) - J(\nuk) \leq -\frac{3\alpha}{2} (2\Lip)^{-d} |\vkd|^{2+d} \epkm.
\]
\end{proposition}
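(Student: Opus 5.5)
The plan is to follow the proof of Proposition~\ref{prop:boundvk} line by line, replacing the $\beta=0$ descent step by the full descent inequality of Proposition~\ref{prop:incre}. The only genuinely new ingredient is a volume estimate that accounts for $\cX$ being a convex body, which is where the factor $(2\Lip)^{-d}$ replaces $C_d\Lip^{-d}$.

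\textbf{Descent and restriction.} Under~\eqref{eq:small_learning_rates}, Proposition~\ref{prop:incre} gives
\[
J(\nukp)-J(\nuk)\le -\tfrac34\Big(\alpha\int_\cX|J'_{\nuk}|^2\,d\nuk+\beta\int_\cX\|\pi_\cX(\vt,\nabla J'_{\nuk}(\vt),\beta)\|^2\,d\nuk\Big).
\]
I will drop the nonpositive $\beta$-contribution, so that the bound coincides with~\eqref{eq:inter-inter1}. I then restrict the integral to $\Thetakm$.

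\textbf{Transfer to $J'_{\nukm}$.} Next I use $(\mathcal{H}^{\mathrm{smooth},2}_\varepsilon)$ to get $|J'_{\nuk}-J'_{\nukm}|\le\CC\epkm$ uniformly. Combined with $(a+b)^2\ge a^2/2-b^2$ and $|J'_{\nukm}|\ge|\vkd|/2$ on $\Thetakm$, this yields the bracket $\tfrac{\vkd^2}{4}-2\CC^2\epkm^2$. The hypothesis $\vkd^2\ge24\epkm^2\CC^2$ keeps this bracket a positive multiple of $\vkd^2$, exactly as in the $\beta=0$ proof.

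The mass bound $\nuk(\Thetakm)\ge\epkm\lambda(\Thetakm)$ then follows from $(\mathcal{H}^+_\varepsilon)$ at step $k-1$. Here $\Thetakm\subset\{J'_{\nukm}\le0\}=\mathcal{N}_{\nukm}$, and $\nuk$ is exactly the birth output $\nu_{(k-1)+1}$. This step does not involve $\beta$ at all, because the push-forward acts inside $\nu_{k-1}\mapsto\nukm$, before the birth step.

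\textbf{Geometric step.} Let $x^\star\in\argmin J'_{\nukm}$. Lipschitz continuity of $J'_{\nukm}$ with constant $\Lip$ (valid thanks to $\HTVC$ and the TV preservation of Section~\ref{s:TV_beta}) gives
\[
\cX\cap B\big(x^\star,|\vkd|/(2\Lip)\big)\subset\Thetakm.
\]
I then need a lower bound on $\lambda\big(\cX\cap B(x^\star,r)\big)$ with $r=|\vkd|/(2\Lip)$, uniform in $x^\star\in\cX$, including minimizers on the boundary. I expect this to be the main obstacle. My approach is to exploit convexity together with the fact that $\cX$ is the closure of its interior. Fix an interior ball $B(c,\rho)\subset\cX$; the homothety centered at $x^\star$ sending this ball into $\cX\cap B(x^\star,r)$ gives a volume of order $(r\rho/\mathrm{diam}\,\cX)^d$.

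The precise constant $(2\Lip)^{-d}$ in the statement suggests normalizing with $\lambda(B(0,1))\ge1$, or absorbing $\mathrm{diam}$ and $\rho$ into $\Lip$ (which may be enlarged). Concretely, I will write $\lambda(\Thetakm)\ge(|\vkd|/(2\Lip))^d$, as in the stochastic proof of Proposition~\ref{prop:Decroissance_sto_1}, and justify it this way. Since $|\vkd|\le\Lip\,\mathrm{diam}\,\cX$ is bounded, $r$ stays bounded and the homothety argument applies.

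\textbf{Conclusion.} Plugging the volume bound into $-\tfrac32\alpha\epkm\vkd^2\lambda(\Thetakm)$ gives the claim. The only point I would double-check is whether the $\beta$-dependent inclusion of Remark~\ref{rem:weight_update_perturbation} enters anywhere. I believe it does not: it is used only for $(\mathcal{H}^{\mathrm{smooth},1}_\varepsilon)$, which this proposition does not invoke.
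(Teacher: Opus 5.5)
Your route is the paper's own. Its proof of this proposition applies Proposition~\ref{prop:incre} with $\beta>0$, drops the nonpositive $\beta$-term to recover \eqref{eq:inter-inter1}, and then says the rest of the proof of Proposition~\ref{prop:boundvk} ``proceeds without change''. Your use of $(\mathcal{H}^+_\varepsilon)$ at step $k-1$ to get $\nuk(\Thetakm)\ge\epkm\lambda(\Thetakm)$ is also correct.

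The gap is that rerunning that proof does not give the stated constant. First, the arithmetic. From $-\frac38\alpha\big[\frac{\vkd^2}{4}-2\CC^2\epkm^2\big]\nuk(\Thetakm)$ and $2\CC^2\epkm^2\le\vkd^2/12$ you get
\[
J(\nukp)-J(\nuk)\le-\tfrac{3}{8}\cdot\tfrac{\vkd^2}{6}\,\alpha\,\nuk(\Thetakm)=-\tfrac{1}{16}\,\alpha\,\vkd^2\,\nuk(\Thetakm),
\]
not $-\frac32\alpha\vkd^2\nuk(\Thetakm)$. The coefficient can never exceed $\frac{3}{32}$, its value at $\epkm=0$. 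So the $\frac32$ you plug in is off by a factor $24$, and no choice of the generic constant $\CC$ fixes it. This slip comes from the paper's proof of Proposition~\ref{prop:boundvk}, which you reproduce.

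Second, the volume step. Your homothety is the right way to handle minimizers on $\partial\cX$; the paper just asserts the ball inclusion. But what it yields is $\lambda(\Thetakm)\ge\omega_d\big(\rho/\mathrm{diam}\,\cX\big)^d r^d$, with $\omega_d=\lambda(B(0,1))$, and this prefactor can be smaller than $1$.
\begin{itemize}
\item Even for an interior minimizer, $\omega_d<1$ once $d\ge 13$, so ``normalizing with $\lambda(B(0,1))\ge1$'' is not available.
\item ``Absorbing $\mathrm{diam}\,\cX$ and $\rho$ into $\Lip$'' proves the bound for a constant larger than the $\Lip$ of \eqref{def:lip_uniform_J'}, which is a weaker statement.
\item The claim $|\vkd|\le\Lip\,\mathrm{diam}\,\cX$ is not justified. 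What you do have is $|\vkd|\le\|y\|_{\mathbb H}$ from \eqref{eq:lower_bound_DC}. The case $r>\mathrm{diam}\,\cX$ then needs separate treatment; there $\Thetakm=\cX$.
\end{itemize}

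Put together, your argument proves
\[
J(\nukp)-J(\nuk)\le-\tfrac{1}{16}\,\omega_d\Big(\tfrac{\rho}{\mathrm{diam}\,\cX}\Big)^{d}\,\alpha\,(2\Lip)^{-d}\,|\vkd|^{2+d}\,\epkm
\]
for $r\le\mathrm{diam}\,\cX$. That is all Proposition~\ref{prop:upper-bound-increment} and Theorem~\ref{theo:convergence_beta} use, since constants are absorbed there, but it is not the inequality as stated.

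A minor point: the inclusion of Remark~\ref{rem:weight_update_perturbation} is also what the paper uses to verify $(\mathcal{H}^{\mathrm{smooth},2}_\varepsilon)$, which you do invoke. It is irrelevant here only because \eqref{eqs:H_eps_determinist} is a hypothesis of the proposition.
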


\begin{proof}
The proof follows that of Proposition~\ref{prop:boundvk} with a single modification: we apply Proposition~\ref{prop:incre} with $\beta > 0$ instead of $\beta = 0$. The descent property yields:
\[
J(\nukp) - J(\nuk) \leq -\frac{3}{4} \left( \alpha \int_\cX |J'_{\nuk}|^2 d\nuk + \beta \int_\cX \|\pi_\cX(\vt, \nabla J'_{\nuk}(\vt), \beta)\|^2 d\nuk \right).
\]
Since the second term is non-positive, dropping it only weakens the upper bound by an amount of additional descent, and we obtain:
\[
J(\nukp) - J(\nuk) \leq -\frac{3}{4} \alpha \int_\cX |J'_{\nuk}|^2 d\nuk = -\frac{3}{4} \|g_{\nuk}^\alpha\|^2_{L^2(\nuk)}.
\]
This is exactly inequality~\eqref{eq:inter-inter1}, and the remainder of the proof of Proposition~\ref{prop:boundvk} (Eqs~\eqref{eq:inter1}--\eqref{eq:inter3}) proceeds without change.
\end{proof}

\subsection{Convergence rates for $\beta > 0$}
\label{s:convergence_beta}

Since Proposition~\ref{prop:boundvk_beta} yields the same bound as Proposition~\ref{prop:boundvk}, the downstream results—namely Proposition~\ref{prop:upper-bound-increment} and Theorem~\ref{theo:convergence_deterministe}—extend to $\beta > 0$ with identical rates.

The proof of Theorem~\ref{theo:convergence_deterministe} (Section~\ref{s:proofconv1}) uses two ingredients: (i) Assumption $(\mathcal{H}_\varepsilon^{\mathrm{smooth},1})$, which is verified in Section~\ref{s:smoothness_beta}, and (ii) the descent property $J(\nukp) - J(\nuk) \leq 0$, which follows from Proposition~\ref{prop:incre} for any $(\alpha,\beta)$ satisfying~\eqref{eq:small_learning_rates}. The telescoping sum argument and the optimization over $\varepsilon_k$ are unchanged.

\subsection{The screening issue for $\beta > 0$}
\label{s:screening_beta}

We now address the most delicate point: verifying that the birth process construction of Section~\ref{s:birth-death-proc} still satisfies Assumption~$(\mathcal{H}_\varepsilon^+)$ and that the smoothness bounds~\eqref{eqs:H_eps_determinist} remain valid when $\nukp$ is computed with $\beta > 0$.

\paragraph{Assumption $(\mathcal{H}_\varepsilon^+)$ is satisfied by construction.}
Recall that the mass creation step~\eqref{def:nukpp} adds $\epk \bm{1}_{\mathcal{N}_{\nukp}} \lambda$ on $\mathcal{N}_{\nukp} = \{J'_{\nukp} \leq 0\}$. Since $\nukpp = \nukkp + \epk \bm{1}_{\mathcal{N}_{\nukp}} \lambda$ with $\nukkp \geq 0$, we obtain
\[
\nukpp \bm{1}_{J'_{\nukp} \leq 0} \geq \epk \lambda \bm{1}_{J'_{\nukp} \leq 0},
\]
which is precisely Assumption~$(\mathcal{H}_\varepsilon^+)$. This holds regardless of $\beta$, since the birth process is defined in terms of the actual measure $\nukp$, whatever its construction.

\paragraph{Perturbation of $J'_{\nukp}$ by the push-forward.}

The more subtle issue arises in the smoothness proofs (Section~\ref{s:proof_delet}), which use the specific relationship between $\nuk$, $\nukp$, and the set $\mathcal{P}_{\nukp}$. We denote by $\tilde{\nu}_{k^+} = \Tnuka\nuk$ the intermediate reweighted measure (before position update), so that $\nukp = \Tnub^\sharp \tilde{\nu}_{k^+}$.

We control the perturbation induced by the push-forward using a Taylor expansion. For any $\vt \in \cX$:
\begin{equation}
\label{eq:J_prime_pushforward_perturbation}
J'_{\nukp}(\vt) - J'_{\tilde{\nu}_{k^+}}(\vt) = \langle \varphi_\vt, \Phi(\nukp - \tilde{\nu}_{k^+}) \rangle_\mathbb{H}.
\end{equation}
Since $\nukp = \Tnub^\sharp \tilde{\nu}_{k^+}$, the change-of-variables formula gives:
\[
\Phi(\nukp - \tilde{\nu}_{k^+}) = \int_\cX \left[ \varphi_{\Tnub(\vt)} - \varphi_\vt \right] d\tilde{\nu}_{k^+}(\vt).
\]
By the Lipschitz property of the feature map (Lemma~\ref{lem:lipschitz_feature_map}), $\|\varphi_{\Tnub(\vt)} - \varphi_\vt\|_\mathbb{H} \leq \sqrt{\CC_{\mathcal{P}}} \|\Tnub(\vt) - \vt\|$. Recalling the definition $\Tnub(\vt) = \vt - \beta \pi_\cX(\vt, \nabla J'_{\nuk}(\vt), \beta)$ and the bound $\|\pi_\cX(\vt, \nabla J'_{\nuk}(\vt), \beta)\| \leq \|\nabla J'_{\nuk}\|_\infty \leq \Lip$ (Lemma~\ref{lem:lipschitz_J_prime}), we obtain:
\[
\|\Phi(\nukp - \tilde{\nu}_{k^+})\|_\mathbb{H} \leq \beta \sqrt{\CC_{\mathcal{P}}} \Lip \, \|\tilde{\nu}_{k^+}\|_{\mathrm{TV}} \leq \beta \sqrt{\CC_{\mathcal{P}}} \Lip \, \CTV.
\]
Combined with~\eqref{eq:J_prime_pushforward_perturbation} and $\|\varphi_\vt\|_\mathbb{H} = 1$:
\begin{equation}
\label{eq:beta_perturbation_bound}
\|J'_{\nukp} - J'_{\tilde{\nu}_{k^+}}\|_\infty \leq \beta \sqrt{\CC_{\mathcal{P}}} \Lip \, \CTV.
\end{equation}

\paragraph{Extension of the smoothness bounds.}
It remains to verify that Assumptions~$({\mathcal{H}_\varepsilon^{\mathrm{smooth},1}})$ and~$({\mathcal{H}_\varepsilon^{\mathrm{smooth},2}})$ still hold with the same order. In both cases, the key observation is the following: with $\beta > 0$, integrals against $\nukp$ over a set $\mathcal{A} \subset \cX$ are expressed via the push-forward as
\[
\int_{\mathcal{A}} d\nukp(\vt') = \int_{\Tnub^{-1}(\mathcal{A})} e^{-\alpha J'_{\nuk}(\vt)} d\nuk(\vt).
\]
On the preimage $\Tnub^{-1}(\mathcal{P}_{\nukp})$, since $\Tnub(\vt) \in \mathcal{P}_{\nukp} \subset \{J'_{\nuk} > -2\alpha^{-1} \log \varepsilon_k\}$ (by Remark~\ref{rem:weight_update_perturbation}), the Lipschitz continuity of $J'_{\nuk}$ (Lemma~\ref{lem:lipschitz_J_prime}) yields:
\begin{equation}
\label{eq:J_prime_lipschitz_shift}
J'_{\nuk}(\vt) \geq J'_{\nuk}(\Tnub(\vt)) - \Lip \|\Tnub(\vt) - \vt\| > -2\alpha^{-1} \log \varepsilon_k - \beta \Lip^2,
\end{equation}
using the bound $\|\Tnub(\vt) - \vt\| = \beta \|\pi_\cX(\vt, \nabla J'_{\nuk}(\vt), \beta)\| \leq \beta \Lip$ (Lemma~\ref{lem:lipschitz_J_prime}). Therefore:
\begin{equation}
\label{eq:exp_bound_beta}
\forall \vt \in \Tnub^{-1}(\mathcal{P}_{\nukp}): \qquad e^{-\alpha J'_{\nuk}(\vt)} < e^{\alpha \beta \Lip^2}\, \epk^2.
\end{equation}
Under condition~\eqref{eq:small_learning_rates}, the product $\alpha \beta \Lip^2$ is uniformly bounded by a constant depending only on $\CC_\mathcal{P}$, $\CTV$, and $\kappa$.

\medskip

\noindent \underline{Assumption $({\mathcal{H}_\varepsilon^{\mathrm{smooth},1}})$.} As in the proof of Proposition~\ref{prop:hypdist}~$ii)$ (Section~\ref{s:proof_delet}), the Cauchy--Schwarz inequality gives:
\[
\| \Phi(\nukp \bm{1}_{\mathcal{P}_{\nukp}}) \|_\mathbb{H}^2
= \left\| \int_{\mathcal{P}_{\nukp}} \varphi_{\vt'} d\nukp(\vt') \right\|_\mathbb{H}^2
\leq \nukp(\mathcal{P}_{\nukp}) \times \int_{\mathcal{P}_{\nukp}} \| \varphi_{\vt'} \|_\mathbb{H}^2 d\nukp(\vt').
\]
Since $\|\varphi_{\vt'}\|_\mathbb{H} = 1$, both factors equal $\nukp(\mathcal{P}_{\nukp})$. Using the push-forward and the bound~\eqref{eq:exp_bound_beta}:
\[
\nukp(\mathcal{P}_{\nukp}) = \int_{\Tnub^{-1}(\mathcal{P}_{\nukp})} e^{-\alpha J'_{\nuk}(\vt)} d\nuk(\vt) \leq e^{\alpha \beta \Lip^2}\, \epk^2 \|\nuk\|_{\mathrm{TV}}.
\]
Therefore $\| \Phi(\nukp \bm{1}_{\mathcal{P}_{\nukp}}) \|_\mathbb{H}^2 \leq e^{2\alpha \beta \Lip^2}\, \epk^4 \|\nuk\|_{\mathrm{TV}}^2 \leq \CC \epk^2 \|\nuk\|_{\mathrm{TV}}^2$,
where the last inequality uses $\epk \leq 1$. The bound on $\|\Phi(\epk \bm{1}_{\mathcal{N}_{\nukp}} \lambda)\|_\mathbb{H}^2 \leq \epk^2 \lambda(\cX)^2$ is unchanged (it depends only on $\epk$ and $\lambda(\cX)$). Therefore, Assumption~$(\mathcal{H}_\varepsilon^{\mathrm{smooth},1})$ holds with a modified constant~$\CC$.

\medskip

\noindent \underline{Assumption $({\mathcal{H}_\varepsilon^{\mathrm{smooth},2}})$.} Following the proof in Section~\ref{s:proof_delet}:
\begin{align*}
\left| J'_{\nukpp}(\vt) - J'_{\nukp}(\vt) \right|
& \leq \int_{\mathcal{P}_{\nukp}} \left| \langle \varphi_u, \varphi_\vt \rangle_\mathbb{H} \right| d\nukp(u) + \epk \int_{\mathcal{N}_{\nukp}} \left| \langle \varphi_u, \varphi_\vt \rangle_\mathbb{H} \right| d\lambda(u).
\end{align*}
The second term is bounded by $\epk \lambda(\cX)$ as before. For the first term, using $|\langle \varphi_u, \varphi_\vt \rangle| \leq 1$ and the push-forward:
\[
\int_{\mathcal{P}_{\nukp}} d\nukp(u) = \int_{\Tnub^{-1}(\mathcal{P}_{\nukp})} e^{-\alpha J'_{\nuk}(\vt)} d\nuk(\vt) \leq e^{\alpha \beta \Lip^2}\, \epk^2 \|\nuk\|_{\mathrm{TV}},
\]
where we used~\eqref{eq:exp_bound_beta}. This yields:
\[
\|J'_{\nukpp} - J'_{\nukp}\|_\infty \leq e^{\alpha \beta \Lip^2}\, \|\nuk\|_{\mathrm{TV}} \epk^2 + \epk \lambda(\cX) \leq \CC \epk,
\]
since $e^{\alpha \beta \Lip^2}$ is bounded under~\eqref{eq:small_learning_rates} and $\epk \leq 1$.

\subsection{Global convergence with position updates}
\label{s:global_convergence_beta}

We now state the main result of this section, which extends Theorem~\ref{theo:convergence_deterministe} to the case $\beta > 0$. The transition $\nuk \longmapsto \nukp$ is given by the full Weight \& Push-Forward update~\eqref{eq:cgpd_beta}, and the birth process $\nukp \longrightarrow \nukpp$ follows the construction of Section~\ref{s:birth-death-proc}.

\begin{theo}[Global convergence for $\beta > 0$]
\label{theo:convergence_beta}
Assume~\eqref{eqs:hyp_HP} with $\kappa\geq 0$. Let the sequence $(\nuk)_{k \geq 1}$ be generated by the update $\nukp = \Tnub^\sharp \Tnuka \nuk$ followed by the birth process~\eqref{def:nukpp_alt1}--\eqref{def:nukpp}, with $(\epk)_{k\ge 0}$ satisfying $\epk\le\UU\alpha$ for every $k\ge 0$. If $(\alpha, \beta)$ satisfies condition~\eqref{eq:small_learning_rates}, then Assumptions~\eqref{eqs:H_eps_determinist} and~\eqref{eq:HTVC} hold (with constants $\CC$ depending on $\alpha$, $\beta$, $\CC_\mathcal{P}$, $\CTV$, and $\kappa$, and differing from the $\beta=0$ case by bounded multiplicative factors of order $e^{\alpha\beta\Lip^2}$), and for any final horizon $K \geq 2$:
\begin{itemize}
\item[$i)$] If $(\epk)_{k\ge 0}$ is \textit{non-adaptive} and $\epk = \varepsilon = \sqrt{\CC / K}\le\alpha$ for all $k \in \{1, \ldots, K\}$, then:
\[
\min_{1 \leq k \leq K} \left\{ J(\nuk) - J(\nu^\star) \right\} \leq \CC\, \Lip^{\frac{2+2d}{2+d}}\, \alpha^{-\frac{1}{2+d}}\, K^{-\frac{1}{2(2+d)}}.
\]
\item[$ii)$] If $(\epk)_{k\ge 0}$ is \textit{horizon-free} and $\epk = \sqrt{\CC / (k+1)}\le\alpha$, then:
\[
\min_{1 \leq k \leq K} \left\{ J(\nuk) - J(\nu^\star) \right\} \leq \CC\, \Lip^{\frac{2+2d}{2+d}}\, \alpha^{-\frac{1}{2+d}}\, K^{-\frac{1}{2(2+d)}}\, \log(K)^{\frac{1}{(2+d)}}.
\]
\item[$iii)$] If $\epk = \varepsilon = \CC \left( \frac{\Lip^{2+2d}}{(d+1)\alpha} \right)^{\frac{1}{5+2d}} K^{-\frac{3+d}{5+2d}}$, then:
\[
J(\nu_K) - J(\nu^\star) \leq \CC \left( \frac{\Lip^{2+2d}}{(d+1)\alpha} \right)^{\frac{2}{5+2d}} K^{-\frac{1}{5+2d}}.
\]
\end{itemize}
In all three items, the generic constant $\CC$ may depend polynomially on $(\|\nu^\star\|_{\mathrm{TV}}/\Lip)$; in item~$iii)$, $\CC$ also depends polynomially on the initial excess $J(\nu_1)-J^\star$ (which is finite under~$\HTVC$).
\end{theo}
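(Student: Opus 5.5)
The plan is to reduce Theorem~\ref{theo:convergence_beta} to the $\beta=0$ proof of Theorem~\ref{theo:convergence_deterministe}. That proof uses only four abstract inputs: the total-variation bound $\HTVC$, the three conditions of $\Heps$, the one-step inequality of Proposition~\ref{prop:boundvk}, and monotone descent $J(\nukp)\le J(\nuk)$. Once these are verified for the full update $\nukp=\Tnub^\sharp\Tnuka\nuk$, the telescoping arguments of Section~\ref{s:proofconv1} apply verbatim and give items $i)$--$iii)$.

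The ingredients are checked in the following order.
\begin{enumerate}
\item $\HTVC$ holds because the push-forward preserves mass. We have $\|\nukp\|_{\mathrm{TV}}=\int e^{-\alpha J'_{\nuk}}\,d\nuk$, exactly as when $\beta=0$, so the threshold argument of Proposition~\ref{prop:hypdist}~$i)$ (Section~\ref{s:TV_beta}) runs unchanged and gives the same constant $\CTV$.
\item $(\mathcal{H}^+_\varepsilon)$ holds by construction. The birth step adds $\epk\bm{1}_{\{J'_{\nukp}\le0\}}\lambda$, whatever the construction of $\nukp$.
\item The smoothness conditions $(\mathcal{H}^{\mathrm{smooth},1}_\varepsilon)$ and $(\mathcal{H}^{\mathrm{smooth},2}_\varepsilon)$ need control of the mass that $\nukp$ places on the deletion set $\mathcal{P}_{\nukp}$. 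I will pull this mass back through $\Tnub$ and use the displacement bound $\|\Tnub(\vt)-\vt\|\le\beta\Lip$. Combined with Lipschitz continuity of $J'_{\nuk}$ and the inclusion of Remark~\ref{rem:weight_update_perturbation}, this gives \eqref{eq:exp_bound_beta}: the density is at most $e^{\alpha\beta\Lip^2}\epk^2$ on the preimage. The Cauchy--Schwarz bounds of Section~\ref{s:proof_delet} then hold with an extra factor $e^{\alpha\beta\Lip^2}$. This factor is bounded under \eqref{eq:small_learning_rates}, so it is absorbed into $\CC$.
\item Proposition~\ref{prop:boundvk_beta} follows from Proposition~\ref{prop:incre} with $\beta>0$ by discarding the nonpositive position term. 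This recovers \eqref{eq:inter-inter1}, and from there Proposition~\ref{prop:upper-bound-increment}, whose proof only uses \eqref{eq:inter-inter1_a} and $(\mathcal{H}^{\mathrm{smooth},2}_\varepsilon)$.
\end{enumerate}

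With these inputs, the rest follows the $\beta=0$ argument. Inequality \eqref{eq:tec-case0_bis} combined with Proposition~\ref{prop:upper-bound-increment} yields the key recursion \eqref{eq:inegalite_importante}. The argument then splits into two parts.
\begin{itemize}
\item For $i)$ and $ii)$, bound $\rho_K$ through \eqref{eq:inter_cvgce}. Then optimize over constant $\varepsilon\asymp K^{-1/2}$, or take $\epk\asymp(k+1)^{-1/2}$, where the sums $\sum\epk\gtrsim\sqrt K$ and $\sum\epk^2\lesssim\log K$ give the logarithmic factor.
\item For $iii)$, use the compensated non-increasing sequence $\bar f$ of \eqref{eq:newf}. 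Its monotonicity requires only $(\mathcal{H}^{\mathrm{smooth},1}_\varepsilon)$ and descent. Then apply the discrete integration-by-parts inequality \eqref{eq:draft1} and balance $K\varepsilon^2$ against the $(K\varepsilon)^{-1/(2+d)}$ term.
\end{itemize}
Since $A=\CC(\|\nu^\star\|_{\mathrm{TV}}\vee\Lip)$, the prefactors $\Lip^{(2+2d)/(2+d)}$ come out as stated.

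I expect step 3 to be the main obstacle: the deletion-set inclusion under the push-forward. Remark~\ref{rem:weight_update_perturbation} gives $\|J'_{\nukp}-J'_{\nuk}\|_\infty\le\CC_w$ only for the reweighting part. With $\beta>0$ there is an additional shift $\|J'_{\nukp}-J'_{\tilde\nu_{k^+}}\|_\infty\le\beta\sqrt{\CC_{\mathcal P}}\Lip\CTV$ from \eqref{eq:beta_perturbation_bound}. My first attempt is to enlarge the gauge $\CC_w$ in \eqref{def:nukpp_alt1} by this $\beta$-term, so that $\mathcal{P}_{\nukp}\subset\{J'_{\nuk}>-2\alpha^{-1}\log\epk\}$ still holds. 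Any residual slack then appears only as the bounded factor $e^{\alpha\beta\Lip^2}$, which yields the advertised multiplicative change in the constants.
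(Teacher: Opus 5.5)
Your proposal is correct and follows essentially the same route as the paper: mass preservation under $\Tnub^\sharp$ for $\HTVC$, $(\mathcal{H}^+_\varepsilon)$ by construction, the pull-back bound \eqref{eq:exp_bound_beta} with the bounded factor $e^{\alpha\beta\Lip^2}$ in the two smoothness conditions, dropping the position term of Proposition~\ref{prop:incre} to recover \eqref{eq:inter-inter1}, and then the $\beta=0$ telescoping arguments verbatim. The obstacle you flag is real, and you handle it more carefully than the paper, which cites Remark~\ref{rem:weight_update_perturbation} for $\mathcal{P}_{\nukp}\subset\{J'_{\nuk}>-2\alpha^{-1}\log\epk\}$ even though that remark is proved only for $\nukp=\Tnuka\nuk$; you also need not enlarge $\CC_w$ (which would change the algorithm in the statement), because keeping it and using \eqref{eq:beta_perturbation_bound} only lowers the threshold by $\beta\sqrt{\CC_{\mathcal{P}}}\Lip\CTV$, at the cost of one more factor $e^{\alpha\beta\sqrt{\CC_{\mathcal{P}}}\Lip\CTV}$, which is bounded under \eqref{eq:small_learning_rates}.
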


\begin{proof}
We have established in Sections~\ref{s:TV_beta}--\ref{s:screening_beta} that Assumptions~\eqref{eqs:H_eps_determinist} and~\eqref{eq:HTVC} hold under the update~\eqref{eq:cgpd_beta} with $\beta > 0$, with constants $\CC$ that may differ from the $\beta = 0$ case by bounded multiplicative factors (specifically, factors of $e^{\alpha \beta \Lip^2}$, which are uniformly bounded under~\eqref{eq:small_learning_rates}).

With these assumptions verified, Proposition~\ref{prop:boundvk_beta} yields the same one-step descent bound as Proposition~\ref{prop:boundvk} (up to the modified constants), and the proof of Theorem~\ref{theo:convergence_deterministe} (Section~\ref{s:proofconv1}) applies verbatim: the telescoping sum argument~\eqref{eq:inegalite_importante} and the optimization over $\varepsilon_k$ depend only on the structure of Assumptions~\eqref{eqs:H_eps_determinist}, not on the specific value of~$\beta$. The three convergence rates therefore coincide with those of Theorem~\ref{theo:convergence_deterministe}.
\end{proof}

\section{Statistical guarantees}
\label{sec:statistical_guarantees}
\begin{defi}[Sparse target measures]
Let $\Delta^0$ be positive and let $s^0$ be greater than $1$. We define the class of $s^0$-sparse measures with minimal Euclidean separation $\Delta^0$ as
	\begin{equation}
   \label{eq:class_model}
	\mathbb M_{s^0, \Delta^0} 
       := 
       \Bigg\{ \mu^0\,:\,
           \mu^0 = \sum_{k=1}^{s^0} a_k^0 \delta_{x^0_k}
              \textnormal{ and }
           \min_{k\neq l} 
           \|x^0_k-x^0_l\|_2
           \geq \Delta^0
       \Bigg\}\,,
	\end{equation}
where $\delta_{x}$ denotes the Dirac mass at point $x\in\mathcal X$ and at least one $a_k^0\in\mathds R$ is non-zero. 
\end{defi}

In statistical learning theory, one considers a target sparse measure $\mu^0\in\mathbb M_{s^0, \Delta^0}$ and one defines the noise term and, respectively, the noise level as
\begin{subequations}\label{eq:noise_level_term}
\begin{align}
   \Gamma &:= y-\Phi\mu^0\label{eq:noise}\\
   \text{resp. }\gamma &:=\|\Gamma\|_{\mathbb{H}}\,.\label{eq:noise_level}
\end{align}
\end{subequations}
The statistical estimation error bounds of \eqref{eq:blasso} are defined by means of the so-called far and near regions. 
\begin{subequations}\label{eq:near_far_def}
\begin{defi}[Far and Near regions]
\label{def:Far_and_Near_regions}
	Let ${\mu^0} \in {\mathbb M}_{{s^0}, \Delta^0}$ and let $r > 0$. Define the near region of $x^0_k$ of radius~$r$ as
	\begin{equation}
       \label{eq:near_regions}
	\mathbb{N}_k(r) := \left\{ x \in {\mathcal X}, \quad 
       \|x-x^0_k\|_2
       \leq r \right\},
	\end{equation}
	and the far region as
	\begin{equation}
        \label{eq:far_regions}
	\mathbb{F}(r) := {\mathcal X} \setminus \mathbb{N}(r), \quad \textnormal{with: } \mathbb{N}(r) := \bigcup_{k=1}^{{s^0}} \mathbb{N}_k(r).
	\end{equation}
\end{defi}
\end{subequations}
In the literature \citep{azais2015spike,candes2014towards,poon2023geometry,de2025effective}, the estimation errors (with respect to the Euclidean metric) are proven to be, under some conditions, for some radius $r>0$ such that $r<\Delta^0/2$, 
\begin{subequations}\label{eq:error_bounds}
\begin{itemize}
   \item Control of the far region: 
   \begin{equation}
   \label{eq:control_far}
       |\mu^\star|(\mathbb{F}(r))\lesssim_d \gamma \sqrt{{s^0}}\,,
   \end{equation}
   \item Control of all the near regions:
   \begin{equation}
   \label{eq:control_near}
       |\mu^\star(\mathbb{N}_k(r))-a^0_k|\lesssim_d \gamma \sqrt{{s^0}}\,,
   \end{equation}
   \item Detection level: For every Borel set $A\subset{\mathcal X}$ such that $|\mu^\star|(A)\gtrsim_d \gamma \sqrt{{s^0}}$, there exists $x^0_k$ such that
   \begin{equation}
   \label{eq:detection_near}
       \min_{t\in A}\|t-x^0_k\|_2\lesssim_d r\,,
   \end{equation}    
\end{itemize}
\end{subequations}
where $\lesssim_d$ denotes the inequality up to a multiplicative constant that may depend on the dimension $d$, $\mu^\star$ is a solution to \eqref{eq:blasso} with regularization parameter $\kappa\sim \gamma /\sqrt{{s^0}}$ and $|\mu^\star|$ denotes the absolute part of $\mu^\star$. These statistical estimation error bounds hold for all $\mu\in\mathcal M(\mathcal X)$ such that
\begin{equation}
   \label{eq:objective_range}
   0\leq J(\mu) - J(\mu^\star)\leq J(\mu^0) - J(\mu^\star)=:\varepsilon^\star\,,
\end{equation}
see for instance \cite[Theorem 1]{decastro2024four}, under some conditions (see \cite[Assumption~1]{poon2023geometry}). Extensions to Fisher-Rao metrics are given in \cite{poon2023geometry} and \cite{giard2025gaussian}.

We call an $\varepsilon$-solution any $\mu\in\mathcal{M}(\mathcal X)$ such that $0\leq J(\mu)-J(\mu^\star)\leq \varepsilon$. From an optimization point of view, Equation~\eqref{eq:objective_range} shows that there exists $ \varepsilon^\star>0$ such that any $\varepsilon$-solution satisfies the statistical error bounds~\eqref{eq:error_bounds}, for $0\leq \varepsilon\leq \varepsilon^\star$. While the parameter $\varepsilon^\star$ is not observed in practice, it shows that any gradient descent path converging towards a solution will satisfy the statistical error~\eqref{eq:error_bounds} after a finite number of steps.

\section{List of notation}
\label{app:list_notation}

\begin{table}[!ht]
	\centering
	{\small
    \begin{tabularx}{\linewidth}{@{}lX@{}}
        \toprule
        \multicolumn{2}{c}{\textit{General Notation \& Measure Spaces}} \\
        \midrule
        $d$, $\mathcal X$                                     & Dimension and domain ($\mathcal X \subset \bbR^d$ is compact convex)\\
        $\cc,\CC$                               & Generic positive constants\\
        $\lambda$                               & Lebesgue measure on $\mathcal{X}$\\
        $\mathcal{C}(\mathcal X)$                   & Space of continuous functions on $\mathcal X$ equipped with $\|\cdot\|_\infty$\\
        $\mathcal{M}(\mathcal X)$, $\mathcal{M}_+(\mathcal X)$                   & Spaces of signed and non-negative Radon measures on $\mathcal X$ equipped with $\|\cdot\|_{\mathrm{TV}}$\\
        
        \midrule
        \multicolumn{2}{c}{\textit{Continuous Sparse Regression Framework}} \\
        \midrule
        $\mathbb H, y$                                 & Separable Hilbert space and observation vector in $\mathbb H$\\
        $\Phi$                                      & Linear measurement operator $\mathcal{M}(\mathcal X) \to \mathbb H$\\
        $\varphi_t$                                 & Feature map $t \longmapsto \varphi_t \in \mathbb H$\\
        $K(\cdot, \cdot)$                           & Model kernel defined by $K(s,t) = \langle \varphi_s, \varphi_t \rangle_{\mathbb H}$\\
        $\kappa$                                    & Regularization parameter of problem~\eqref{eq:blasso}\\
        $J(\nu)$                                    & Objective function over measures\\
        $J'_\nu$                                    & Dual certificate (Fréchet derivative of $J$ at $\nu$)\\
        $\mu^\star, \nu^\star$                               & Optimal signed and non-negative measures (minimizers of $J$)\\
        $\rho_K$                                    & Minimum excess loss along the deterministic trajectory, $\min_{0\le k\le K}[J(\nu_k)-J(\nu^\star)]$\\

        \midrule
        \multicolumn{2}{c}{\textit{Regularity \& Complexity Constants}} \\
        \midrule
        $\cc_{\mathcal{P}},\CC_{\mathcal{P}}$       & Kernel smoothness bounding constants (Assumption~$\mathcal{H}_{\mathcal{P}}$)\\
        $\CTV$                                      & Uniform bound on TV norm along trajectories (Assumption~\eqref{eq:HTVC})\\
        $\Lip$                                      & Uniform Lipschitz constant for $J'_\nu$, see~\eqref{def:lip_uniform_J'}\\
        $\MM$                                       & Uniform $L^\infty$ bound on the per-sample dual-certificate estimator (Assumption~(A2))
        \\

        \midrule
        \multicolumn{2}{c}{\textit{Conic Particle Gradient Descent Algorithm}} \\
        \midrule
        $K$                                     & Total number of iterations (horizon)\\
        $p_k$, $p_0$                            & Number of active particles at iteration $k$ and at initialization\\
        $\alpha,\beta$                          & Learning rates for weight and position updates\\
        $\nu_k, \nu_{k^+}, \nu_{k+1}$           & Measure estimates at iteration $k$, post-weight update, and next iteration\\
        $\Tnua, \Tnub$                          & Weight exponential update and position push-forward update operators\\
        $\pi_{\mathcal X}$                            & Generalized gradient descent step on $\mathcal X$\\
        $\Delta_k, \Delta_{k^+}$                     & Potential energy descents along iterations\\

        \midrule
        \multicolumn{2}{c}{\textit{Exploration: Birth and Death Processes}} \\
        \midrule
        $\mathcal{P}_\nu$, $\mathcal{N}_\nu$         & Regions of positivity (particle death) and negativity (particle birth) of $J'_\nu$\\
        $\varepsilon_k$                         & Exploration schedule / birth rate parameter\\
        $\poschedule$, $\negschedule$           & Threshold parameters for particle death and birth processes\\
        $U_{k+1}, V_{k+1}$                      & Random candidate points proposed for birth and for death at iteration $k+1$\\

        \midrule
        \multicolumn{2}{c}{\textit{FS\&P Stochastic Estimators}} \\
        \midrule
        $\bm{W}, \bm{T}$                        & Vectors of particle weights ($p_k$ components) and positions ($p_k \times d$)\\
        $\hat{\nu}_k, \nukps, \nukkpps, \nukpps$ & Stochastic counterparts of $\nu_k, \nu_{k^+}, \nu_{k^{++}}, \nu_{k+1}$ (current iterate, post-weight update, post-death, and next iterate)\\
        $m_k$, $N$                              & Mini-batch size at iteration $k$ and total oracle count $N=\sum_{k=1}^K m_k$\\
        $\widehat{J'_k}, \widehat{D_k}$         & Stochastic estimators of the dual certificate and its gradient\\
        $\Pkp, \Nkp$  & Empirical regions of particle death and birth\\
        $\widehat{\Delta}_k, \widehat{\Delta}_{k^+}$ & Empirical potential energy descents\\
        $\mathfrak{F}_k,\mathfrak{F}_k^+$       & Filtrations generated by the algorithm up to step $k$ and post-update $k^+$\\
        $\hat\rho_K$                            & Minimum stochastic excess loss along $K$ iterations, see~\eqref{eq:global_conv_rate_sto} %
        \\
        \bottomrule
  \end{tabularx}
  }
\caption{List of notation}
\label{tab:notation}
\end{table}

\end{document}

%% file: maths_commands_jmlr.tex
\usepackage{amsmath,amsfonts,amssymb,bm}
\usepackage{dsfont}

\def\eq{\begin{equation}}
\def\qe{\end{equation}}

\def\1{\bm{1}}

\def\vs{{{s}}}
\def\vt{{{t}}}

\def\mD{{{D}}}

\def\mJ{{{J}}}

\DeclareMathAlphabet{\mathsfit}{\encodingdefault}{\sfdefault}{m}{sl}
\SetMathAlphabet{\mathsfit}{bold}{\encodingdefault}{\sfdefault}{bx}{n}

\newcommand{\R}{\mathds{R}}

\DeclareMathOperator*{\argmin}{arg\,min}